\theoremstyle{plain}
\newtheorem{theo}{Théorème}[section]
\newtheorem{lem}[theo]{Lemme}
\newtheorem{cor}[theo]{Corollaire}
\newtheorem{prop}[theo]{Proposition}
\theoremstyle{definition}
\newtheorem{defi}[theo]{Définition}
\newtheorem{rem}[theo]{Remarque}
\theoremstyle{plain}
\theoremstyle{definition}
\newtheorem*{defi_sans_num}{Définition}
\newtheorem*{rem_sans_num}{Remarque}
\theoremstyle{plain}
\newtheorem{theorem}{Théorème}
\newtheorem{corollary}[theorem]{Corollaire}
\title{Correspondance thêta locale $\ell$-modulaire I : groupe métaplectique, représentation de Weil et $\Theta$-lift}
\author{Justin Trias}
\date{}
\begin{document}

\maketitle

\begin{abstract} Soit $F$ un corps qui est, soit local non archimédien, soit fini, de caractéristique résiduelle $p$ mais toujours de caractéristique différente de $2$. Soit $W$ un espace symplectique de dimension finie sur $F$. On suppose que $R$ est un corps de caractéristique $\ell \neq p$ de sorte qu'il existe un caractère lisse additif non trivial $\psi : F \to R^\times$. On prouve que le théorème de Stone-von Neumann pour le groupe d'Heisenberg $H(W)$ est encore valable pour les représentations à coefficients dans $R$. On construit ainsi une représentation projective du groupe $\textup{Sp}(W)$ qui se relève en une représentation lisse à coefficients dans $R$ d'une extension centrale de $\textup{Sp}(W)$ par $R^\times$ : c'est la représentation de Weil modulaire du groupe métaplectique. On donne ensuite pour toute paire duale $(H_1,H_2)$ dans $\textup{Sp}(W)$ les cas où leurs relevés au groupe métaplectique sont scindés. On calcule enfin le plus grand quotient isotypique de la représentation de Weil modulaire pour définir le $\Theta$-lift et on donne quelques pistes d'études qui s'ouvrent avec les outils construits ici, à savoir l'extension des scalaires et la réduction modulo $\ell$. \end{abstract}

\renewcommand{\abstractname}{Abstract}
\begin{abstract} Let $F$ be a field which is, either local non archimedean, or finite, of residual charcateristic $p$ but of characteristic different from $2$. Let $W$ be a symplectic space of finite dimension over $F$. Suppose $R$ is a field of characteristic $\ell \neq p$ so that there exists a non trivial smooth additive character $\psi : F \to R^\times$. Then the Stone-von Neumann theorem of the Heisenberg group $H(W)$ is still valid for representations with coefficients in $R$. It leads to a projective representation of the group $\textup{Sp}(W)$ which lifts to a genuine smooth representation of a central extension of $\textup{Sp}(W)$ by $R^\times$: this is the modular Weil representation of the metaplectic group. For any dual pair $(H_1,H_2)$, their lifts to the metaplectic group may splitor not according to the different cases at stake. Eventually, computing the biggest isotypic quotient of the modular Weil representation allows to define the $\Theta$-lift. Some new lines of investigation are thus available with these new tools such as studying scalar extension and reduction modulo $\ell$. \end{abstract}

\noindent \textbf{Remerciements :} J'aimerais remercier Alberto M\'inguez et Shaun Stevens pour leur soutien constant et leurs conseils durant la rédaction de cet article, ainsi que Nadir Matringe et Wee Teck Gan d'avoir rapporté mon travail de thèse qui sert de base à ce premier papier. J'ai eu la chance de discuter avec Colette Moeglin, Marie-France Vignéras et Jean-Loup Waldspurger, qui ont éclairé ma compréhension du livre sur la correspondance thêta et au-delà. J'ai également pu compter sur des échanges enrichissants et stimulants avec Anne-Marie Aubert, Gianmarco Chinello, Jean-François Dat, Guy Henniart et Vincent Sécherre.

\section*{Introduction}

La correspondance thêta sur un corps local établit une bijection $\theta$ entre des classes d'équivalences de représentations lisses irréductibles complexes de deux groupes provenant d'une paire duale dans un groupe symplectique. L'interprétation de cette bijection donne des informations arithmétiques profondes sur les représentations de chacun des deux groupes précédents. Par exemple, on peut relier cette bijection aux propriétés analytiques de fonctions L ou bien aux valeurs de facteurs $\varepsilon$ ; ou encore l'interpréter dans certains cas en termes de la correspondance de Langlands. Par ailleurs, M.-F. Vignéras a été la première à étudier les représentations lisses des groupes $p$-adiques à coefficients dans un corps de caractéristique $\ell$ avec $\ell \neq p$. On appelle ces représentations \og $\ell$-modulaires \fg{} pour insister sur la distinction entre $p$, souvent sous-entendu, et $\ell$, qui est différent. Considérer de telles représentations permet de s'intéresser à des questions globales de congruences entre formes automorphes. La correspondance thêta globale est construite à l'aide de sa version locale, \textit{i.e.} celle sur des corps locaux, et entraîne des résultats profonds sur la théorie des formes automorphes. On justifie la motivation du présent travail par la perspective -- lointaine, mais stimulante -- suivante : développer une correspondance thêta locale $\ell$-modulaire pourrait impliquer des résultats inédits de congruence entre formes automorphes tels qu'initiés par M.-F. Vignéras.

Il existe à ce jour deux manières élémentaires \cite{weil,mvw}, qui sont réputées équivalentes, de développer la théorie qui mène à la définition de cette bijection $\theta$. Elles aboutissent toutes deux à définir une représentation remarquable du groupe métaplectique : la représentation de Weil. Pour les besoins de cette introduction, on donne le nom de \og théorie de la représentation de Weil \fg{} à l'ensemble de ces méthodes. La première façon de procéder trouve sa source dans les travaux de Weil \cite{weil} pour les représentations complexes et a été généralisée dans \cite{ct} pour des représentations à coefficients plus généraux. La seconde s'appuie de manière cruciale sur le théorème de Stone-von Neumann et présente l'avantage de fonctionner aussi bien pour les corps finis que pour les corps locaux non archimédiens. C'est cette dernière construction que l'on propose de généraliser à des représentations à coefficients plus généraux dans les Sections \ref{representations_modulaires_section} et \ref{representation_de_weil_mod_section}. On prouve de plus  dans l'Annexe \ref{lien_avec_ct_section} l'équivalence, réputée vraie, entre ces deux méthodes. Il existe également une variante géométrique de cette théorie \cite{shin} qui mène à définir une représentation de Weil, mais la nature radicalement différente des outils qui y sont employés rend toute comparaison particulièrement difficile.

Avant de donner plus de détails, voici les principales contributions qu'apporte ce papier et que l'on va présenter : le théorème de Stone-von Neumann modulaire, qui est une représentation du groupe d'Heisenberg à coefficients dans un corps de caractéristique $\ell$ possédant suffisamment de racines de l'unité ; la construction de la représentation de Weil modulaire du groupe métaplectique et ses modèles explicites ; ses conséquences en direction d'une correspondance thêta $\ell$-modulaire, avec notamment l'équivalence entre correspondance thêta à coefficients dans un corps de base et à coefficients dans une clôture algébrique, mais également des exemples de compatibilité de la correspondance thêta à la réduction modulo $\ell$ sous certaines hypothèses de banalité.

\subsection{Théorie de la représentation de Weil modulaire}

Soit $F$ un corps qui est, soit fini de caractéristique $p$, soit local non archimédien de caractéristique résiduelle $p$ et de caractéristique différente de $2$. Soit $(W,\langle , \rangle)$ un espace symplectique de dimension finie sur $F$ dont on note $\textup{Sp}(W)$ le groupe symplectique associé. Soit $R$ un corps de caractéristique $\ell$. On suppose qu'il existe un caractère lisse non trivial $\psi : F \to R^\times$. Cette condition se traduit en termes de racines de l'unité et la Remarque \ref{remarque_car_R_non_p_si_psi_existe} impose en particulier que $\ell$ est différent de $p$.

\subsubsection*{Théorème de Stone-von Neumann}

Le groupe d'Heisenberg $H$ est le groupe localement profini formé de l'ensemble $W \times F$ muni de la topologie produit et de la loi de groupe :
$$(w,t) \cdot (w',t') = \bigg(w+w',t+t'+\frac{\langle w,w' \rangle }{2}\bigg).$$
On identifie $F$ avec le centre de $H$ via l'isomorphisme de groupes topologiques $t \mapsto (0,t)$, de sorte que $\psi$ soit un caractère du centre de $H$. La catégorie des représentations lisses de $H$ à coefficients dans $R$ est notée $\textup{Rep}_R(H)$.

Quand $R$ est le corps des nombres complexes, le théorème suivant est connu sous le nom de théorème de Stone-von Neumann \cite[Chap. 2, Th. I.2]{mvw}. On le généralise dans la Section \ref{representations_modulaires_section} pour les représentations modulaires \textit{i.e.} à coefficients dans $R$ :

\begin{theorem}[Stone-von Neumann modulaire] À isomorphisme près, il existe une unique représentation irréductible $(\rho_\psi,S)$ dans $\textup{Rep}_R(H)$ dont le caractère central est $\psi$. \end{theorem}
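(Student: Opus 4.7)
The plan is to adapt the classical argument of \cite[Chap. 2, Th. I.2]{mvw}, whose two core ingredients — compact induction from a polarization for existence, and a Hecke-algebra argument for uniqueness — transfer to the modular setting thanks to $\ell \neq p$. First, fix a complete polarization $W = X \oplus Y$ into Lagrangian subspaces and set $A := X \oplus F$: this is a closed, abelian, normal subgroup of $H$ with $H/A \cong Y$, and $\widetilde{\psi}_X : A \to R^\times$, $(x, t) \mapsto \psi(t)$, is a smooth character of $A$ extending $\psi$. Set
$$\rho_\psi := \textup{c-ind}_A^H \widetilde{\psi}_X.$$
Via the coset decomposition $H = A \cdot Y$, $\rho_\psi$ is realized on the Schrödinger model $C_c^\infty(Y, R)$, with $H$ acting by translations in $Y$, multiplication by the characters $y \mapsto \psi(\langle y, x\rangle)$ for $x \in X$, and scalar multiplication by $\psi$ on the center; by construction $\rho_\psi$ is smooth with central character $\psi$.

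Irreducibility of $\rho_\psi$ is then proved via the computation $\textup{End}_H(\rho_\psi) = R$. In the Schrödinger model any $H$-intertwiner $T$ commutes with all $Y$-translations and with multiplication by every character $y \mapsto \psi(\langle y, x\rangle)$, $x \in X$. Since the symplectic form is non-degenerate and $\psi$ is non-trivial, these characters separate points of $Y$, forcing $T$ to be a scalar.

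For uniqueness, the category of smooth representations of $H$ with central character $\psi$ is equivalent to the category of non-degenerate modules over the twisted Hecke algebra $\mathcal{H}_R(H, \psi)$, consisting of compactly supported $R$-valued locally constant functions $f$ on $H$ with $f(zh) = \psi(z)^{-1} f(h)$ for $z$ in the center, endowed with convolution relative to a Haar measure on $H$ valued in $R$ (which exists since $\ell \neq p$). Using the polarization, this algebra identifies with a twisted group algebra of $W$ whose cocycle derives from $\psi \circ \langle , \rangle$, and it acts faithfully on the Schrödinger model $\rho_\psi$. The key structural statement is then that $\mathcal{H}_R(H, \psi)$ admits, up to isomorphism, a unique simple non-degenerate module, realized by $\rho_\psi$; this delivers the uniqueness assertion.

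The main obstacle lies in this structural analysis of $\mathcal{H}_R(H, \psi)$: over $\mathbb{C}$ it proceeds via Fourier analysis on $W$, unavailable over $R$. One must replace it by a direct algebraic argument exploiting that $\psi$ yields a non-degenerate $R$-valued pairing between $X$ and $Y$, combined with a Pontryagin-type duality for the $F$-vector space $X$ with $R$-valued smooth characters — a duality available precisely because $\psi$ exists and $\ell \neq p$. Once this algebraic identification is secured, the uniqueness of $\rho_\psi$ follows from the general principle that matrix algebras over a field have a unique simple module up to isomorphism, applied to $\mathcal{H}_R(H, \psi)$ in its idempotented smooth version.
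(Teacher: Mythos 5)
Your existence construction (compact induction of $\psi$ extended trivially along a Lagrangian $X$) matches the paper's, which does the same for any self-dual subgroup $A$ of $W$ — the extra generality of self-dual $A$ is not needed for the theorem itself but prepares the lattice models used later.

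There is a genuine gap in your irreducibility step. You compute $\textup{End}_H(\rho_\psi) = R$ and conclude irreducibility, but this implication is false in general: a non-semisimple representation can have scalar endomorphism ring without being irreducible. The ingredients you invoke — the translations by $Y$, the multiplication operators $\psi(\langle\,\cdot\,,x\rangle)$, and the separation of points of $Y$ — are exactly the right ones, but they must be used to show directly that any nonzero $f\in C_c^\infty(Y)$ generates the whole space, which is what the paper does (via its family $\chi_{w,L}$ and the explicit smoothing operator $\phi\cdot f$ in the proof of its first lemma). Rephrasing your argument along these lines would repair it, but as written the logical step is missing.

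For uniqueness you take a different route from the paper, but you do not actually complete it. The paper first realizes every candidate $\rho_\psi$ as a subrepresentation of $\textup{ind}_F^H(\psi)$ via matrix coefficients, and then proves that $\textup{ind}_F^H(\psi)$ is $\rho_\psi$-isotypic by exhibiting an explicit $H\times H$-isomorphism $S'\otimes S\simeq\textup{ind}_F^H(\psi)$, the key analytic input being the bijectivity of the Fourier transform on $C_c^\infty(W,R)$ (available since $\ell\neq p$, \cite[I.3.10]{vig}). Your proposal instead reformulates the problem as the structure of the twisted Hecke algebra $\mathcal{H}_R(H,\psi)$ and asserts that it has a unique simple non-degenerate module, but you explicitly flag the required structural identification as the ``main obstacle'' and replace it with heuristics (``Pontryagin-type duality'', ``matrix algebras have a unique simple module''). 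This is precisely the content that must be proved and is not; the two-line appeal to a ``general principle'' does not apply since $\mathcal{H}_R(H,\psi)$ is not a priori a matrix algebra. In fact your Hecke-algebra approach and the paper's isotypicity argument are two faces of the same fact, and the concrete Fourier-transform intertwiner of the paper is exactly what would prove the claim you are leaving open; as it stands your uniqueness argument is a restatement rather than a proof.
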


On appelle représentation métaplectique associée à $\psi$
 l'unique classe d'équivalence qui provient du théorème. Un modèle de la représentation métaplectique est un représentant dans $\textup{Rep}_R(H)$ de cette classe d'équivalence. On décrit dans la Section \ref{modeles_rep_metaplectique_section} des modèles explicites $(\rho_\psi,S_A)$ quand $A$ est un sous-groupe auto-dual de $W$.

\subsubsection*{Groupe métaplectique}

Soit $(\rho_\psi,S)$ un modèle de la représentation métaplectique associée à $\psi$. On développe au début de la Section \ref{representation_de_weil_mod_section} les considérations suivantes. Pour tout $g \in \textup{Sp}(W)$, la représentation $(\rho_\psi^g,S)$ définie par $\rho_\psi^g(w,t) = \rho_\psi(g^{-1}w,t)$ pour $(w,t) \in H$ est un modèle de la représentation métaplectique associée à $\psi$. Elle est donc isomorphe à $(\rho_\psi,S)$. Comme $\textup{Hom}_H(\rho_\psi^g,\rho_\psi) \simeq R$, l'action de $\textup{Sp}(W)$ sur $(\rho_\psi,S)$ définit une représentation projective $\sigma_S : \textup{Sp}(W) \to \textup{PGL}_R(S)$, qui se relève en une représentation $\omega_{\psi,S}$ d'une extension centrale de $\textup{Sp}(W)$ par $R^\times$. Celle-ci est obtenue comme la première projection du produit fibré associé au morphisme quotient $\textsc{red}$ et à $\sigma_S$ au-dessus de $\textup{PGL}_R(S)$ :
$$\xymatrix{
\widetilde{\textup{Sp}}_{\psi,S}^R(W) \ar[d]^{p_S} \ar[r]^{\omega_{\psi,S}} & \textup{GL}_R(S) \ar[d]^{\textsc{red}} \\
\textup{Sp}(W) \ar[r]^{\sigma_S} & \textup{PGL}_R(S) 
}$$
où $\widetilde{\textup{Sp}}_{\psi,S}^R(W)$ est l'extension centrale de $\textup{Sp}(W)$ par $R^\times$ en question et $p_S$ est la seconde projection. On prouve dans la Section \ref{gp_met_et_rep_de_Weil_subsection} tout un ensemble de faits concernant ces extensions centrales. Les voici pêle-mêle.

Il existe une collection $\{ \Phi_{S,S'} : \widetilde{\textup{Sp}}_{\psi,S}^R(W) \to \widetilde{\textup{Sp}}_{\psi,S'}^R(W) \}_{S,S'}$ d'isomorphismes d'extensions centrales qui identifient canoniquement l'ensemble des extensions centrales précédentes. Excepté dans le cas $F = \mathbb{F}_3$ et $\textup{dim}_F W = 2$, cette collection est l'unique collection d'isomorphismes d'extensions centrales possibles. Son groupe dérivé $\widehat{\textup{Sp}}_{\psi,S}^R(W)$ est : le groupe dérivé de $\textup{Sp}(W)$ si $F$ est fini ou $\ell = 2$ ; l'unique extension centrale non triviale de $\textup{Sp}(W)$ par $\{ \pm 1 \}$ sinon.

En munissant $S$ de la topologie discrète et $\textup{GL}_R(S)$ de la topologie compacte-ouverte, le diagramme précédent est celui du produit fibré dans la catégorie des groupes topologiques associé aux morphismes de groupes continus $\sigma_S$ et $\textsc{red}$. Ainsi, le morphisme de groupes topologiques $p_S$ munit :
\begin{itemize}[label=$\circ$]
\item le groupe $\widetilde{\textup{Sp}}_{\psi,S}^R(W)$ d'une structure de revêtement topologique de base $\textup{Sp}(W)$ et de fibre (discrète) $R^\times$ ;
\item le sous-groupe ouvert $\widehat{\textup{Sp}}_{\psi,S}^R(W)$ de $\widetilde{\textup{Sp}}_{\psi,S}^R(W)$ d'une structure de revêtement topologique de base $\textup{Sp}(W)$ et de fibre $\{ \pm 1\}$ \end{itemize}
En particulier, les groupes $\widetilde{\textup{Sp}}_{\psi,S}^R(W)$ et $\widehat{\textup{Sp}}_{\psi,S}^R(W)$ sont localement profinis et la représentation $\omega_{\psi,S}$ est lisse. Cette représentation est même admissible.

\begin{defi_sans_num} La classe d'isomorphisme des extensions centrales topologiques $\widetilde{\textup{Sp}}_{\psi,S}^R(W)$, où $S$ parcourt les modèles de la représentation métaplectique associée à $\psi$, est appelée groupe métaplectique associé à $\psi$. Un modèle du groupe métaplectique est simplement un représentant de cette classe d'isomorphisme. \end{defi_sans_num}

\begin{rem_sans_num} L'Annexe \ref{lien_avec_ct_section} donne une traduction explicite entre cette construction du groupe métaplectique et celle de \cite{weil,ct}. \end{rem_sans_num}

\subsubsection*{Représentation de Weil} Soit $\textup{Mp}(W)$ un modèle du groupe métaplectique associé à $\psi$. Pour tout modèle $(\rho_\psi,S)$ de la représentation métaplectique associée à $\psi$, il existe par définition un isomorphisme $\varphi_S$ d'extensions centrales topologiques :
$$\varphi_S : \textup{Mp}(W) \to \widetilde{\textup{Sp}}_{\psi,S}^R(W).$$
Qui plus est, un tel isomorphisme est unique sauf si $F = \mathbb{F}_3$ et $\textup{dim}_F W =2$. On choisit un système d'isomorphismes $\varphi_S$ pour tout modèle et on définit :

\begin{defi_sans_num} La représentation de Weil modulaire associée à $\psi$ est la classe d'isomorphisme des représentations $(\omega_{\psi,S} \circ \varphi_S,S)$ du groupe $\textup{Mp}(W)$. Un modèle de cette représentation de Weil modulaire est un représentant de cette classe d'isomorphisme. \end{defi_sans_num}

Quelques commentaires sur cette représentation de Weil modulaire : il subsiste une ambiguïté quand $F=\mathbb{F}_3$ et $\textup{dim}_F(W)=2$ puisque cette classe d'isomorphisme dépend \textit{a priori} du choix du système d'isomorphisme $\{ \varphi_S \}$ ; le cas que l'on vient de mentionner est désigné comme exceptionnel et ne se produit évidemment pas quand $F$ est local non archimédien. Peu importe, la représentation de Weil modulaire est lisse admissible ; et
sauf dans le cas exceptionnel, il suffit de connaître l'action du groupe dérivé $\widehat{\textup{Mp}}(W)$ de $\textup{Mp}(W)$ pour  étudier la représentation de Weil modulaire.

\subsection{Vers une correspondance thêta locale modulaire}

Soit $F$ un corps local non archimédien de caractéristique résiduelle $p$ et de caractéristique différente de $2$. Soit $W$ un espace symplectique de dimension finie sur $F$.

\subsubsection*{Correspondance thêta locale classique}

Soit $\psi : F \to \mathbb{C}^\times$ un caractère lisse non trivial. Le groupe métaplectique classique $\textup{Mp}(W)$ est l'extension centrale de $\textup{Sp}(W)$ par $\mathbb{C}^\times$ décrite plus haut. Pour tout sous-groupe $H$ de $\textup{Sp}(W)$, on note $\widetilde{H}$ l'image réciproque de $H$ dans $\textup{Mp}(W)$. Avec ces conventions $\widetilde{\textup{Sp}}(W) = \textup{Mp}(W)$. En notant $i$ l'inclusion naturelle de $\mathbb{C}^\times$ dans $\textup{Mp}(W)$, on définit la catégorie $\textup{Rep}_\mathbb{C}^{\textup{gen}}(\widetilde{H})$ des représentations lisses complexes $(\pi,V)$ de $\widetilde{H}$ telle que $\pi \circ i(\lambda) = \lambda \times \textup{Id}_V$ pour tout $\lambda \in \mathbb{C}^\times$. L'ensemble de ses classes d'équivalence de représentations irréductibles est $\textup{Irr}_{\mathbb{C}}^{\textup{gen}}(\widetilde{H})$.

Soit $(\omega_\psi,S) \in \textup{Rep}_\mathbb{C}^{\textup{gen}}(\textup{Mp}(W))$ un modèle de la représentation de Weil complexe associée à $\psi$. Une paire duale réductive irréductible $(H_1,H_2)$ dans le groupe symplectique $\textup{Sp}(W)$ est : 
\begin{enumerate}[label=$\circ$]
\item soit constituée de deux groupes classiques, auquel cas elle est dite de type I ;
\item soit de deux groupes linéaires, auquel cas elle est dite de type II. 
\end{enumerate}
En particulier, on a un morphisme de groupes $\widetilde{H}_1 \times \widetilde{H}_2 \to \textup{Mp}(W)$ donné par la multiplication qui permet de considérer par restriction $(\omega_\psi,S)$ comme une représentation du produit $\widetilde{H}_1 \times \widetilde{H}_2$. Pour toute représentation irréductible $\pi_1$ dans $\textup{Rep}_\mathbb{C}^{\textup{gen}} (\widetilde{H}_1)$, le plus grand quotient $\pi_1$-isotypique de $\omega_\psi$ est de la forme $\pi_1 \otimes_{\mathbb{C}} \Theta(\pi_1)$ avec $\Theta(\pi_1) \in \textup{Rep}_{\mathbb{C}}^{\textup{gen}}(\widetilde{H}_2)$. Dans le sens contraire de même, toute représentation irréductible $\pi_2$ dans $\textup{Rep}_{\mathbb{C}}^{\textup{gen}}(\widetilde{H}_2)$ définit une représentation $\Theta(\pi_2) \in \textup{Rep}_{\mathbb{C}}^{\textup{gen}}(\widetilde{H}_1)$ telle que le plus grand quotient $\pi_2$-isotypique de $\omega_\psi$ est $\Theta(\pi_2) \otimes_{\mathbb{C}} \pi_2$.

Le résultat suivant constitue le c\oe ur de la théorie et a nécessité le concours de différents mathématiciens pour être prouvé dans sa plus grande généralité. On le présente avant de donner un panorama succinct de l'ensemble de ses contributeurs :

\begin{theo} \label{correspondance_theta_classique_thm} Pour toutes représentations irréductibles $\pi_1$ et $\pi_1'$ dans $\textup{Rep}_\mathbb{C}^{\textup{gen}} (\widetilde{H}_1)$ :
\begin{enumerate}[label=\textup{\alph*)}]
\item soit $\Theta(\pi_1)$ est nulle, soit $\theta(\pi_1)$ est irréductible ;
\item quand $\Theta(\pi_1) \neq 0$, on a $\theta(\pi_1) \simeq \theta(\pi_1')$ si et seulement $\pi_1 \simeq \pi_1'$. \end{enumerate} \end{theo}

Plus généralement, le Théorème \ref{correspondance_theta_classique_thm} est vrai pour toute paire duale réductive $(H_1,H_2)$ dans $\textup{Sp}(W)$ d'après \cite[Chap. 2, Rem. 2]{mvw}. Il est important de souligner que les représentations ainsi définies grâce à $\Theta$ et $\theta$ dépendent du choix du caractère lisse non trivial $\psi : F \to \mathbb{C}^\times$.

\begin{defi} Soit $(H_1,H_2)$ une paire duale réductive dans $\textup{Sp}(W)$. La correspondance thêta locale classique sur $F$ (associée à $\psi$) est définie comme la bijection induite par $\theta$ entre les ensembles : 
$$\{ \pi_1 \in \textup{Irr}_{\mathbb{C}}^{\textup{gen}} (\widetilde{H}_{1,S}) \ | \ \Theta(\pi_1) \neq 0\} \overset{\theta}{\simeq} \{ \pi_2 \in \textup{Irr}_{\mathbb{C}}^{\textup{gen}} (\widetilde{H}_{2,S}) \ | \ \Theta(\pi_2) \neq 0\}.$$ \end{defi}

\paragraph{Contributeurs de la correspondance thêta locale classique.} On donne un résumé succinct des contributions qui ont permis de prouver le Théorème \ref{correspondance_theta_classique_thm}, dont la résolution complète a nécessité près de 40 ans.

C'est R. Howe \cite{howe} qui a d'abord conjecturé le Théorème \ref{correspondance_theta_classique_thm} sous la forme que l'on présente, en introduisant de manière systématique la notion de paires duales réductives et en développant les travaux de A. Weil \cite{weil}. Quand, par le passé, ce théorème n'était pas encore démontré, il portait le nom de conjecture de dualité locale de Howe. Cette conjecture de dualité était connue au moins pour les représentations non ramifiées \cite[7-10]{howe}. Un premier succès significatif en direction d'une résolution plus générale est attribué à R. Howe lui-même \cite[Chap. 5]{mvw} et provient d'une conférence qu'il a donnée en 1984 : il prouve à cette occasion que, quand la paire duale $(H_1,H_2)$ est de type I non ramifiée et que la caractéristique résiduelle $p$ de $F$ est différente de $2$, le Théorème \ref{correspondance_theta_classique_thm} est vrai. Cette preuve met en jeu un modèle explicite de la représentation de Weil, le modèle latticiel, ce qui explique la restriction $p \neq 2$ apparaissant sur la caractéristique résiduelle.

En s'inspirant des idées de R. Howe sur la théorie invariante,  S. Rallis a prouvé que pour toute paire duale $(H_1,H_2)$ symplectique-orthogonale avec $F$ de caractéristique $0$ et satisfaisant aux conditions de \cite[Cor. to Th. II.4.1]{rallis}, le Théorème \ref{correspondance_theta_classique_thm} était valide pour toutes représentations unitaires irréductibles $\pi_1$ et $\pi_1'$. L'argument crucial de cette preuve repose sur l'utilisation de la méthode dite de doubling, qui permet de ramener \cite[Prop. II.3.1] {rallis} les problèmes à la compréhension des coinvariants, pour une certaine paire duale $(H_1',H_2')$ dans $\textup{Sp}(\mathbb{W})$ avec $\mathbb{W} = W + (-W)$, de la représentation de Weil associée à $\psi$ dans $\textup{Rep}_\mathbb{C} (\textup{Mp}(\mathbb{W}))$.

Dans le prolongement de ces avancées, S. Kudla \cite{kudla_invent} a donné dans le cas des paires symplectiques-orthogonales une preuve partielle du point a) du Théorème \ref{correspondance_theta_classique_thm}. Quand la représentation $\pi_1$ est (super)cuspidale et la caractéristique de $F$ est $0$, il prouve le résultat plus fort suivant : la représentation $\Theta(\pi_1)$ est irréductible ou nulle. Ses arguments reposent sur l'utilisation d'un autre modèle explicite de la représentation de Weil : le modèle de Schr\"odinger mixte. Il s'inspire aussi du travail de Rallis en introduisant des méthodes récurrentes \cite[5]{kudla_invent} pour le calcul des foncteurs de Jacquet de la représentation de Weil, qui donne accès au support cuspidal des quotients irréductibles de $\Theta(\pi_1)$ pour toute représentation $\pi_1$ irréductible. 

Le livre \cite{mvw} est issu d'un séminaire, qui s'est tenu en 1985 et 1986, et qui était consacré aux représentations métaplectiques sur un corps $p$-adique. Il y est fait état des travaux de R. Howe, S. Rallis, S. S. Kudla, ainsi que R. Rao \cite{rao}\footnote{Ce travail a donné lieu à une publication tardive et existait bel et bien comme prépublication à l'époque de ce séminaire. Il est concomitant bien qu'indépendant à celui de P. Perrin.} et P. Perrin \cite{perrin} sur les cocycles métaplectiques. Plus qu'un simple exposé de ces travaux, les auteurs généralisent les méthodes employées en éliminant au maximum les restrictions sur la caractéristique de $F$ ou sur la paire considérée. Par exemple ils généralisent, en suivant des intuitions attribuées à J.-L. Waldspurger \cite[Chap.3, IV.1]{mvw}, la méthode de doubling, l'irréductibilité pour les représentations cuspidales et le calcul du support cuspidal indépendamment de la caractéristique de $F$ ou de la paire duale de type I considérées. Ce travail de mise en forme et de \og polissage \fg{} \cite[Intro]{mvw} de la théorie apporte également des contributions radicalement nouvelles. En effet, les résultats de \cite[Chap. 4, II.1]{mvw} pour les groupes classiques non quaternioniques, maintenant connus sous le nom d'involution de MVW, présentent un intérêt plus large pour l'étude des représentations des groupes $p$-adiques.

Quand $p \neq 2$, il existe une preuve du Théorème \ref{correspondance_theta_classique_thm} qui généralise à toute paire duale de type I la preuve pour les paires duales de type I non ramifiées. L'idée de départ, plutôt simple, trouve son origine chez R. Howe. Cependant, sa réalisation concrète a nécessité la traduction technique relativement complexe de J.-L. Waldspurger \cite{wald}. Dans ce travail, l'auteur appelle de ses v\oe ux à la mise au point d'une \og démonstration plus claire et moins technique \fg{}. On signale par ailleurs dans le même ouvrage une preuve alternative de R. Howe pour les paires non ramifiées, avec toujours la restriction $p \neq 2$.

Depuis les années 80 cependant, le Théorème \ref{correspondance_theta_classique_thm} était réputé vrai pour les paires duales de type II d'après des arguments de R. Howe -- qui sont restés longtemps non publiés mais aujourd'hui retranscrits dans \cite[Ann. A]{minguez_thesis}. Plus récemment par ailleurs, une nouvelle preuve de ce résultat a été élaborée par A. M\'inguez \cite{minguez_howe} et repose sur des idées inédites jusqu'alors. L'espoir fécond de traduire les idées de A. M\'inguez pour les paires duales de type I a été réalisé par W. T. Gan et S. Takeda \cite{gt}. Ils démontrent le Théorème \ref{correspondance_theta_classique_thm} pour toute paire duale de type I non quaternioniques, avec le double avantage de n'avoir aucune restriction sur $p$ et de présenter une preuve beaucoup plus simple que celles jusqu'alors. La restriction sur la paire duale de type I considérée repose sur l'utilisation de l'involution de MVW évoquée pus haut. Néanmoins, il existe une identité similaire pour les paires duales de type I quaternioniques \cite{sun_zhu}, qui a permis à W. T. Gan et B. Y. Sun \cite{gan_sun} de prouver le Théorème \ref{correspondance_theta_classique_thm} dans le cas restant. Les deux arguments \cite{gt} et \cite{gan_sun} font une utilisation extensive des filtrations de Rallis et Kudla, déjà généralisées à toute paire duale de type I dans \cite[Chap. 3, IV]{mvw}.

\subsubsection*{Représentations $\ell$-modulaires et correspondance thêta}

Pour développer un analogue du Théorème \ref{correspondance_theta_classique_thm} pour les représentations modulaires \textit{i.e.} à coefficients dans $\overline{\mathbb{F}_\ell}$ avec $\ell \neq p$, il est nécessaire de se donner une représentation lisse $(\omega_\psi,S)$, où $\psi : F \to \overline{\mathbb{F}_\ell}$ est un caractère lisse non trivial, dont on va étudier les plus grands quotients isotypiques -- que l'on désigne encore par \og $\Theta$ \fg{}. Deux stratégies s'offrent alors :
\begin{enumerate}[label=$\circ$]
\item la première consiste à examiner les formules explicites qui définissent cette représentation -- par exemple les modèles de Schrödinger et latticiels -- dans le cas complexe, et remplacer partout, quand cela est possible, le corps $\mathbb{C}$ par $\overline{\mathbb{F}_\ell}$ ;
\item la seconde, qui a été poursuivie dans ce travail, reconstruit la théorie en partant de zéro pour définir une représentation de Weil modulaire et les formules explicites afférentes de la Section \ref{decalque_des_rep_S_A_section}.
\end{enumerate}

Bien évidemment, ces deux options se révèlent équivalentes. Quand $(H_1,H_2)$ est une paire duale de type II, la représentation de Weil classique peut être considérée comme appartenant à $\textup{Rep}_\mathbb{C}(H_1 \times H_2)$ car les relevés de $H_1$ et $H_2$ sont scindés dans le groupe métaplectique. Pour cette raison, A. M\'inguez étudia, selon la première stratégie, un analogue ad hoc modulaire $(\omega_\psi,S) \in \textup{Rep}_{\overline{\mathbb{F}_\ell}}(H_1 \times H_2)$ de la représentation de Weil classique. On dit que $\ell$ est banal vis-à-vis de $(H_1,H_2)$ si $\ell$ ne divise pas les pro-ordres de $H_1$ et $H_2$. Dans les notations ici présentes, le résultat principal de \cite{minguez_thesis} est :

\begin{theo} \label{correspondance_theta_modulaire_type_II_thm} Soit $(H_1,H_2)$ une paire réductive duale de type II dans $\textup{Sp}(W)$. On suppose que $\ell$ est banal vis-à-vis de $(H_1,H_2)$. Alors pour toutes représentations irréductibles $\pi_1$ et $\pi_1'$ dans $\textup{Rep}_{\overline{\mathbb{F}_\ell}} (H_1)$ :
\begin{enumerate}[label=\textup{\alph*)}]
\item soit $\Theta(\pi_1)$ est nulle, soit $\theta(\pi_1)$ est irréductible ;
\item quand $\Theta(\pi_1) \neq 0$, on a $\theta(\pi_1) \simeq \theta(\pi_1')$ si et seulement $\pi_1 \simeq \pi_1'$. \end{enumerate} \end{theo}

Maintenant que la théorie de la représentation de Weil modulaire a été développée, il est loisible de considérer la restriction de la représentation de Weil modulaire aux relevés de toute paire réductive duale $(H_1,H_2)$. La situation est la suivante. Soit $(\omega_\psi,S)$ la représentation de Weil modulaire associée à $\psi$. D'après la Section \ref{representation_de_weil_mod_section}, c'est une représentation du groupe $\textup{Mp}(W)$, qui est une extension centrale de $\textup{Sp}(W)$ par $\overline{\mathbb{F}_\ell}^\times$.

Soient $\widetilde{H}_1$ et $\widetilde{H}_2$ les relevés de $H_1$ et $H_2$ dans le groupe métaplectique \textit{i.e.} les images réciproques de ces groupes par la projection canonique $p : \textup{Mp}(W) \to \textup{Sp}(W)$.

\begin{defi_sans_num}[Correspondance thêta $\ell$-modulaire] Soient $\pi_1$ et $\pi_1'$ deux représentations irréductibles dans $\textup{Rep}_{\overline{\mathbb{F}_\ell}}^{\textup{gen}} (\widetilde{H}_1)$. On considère les assertions suivantes :
\begin{enumerate}
\item[($\Theta_1$)] la représentation $\Theta(\pi_1)$ est de longueur finie, donc admet un co-socle noté $\theta(\pi_1)$ ;
\item[($\Theta_2$)]  soit $\Theta(\pi_1)$ est nulle, soit $\theta(\pi_1)$ est irréductible ;
\item[($\Theta_3$)] quand $\Theta(\pi_1) \neq 0$, on a $\theta(\pi_1) \simeq \theta(\pi_1')$ si et seulement $\pi_1 \simeq \pi_1'$. 
\end{enumerate}
Quand ces trois énoncés sont valides pour tout $\pi_1$ et $\pi_1'$, la \og correspondance thêta locale $\ell$-modulaire sur $F$ \fg{} est alors définie comme la bijection induite par $\theta$ entre les ensembles : 
$$\{ \pi_1 \in \textup{Irr}_R^{\textup{gen}} (\widetilde{H}_1) \ | \ \Theta(\pi_1) \neq 0\} \overset{\theta}{\simeq} \{ \pi_2 \in \textup{Irr}_R^{\textup{gen}} (\widetilde{H}_2) \ | \ \Theta(\pi_2) \neq 0\}.$$ \end{defi_sans_num}

\begin{rem_sans_num} On ajoute l'hypothèse de longueur finie pour que le co-socle soit bien défini. En particulier, il en résulte l'existence d'un quotient irréductible quand $\Theta(\pi_1) \neq 0$. Dans le cas complexe, cela est vrai en vertu de \cite[Chap. 3, Th. IV.4]{mvw}. C'est en revanche moins clair dans le cas $\ell$-modulaire en toute généralité, même si cela reste vrai quand $\ell$ est banal vis-à-vis de $(H_1,H_2)$ d'après \cite[Sec. 5.2]{trias_thesis}. \end{rem_sans_num}

\begin{rem_sans_num} On s'attend à ce que la correspondance thêta locale $\ell$-modulaire sur $F$ \textit{i.e.} que l'ensemble des énoncés ($\Theta_1$)-($\Theta_2$)-($\Theta_3$) soient vrais pour tout $\pi_1$ et $\pi_1'$ à condition que $\ell$ soit banal vis-à-vis de $(H_1,H_2)$. Dans le cas non banal, il existe des contre-exemples pour les paires de type II \cite[Sec. 4.5.2]{minguez_thesis} et de type I \cite[Sec. 7.2]{trias_thesis}. \end{rem_sans_num}

\subsubsection*{En direction d'une correspondance thêta locale modulaire}

Dans cet article, on ne cherche pas à prouver que les énoncés ($\Theta_1$)-($\Theta_2$)-($\Theta_3$) sont vrais pour tout $\pi_1$ et $\pi_1'$ quand $(H_1,H_2)$ est une paire duale de type I et $\ell$ est banal vis-à-vis de $(H_1,H_2)$. Cela dépasse largement le cadre de ce travail et  nécessiterait d'adapter aux représentations modulaires l'ensemble des résultats sur les filtrations de Rallis et Kudla \cite[Chap. 3, IV]{mvw}, ainsi que l'involution de MVW \cite[Chap. 4]{mvw} et les travaux de \cite{gt} et \cite{gan_sun}. Ces problèmes feront par ailleurs l'objet d'une série d'articles dans la continuité de celui-ci.

On donne cependant quelques applications des objets que l'on a développés. Soit $R$ un corps parfait de caractéristique $\ell \neq p$. Quitte à prendre en compte l'action de l'algèbre d'endomorphismes de la représentation $\pi_1$, il existe (cf. Section \ref{compatibilité_extension_des_scalaires_section}) des énoncés analogues ($\Theta_1'$)-($\Theta_2'$)-($\Theta_3'$) pour $R$. Cela complique quelque peu la situation, mais seulement en apparence : les trois énoncés précédents sur $R$ et ceux définis sur la clôture algébrique $\bar{R}$, notés ($\Theta_1$)-($\Theta_2$)-($\Theta_3$), sont compatibles au sens des trois Propositions \ref{extension_des_scal_THETA1_prop}, \ref{extension_des_scal_THETA2_prop} et \ref{extension_des_scal_THETA3_prop}. Finalement, on obtient le résultat de compatibilité du Théorème \ref{correspondance_sur_un_corps_non_alg_clos_equiv_thm} que l'on résume ainsi :

\begin{theorem} La correspondance thêta locale sur $F$ à coefficients dans $R$ est valide si et seulement si la correspondance thêta locale sur $F$ à coefficients dans $\bar{R}$ l'est. \end{theorem}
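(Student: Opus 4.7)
La stratégie envisagée consiste à réduire le théorème à l'articulation des trois Propositions \ref{extension_des_scal_THETA1_prop}, \ref{extension_des_scal_THETA2_prop} et \ref{extension_des_scal_THETA3_prop} déjà obtenues. En effet, la validité de la correspondance thêta locale sur $F$ à coefficients dans un corps $k$ équivaut à la conjonction des trois énoncés $(\Theta_1)$-$(\Theta_2)$-$(\Theta_3)$ (ou de leurs analogues primés $(\Theta_i')$ quand $k = R$) pour toute paire de représentations irréductibles $\pi_1$ et $\pi_1'$. Il suffit donc d'établir séparément l'équivalence de chacun des énoncés $(\Theta_i')$ sur $R$ avec son homologue $(\Theta_i)$ sur $\bar{R}$, lorsqu'on fait parcourir aux $\pi_1$ et $\pi_1'$ tous les irréductibles, ce qu'accomplissent précisément les trois propositions citées. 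Le théorème en résulte formellement en rassemblant ces trois équivalences.

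Plus concrètement, première étape, je commencerais par fixer un caractère $\psi : F \to R^\times$ et observer que la représentation de Weil $\omega_\psi$ sur $\bar{R}$ s'obtient par extension des scalaires à partir d'un modèle défini sur $R$ : c'est un avantage de la construction développée aux Sections \ref{representations_modulaires_section} et \ref{representation_de_weil_mod_section}. Deuxième étape, pour $R$ parfait et $\bar{R}$ sa clôture algébrique, le foncteur $- \otimes_R \bar{R}$ permet d'établir une correspondance, compatible à l'action de $\textup{Gal}(\bar{R}/R)$, entre les irréductibles sur $R$ et sur $\bar{R}$ : toute $\pi_1 \in \textup{Irr}_R^{\textup{gen}}(\widetilde{H}_1)$ donne lieu, après extension, à une somme directe finie d'irréductibles sur $\bar{R}$ formant une unique orbite galoisienne ; réciproquement toute irréductible sur $\bar{R}$ provient d'une unique telle orbite. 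Troisième étape, on invoque la compatibilité entre $\Theta$-lift et extension des scalaires, compatibilité fournie précisément par les trois propositions mentionnées.

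Le point délicat auquel je m'attendrais concerne l'algèbre à division $D = \textup{End}_{\widetilde{H}_1}(\pi_1)$, qui peut être strictement plus grande que $R$. L'espace $\Theta(\pi_1)$ porte alors naturellement une structure de $D^{\textup{op}}$-module par laquelle transite l'action de $\widetilde{H}_2$, et la décomposition de $D \otimes_R \bar{R}$ en produit d'algèbres de matrices (qui est à facteurs simples grâce à la perfection de $R$) gouverne la décomposition isotypique de $\pi_1 \otimes_R \bar{R}$. L'obstacle principal est donc de suivre précisément cette structure additionnelle à travers toutes les opérations en jeu : passage au plus grand quotient isotypique, finitude de longueur, co-socle, et isomorphismes $\theta(\pi_1) \simeq \theta(\pi_1')$. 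Une fois cette compatibilité assurée dans chacun des trois cas, comme établi par les propositions précitées, l'équivalence globale du théorème en découle sans difficulté supplémentaire en prenant la conjonction des équivalences obtenues.
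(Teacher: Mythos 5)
Votre proposition est correcte et suit essentiellement la même démarche que l'article, dont la preuve se résume à invoquer les Propositions \ref{extension_des_scal_THETA1_prop}, \ref{extension_des_scal_THETA2_prop} et \ref{extension_des_scal_THETA3_prop} en les enchaînant, l'hypothèse conditionnelle de chacune étant fournie par la précédente. Vous identifiez correctement les ingrédients sous-jacents, déjà absorbés dans ces propositions et dans l'Annexe \ref{representations_un_produit_groupes_section} (Théorème \ref{decomposition_extension_des_scalaires_thm}, Lemme \ref{unicite_restriction_des_scalaires_induction_lem}) : la bijection entre irréductibles sur $R$ et orbites galoisiennes d'irréductibles sur $\bar{R}$, et le suivi de la structure de $D_1$-module à travers la formation du $\Theta$-lift.
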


Par conséquent, il est suffisant de considérer la situation où $R$ est un corps algébriquement clos. Néanmoins, l'exemple qui clôt la Section \ref{compatibilité_extension_des_scalaires_section} met en lumière une certaine compatibilité à l'action de Galois pour la descente depuis un corps algébriquement clos.

\paragraph{Un argument de réduction.} Pour terminer, on donne dans la Section \ref{compatibilite_a_la_reduction_type_I_section} des idées de compatibilité à la réduction modulo $\ell$ pour tirer des résultats de la correspondance thêta classique. Le même argument semble fonctionner pour les paires de type II, mais on le présente pour une paire duale $(H_1,H_2)$ de type I.

Soit $W(\overline{\mathbb{F}_\ell})$ l'anneau des vecteurs de Witt, dont on note $K$ le corps des fractions et dont l'idéal maximal est engendré par $\ell$. D'après \cite[II.4.12]{vig}, une représentation absolument irréductible cuspidale $\Pi_1$ dans $\textup{Rep}_K^{\textup{gen}}(\widetilde{H}_1)$ est entière : son caractère central est à valeurs dans $W(\overline{\mathbb{F}_\ell})^\times$ puisque le centre de $H_1$ est compact. La compacité du centre de $H_1$ entraîne également que $\Pi_1$ est un objet projectif -- et injectif -- dans $\textup{Rep}_K^{\textup{gen}}(\widetilde{H}_1)$.

Ensuite, si l'on suppose que $\ell$ est banal vis-à-vis $H_1$ et que $H_1$ admet des sous-groupes discrets co-compacts\footnote{Cette condition est toujours satisfaite quand $F$ est de caractéristique $0$. On indique la Remarque \ref{reduction_mod_ell_longue_rem} pour une discussion plus poussée sur ce point.}, tout $W(k)$-réseau stable de $\Pi_1$ se réduit en une représentation irréductible cuspidale d'après \cite[Lem. 6.8 \& Cor. 6.10]{dat_nu}. Une telle réduction ne dépend pas du choix du $W(k)$-réseau en question car elle est unique à isomorphisme près d'après le principe de Brauer-Nesbitt \cite[I.9.6]{vig}. On la note $\pi_1$. Enfin, la représentation $\pi_1$ est encore un objet projectif -- et injectif -- de $\textup{Rep}_{\overline{\mathbb{F}_\ell}}(\widetilde{H}_1)$ puisque le centre de $H_1$ est compact et $\ell$ est banal vis-à-vis de $H_1$.

\begin{theorem} On suppose que $H_1$ admet des sous-groupes discrets co-compact, que $\ell$ est banal vis-à-vis de $H_1$ et que $F$ est de caractéristique résiduelle $p$ impaire. Soit $\Pi_1$ une représentation absolument irréductible cuspidale dans $\textup{Rep}_K^{\textup{gen}}(\widetilde{H}_1)$. Il existe alors un $W(\overline{\mathbb{F}_\ell})$-réseau stable $L_{\Theta(\Pi_1)}$ de $\Theta(\Pi_1)$ tel que les semi-simplifiées dans $\textup{Rep}_{\overline{\mathbb{F}_\ell}}^{\textup{gen}}(\widetilde{H}_2)$ des représentations :
$$\mathfrak{r}_\ell(L_{\Theta(\Pi_1)}) = L_{\Theta(\Pi_1)} / \ell L_{\Theta(\Pi_1)} \textup{ et } \Theta(\pi_1)$$
soient isomorphes. En particulier $\Theta(\pi_1)$ est de longueur finie. \end{theorem}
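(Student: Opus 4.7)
Le plan est de relever intégralement la construction du plus grand quotient $\Pi_1$-isotypique de $\omega_\Psi$ à l'anneau $W(\overline{\mathbb{F}_\ell})$, puis d'exploiter la projectivité-injectivité simultanées de $\Pi_1$ dans $\textup{Rep}_K^{\textup{gen}}(\widetilde{H}_1)$ et de $\pi_1$ dans $\textup{Rep}_{\overline{\mathbb{F}_\ell}}^{\textup{gen}}(\widetilde{H}_1)$ -- conséquences de la cuspidalité, de la compacité du centre de $H_1$ et de la banalité -- pour faire commuter l'isotypisation avec la réduction modulo $\ell$.

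On commence par relever $\psi$ en un caractère lisse $\Psi : F \to W(\overline{\mathbb{F}_\ell})^\times$ et par construire une représentation $\Omega_\Psi$ du groupe métaplectique à coefficients dans $W(\overline{\mathbb{F}_\ell})$, en reprenant sans modification un modèle explicite $S_A$ de la Section \ref{modeles_rep_metaplectique_section} (espace des fonctions localement constantes à support compact à valeurs dans $W(\overline{\mathbb{F}_\ell})$). Par construction on a $\Omega_\Psi \otimes_{W(\overline{\mathbb{F}_\ell})} K \simeq \omega_\Psi$ et $\Omega_\Psi \otimes_{W(\overline{\mathbb{F}_\ell})} \overline{\mathbb{F}_\ell} \simeq \overline{\omega}_\psi$. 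On fixe en parallèle un réseau $L_{\Pi_1}$ de $\Pi_1$ stable par $\widetilde{H}_1$, dont la réduction est $\pi_1$. La projection $\omega_\Psi \twoheadrightarrow \Pi_1 \otimes_K \Theta(\Pi_1)$ sur le quotient $\Pi_1$-isotypique maximal, composée avec l'évaluation en une forme linéaire $\widetilde{H}_1$-entière non nulle associée à $L_{\Pi_1}^\vee$, fournit un morphisme $\Omega_\Psi \to \Theta(\Pi_1)$ dont on prend pour $L_{\Theta(\Pi_1)}$ l'image. Ce $W(\overline{\mathbb{F}_\ell})$-sous-module $\widetilde{H}_2$-stable est sans torsion (car inclus dans un $K$-espace vectoriel) et engendre $\Theta(\Pi_1)$ après inversion de $\ell$, c'est donc bien un réseau. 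La finitude de longueur de sa réduction découle de celle, classique, de $\Theta(\Pi_1)$ sur $K$ donnée par le Théorème \ref{correspondance_theta_classique_thm}.

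Pour obtenir l'isomorphisme des semi-simplifiées avec $\Theta(\pi_1)$, on passe à la réduction : le morphisme $\Omega_\Psi \twoheadrightarrow L_{\Pi_1} \otimes_{W(\overline{\mathbb{F}_\ell})} L_{\Theta(\Pi_1)}$ induit par fonctorialité un morphisme $\overline{\omega}_\psi \twoheadrightarrow \pi_1 \otimes_{\overline{\mathbb{F}_\ell}} \mathfrak{r}_\ell(L_{\Theta(\Pi_1)})$ qui réalise un quotient $\pi_1$-isotypique de $\overline{\omega}_\psi$. Par maximalité de $\Theta(\pi_1)$, on en déduit une injection canonique $\mathfrak{r}_\ell(L_{\Theta(\Pi_1)}) \hookrightarrow \Theta(\pi_1)$. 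L'égalité au niveau des semi-simplifiées s'obtient en exploitant l'injectivité de $\pi_1$ -- qui fait de $\pi_1 \otimes_{\overline{\mathbb{F}_\ell}} \Theta(\pi_1)$ un facteur direct de $\overline{\omega}_\psi$ -- combinée à un argument de multiplicités de type Brauer-Nesbitt comparant les caractères des deux réductions obtenues.

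Principale difficulté : le c\oe ur technique est précisément la commutativité entre l'isotypisation et la réduction modulo $\ell$, car il faut un contrôle fin de la torsion lors du passage au quotient entier. L'entièreté sur $W(\overline{\mathbb{F}_\ell})$ de l'idempotent central associé au bloc cuspidal de $\Pi_1$ -- qui est le point crucial permettant cette commutativité -- repose de manière essentielle sur l'hypothèse de banalité de $\ell$ vis-à-vis de $H_1$ et sur la compacité du centre de $H_1$ ; l'existence des sous-groupes discrets co-compacts intervient ici pour garantir la cuspidalité de la réduction et donc la validité du théorème de réduction de \cite[Lem. 6.8 \& Cor. 6.10]{dat_nu}, sans quoi le réseau $L_{\Pi_1}$ pourrait se réduire en une représentation non cuspidale et ruiner l'argument de commutativité.
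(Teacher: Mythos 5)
Votre schéma suit essentiellement la même stratégie que le papier : faire agir l'idempotent central $e_{\Pi_1}$ sur le modèle entier de la représentation de Weil, exploiter que sa réduction est $e_{\pi_1}$, puis conclure par Brauer-Nesbitt. Deux points méritent cependant d'être précisés, car leur traitement diffère de ce que vous annoncez.

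Premièrement, vous écrivez que l'image de $\Omega_\Psi$ dans $\Pi_1\otimes_K\Theta(\Pi_1)$ est $L_{\Pi_1}\otimes_{W(\overline{\mathbb{F}_\ell})}L_{\Theta(\Pi_1)}$. Ce n'est pas automatique : un réseau stable dans un produit tensoriel n'est pas en général le produit tensoriel de réseaux dans chaque facteur. Le papier contourne cet écueil précisément grâce à Brauer-Nesbitt, qui compare les semi-simplifiées des réductions de \emph{deux réseaux quelconques} de $\Theta(\Pi_1)\otimes_K\Pi_1$ -- en l'occurrence $e_{\Pi_1}\Omega_\Psi$ d'un côté et $L_{\Theta(\Pi_1)}\otimes L_1$ de l'autre -- sans jamais affirmer qu'ils coïncident. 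Une remarque en fin de section signale d'ailleurs explicitement que décrire $e_{\Pi_1}\Omega_\Psi$ comme un produit tensoriel de réseaux est un problème ouvert qui donnerait une conclusion plus fine. Votre argument reste sauvable en invoquant la version anonyme de Brauer-Nesbitt plutôt que l'identification du réseau, mais la phrase telle qu'écrite est fausse.

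Deuxièmement, vous attribuez l'entièreté de $e_{\Pi_1}$ à la banalité et à la compacité du centre. C'est incomplet. Ces deux hypothèses donnent l'entièreté des coefficients matriciels (via l'entièreté du caractère central) et la projectivité, mais l'élément clé est le fait que le \emph{degré formel} $d_{\Pi_1}$ soit une unité de $W(\overline{\mathbb{F}_\ell})$. Dans le papier, cela s'obtient en réalisant $\Pi_1$ comme induite compacte d'un type cuspidal $(J_1,\lambda_1)$ via \cite{kurinczuk_stevens}, puis en calculant $d_{\Pi_1}=\dim_K(\lambda_1)$ qui divise le cardinal d'un quotient fini de $J_1$, inversible par banalité. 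C'est précisément là qu'intervient l'hypothèse $p$ impair -- la théorie des types cuspidaux n'étant disponible que sous cette restriction -- et votre esquisse ne rend pas compte de ce maillon, alors que le papier le signale dès l'introduction comme la raison de la restriction sur $p$.

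Enfin, notez que le papier ne construit pas $L_{\Theta(\Pi_1)}$ explicitement comme image d'un morphisme -- il établit directement que $e_{\Pi_1}\Omega_\Psi$ est un réseau stable de $\Theta(\Pi_1)\otimes_K\Pi_1$ en invoquant le critère de \cite[I.9.2]{vig} (sous-module sans droite dans un espace vectoriel de dimension dénombrable sur un anneau local principal complet), et que $\mathfrak{r}_\ell(e_{\Pi_1}\Omega_\Psi)=e_{\pi_1}\overline{\omega}_\psi=\Theta(\pi_1)\otimes_k\pi_1$ par commutation de l'idempotent à la réduction. Cette égalité est l'équivalent propre de la double inclusion que vous cherchiez à obtenir par injectivité de $\pi_1$ et maximalité de $\Theta(\pi_1)$ ; elle évite la comparaison de multiplicités que vous laissez en suspens.
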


La restriction sur la caractéristique résiduelle provient de l'utilisation d'un argument de théorie des types \cite{kurinczuk_stevens} pour montrer, lors du Lemme \ref{idempotent_central_Pi1_coeff_W(k)_lem}, que l'idempotent central $e_{\Pi_1}$ du centre de Bernstein associé à $\Pi_1$ est bien à coefficients dans $W(\overline{\mathbb{F}_\ell})$. On déduit du théorème précédent :

\begin{corollary} On reprend les hypothèses précédentes et on suppose de plus que $\ell$ est banal vis-à-vis de $H_2$. Alors si la représentation $\Theta(\Pi_1)$ est cuspidale non nulle, la représentation $\Theta(\pi_1)$ est irréductible. \end{corollary}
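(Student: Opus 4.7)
Le plan procède en deux étapes. \textbf{Première étape :} je tire parti de l'hypothèse de cuspidalité pour montrer que $\Theta(\Pi_1)$ coïncide avec son co-socle $\theta(\Pi_1)$. Ce dernier est irréductible d'après le Théorème \ref{correspondance_theta_classique_thm} puisque $\Theta(\Pi_1) \neq 0$. Comme $K$ est de caractéristique nulle, toute représentation cuspidale de $\widetilde{H}_2$ est supercuspidale, donc projective-injective dans son bloc de Bernstein. La suite exacte courte $0 \to N \to \Theta(\Pi_1) \to \theta(\Pi_1) \to 0$ se scinderait alors, de sorte que tout quotient irréductible de $N$ -- qui existerait par finitude de longueur de $\Theta(\Pi_1)$ si $N \neq 0$ -- fournirait un second facteur dans le co-socle, contredisant son irréductibilité. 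On en déduit $\Theta(\Pi_1) = \theta(\Pi_1)$, irréductible cuspidale dans $\textup{Rep}_K^{\textup{gen}}(\widetilde{H}_2)$.

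\textbf{Deuxième étape :} j'applique le Théorème précédent, qui fournit un $W(\overline{\mathbb{F}_\ell})$-réseau stable $L_{\Theta(\Pi_1)}$ de $\Theta(\Pi_1)$ tel que les semi-simplifiées dans $\textup{Rep}_{\overline{\mathbb{F}_\ell}}^{\textup{gen}}(\widetilde{H}_2)$ de $\mathfrak{r}_\ell(L_{\Theta(\Pi_1)})$ et de $\Theta(\pi_1)$ soient isomorphes. La banalité de $\ell$ vis-à-vis de $H_2$, combinée à l'irréductibilité cuspidale et à l'entièreté automatique (par compacité centrale) de $\Theta(\Pi_1)$, permet d'invoquer les résultats standards sur la réduction modulo $\ell$ en caractéristique banale -- du type de ceux employés dans la preuve du Théorème précédent, \textit{i.e.} \cite[Lem. 6.8 \& Cor. 6.10]{dat_nu} -- pour conclure à l'irréductibilité de $\mathfrak{r}_\ell(L_{\Theta(\Pi_1)})$. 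Comme $\Theta(\pi_1)$ est de longueur finie (par le Théorème précédent) et que sa semi-simplifiée se réduit à un facteur simple, $\Theta(\pi_1)$ est elle-même irréductible.

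L'obstacle principal se situe à la charnière des deux étapes : il faudra vérifier soigneusement que les résultats de projectivité-injectivité des supercuspidales et de réduction banale, bien établis pour les groupes réductifs classiques, se transposent sans encombre au relevé métaplectique $\widetilde{H}_2$. Cela demande une analyse préalable de la structure centrale de $\widetilde{H}_2$ -- notamment le comportement de l'image du centre compact de $H_2$ par le relevé -- et de sa décomposition en blocs de Bernstein dans la catégorie $\textup{Rep}_K^{\textup{gen}}(\widetilde{H}_2)$, analyse qui peut s'avérer délicate pour les paires duales quaternioniques ou les covers non scindés.
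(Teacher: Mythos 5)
Ta preuve suit essentiellement la même stratégie que celle du papier, mais la première étape est un peu plus lourde que nécessaire. Le papier n'a pas besoin de déduire l'irréductibilité de $\Theta(\Pi_1)$ de sa cuspidalité : elle est déjà acquise dès lors que $\Pi_1$ est cuspidale (absolument irréductible), par le résultat classique \cite[Chap.~3, IV.4 Th.~1)a)]{mvw}, indépendamment du caractère cuspidal de $\Theta(\Pi_1)$. C'est d'ailleurs ce qui est utilisé, et explicitement établi, dans la Proposition \ref{reduction_big_theta_ss_brauer_nesbitt_prop} qui précède le corollaire : la preuve y fixe un plongement de $K$ dans $\mathbb{C}$ et transfère directement l'irréductibilité de $\Theta(\Pi_1 \otimes_K \mathbb{C})$ à $\Theta(\Pi_1)$. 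Ton détour par le scindage de la suite exacte via la projectivité-injectivité des supercuspidales aboutit au même point mais en mobilise plus ; la cuspidalité de $\Theta(\Pi_1)$ n'est véritablement utilisée que dans la deuxième étape, pour invoquer \cite[Cor.~6.10]{dat_nu}.

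La deuxième étape est identique à celle du papier : la banalité de $\ell$ vis-à-vis de $H_2$ et la cuspidalité de $\Theta(\Pi_1)$ donnent, par \cite[Cor.~6.10]{dat_nu}, que la réduction modulo $\ell$ de tout $W(\overline{\mathbb{F}_\ell})$-réseau stable est absolument irréductible cuspidale ; comme $\Theta(\pi_1)$ est de longueur finie et a la même semi-simplifiée, elle est irréductible (la semi-simplification préservant la longueur). Quant à l'obstacle que tu signales en fin de preuve — la transposition aux relevés $\widehat{H}_2$ de la théorie des types, du centre de Bernstein et du principe de Brauer-Nesbitt — le papier le règle par une remarque générale au début de la Section \ref{compatibilite_a_la_reduction_type_I_section} : qu'il y ait ou non scindage, on peut toujours remplacer partout $H_i$ par $\widehat{H}_i$, les catégories $\textup{Rep}_R^{\textup{gen}}(\widehat{H}_i)$ se comportant identiquement pour tous ces outils. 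C'est un point d'attention légitime, mais il ne constitue pas une lacune non traitée dans le texte.
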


Bien que que l'hypothèse $\Theta(\Pi_1)$ cuspidale semble sortie de nulle part, elle ne le sera surement pas pour les lecteurs familiers de la correspondance thêta. En effet, ce cas est même le plus fondamental quand on étudie les propriétés de $\Theta(\Pi_1)$ avec $\Pi_1$ cuspidale, puisqu'il correspond à l'indice de première occurrence au sens de \cite{kudla_invent}.

\subsection{Contenu}

Les préliminaires de la Section \ref{preliminaires_section} sont l'occasion de définir les notations, ainsi que de poser les bases du facteur de Weil non normalisé, qui est une modification nouvelle du facteur de Weil usuel. On se sert de ce facteur dans la Section \ref{expression_du_cocycle_perrin_like_section} pour donner une interprétation plus géométrique du cocycle métaplectique. Ces considérations sont d'une nature assez technique et n'entravent en rien la compréhension du reste du texte à condition de les admettre.

Les ossatures des Sections \ref{representations_modulaires_section} et \ref{representation_de_weil_mod_section} sont similaires à celles de \cite[Chap. 2, I]{mvw} et \cite[Chap. 2, II]{mvw}. On a repris et généralisé le propos, aussi bien pour insister sur les différences avec le cas complexe que par un souci de donner un texte aussi auto-contenu que possible. On l'espère détaillée et aussi claire que \cite[Chap. 2]{mvw}. On décrit précisément néanmoins la topologie du groupe métaplectique dans la partie \ref{gp_met_et_rep_de_Weil_subsection} en établissant explicitement, dans l'Annexe \ref{lien_avec_ct_section}, le lien habituellement présumé avec les travaux \cite{weil} et \cite{ct}. Un point nouveau est quand la caractéristique du corps de base est $2$ : le groupe métaplectique est alors scindé. La Section \ref{expression_du_cocycle_perrin_like_section} apporte un nouvel éclairage sur la signification du cocycle métaplectique et de la section qui en est à l'origine. On interprète \cite{rao} à l'aide du facteur de Weil non normalisé que l'on a défini dans les préliminaires.

La Section \ref{releves_de_paires_duales_scindages_section} constitue une généralisation de \cite{kudla} sur les scindages des relevés de paires duales. Il y a cependant des différences notables avec la théorie complexe, au sens où les problèmes de scindage dépendent du corps de base que l'on considère. La condition en question est liée à une condition sur une racine carrée du cardinal résiduel.

Pour terminer, la dernière section définit le $\Theta$-lift d'une représentation irréductible d'une paire duale $(H_1,H_2)$. Le corps des complexes étant algébriquement clos, on introduit ici un $\Theta$-lift sur des corps qui ne sont plus nécessairement algébriquement clos. Cela donne lieu à des considérations plus techniques puisqu'il faut prendre en compte l'action d'un anneau des endomorphismes. L'Annexe \ref{representations_un_produit_groupes_section} est l'occasion de développer proprement les points dont on a besoin sur l'extension des scalaires pour les représentations lisses des groupes réductifs, ainsi que ses liens avec la construction du plus grand quotient isotypique. On prouve lors du Théorème \ref{correspondance_sur_un_corps_non_alg_clos_equiv_thm} que quand le corps de base est un corps parfait, il est équivalent de considérer les énoncés ($\Theta_1'$)-($\Theta_2'$)-($\Theta_3'$) sur ce corps que les énoncés ($\Theta_1$)-($\Theta_2$)-($\Theta_3$) sur une clôture algébrique. Enfin, la Section \ref{compatibilite_a_la_reduction_type_I_section} donne des idées pour la compatibilité à la réduction des scalaires, dans l'espoir de transférer des résultats de la correspondance thêta classique au cas modulaire.

\tableofcontents

\section{Préliminaires} \label{preliminaires_section}

\subsection{Notations et conventions}

Dans tout cet article, $F$ désigne un corps qui est, soit fini de caractéristique $p$, soit local non archimédien de caractéristique résiduelle $p$. Néanmoins, on suppose que la caractéristique de $F$ est toujours différente de $2$. Le corps résiduel de $F$ étant fini de caractéristique $p$, on note $q$ son cardinal, qui est une puissance de $p$. La norme $| \cdot |_F$ sur $F$ est, la norme triviale quand $F$ est fini, la norme  $p$-adique normalisée quand $F$ est local non archimédien. Elle est normalisée au sens où la norme de toute uniformisante de $F$ est $q^{-1}$. L'anneau des entiers de $F$ est $\mathcal{O}_F$, où par convention $\mathcal{O}_F$ est le corps $F$ tout entier dans le cas fini.

On réservera $(W,\langle , \rangle )$ pour désigner un espace symplectique de dimension finie $n$ sur $F$ muni de son produit symplectique $\langle , \rangle $. Pour tout sous-espace totalement isotrope $X$ dans $W$, on note $P(X)$ le stabilisateur de $X$ dans $\textup{Sp}(W)$. C'est un parabolique maximal de $\textup{Sp}(W)$ pour lequel $N(X)$ désignera son radical unipotent et $M(X)$ un sous-groupe de Levi. Un lagrangien de $W$ est un sous-espace totalement isotrope maximal, ou autrement dit, un sous-espace totalement isotrope de dimension $m$ avec $n=2m$. Deux lagrangiens $X$ et $Y$ de $W$  donne une polarisation complète si $W = X +Y$. Dans ce cas $Y \simeq X^*$ via $y \mapsto \langle y , \cdot \rangle$, ce qui induit une dualité $a \in \textup{GL}_F(X) \mapsto a^* \in \textup{GL}_F(Y)$ ainsi que $c \in \textup{Hom}_F(X,Y) \mapsto  c^* \in \textup{Hom}_F(X,Y)$. Plus généralement quand $X$ est un sous-espace totalement isotrope quelconque, en donnant une décomposition $W=X+W^0+Y$ où $Y \simeq X^*$ et $W_1$ est un espace symplectique orthogonal à l'espace symplectique $X+Y$, on a un isomorphisme canonique $M(X) \simeq \{ (a,u) \ | \ a \in \textup{GL}_F(X), u \in \textup{Sp}(W^0)\}$. On note $\textup{det}_X$ le caractère de $P(X)$ qui à tout $p = m(a,u) n \in P(X)$, avec $m(a,u) \in M(X)$ et $n \in N(X)$, associe $|\textup{det}(a)|_F$.

La théorie des espaces $\varepsilon$-hermitiens généralise les notions d'espaces orthogonaux, symplectiques et unitaires. Il en sera fait un usage extrêmement bref et limité dans ce travail, aussi invite-t-on, pour les personnes désireuses d'un traitement complet de leur théorie, à consulter \cite[Chap. 1]{mvw}. On rappelle brièvement le peu de vocabulaire dont on va faire usage ici. Un espace $\varepsilon$-hermitien de dimension $n$ est scindé s'il possède un sous-espace totalement isotrope de dimension $m$ tel que $n=2m$. On s'autorise alors, et dans ce cas seulement, à utiliser le mot lagrangien pour signifier un sous-espace totalement isotrope maximal d'un espace scindé. Dans un espace $\varepsilon$-hermitien quelconque, il existe bien évidemment des sous-espaces totalement isotropes maximaux. Enfin, on note par la lettre $U$ son groupe des isométries.

Soit $R$ un anneau commutatif unitaire. Pour tout groupe localement profini $G$, on note $\textup{Rep}_R(G)$ la catégorie des représentations lisses de $G$ à coefficients dans $R$. Le cadre d'étude de ces représentations est largement développé dans \cite[Chap. I]{vig}, dont on donne quelques rappels succincts. On appelle aussi ces représentations des $R[G]$-modules lisses. Le foncteur $V \in R[G]-\textup{mod} \mapsto V^{\infty} \in \textup{Rep}_R(G)$ associe à un $R[G]$-module $V$ sa partie lisse $V^{\infty}$. Pour tout sous-groupe fermé $H$ d'un groupe localement profini $G$, on désignera par $\textup{Ind}_H^G$ et $\textup{ind}_H^G$ les foncteurs d'induction et d'induction compacte respectivement. Ces foncteurs ne sont pas normalisés. D'après \cite[I.2.4]{vig}, l'existence d'un sous-groupe ouvert de $G$ de pro-ordre inversible dans $R$ est une condition nécessaire et suffisante pour qu'il existe une mesure de Haar $\mu_G$ de $G$ à valeurs dans $R$. Pour tout sous-groupe compact ouvert $K$ de $G$, il existe au plus une mesure de Haar $\mu_K$ de $G$ à valeurs dans $R$ qui prenne la valeur $1$ sur $K$. On désigne par $C^\infty(G,R)$ l'ensemble des fonctions sur $G$ à valeurs dans $R$ qui sont localement constante et par $C_c^\infty(G,R)$ le sous-espace de $C_c^\infty(G,R)$ des fonctions à support compact. Le contexte étant clair la plupart du temps, la référence à $R$ sera sous-entendue en écrivant $C^\infty(G)$ et $C_c^\infty(G)$.

On note $\hat{F}_R$ le groupe des caractères lisses de $F$ à valeurs dans $R$. Il est muni d'une structure d'espace vectoriel sur $F$, où l'addition correspond à la multiplication de deux caractères et la multiplication par un scalaire $\lambda \in F$ à $\lambda \cdot \psi : t \mapsto \psi(\lambda t)$ où $\psi \in \hat{F}_R$. Quand $R$ est un corps, l'espace vectoriel $\hat{F}_R$ est de dimension au plus $1$ sur $F$. Il est de dimension exactement $1$ à condition qu'on ait :
$$\mu^p(R) = \left\lbrace \begin{array}{ll} 
\{ \xi \in R^\times \ | \ \exists k \in \mathbb{N}, \xi^{p^k}=1 \} \simeq \mathbb{Q}_p / \mathbb{Z}_p & \textup{si } F \textup{ est de caractéristique $0$ ;} \\
\{ \xi \in R^\times \ | \ \xi^p = 1 \} \simeq \mathbb{Z} / p \mathbb{Z} & \textup{sinon}. \end{array} \right.$$

\begin{rem} \label{remarque_car_R_non_p_si_psi_existe} En particulier, un tel corps $R$ est de caractéristique $\ell$ différente de $p$. \end{rem}

Dans tout cet article, on se place sur un corps $R$ où cette condition est réalisée, de sorte qu'il existe un caractère lisse  non trivial $\psi : F \to R^\times$. On réserve dorénavant \og $R$ \fg{} et \og $\psi$ \fg{} pour désigner un tel corps et un tel caractère. On écrit indifféremment $q$ pour désigner l'entier dans $\mathbb{Z}$ et son image dans $R$ par la morphisme canonique $\mathbb{Z} \to R$. Quand $S$ est un espace vectoriel sur $R$, on note $\textup{GL}_R(S)$ le groupe des endomorphismes inversibles de $S$ et $\textsc{red} : \textup{GL}_R(S) \to \textup{PGL}_R(S)$ le morphisme de groupe quotient.

\subsection{Facteur de Weil non normalisé} \label{facteur_de_weil_non_norm_sect}

Soit $X$ un espace vectoriel sur $F$ de dimension finie $m$. Comme le pro-ordre de $X$ est une puissance de $p$ et la caractéristique $\ell$ de $R$ est différente de $p$, il existe une mesure de Haar $\mu$ de $X$ à valeurs dans $R$ d'après \cite[I.2.4]{vig}. On donne dans cette partie une définition concurrente du facteur de Weil. Elle présente l'avantage d'être plus élémentaite et immédiate. On explique ensuite comment la relier au facteur de Weil habituel, c'est-à-dire tel que défini dans \cite{weil,perrin,rao} quand $R=\mathbb{C}$, et dont la généralisation quand $R$ possède une racine de $q$ est effectuée dans \cite{ct}. 

\begin{prop} \label{facteur_de_Weil_non_normalise_def_prop} Soit $Q$ une forme quadratique non dégénérée sur $X$. Il existe alors un unique élément $\Omega_{\mu}(\psi \circ Q)$ dans $R^\times$ tel que l'on ait pour tout $f \in C_c^\infty(X)$ :
$$\int_X \int_X f(y-x) \psi( Q(x)) d\mu (x) d\mu (y) = \Omega_{\mu}(\psi \circ Q) \int_X f(x) d \mu (x).$$ \end{prop}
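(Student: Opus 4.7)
L'idée directrice est de reconnaître dans le membre de gauche une forme linéaire translation-invariante sur $C_c^\infty(X)$, laquelle est nécessairement proportionnelle à l'intégration contre $\mu$ par unicité de la mesure de Haar à valeurs dans $R$.

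\emph{Étape 1 : bonne définition de $L(f) = \int_X \int_X f(y-x)\psi(Q(x)) d\mu(x) d\mu(y)$.} Pour $f \in C_c^\infty(X)$, l'intégrale intérieure $T_f(y) = \int_X f(y-x) \psi(Q(x)) d\mu(x)$ converge absolument puisque $x \mapsto f(y-x)$ est à support compact pour tout $y$ fixé. Pour justifier que l'intégrale extérieure a un sens, je décompose $f$ en combinaison linéaire finie d'indicatrices $\mathbf{1}_{y_0 + U}$, où $U$ est un sous-groupe compact ouvert de $X$ assez petit pour que $Q(U) \subset \ker \psi$ et $B(U,U) \subset \ker \psi$, en notant $B(x,y) = Q(x+y) - Q(x) - Q(y)$ la forme bilinéaire associée à $Q$. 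Un calcul direct par le changement de variables $x = y + u$ donne alors $T_{\mathbf{1}_U}(y) = \mu(U)\, \psi(Q(y))\, \mathbf{1}_{U_\psi^\perp}(y)$, où $U_\psi^\perp = \{ z \in X : B(z, U) \subset \ker \psi \}$ est compact ouvert. Ainsi $T_f \in C_c^\infty(X)$ et $L(f)$ est bien défini.

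\emph{Étape 2 : invariance par translation et identification.} Le changement de variables $y \mapsto y - a$ dans l'intégrale extérieure fournit $L(\tau_a f) = L(f)$ pour tout $a \in X$. Décomposant une fois encore $f = \sum_i c_i \mathbf{1}_{y_i + U}$, on obtient par linéarité et invariance par translation $L(f) = L(\mathbf{1}_U) \sum_i c_i = \mu(U)^{-1} L(\mathbf{1}_U) \cdot \int_X f d\mu$. La cohérence pour les raffinements $U' \subset U$ montre que $\Omega := \mu(U)^{-1} L(\mathbf{1}_U)$ ne dépend pas du choix de $U$ assez petit. Cela fournit simultanément l'existence et l'unicité de $\Omega_\mu(\psi \circ Q) := \Omega \in R$ vérifiant la formule annoncée.

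\emph{Étape 3 : non-annulation dans $R^\times$.} Il s'agit du principal obstacle. En poursuivant le calcul précédent et en décomposant $U_\psi^\perp = \bigsqcup_i (y_i + U)$ (décomposition finie puisque $U \subset U_\psi^\perp$), on obtient
$$\Omega_\mu(\psi \circ Q) = \mu(U) \sum_{[y_i] \in U_\psi^\perp / U} \psi(Q(y_i)),$$
où la somme a un sens car $\psi \circ Q$ descend au quotient fini $G := U_\psi^\perp / U$ grâce aux conditions imposées sur $U$. La non-dégénérescence de $Q$ se traduit par $(U_\psi^\perp)_\psi^\perp = U$, ce qui induit une dualité parfaite sur $G$ via $B$ et fait de $\psi \circ Q$ une forme quadratique non dégénérée sur un groupe abélien fini de $p$-torsion. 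Comme $\mu(U) \in R^\times$ (pro-ordre de $U$ inversible dans $R$), il reste à établir la non-annulation dans $R$ de la somme de Gauss associée : cela se démontre par récurrence sur $\dim G$ en se ramenant au cas unidimensionnel, en utilisant que $|G|$ est une puissance de $p$ donc inversible dans $R$ et que $\psi$ est non trivial, technique classique dont la portée dans notre cadre $\ell$-modulaire justifie le détour par la Section précédente sur les hypothèses garantissant l'existence de $\psi$.
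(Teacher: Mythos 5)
Your proof follows the same route as the paper's: the left-hand side is a translation-invariant linear functional on $C_c^\infty(X)$, i.e.\ an $R$-valued Haar measure, so uniqueness of Haar measure gives $\Omega_\mu(\psi\circ Q)\,\mu$ for a unique scalar, and non-vanishing puts that scalar in $R^\times$. The only point the paper treats as \og bien évidemment \fg{} and which you rightly elaborate is the non-vanishing; your Gauss-sum reduction is sound, and the suggested induction can be shortcut by pairing $\gamma = \sum_{g\in G}\psi(Q(g))$ with $\gamma' = \sum_{g\in G}\psi(-Q(g))$ — non-degenerescence of $Q$ sur $G$ donne $\gamma\gamma' = |G|$, puissance de $p$ donc inversible dans $R$ puisque $\ell \neq p$.
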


\begin{proof} L'application linéaire :
$$\mu' : f \in C_c^\infty(X) \mapsto \int_X \int_X f(y-x) \psi( Q(x)) d\mu (x) d\mu (y) \in R$$
est une mesure de Haar de $X$ à valeurs dans $R$, qui est bien évidemment non nulle. Par unicité de la mesure de Haar \cite[I.2.4]{vig}, il existe un unique élément $c \in R^\times$ de sorte que $\mu' = c \mu$. \end{proof}

\begin{defi} On appelle \emph{facteur de Weil non normalisé} la quantité $\Omega_{\mu}(\psi \circ Q)$. \end{defi}

Ce facteur dépend bien évidemment du choix de $\mu$ comme la notation le suggère. On étend cette définition aux formes quadratiques quelconque. Une forme quadratique $Q$ est non dégénérée si son radical $\textup{rad}(Q)$ est réduit à $0$. Toute forme quadratique $Q$ sur $X$ induit une forme quadratique $Q_{\textup{nd}}$ sur $X / \textup{rad}(Q)$ qui est non dégénérée.

\begin{defi} Soit $Q$ une forme quadratique sur $X$. On définit pour toute mesure de Haar $\mu$ de $X / \textup{rad}(Q)$ à valeurs dans $R$ :
$$\Omega_\mu(\psi \circ Q) : = \Omega_\mu (\psi \circ Q_{\textup{nd}}).$$ \end{defi} 

Avant de présenter quelques propriétés de ce facteur, on introduit quelques considérations assez connues. Si $X'$ est un espace vectoriel isomorphe à $X$, on note $\textup{Iso}_F(X,X')$ l'ensemble des isomorphismes d'espaces vectoriels entre $X$ et $X'$. On écrit $\textup{Aut}_F(X)$ pour signifier $\textup{Iso}_F(X,X)$. Quand $X^* = \textup{Hom}_F(X,F)$ est le dual de $X$, l'ensemble $\textup{Iso}_F(X,X^*)$ est muni d'une involution $\rho \mapsto \rho^*$ donnée par la dualité. On définit :
$$\textup{Iso}_F^{\textup{sym}}(X,X^*) = \{ \rho \in \textup{Iso}_F(X,X^*) \ | \ \rho = \rho^*\}.$$
Chacun de ces ensembles hérite, en tant que sous-ensembles de $\textup{Hom}_F(X,X')$, de la topologie naturelle de cet espace vectoriel de dimension fine.

Soit $\mu$ une mesure de Haar de $X$ à valeurs dans $R$. Pour tout $\phi \in \textup{Aut}_F(X)$, la mesure $\phi \cdot \mu = \mu \circ \phi^{-1}$ est une mesure de Haar de $X$. Elle est proportionnelle à $\mu$ par unicité de la mesure de Haar. Soit $|\phi| \in R^\times$ tel que $\phi \cdot \mu = |\phi| \times \mu$. Alors $|\phi|$ ne dépend pas du choix de $\mu$, on l'appelle le module de $\phi$. On a pour tout sous-groupe compact ouvert $K$ de $X$ :
$$|\phi| = \frac{\textup{vol}_{\phi \cdot \mu}(K)}{\textup{vol}_{\mu}(K)} = \frac{\mu(\phi^{-1} (K))}{\mu(K)}.$$
De plus, l'application module :
\begin{eqnarray*} \textup{Aut}_F(X) & \to & R^\times  \\
 \phi & \mapsto & |\phi|
\end{eqnarray*}
est localement constante et définit un morphisme de groupes. En d'autres termes, le module est un morphisme de groupes continu, où $R^\times$ est muni de la topologie discrète. On peut affiner  la description de l'application module :
$$|\phi| = |\textup{det}_F(\phi)|_F \textup{ pour tout } \phi \in \textup{Aut}_F(X).$$
En particulier, l'image de l'application module est $q^{\mathbb{Z}}$. 

On peut également définir l'application module pour $\textup{Iso}_F(X,X^*)$ comme suit. Soit $\mu$ une mesure de Haar de $X$ à valeurs dans $R$. La mesure duale $\mu^*$ de $\mu$ est l'unique mesure de Haar de $X^*$ telle que l'application de transformée de Fourier :
$$\begin{array}{ccccc} \mathcal{F}_{\mu} &:& C_c^\infty(X) & \to & C_c^\infty(X^*)  \\
 & & f & \mapsto & \mathcal{F}_{\mu} f
\end{array} \textup{ où } \mathcal{F}_{\mu} f : x^* \mapsto \int_X \psi(x^*(x)) f(x) d \mu(x)$$
admette comme inverse :
$$\begin{array}{ccccc} \mathcal{F}_{\mu^*} &:& C_c^\infty(X^*) & \to & C_c^\infty(X)  \\
 & & h & \mapsto & \mathcal{F}_{\mu^*} h
\end{array} \textup{ où } \mathcal{F}_{\mu^*} h : x \mapsto \int_{X^*} \psi(-x^*(x)) h(x^*) d \mu(x).$$
Pour tout $\rho \in \textup{Iso}_F(X,X^*)$, la mesure $\rho \cdot \mu = \mu \circ \rho^{-1}$ est une mesure de Haar de $X^*$, proportionnelle à $\mu^*$ par unicité de la mesure de Haar. Soit $|\rho|_{\mu} \in R^\times$ tel que $\rho \cdot \mu = |\rho|_{\mu} \times \mu^*$. Alors $|\rho|_{\mu}$ dépend du choix de $\mu$, mais seulement à un carré près de $R^\times$. On a pour tout sous-groupe compact ouvert $K$ de $X^*$  :
$${|\rho|}_\mu = \frac{\textup{vol}_{\rho \cdot \mu}(K)}{\textup{vol}_{\mu^*}(K)} = \frac{\mu(\rho^{-1} (K))}{\mu^*(K)}.$$ De plus, l'application module :
\begin{eqnarray*} \textup{Iso}_F(X,X^*) & \to & R^\times  \\
 \rho & \mapsto & |\rho|_{\mu}
\end{eqnarray*}
est localement constante et compatible à l'action de $\textup{Aut}_F(X)$ au sens où pour tout $\phi \in \textup{Aut}_F(X)$ et tout $\rho \in \textup{Iso}_F(X,X^*)$, on a :
$$|\rho \circ \phi|_{\mu} = |\phi| \times |\rho|_{\mu}.$$
Le module ainsi défini est invariant par la dualité au sens où $|\rho|_{\mu} = |\rho^*|_{\mu}$. Quand $K$ est un réseau de $X$, \textit{i.e.} un sous-groupe compact ouvert de $X$ muni d'une structure de $\mathcal{O}_F$-module, la quantité $|\rho|_{\mu_K}$ est une puissance de $q$.

\begin{prop} \label{facteur_de_weil_non_norm_prop} $Q$ désigne une forme quadratique sur $X$ et $\mu$ une mesure sur $X$.
\begin{enumerate}[label=\textup{\alph*)}]
\item Si $Q$ est la forme quadratique nulle, on a $\Omega_\mu(\psi \circ 0) = \mu( \{ 0 \} )$.
\item Pour tout $\lambda \in R^\times$, on a :
$$\Omega_{\lambda \mu} (\psi \circ Q) = \lambda \times \Omega_\mu(\psi \circ Q).$$
\item \label{non_invariance_par_isometrie_weil_non_norm_pt} Pour tout $X'$ de même dimension que $X$ et pour tout $\phi \in \textup{Iso}_F(X,X')$, la forme quadratique $Q_\phi= Q \circ \phi^{-1}$ définie sur $X'$ a pour radical $\phi(\textup{Ker}(Q))$. Alors :
$$\Omega_{\phi \cdot \mu}( \psi \circ Q_\phi ) =  \Omega_\mu(\psi \circ Q).$$
En particulier, si $\phi \in \textup{Aut}_F(X)$ est tel que $\phi(\textup{rad}(Q))=\textup{rad}(Q)$, on a :
$$\Omega_\mu( \psi \circ Q_\phi ) = |\phi|^{-1} \times \Omega_\mu(\psi \circ Q).$$
\item Le facteur de Weil non normalisé est compatible à la somme directe des formes quadratiques au sens suivant. Si $Q_1 \oplus Q_2$ est une forme quadratique sur $X_1 \oplus X_2$ somme de deux formes quadratiques $Q_1$ et $Q_2$ sur $X_1$ et $X_2$ respectivement, alors :
$$\Omega_{\mu_1 \otimes \mu_2}(\psi \circ (Q_1 \oplus Q_2)) = \Omega_{\mu_1}(\psi \circ Q_1) \Omega_{\mu_2}(\psi \circ Q_2).$$
\item L'application :
$$\begin{array}{ccc}	
\textup{Iso}_F^{\textup{sym}}(X,X^*) & \to & R^\times \\
 \rho & \mapsto & \Omega_\mu(\psi \circ Q_\rho) \end{array}, \textup{ où } Q_\rho(x)=\rho(x)(x),$$
est localement constante. En d'autres termes, elle est continue en munissant $R$ de la topologie discrète.
\item \label{facteur_de_weil_non_nomr_vs_classique_pt} On suppose que le corps $R$ contient une racine carrée de $q$, que l'on fixe et note $q^{\frac{1}{2}}$. Soient $\rho \in \textup{Iso}_F^{\textup{sym}}(X,X^*)$ et $K$ un réseau de $X$. Il existe $k \in \mathbb{Z}$ tel que $|\rho|_{\mu_K} = q^k$. Alors ${|\rho|}_\mu^{\frac{1}{2}} = \mu(K) (q^\frac{1}{2})^k$ est une racine carrée de ${| \rho |}_\mu$ qui ne dépend pas du choix de $K$ et le facteur de Weil associé à la forme quadratique non dégénérée $Q_{\frac{1}{2} \rho}$ est :
$$\omega(\psi \circ Q_{\frac{1}{2} \rho}) : = \frac{\Omega_\mu ( \psi \circ Q_{\frac{1}{2} \rho})}{{|\rho|}_\mu^\frac{1}{2}}.$$
\item \label{facteur_de_weil_non_norm_hilbert_symbol_pt} Pour tout $a \in F^\times$, on note $Q_a (x) = a x^2$ la forme quadratique sur $F$. Alors pour toute mesure de Haar $\mu$ de $F$, la quantité :
$$\Omega_{a,b} = \frac{\Omega_\mu(\psi \circ Q_a)}{\Omega_\mu( \psi \circ Q_b)} \in R^\times$$
ne dépend pas du choix de $\mu$. De plus, en notant $(\cdot , \cdot)_F$ le symbole de Hilbert de $F$, on a pour tout $a$ et $b$ dans $F^\times$ :
$$(a,b)_F = \frac{\Omega_\mu(\psi \circ Q_1) \Omega_\mu(\psi \circ Q_{ab})}{\Omega_\mu(\psi \circ Q_a) \Omega_\mu(\psi \circ Q_b)} = \frac{\Omega_{ab,1}}{\Omega_{a,1} \Omega_{b,1}}.$$ \end{enumerate} \end{prop}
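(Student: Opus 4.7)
La proposition regroupe sept assertions dont les six premières relèvent de manipulations élémentaires sur la mesure de Haar et les changements de variables, tandis que la dernière exige un argument substantiel reliant le facteur de Weil non normalisé au symbole de Hilbert. Pour (a), on applique la définition à $X/\textup{rad}(Q) = \{0\}$ : l'intégrale double dégénère en $f(0) \mu(\{0\})^2$ et l'identification avec $\Omega_\mu(\psi \circ 0) \cdot f(0) \mu(\{0\})$ donne le résultat. Pour (b), on remplace $\mu$ par $\lambda \mu$ dans l'équation définissante : le membre de gauche est multiplié par $\lambda^2$, celui de droite par $\lambda \Omega_{\lambda \mu}$, d'où $\Omega_{\lambda \mu} = \lambda \Omega_\mu$.

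Les points (c) et (d) reposent sur des changements de variable et Fubini. Pour (c), on effectue le changement $x' = \phi(x)$ dans l'intégrale définissant $\Omega_{\phi \cdot \mu}(\psi \circ Q_\phi)$ et on la compare directement avec celle définissant $\Omega_\mu(\psi \circ Q)$, obtenant l'égalité ; la seconde formule suit de $\phi \cdot \mu = |\phi| \mu$ combinée au point (b). Pour (d), l'écriture en intégrale itérée sur $X_1 \oplus X_2$ sépare immédiatement les deux contributions.

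Pour (e), on se donne une fonction test $f \in C_c^\infty(X)$ de support contenu dans un compact $K$. La lissité de $\psi$ et la continuité de $\rho \mapsto Q_\rho$ assurent que, pour $\rho$ dans un voisinage ouvert assez petit de $\rho_0$, l'égalité $\psi(Q_\rho(x)) = \psi(Q_{\rho_0}(x))$ est uniforme en $x \in K$, d'où la locale constance. Pour (f), on vérifie par un calcul direct utilisant $\mu = \mu(K) \mu_K$ et l'expression de la mesure duale $\mu^* = \mu(K)^{-1} \mu_K^*$ que $|\rho|_\mu = \mu(K)^2 q^k$ ; l'indépendance par rapport au choix du réseau $K$ s'obtient en comparant deux tels calculs, les facteurs se compensant exactement.

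La partie (g) constitue l'obstacle principal. L'indépendance de $\Omega_{a,b}$ vis-à-vis de $\mu$ découle de (b). Pour identifier le rapport $\Omega_{ab,1}/(\Omega_{a,1}\Omega_{b,1})$ avec le symbole de Hilbert $(a,b)_F$, on observe d'abord grâce au point (c) appliqué aux homothéties que $\Omega_{a,1}$ ne dépend que de la classe de $a$ dans $F^\times/F^{\times 2}$. Cela ramène la vérification de la formule à un nombre fini de cas sur le groupe fini $F^\times/F^{\times 2}$. Le défaut de multiplicativité de $a \mapsto \Omega_{a,1}$ se calcule alors en utilisant les sommes de Gauss modulaires associées aux formes quadratiques $Q_a$ ; la principale subtilité sera de vérifier que ces relations, classiques dans le cas complexe, se transposent intégralement dans le cadre $R$-modulaire sans recourir à une racine carrée de $q$ -- le symbole de Hilbert étant à valeurs dans $\{\pm 1\}$, une telle racine n'est de toute façon pas nécessaire à droite de l'égalité.
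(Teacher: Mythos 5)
Les points a) à e) de ta preuve sont corrects et suivent essentiellement la même démarche que l'article (changements de variable, Fubini, continuité de $\rho \mapsto \psi(Q_\rho(x))$). Pour f), ton calcul de $|\rho|_\mu = \mu(K)^2 q^k$ et de l'indépendance vis-à-vis de $K$ est juste ; mais tu ne traites pas la seconde moitié de l'énoncé, à savoir que l'expression $\Omega_\mu(\psi\circ Q_{\frac12\rho})/|\rho|_\mu^{1/2}$ coïncide avec le facteur de Weil \emph{classique} de \cite{ct}. L'article l'établit en invoquant \cite[Prop.~3.3]{ct}, et c'est ce qui autorise ensuite à citer \cite[4.3]{ct} au point g). En traitant le signe \og $:=$ \fg{} comme une pure définition, tu te prives de ce pont.

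C'est au point g) que la preuve présente un véritable trou, à la fois local et global. Localement : l'affirmation \og $\Omega_{a,1}$ ne dépend que de la classe de $a$ dans $F^\times/F^{\times 2}$ \fg{} est fausse. Par le point c) appliqué à l'homothétie $\phi(x)=cx$, on a $\Omega_\mu(\psi\circ Q_{ac^2}) = |c|\,\Omega_\mu(\psi\circ Q_a)$, donc $\Omega_{ac^2,1} = |c|\,\Omega_{a,1}$ n'est invariant qu'à un facteur $|c|$ près ; seul le rapport complet $\Omega_{ab,1}/(\Omega_{a,1}\Omega_{b,1})$ (ou $\Omega_{a,b}$) est invariant par changement de $a,b$ modulo les carrés — les facteurs $|c|$ se compensent. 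Globalement : la vérification case par case sur $F^\times/F^{\times 2}$ via des sommes de Gauss modulaires, que tu invoques comme \og se transposant intégralement \fg{}, n'est pas menée et constitue justement le contenu non trivial de la réciprocité de Weil (en particulier sur les corps $2$-adiques). L'article évite totalement ce calcul par une voie plus économe : on adjoint une racine carrée de $q$ pour former $R' = R[\sqrt q]$, ce qui donne accès au facteur de Weil classique ; on cite alors \cite[4.3]{ct} pour l'égalité $(a,b)_F = \omega(\psi\circ Q_1)\omega(\psi\circ Q_{ab})/\omega(\psi\circ Q_a)\omega(\psi\circ Q_b)$ sur $R'$, puis on utilise le point f) pour réécrire chaque $\omega$ en $\Omega$ (les $|\rho|^{1/2}$ se simplifient), et enfin on observe que les deux membres de l'égalité obtenue appartiennent à $R$, d'où la descente. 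Ton approche directe, si elle était menée à terme, redémontrerait cette réciprocité ; mais en l'état, c'est un renvoi implicite à un résultat dont la transposition au cadre modulaire mériterait d'être justifiée, et l'article offre une démonstration complète et plus courte par changement d'anneau de coefficients.
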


\begin{proof} a) b) c) Les deux premiers points sont immédiats. Le troisième résulte du changement de variable $x = \phi^{-1}(x')$ qui donne :
$$\Omega_{\mu}(\psi \circ Q_\phi) = \Omega_{\phi^{-1} \cdot \mu}(\psi \circ Q).$$
Comme $\phi^{-1} \cdot \mu = |\phi|^{-1} \times \mu$, le point b) donne $\Omega_{\phi^{-1} \cdot \mu}(\psi \circ Q) =|\phi|^{-1} \times \Omega_\mu(\psi \circ Q)$.

d) Vient de la définition de la mesure de Haar produit $\mu_1 \otimes \mu_2$ de $X_1 \oplus X_2$, construite à partir de deux mesures de Haar $\mu_1$ de $X_1$ et $\mu_2$ de $X_2$ comme suit. L'application bilinéaire :
$$\begin{array}{ccc}	
C_c^\infty(X_1) \times C_c^\infty(X_2) & \to & C_c^\infty(X_1 \oplus X_2) \\
 (f_1,f_2) & \mapsto & \bigg( F_{f_1,f_2} : (x_1,x_2) \mapsto f(x_1) f(x_2) \bigg) \end{array}$$
induit un isomorphisme d'espaces vectoriels $C_c^\infty(X_1) \otimes_R C_c^\infty(X_2) \simeq C_c^\infty(X_1 \oplus X_2)$. La mesure de Haar produit $\mu_1 \otimes \mu_2$ est alors l'unique mesure définie pour tout $f_1 \in C_c^\infty(X_1)$ et tout $f_2 \in C_c^\infty(X_2)$ par $\mu_1 \otimes \mu_2(f_1 \otimes f_2) = \mu_1(f_1) \mu_2(f_2)$.

e) Ceci est vrai car, à $f \in C_c^\infty(X)$ fixé, l'application :
$$\rho \mapsto \int_X \int_X f(y-x) \psi( Q_\rho (x)) d\mu (x) d\mu (y)$$
est localement constante. 

f) La première partie de l'énoncé constitue des faits généraux élémentaires sur les mesures et les applications modules. Ensuite, le point 2. de \cite[Prop. 3.3]{ct} évalué en $x^*=0$ donne, en notant $\gamma$ le facteur de Weil de $Q_{\frac{1}{2} \rho}$ :
$$\int_X \int_X f(y-x) \psi( Q_{\frac{1}{2}\rho}(x)) d\mu (x) d\mu (y) = \gamma \ |\rho|_{\mu}^{\frac{1}{2}} \int_X f(x) d \mu (x).$$
Par définition du facteur de Weil non normalisé, on a $\Omega_{\mu}(\psi \circ Q_{\frac{1}{2} \rho}) = \gamma |\rho|_{\mu}^{\frac{1}{2}}$.

g) D'après le point b), $\Omega_{a,b}$ est bien indépendant de $\mu$. Enfin, quitte à adjoindre une racine carrée de $q$ à $R$ pour former une extension $R'$ de $R$, on obtient que pour tout $a \in F^\times$, l'élément $|a| = |a|_F$ est un carré dans $R'$. Par conséquent, on a dans $R'$ :
$$(a,b)_F = \frac{\omega(\psi \circ Q_1) \omega(\psi \circ Q_{ab})}{\omega(\psi \circ Q_a) \omega(\psi \circ Q_b)} = \frac{\Omega_\mu(\psi \circ Q_1) \Omega_\mu(\psi \circ Q_{ab})}{\Omega_\mu(\psi \circ Q_a) \Omega_\mu(\psi \circ Q_b)}$$
où la première égalité résulte de \cite[4.3]{ct} et la deuxième du point \ref{facteur_de_weil_non_nomr_vs_classique_pt} ici présent. Dans les membres de droite et de gauche, chacun des termes appartient à $R$ donc l'égalité :
$$(a,b)_F = \frac{\Omega_\mu(\psi \circ Q_1) \Omega_\mu(\psi \circ Q_{ab})}{\Omega_\mu(\psi \circ Q_a) \Omega_\mu(\psi \circ Q_b)}$$
est valable dans $R$. \end{proof}

Le facteur non normalisé n'est \textit{a priori} pas trivial quand la forme quadratique est scindée, ce qui constitue une différence notable d'avec le facteur de Weil habituel. De plus, ce facteur non normalisé dépend de la réalisation de $Q$ considérée au sens où il n'est pas invariant par isométrie d'après le point \ref{non_invariance_par_isometrie_weil_non_norm_pt}. On a cependant une formule qui éclaire sa signification quand on réalise $Q$ dans une base orthogonale de $X$.

Soient $Q$ une forme quadratique non dégénérée sur $X$ et $\mathcal{B}=\{v_1, \dots, v_m \}$ une base orthogonale de $X$ pour cette forme quadratique. Ce choix de base induit un isomorphisme de $X$ vers $F^m$ donné par les coordonnés et que l'on note $\phi_{\mathcal{B}}$. On fixe une mesure de Haar $\mu_F$ de $F$. On considère la mesure de Haar produit $\otimes \mu_F$ sur $F^m$ qui se transporte en une mesure de Haar $\phi_{\mathcal{B}}^{-1} \cdot (\otimes \mu_F)$ de $X$. On pose $a_i = Q(v_i)$. La quantité $\det_{\mathcal{B}}(Q)= \prod a_i$ dépend du choix de la base $\mathcal{B}$, alors que l'invariant de Hasse $h_F(Q) = \prod_{i < j} (a_i,a_j)_F$ n'en dépend pas. On déduit des différents points de la Proposition \ref{facteur_de_weil_non_norm_prop} un corollaire qui en généralise le point \ref{facteur_de_weil_non_norm_hilbert_symbol_pt} :

\begin{cor} \label{facteur_de_weil_inv_de_hasse_cor} On a :
$$\Omega_{\phi_{\mathcal{B}}^{-1} \cdot ( \otimes \mu_F )} (\psi \circ Q ) = \Omega_{\textup{det}_{\mathcal{B}}(Q),1} \times \Omega_{\mu_F}(\psi \circ Q_1)^m h_F(Q).$$
De plus, si $Q_{\textup{Id}_{\mathcal{B}}}$ désigne l'unique forme quadratique non dégénérée diagonale associée à l'identité sur la base $\mathcal{B}$, alors pour toute mesure de Haar $\mu$ de $X$ :
$$\Omega_\mu(\psi \circ Q) = \Omega_{\textup{det}_{\mathcal{B}}(Q),1} \times \Omega_{\mu}(\psi \circ Q_{\textup{Id}_{\mathcal{B}}}) h_F(Q).$$
\end{cor}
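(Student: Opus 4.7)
Le plan est de se ramener, via l'isomorphisme de coordonnées $\phi_\mathcal{B} : X \to F^m$, à une forme quadratique diagonale sur $F^m$, puis d'exploiter la compatibilité à la somme directe (point d) de la Proposition \ref{facteur_de_weil_non_norm_prop}) et la formule du symbole de Hilbert (point \ref{facteur_de_weil_non_norm_hilbert_symbol_pt})). Pour amorcer, j'appliquerais le point \ref{non_invariance_par_isometrie_weil_non_norm_pt} à $\phi_\mathcal{B}$ pour écrire
$$\Omega_{\phi_\mathcal{B}^{-1} \cdot (\otimes \mu_F)}(\psi \circ Q) = \Omega_{\otimes \mu_F}(\psi \circ (Q \circ \phi_\mathcal{B}^{-1})),$$
puis j'observerais que l'orthogonalité de $\mathcal{B}$ pour $Q$ donne $Q \circ \phi_\mathcal{B}^{-1} = \bigoplus_{i=1}^m Q_{a_i}$ sur $F^m$. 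Le point d) appliqué à cette décomposition permet de factoriser le membre de droite en $\prod_{i=1}^m \Omega_{\mu_F}(\psi \circ Q_{a_i})$.

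Ensuite, je calculerais ce produit en posant $\alpha_a = \Omega_{a,1}$, de sorte que $\Omega_{\mu_F}(\psi \circ Q_a) = \alpha_a \cdot \Omega_{\mu_F}(\psi \circ Q_1)$ pour tout $a \in F^\times$. La formule du point \ref{facteur_de_weil_non_norm_hilbert_symbol_pt} se récrit alors $\alpha_a \alpha_b = (a,b)_F \, \alpha_{ab}$, puisque $(a,b)_F \in \{\pm 1\}$ coïncide avec son propre inverse dans $R^\times$. Une récurrence directe sur $m$, appuyée sur la bilinéarité $(a_1 a_2, a_3)_F = (a_1, a_3)_F (a_2, a_3)_F$ du symbole de Hilbert, donnera
$$\prod_{i=1}^m \alpha_{a_i} = \alpha_{\textup{det}_{\mathcal{B}}(Q)} \prod_{i < j} (a_i, a_j)_F = \Omega_{\textup{det}_{\mathcal{B}}(Q),1} \cdot h_F(Q).$$
La multiplication par $\Omega_{\mu_F}(\psi \circ Q_1)^m$ produira la première égalité de l'énoncé.

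Pour la seconde formule, j'appliquerais la première à $Q_{\textup{Id}_{\mathcal{B}}}$, pour laquelle $\textup{det}_{\mathcal{B}} = 1$ et $h_F = 1$, ce qui donnera $\Omega_{\phi_\mathcal{B}^{-1} \cdot (\otimes \mu_F)}(\psi \circ Q_{\textup{Id}_{\mathcal{B}}}) = \Omega_{\mu_F}(\psi \circ Q_1)^m$, et on substitue ce facteur dans la première formule. Le passage à une mesure $\mu$ arbitraire de $X$ s'obtient en invoquant le point b) : si $\mu = \lambda \cdot \phi_\mathcal{B}^{-1} \cdot (\otimes \mu_F)$ avec $\lambda \in R^\times$, le scalaire $\lambda$ intervient identiquement dans $\Omega_\mu(\psi \circ Q)$ et $\Omega_\mu(\psi \circ Q_{\textup{Id}_{\mathcal{B}}})$ et se simplifie. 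L'unique passage qui nécessite un soin véritable est la récurrence combinatoire faisant émerger l'invariant de Hasse, mais elle ne présente aucune difficulté conceptuelle, la bilinéarité du symbole de Hilbert suffisant à la mener à bien.
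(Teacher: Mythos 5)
Votre démonstration est correcte et suit exactement le chemin naturel que le papier suggère sans le développer (le texte se contente d'indiquer « On déduit des différents points de la Proposition \ref{facteur_de_weil_non_norm_prop} »). Vous utilisez bien les points c), d) et g) dans l'ordre attendu : c) pour transporter la forme sur $F^m$, d) pour la factoriser en produit de facteurs de Weil de formes $Q_{a_i}$, puis g) et une récurrence élémentaire pour faire apparaître $\Omega_{\textup{det}_\mathcal{B}(Q),1}$ et l'invariant de Hasse $h_F(Q) = \prod_{i<j}(a_i,a_j)_F$ via la bimultiplicativité du symbole de Hilbert et le fait que $(a,b)_F$ est son propre inverse. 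La déduction de la seconde formule à partir de la première par spécialisation à $Q_{\textup{Id}_\mathcal{B}}$ (qui annule $\textup{det}_\mathcal{B}$ et $h_F$) et par le point b) pour changer de mesure est également sans faille.
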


\begin{rem} Par conséquent, la quantité $\Omega_{\textup{det}_{\mathcal{B}}(Q),1} \times \Omega_{\mu}(\psi \circ Q_{\textup{Id}_{\mathcal{B}}})$ ne dépend pas du choix de la base $\mathcal{B}$. Pour en donner une interprétation plus géométrique, pour tout autre choix de base $\mathcal{B}'$ qui diagonalise $Q$, la quantité $\textup{det}_{\mathcal{B}'}(Q)$ diffère de $\textup{det}_{\mathcal{B}}(Q)$ par le carré du déterminant du changement de base entre $\mathcal{B}$ et $\mathcal{B}'$. On observe que le point \ref{non_invariance_par_isometrie_weil_non_norm_pt} de la Proposition \ref{facteur_de_weil_non_norm_prop} garantit que cette différence transforme $\Omega_{\mu} (\psi \circ Q_{\textup{Id}_{\mathcal{B}}})$ en $\Omega_{\mu} (\psi \circ Q_{\textup{Id}_{\mathcal{B}'}})$. \end{rem}

\paragraph{Facteur de Weil.} Grâce au point \ref{facteur_de_weil_non_nomr_vs_classique_pt} de la Proposition \ref{facteur_de_weil_non_norm_prop}, il existe un lien avec le facteur de Weil quand le corps $R$ possède une racine carrée de $q$. Une fois une telle racine $q^{\frac{1}{2}} \in R$ fixée, le facteur de Weil d'une forme quadratique $Q$ non dégénérée est donc :
$$\omega(\psi \circ Q) = \frac{\Omega_\mu(\psi \circ Q)}{|\rho|_{\mu}^{\frac{1}{2}}}$$
où $\rho : x \mapsto Q(x+y)-Q(x)-Q(y)$ est la forme symétrique associée à $Q = Q_{\frac{1}{2} \rho}$. L'image du facteur de Weil est contenue, d'après \cite[4.3]{ct}, dans l'ensemble des racines $4$-èmes de l'unité de $R$ si $F$ n'est pas une extension finie de $\mathbb{Q}_2$ sans racine de $-1$. Dans le cas contraire, on a $(-1,-1)_F=-1$ et l'image du facteur de Weil est seulement contenue dans l'ensemble des racines $8$-èmes de l'unité.

Quand $R$ ne contient pas de racine carrée de $q$, il faut utiliser $\Omega(\psi \circ Q)$ en toute généralité. Voici un exemple pour illustrer le propos. Quand $F=\mathbb{F}_3((t))$ et $R=\mathbb{Q}[j]$, il existe un caractère lisse additif non trivial $\psi : F \to R^\times$. Bien que $\sqrt{3} \notin R$, on  a cependant $i \sqrt{3} \in R$. Or, l'élément $\omega(\psi \circ Q) \in \mathbb{C}^\times$ est une racine $4$-ème de l'unité. Cela signifie que pour toute forme quadratique $Q$ telle que $|\rho|_\mu=3$, on peut prendre par exemple $\rho(x)(y) = t x y$, son facteur de Weil non normalisé ne peut être que $i \sqrt{3}$ ou $-i\sqrt{3}$. Sont ainsi encodées dans ce facteur des propriétés arithmétiques de $F$ qui, en un certain sens, ne dépendent pas de $R$.

\paragraph{Normalisation de la transformée de Fourier.} On interprète maintenant ce facteur de Weil non normalisé comme un coefficient de normalisation pour la transformée de Fourier vis-à-vis de $\psi$. Ce facteur permet, pour tout $\rho \in \textup{Iso}_F^{\textup{sym}}(X,X^*)$, de donner une mesure $\mu_\rho$ qui est en un certain sens la plus naturelle pour la transformée de Fourier à coefficients dans $R$.

\begin{prop} \label{normalisation_transformee_de_Fourier_prop} Soient $\rho \in \textup{Iso}_F^{\textup{sym}}(X,X^*)$ et $\mu$ une mesure de Haar de $X$. 

\begin{enumerate}[label=\textup{\alph*)}]
\item La mesure de Haar :
$$\mu_\rho = \Omega_\mu(\psi \circ Q_{\frac{1}{2} \rho})^{-1} \mu$$
ne dépend pas du choix de $\mu$.
\item Si l'on note $\star_{\mu_\rho}$ le produit de convolution dans $C_c^\infty(X)$ et $\times$ la multiplication des fonctions, l'opérateur suivant dit de transformée de Fourier :
$$\mathcal{F}_{\mu_\rho} : f \in (C_c^\infty(X),\star_{\mu_\rho}) \mapsto \bigg( x \mapsto \int_X \psi( \rho(x)(u)) f(u) d \mu_\rho(u) \bigg) \in (C_c^\infty(X),\times)$$
est un isomorphisme d'algèbre.
\item On pose :
$$\varepsilon =  \big(-1,\textup{det}(Q_{\frac{1}{2} \rho})\big)_F \times  (\Omega_{-1,1})^{m}$$
qui vérifie :
$$\varepsilon^2 = \big(-1,-1\big)_F^m.$$
Alors pour tout $f \in C_c^\infty(X)$, on a :
$$\mathcal{F}_{\mu_\rho}^4 f= \varepsilon^2 f \textup{ et } \mathcal{F}_{\mu_\rho}^2 f : x \mapsto \varepsilon f(-x).$$ 
\end{enumerate} 
\end{prop}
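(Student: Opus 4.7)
Point (a) follows immediately from Proposition \ref{facteur_de_weil_non_norm_prop}(b): replacing $\mu$ by $\lambda \mu$ scales $\Omega_\mu(\psi \circ Q_{\frac{1}{2}\rho})$ by the same $\lambda$, so the ratio defining $\mu_\rho$ is unchanged.

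For point (b), the convolution-to-multiplication property is a routine Fubini computation: expanding $f_1 \star_{\mu_\rho} f_2$ and applying the change of variable $u \mapsto u - v$, together with the multiplicativity $\psi(\rho(x)(u+v)) = \psi(\rho(x)(u)) \psi(\rho(x)(v))$ of $\psi \circ \rho(x)$, yields $\mathcal{F}_{\mu_\rho}(f_1 \star_{\mu_\rho} f_2) = \mathcal{F}_{\mu_\rho}(f_1) \times \mathcal{F}_{\mu_\rho}(f_2)$. Bijectivity of this algebra morphism will follow \emph{a posteriori} from point (c) below, since $\mathcal{F}_{\mu_\rho}^4 = \varepsilon^2 \operatorname{Id}$ with $\varepsilon^2 \in R^\times$.

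For point (c), the equality $\varepsilon^2 = (-1,-1)_F^m$ is immediate: Hilbert symbols take values in $\{\pm 1\}$, hence the square of $(-1, \det(Q_{\frac{1}{2}\rho}))_F$ equals $1$, and item (g) of Proposition \ref{facteur_de_weil_non_norm_prop} applied to $a = b = -1$ gives $\Omega_{-1,1}^2 = (-1,-1)_F^{-1} = (-1,-1)_F$. The identity $\mathcal{F}_{\mu_\rho}^4 f = \varepsilon^2 f$ is a direct iteration of $\mathcal{F}_{\mu_\rho}^2 f(x) = \varepsilon f(-x)$, so the real content lies in this last identity. The plan is to reduce it to dimension one via an orthogonal decomposition of $Q_{\frac{1}{2}\rho}$: fixing an orthogonal basis $\mathcal{B} = \{v_1, \ldots, v_m\}$ with $Q_{\frac{1}{2}\rho}(v_i) = a_i$ splits $\rho$ into a direct sum $\bigoplus_i \rho_i$ on $X = \bigoplus_i F v_i$. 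Proposition \ref{facteur_de_weil_non_norm_prop}(d) gives $\mu_\rho = \bigotimes_i \mu_{\rho_i}$, and Fubini implies that $\mathcal{F}_{\mu_\rho}$ factorizes as a tensor product of the one-dimensional Fourier transforms $\mathcal{F}_{\mu_{\rho_i}}$. Hence the scalar $c_\rho$ defined by $\mathcal{F}_{\mu_\rho}^2 = c_\rho \cdot (f \mapsto f(-\cdot))$ is multiplicative: $c_\rho = \prod_i c_{\rho_i}$. It then suffices to establish $c_{\rho_i} = (-1, a_i)_F \cdot \Omega_{-1,1}$ in the one-dimensional case, for bimultiplicativity of the Hilbert symbol then yields $\prod_i c_{\rho_i} = (-1, \prod_i a_i)_F \cdot \Omega_{-1,1}^m = (-1, \det_\mathcal{B}(Q_{\frac{1}{2}\rho}))_F \cdot \Omega_{-1,1}^m$, which matches $\varepsilon$ since Hilbert symbols only depend on the class of their arguments modulo squares.

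The main obstacle is the one-dimensional identification. Evaluating $\mathcal{F}_{\mu_\rho}^2$ on the characteristic function of a suitable lattice produces the explicit value $c_a = |\rho|_\mu / \Omega_\mu(\psi \circ Q_a)^2$, where $|\rho|_\mu$ is the module of $\rho \colon F \to F^*$. Decomposing $\rho = \iota \circ m_{2a}$, with $\iota$ the canonical identification $F \simeq F^*$ and $m_{2a}$ multiplication by $2a$, gives $|\rho|_\mu = |2a|_F \cdot |\iota|_\mu$. The subtle step is matching this expression with $(-1, a)_F \Omega_{-1,1}$, which requires careful bookkeeping of normalizations (notably the identity $(\lambda \mu)^* = \lambda^{-1} \mu^*$ for dual measures) together with repeated use of the Hilbert symbol relation $(a,b)_F = \Omega_{ab,1} / (\Omega_{a,1} \Omega_{b,1})$ from Proposition \ref{facteur_de_weil_non_norm_prop}(g).
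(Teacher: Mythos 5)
Your proposal is correct, and for point (c) it takes a genuinely different route from the paper. Points (a) and (b) coincide with the paper's argument in substance (the paper also verifies (b) on indicator functions and treats bijectivity as routine). For (c), the paper proceeds in a single step: it shows $\varepsilon = \mu_\rho(K)\mu_\rho(K^\perp)$, computes $\mu(K)\mu(K^\perp) = |\rho|_\mu$, invokes the reciprocity $\Omega_\mu(\psi\circ Q_{-\frac{1}{2}\rho}) = |\rho|_\mu/\Omega_\mu(\psi\circ Q_{\frac{1}{2}\rho})$ to rewrite $\varepsilon = \Omega_\mu(\psi\circ Q_{-\frac{1}{2}\rho})/\Omega_\mu(\psi\circ Q_{\frac{1}{2}\rho})$, and then applies Corollary~\ref{facteur_de_weil_inv_de_hasse_cor} to diagonalize both forms in the same basis and finish with an explicit Hilbert-symbol and Hasse-invariant bookkeeping in dimension $m$. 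You instead factor $\mathcal{F}_{\mu_\rho}$ as a tensor product of one-dimensional transforms along an orthogonal basis (using Proposition~\ref{facteur_de_weil_non_norm_prop}(d) and (a) to get $\mu_\rho = \otimes \mu_{\rho_i}$), reduce the scalar $c_\rho$ to a product $\prod_i c_{\rho_i}$, compute the one-dimensional scalar, and recombine by bimultiplicativity of the Hilbert symbol. Both routes bottom out on the same reciprocity identity $\Omega_\mu(\psi\circ Q_{-a}) \Omega_\mu(\psi\circ Q_a) = |\rho|_\mu$: the paper states it without proof, and you compress it into "careful bookkeeping of normalizations", so neither treatment is more complete on this point. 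What the tensor-factorization buys you is a clean isolation of the one-dimensional case and an argument that works at the level of the operator $\mathcal{F}_{\mu_\rho}$ rather than of the invariants $\Omega_\mu$; what the paper's route buys is that it reuses Corollaire~\ref{facteur_de_weil_inv_de_hasse_cor}, which it needs anyway for the cocycle computations in Section~\ref{expression_du_cocycle_perrin_like_section}, so the Hasse-invariant manipulation is not wasted machinery.
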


\begin{proof} a) D'après le point b) de la Proposition \ref{facteur_de_weil_non_norm_prop}, la mesure $\mu_\rho$ ne dépend pas du choix de $\mu$.

\noindent b) Ensuite, vérifier que $\mathcal{F}_{\mu_\rho}$ est un isomorphisme d'algèbre constitue un fait classique de transformée de Fourier \cite[Prop. 1.2]{ct}, que l'on rappelle succinctement. Tout d'abord, l'application est bien un automorphisme puisqu'elle est linéaire et que pour tout sous-groupe compact ouvert $K$ de $X$, on a $\mathcal{F}_{\mu_\rho} 1_K = \mu_\rho(K) \times 1_{K^\perp}$ où $K^\perp$ est le sous-groupe compact ouvert $\{ x \in X \ | \ \forall u \in X, \psi(\rho(x)(u)) = 1 \}$. De même, il suffit de vérifier, pour les indicatrices seulement, la propriété d'être un morphisme d'algèbres.

\noindent c) Pour terminer, en reprenant les notations du paragraphe précédent, on a :
$$\mathcal{F}_{\mu_\rho}^2 1_K = \mu_\rho(K) \mu_\rho (K^\perp) \times 1_K.$$
On pose donc $\varepsilon = \mu_\rho (K) \mu_\rho(K^\perp)$. Par définition :
$$\varepsilon = \mu_\rho(K) \mu_\rho(K^\perp) = \Omega_\mu(\psi \circ Q_{\frac{1}{2} \rho})^{-2} \times \mu(K) \mu(K^\perp).$$
Alors, si $K'$ désigne le sous-groupe ouvert compact $\{ x^* \in X^* \ | \ \forall u \in X, \psi(x^*(u))=1 \}$ dans $X^*$, on a :
$$\mu(K) \mu(K^\perp) = \frac{\mu(\rho^{-1} K')}{\mu^*(K')} = |\rho|_\mu.$$
On déduit de :
$$\Omega_\mu(\psi \circ Q_{- \frac{1}{2} \rho} )= \frac{|\rho|_\mu}{\Omega_\mu(\psi \circ Q_{\frac{1}{2} \rho} )}$$
qu'on a :
$$\varepsilon = \frac{\Omega_\mu(\psi \circ Q_{-\frac{1}{2} \rho} )}{\Omega_\mu(\psi \circ Q_{\frac{1}{2} \rho} )}.$$

D'après le Corollaire \ref{facteur_de_weil_inv_de_hasse_cor}, comme les deux formes quadratiques $Q_{-\frac{1}{2} \rho}$ et $Q_{\frac{1}{2} \rho}$ peuvent se diagonaliser dans la même base $\mathcal{B}$ de $X$, cette quantité se réécrit :
$$\varepsilon = \frac{\Omega_{\textup{det}_{\mathcal{B}}(Q_{- \frac{1}{2}\rho}),1}}{\Omega_{\textup{det}_{\mathcal{B}}(Q_{\frac{1}{2}\rho}),1}} \times \frac{h_F(Q_{-\frac{1}{2}\rho})}{h_F( Q_{\frac{1}{2}\rho})}.$$
D'une part, comme $\textup{det}_{\mathcal{B}}(Q_{- \frac{1}{2}\rho}) = (-1)^m \textup{det}_{\mathcal{B}}(Q_{\frac{1}{2}\rho})$, on déduit du point g) de la Proposition \ref{facteur_de_weil_non_norm_prop} l'égalité :
\begin{eqnarray*}
\frac{\Omega_{\textup{det}_{\mathcal{B}}(Q_{- \frac{1}{2}\rho}),1}}{\Omega_{\textup{det}_{\mathcal{B}}(Q_{\frac{1}{2}\rho}),1}} &=& \Omega_{(-1)^m,1} \times \big((-1)^m, \textup{det}_\mathcal{B}(Q_{\frac{1}{2} \rho})\big)_F \\
 &=& (\Omega_{-1,1})^m \times (-1,-1)_F^{\frac{m(m-1)}{2}} \times \bigg( \big(-1, \textup{det}_\mathcal{B}(Q_{\frac{1}{2}\rho}) \big)_F \bigg)^m. \end{eqnarray*}
D'autre part, de $(-a_i,-a_j)_F = (-1, - a_i a_j)_F \times (a_i, a_j)_F$, on tire l'égalité :
\begin{eqnarray*} h_F(Q_{- \frac{1}{2} \rho}) &=& \big( -1, (-1)^{\frac{m(m-1)}{2}} \textup{det}(Q_{\frac{1}{2} \rho})^{m-1}) \big)_F \times  h_F(Q_{\frac{1}{2} \rho}) \\
&=& (-1,-1)_F^{\frac{m(m-1)}{2}} \times \bigg( \big(-1,\textup{det}(Q_{\frac{1}{2} \rho}) \big)_F \bigg)^{m-1} \times h_F(Q_{\frac{1}{2}\rho}). \end{eqnarray*}
Donc on obtient l'égalité recherchée pour $\varepsilon$. Toujours en invoquant le point g) de la Proposition \ref{facteur_de_weil_non_norm_prop}, le carré de $\Omega_{-1,1}$ est bien $(\Omega_{-1,1})^2=(-1,-1)_F$.

Concernant les puissances de l'opérateur $\mathcal{F}_{\mu_\rho}$, un argument classique consiste à examiner l'image d'indicatrices de la forme $1_{x + K}$, où $x$ est un élément de $X$ et $K$ un sous-groupe compact ouvert de $X$. Ceci constitue un fait habituel en transformée de Fourier. Pour terminer, le prouver pour les fonctions indicatrices est suffisant puisqu'on déduit par linéarité le résultat pour toute fonction dans $C_c^\infty(X)$. \end{proof}
\begin{defi} On appelle \textit{$R$-transformée de Fourier} l'opérateur $\mathcal{F}_{\mu_\rho}$. \end{defi}

On revient sur l'exemple $F= \mathbb{F}_3((t))$, $\mathcal{O}_F=\mathbb{F}_3[[t]]$ et $R = \mathbb{Q}(j)$. Soient $\psi : F \to R^\times$ le caractère lisse non trivial de noyau $\mathcal{O}_F$ et $\mu$ une mesure de $F$ normalisée sur $\mathcal{O}_F$. En notant, $(\mathcal{O}_F)' = \{ x^* \in F^* \ | \ \forall x \in \mathcal{O}_F, \psi(x^*(x))=1\}$, on a que $\mu^*$ est la mesure de $X^*$ normalisée sur $(\mathcal{O}_F)'$. On considère maintenant le morphisme symétrique :
$$\rho : x \in F \mapsto ( \rho(x) : y \mapsto t xy ) \in F^*$$
dont le module est ${|\rho|}_\mu = \mu(\rho^{-1} (\mathcal{O}_F)') = \mu( t^{-1} \mathcal{O}_F ) = |t^{-1}|_F = 3$. Dans ce cas, on a montré que $\omega(\psi \circ Q_{\frac{1}{2} \rho}) \in \{ \pm i\}$. Donc la transformée de Fourier classique :
$$\mathcal{F} : f \in C_c^\infty(F,\mathbb{C}) \mapsto \bigg( x \mapsto \int_F \psi(\rho(x)(u)) f(u) d \mu_F(u) \bigg) \in C_c^\infty(F,\mathbb{C})$$
définie pour la mesure auto-duale $\mu_F = (\sqrt{3})^{-1} \mu$ n'est pas définie sur $R$, mais l'est en revanche sur $R'=R[\sqrt{3}]$. Elle vérifie pour tout $f \in C_c^\infty(X,R')$, les relations habituelles $\mathcal{F}^4 f = f$ et $\mathcal{F}^2 f : x \mapsto f(-x)$. Par conséquent, le choix naturel de normalisation pour la transformée de Fourier à coefficients dans $R$ vis-à-vis de $\rho$ et $\psi$ consiste plutôt à considérer l'opérateur $\mathcal{F}_{\mu_\rho}$. Dans ce cas, on a $\varepsilon = (- 1) \times \Omega_{-1,1}$ et l'application $\mathcal{F}_{\mu_\rho}$ de $R$-transformée de Fourier vérifie $\mathcal{F}_{\mu_\rho}^4 f = f$ et $\mathcal{F}_{\mu_\rho}^2 f : x \mapsto \varepsilon f(-x)$.

\section{Représentations métaplectiques modulaires} \label{representations_modulaires_section}

Le groupe d'Heisenberg $H(W,\langle , \rangle )$, abrégé en $H$ quand le contexte est clair, est l'ensemble $W \times F$ muni de la topologie produit et de la loi de groupe :
$$(w,t) \cdot (w',t') = \bigg(w+w',t+t'+\frac{\langle w,w' \rangle }{2}\bigg).$$
On identifie $F$ avec le centre de $H$ via l'isomorphisme de groupes topologiques $t \mapsto (0,t)$, ainsi que $W$ avec le sous-ensemble $W \times 0$ de $H$ via l'homéomorphisme $\delta : w \mapsto (w,0)$.

\subsection{Théorème de Stone-von Neumann modulaire}

Le résultat ci-dessous est une généralisation du théorème de Stone-von Neumann pour les représentations à coefficients complexes \cite[Chap. 2, Th. I.2]{mvw}. On lui donne le nom de \emph{théorème de Stone-von Neumann modulaire}.

\begin{theo} \label{stone_von_neumann_modulaire_thm} Soit $\psi \in \hat{F}_R$ un caractère non trivial. À isomorphisme près, il existe une unique représentation irréductible $(\rho_\psi,S)$ dans $\textup{Rep}_R(H)$ dont le caractère central est $\psi$. \end{theo}

\begin{proof} On généralise la preuve de \cite{mvw} qui occupe la première partie du second chapitre. La preuve consiste à tout d'abord exhiber une classe de représentations qui vérifient les conditions du théorème, dont on se sert ensuite pour montrer l'unicité à isomorphisme près.

Un premier candidat qui vient à l'esprit est l'induite compacte $\textup{ind}_F^{H}(\psi)$ qui, en plus d'être lisse, a pour caractère central $\psi$. Cependant, cette représentation n'est pas irréductible. Néanmoins, il existe des induites compactes plus fines que cette dernière et qui se révèleront irréductibles. Pour ce faire, on définit l'orthogonal de tout sous-groupe fermé $A$ de $W$ par $A^\perp = \{ w \in W \ | \ \forall a \in A, \ \psi(\langle w,a \rangle )=1 \}$. Une liste de bons candidats est la suivante :

\begin{lem} \label{stonve_von_neumann_premier_lem} On suppose que $A$ est \emph{auto-dual} i.e. $A=A^\perp$. Alors :
\begin{enumerate}[label=\textup{\alph*)}]
\item il existe un caractère lisse $\psi_A$ du sous-groupe $A_H=A \times F$ de $H$ dont la restriction à $F$ est $\psi$ ;
\item pour tout caractère lisse $\psi_A$ de $A_H$ prolongeant $\psi$, l'induite compacte $\textup{ind}_{A_H}^{H}(\psi_A)$ est irréductible, égale à $\textup{Ind}_{A_H}^{H}(\psi_A)$ et de contragrédiente $\textup{ind}_{A_H}^{H}(\psi_A^{-1})$. \end{enumerate} \end{lem}

\begin{proof} a) Soit $L$ un sous-groupe ouvert de $W$ tel que $\frac{\langle L,L \rangle }{2} \subset \textup{Ker} (\psi)$. En particulier, $L \times \textup{Ker} (\psi)$ est un sous-groupe ouvert de $H$. Comme $A$ est auto-dual, le sous-groupe ouvert $(L\cap A) \times \textup{Ker}(\psi)$ de $A_H$ est distingué dans $A_H$. Le groupe quotient $A \times F / (L\cap A) \times \textup{Ker}(\psi)$ est abélien. Pour montrer l'existence de $\psi_A$, il suffit donc de prouver qu'il existe un morphisme de groupes $\psi'$ de sorte que le diagramme suivant commute :
$$\xymatrix{
	F / \textup{Ker}(\psi) \ar@{^{(}->}[r] \ar[d]_{\psi} & A \times F / (L\cap A) \times \textup{Ker}(\psi) \\
	\mu^p(R) 	\ar@{<--}[ru]_{\psi'} & }.$$

Quand $F$ est de caractéristique $0$, le groupe $\mu^p(R) \simeq \mathbb{Q}_p / \mathbb{Z}_p$ est un groupe abélien divisible donc un objet injectif dans la catégorie des $\mathbb{Z}$-modules. L'existence de $\psi'$ est alors assurée pusique toutes les flèches du diagramme sont des morphismes de $\mathbb{Z}$-modules.

Quand la caractéristique est positive, chacun des groupes du diagramme est muni d'une structure d'espace vectoriel sur $\mathbb{F}_p$ qui assure que les flèches en question sont des applications linéaires. Comme $\mu^p(R) \simeq \mathbb{F}_p$, l'existence de $\psi'$ résulte du théorème de la base incomplète \textit{i.e.} il existe une section $\sigma : \mu^p(R) \to F/\textup{Ker}(\psi)$ de $\psi$ et un supplémentaire $V$ de $\textup{Im}(\sigma)$ dans $A \times F / (L \cap A) \times \textup{Ker}(\psi)$ tels que :
$$A \times F / (L \cap A) \times \textup{Ker}(\psi) = \textup{Im}(\sigma) \oplus V.$$
Ainsi $\psi'$ est le morphisme quotient $A \times F / (L \cap A) \times \textup{Ker}(\psi) \to \mu^p(R)$, qui est obtenu en identifiant $\mu^p(R)$ et $\textup{Im}(\sigma)$ à l'aide de $\sigma$. En composant $\psi'$ avec le morphisme quotient :
$$A_H \to A \times F / (L \cap A) \times \textup{Ker}(\psi)$$
on obtient un caractère $\psi_A$ de $A_H$ dont la restriction à $F$ est bien $\psi$.

b) Soit maintenant $\psi_A$ un caractère de $A_H$ prolongeant $\psi$. On considère la représentation $S_A=\textup{ind}_{A_H}^{H}(\psi_A)$ dont on donne un système de générateurs. Soit $w \in W$. Comme $\psi_A$ est lisse, il existe un sous-groupe ouvert compact $L_w$ de $W$ tel que $\psi_A(a)=1$ pour tout $a \in A_H \cap \delta(w) \delta(L_w) \delta(w)^{-1}$. Par conséquent, pour tout sous-groupe ouvert $L$ d'un tel $L_w$, il existe une fonction dans $S_A$ de support $A_H \delta(w) \delta(L)$. On note dans ce cas $\chi_{w,L}$ l'unique fonction qui a pour support $A_H \delta(w) \delta(L)$, est $L$-invariante à droite et vaut $1$ en $\delta(w)=(w,0)$. Comme les éléments de $S_A$ sont des fonctions sur $H$ qui sont lisses et à support compact modulo $A_H$, le $R$-module $S_A$ est engendré par la famille $\mathcal{B}=\{ \chi_{w,L} \}_{w \in W, L \subset L_w }$.

Pour montrer l'irréductibilité de $S_A$, on souhaite prouver que pour tout $f \in S_A$ non nul, la sous-représentation $\mathcal{H}_R(H) \cdot f$ de $S_A$ contient la famille génératrice $\mathcal{B}$. Pour ce faire, il suffit de montrer que pour tout $w \in W$, il existe un sous-groupe ouvert compact $L_w'$ de $W$ tel que $\{\chi_{w,L}\}_{L \subset L_w'} \subset \mathcal{H}_R(H) \cdot f$. Soient donc $f \in S_A$ non nul et $w \in W$. Quitte à remplacer $f$ par un de ses translatés sous $H$, on peut supposer qu'on a $f((w,0)) \neq 0$. Par lissité de $S_A$, il existe un sous-groupe ouvert compact $L_w'$ de $W$ tel que $L_w' \times \textup{Ker}(\psi)$ soit dans le fixateur de $f$ et $\psi_A(a)=1$ pour tout $a \in A_H \cap \delta(w) \delta(L_w') \delta(w)^{-1}$. Soit $\mu_A$ la mesure de Haar sur $A$ à valeurs dans $R$. On rappelle qu'elle existe car, d'après la Remarque \ref{remarque_car_R_non_p_si_psi_existe}, on a $\ell \neq p$. Soit $L$ un sous-groupe ouvert de $L_w'$. On définit :
$$\phi : a \in A \mapsto \frac{\psi_A((-a,0))}{\textup{vol}(L^\perp \cap A)} \psi(\langle -w,a \rangle ) 1_{L^\perp \cap A}(a) \in R$$
où $1_X$ désigne l'indicatrice de $X$. En particulier $\phi \in C_c^\infty(A)$. On veut montrer que la fonction :
$$\phi \cdot f : h \in H \mapsto \int_A \phi(a) f(h(a,0)) d \mu_A(a) \in R,$$
est un multiple non nul de $\chi_{w,L}$. Tout d'abord, on voit facilement que la fonction $\phi \cdot f$ appartient à $S_A$. Elle appartient à $\mathcal{H}(H) \cdot f$ car $\phi \cdot f$ est, à une puissance de $p$ près, une somme finie de la forme $\sum \phi(a_i) f(h(a_i,0)) = \sum \phi(a_i) ((a_i,0) \cdot f)(h)$. Ensuite, elle est invariante par $\delta(L)$ à droite et a pour support $A_H \delta(w) \delta(L)$ d'après les égalités suivantes :
\begin{eqnarray*} 
\phi \cdot f((w',0))& = & f((w',0)) \times \frac{1}{\textup{vol}(L^\perp \cap A)}  \int_{L^\perp \cap A} \psi(\langle w'-w,a \rangle ) d \mu_A(a) \\
 & = & f((w',0)) 1_{A+w+L}(w').
\end{eqnarray*}
On en déduit donc que $\phi \cdot f = f((w,0)) \chi_{w,L} \in \mathcal{H}(H) \cdot f$ avec $f((w,0)) \neq 0$. 

Pour terminer, on prouve l'égalité $\textup{ind}_{A_H}^{H}(\psi_A)=\textup{Ind}_{A_H}^H(\psi_A)$. En effet, soit $f : H \to R$ invariante à droite par un sous-groupe ouvert et telle que $f(ah) = \psi_A(a) f(h)$ pour tout $a \in A_H$ et tout $h \in H$. Il s'agit de prouver que $f$ est à support compact modulo $A_H$. Soit $L$ un sous-groupe compact ouvert de $W$ tel que $f$ soit invariante à droite par l'action de $\delta(L)$. Alors pour tout $w \in W$ et tout $l \in L \cap A$ :
$$f((w,0))=f((w,0)(l,0))= f((l,\langle w,l \rangle )(w,0))=\psi(\langle w,l \rangle )\psi_A((l,0))f((w,0)).$$
On en déduit que si $f((w,0)) \neq 0$, alors $\psi(\langle w,l \rangle )=\psi_A((-l,0))$ pour tout $l \in L \cap A$, et deux tels éléments $w$ et $w'$ différent par un élément de $(L \cap A)^\perp=L^\perp + A$. Donc le support de $f$ est contenu dans un compact modulo $A_H$.

On applique enfin la compatibilité à la contragrédiente de l'induction \cite[I.5.11]{vig}, en remarquant qu'ici les groupes $H$ et $A_H$ sont limites de leurs pro-$p$-groupes. Cela signifie que les modules $\delta_H$ et $\delta_{A_H}$ -- qui sont bien définis d'après la Remarque \ref{remarque_car_R_non_p_si_psi_existe} -- sont triviaux. On a donc :
$$\textup{ind}_{A_H}^{H}(\psi_A)^\vee \simeq \textup{Ind}_{A_H}^{H}(\psi_A^\vee)=\textup{ind}_{A_H}^{H}(\psi_A^{-1}).$$ \end{proof}

Il existe toujours des sous-groupes auto-duaux de $W$ -- les lagrangiens en fournissent des exemples. Par conséquent, l'existence d'une représentation vérifiant les conditions du Théorème \ref{stone_von_neumann_modulaire_thm} est assurée. Maintenant, l'unicité résulte aisément du lemme suivant :

\begin{lem} \label{stonve_von_neumann_deuxieme_lem} Soit $(\rho_\psi,S) \in \textup{Rep}_R(H)$ vérifiant les hypothèses du Théorème \ref{stone_von_neumann_modulaire_thm}. Alors :
\begin{enumerate}[label=\textup{\alph*)}]
\item $\rho_\psi$ est une sous-représentation de $\textup{ind}_F^{H}(\psi)$ ;
\item la représentation $\textup{ind}_F^{H}(\psi)$ est isotypique. 
\end{enumerate} \end{lem}

\begin{proof} a) On note $(\rho_\psi^\vee,S^\vee)$ la contragrédiente de $(\rho_\psi,S)$. Pour tout tenseur élémentaire $s^\vee \otimes s$ de $S^\vee \otimes S$, on définit le coefficient $f_{s^\vee,s} : h \mapsto s^\vee(\rho_\psi(h) s)$ associé, qui est dans $C_c^\infty(H)$. Soit $\Phi$ l'unique morphisme de $H \times H$-représentations de $S \otimes S^\vee$ dans $C_c^\infty(H)$ tel que $\Phi(s^\vee \otimes s) = f_{s^\vee,s}$ pour tout tenseur élémentaire. Pour prouver le premier point, il suffit de montrer que l'image de $\Phi$ est $\textup{ind}_F^{H}(\psi)$. Tout d'abord, il est immédiat que l'image de $\Phi$ est contenue dans $\textup{Ind}_F^{H}(\psi)$. De plus, les fonctions $f_{s^\vee,s}$ sont à support compact modulo $F$. En effet, en choisissant $L$ un sous-groupe ouvert compact de $W$ tel que $s^\vee$ et $s$ soit fixe par $\delta(L)$, la fonction $f_{s^\vee,s}$ est invariante à gauche et à droite par $\delta(L)$. On obtient donc la relation pour tout $w \in W$ et $l \in L$ :
$$f_{s^\vee,s}((w,0))=f_{s^\vee,s}((-l,0)(w,0)(l,0))=\psi(\langle w,l \rangle ) f_{s^\vee,s}((w,0)).$$
Par conséquent, cette égalité entraîne que l'ensemble $\{ w \in W \ | \ f_{s^\vee,s}((w,0)) \neq 0 \}$ est inclus dans le compact $L^\perp $.

b) Il reste alors à montrer que $\textup{ind}_F^{H}(\psi)$ est isotypique. Soit $W = X+Y$ une polarisation complète. Pour tout lagrangien $A$ de $W$, on pose $\psi_A : (x,t) \in A_H \mapsto \psi(t) \in R^\times$. C'est un caractère de $A_H$ qui étend $\psi$. Soient :
$$S = \textup{ind}_{X_H}^H(\psi_X) \textup{ et } S' =  \textup{ind}_{Y_H}^H(\psi_Y^{-1}).$$
D'après le Lemme \ref{stonve_von_neumann_premier_lem}, ces deux représentations sont irréductibles. De plus,  d'après la Remarque \ref{remarque_car_R_non_p_si_psi_existe}, il existe des mesures de Haar $\mu_X$ et $\mu_Y$ de $X$ et $Y$. Une vérification élémentaire donne que l'application linéaire suivante est un isomorphisme de représentations :
$$\begin{array}{ccc} 
\textup{ind}_{X_H}^H(\psi_X^{-1}) & \rightarrow & \textup{ind}_{Y_H}^H(\psi_Y^{-1}) \\
f & \mapsto & \displaystyle \bigg( h \mapsto \int_Y f((y,0)h) d \mu_Y(y) \bigg) \end{array} .$$
Ainsi, la dualité entre $S^\vee$ et $S$ se transporte en une dualité entre $S'$ et $S$ via :
$$\langle s',s \rangle  = \int_{X \times Y} s'((x,0)) s((y,0)) \psi(\langle x,y \rangle ) d \mu_X(x) d \mu_Y(y).$$

On considère l'opérateur d'entrelacement $(s',s) \in S' \otimes S \to f_{s',s} \in \textup{ind}_F^H(\psi)$ qui est construit comme précédemment. On va montrer grâce aux deux modèles explicites $S'$ et $S$ que cet opérateur est un isomorphisme de $H \times H$-représentations. Tout d'abord, on a des isomorphismes d'espaces vectoriels $S' \simeq C_c^\infty(X)$ et $S \simeq C_c^\infty(Y)$ induits par la restriction à $X$ et $Y$ respectivement. De même, la représentation d'arrivée s'identifie à $C_c^\infty(W)$ par la restriction à $W$. Enfin, les espaces vectoriels $C_c^\infty(X) \otimes C_c^\infty(Y)$ et $C_c^\infty(W)$ sont naturellement isomorphes. On obtient alors qu'il correspond à l'opérateur d'entrelacement précédent un unique endomorphisme de $C_c^\infty(W)$. Une fois explicité l'action de $H$ sur $S$, ce qui est fait dans la partie suivante avec le modèle de Schrödinger, cela donne en notant $w = w_X + w_Y \in W$ :
\begin{eqnarray*} f_{s',s}((w,0)) &=& \langle s',\rho_\psi((w,0)) s \rangle  \\
&=& \psi \bigg( \frac{1}{2}\langle w_Y,w_X \rangle \bigg) \int_W \psi([v,w]) \beta(v) f((v,0)) d\mu_W(v).
\end{eqnarray*}
où le crochet, qui est défini par $[v,w]=\langle v_Y,w_X \rangle +\langle w_Y,v_X \rangle$ si $v=v_X+v_Y$, permet d'identifier $W$ et son dual via $w \mapsto [\cdot,w]$, et la fonction $\beta : v \mapsto \psi(\langle v_X , v_Y \rangle )$ est localement constante et possède un inverse (pour la multiplication des fonctions).

Ainsi, le morphisme  $(s',s) \mapsto f_{s',s}$ est la composée de $f  \mapsto \beta f$, qui est un automorphisme de $\mathcal{S}(W)$, de la transformée de Fourier sur $\mathcal{S}(W)$ et de la multiplication par la fonction $(w,0) \mapsto \psi(\frac{1}{2} \langle w_Y , w_X  \rangle )$ qui est localement constante et possède un inverse (pour la multiplication des fonctions). Avec les hypothèses portant sur $R$, la transformée de Fourier est bijective \cite[I.3.10]{vig}. On vient donc de prouver que $(s',s) \mapsto f_{s',s}$ est un isomorphisme de $H \times H$-représentations \textit{i.e.} $S' \otimes S \simeq \textup{ind}_F^H(\psi)$. De l'irréductibilité de $S$, on déduit finalement que $\textup{ind}_F^H(\psi)$ muni de l'action par translation à droite de $H$ est une somme directe de copies de $S$. \end{proof}

Ceci conclut la preuve du Théorème \ref{stone_von_neumann_modulaire_thm}, l'existence provenant du Lemme \ref{stonve_von_neumann_premier_lem} et l'unicité du Lemme \ref{stonve_von_neumann_deuxieme_lem}. \end{proof}

\subsection{Modèles des représentations métaplectiques et propriétés} \label{modeles_rep_metaplectique_section}

Le théorème de Stone-von Neumman modulaire assure que pour tout caractère lisse non trivial $\psi : F \to R$, il existe une unique classe d'isomorphisme de représentations irréductibles de caractère central $\psi$. On appelle \textit{représentation métaplectique associée à $\psi$} cette unique classe d'isomorphisme $\rho_\psi$ et, par extension, toute représentation qui appartient à cette unique classe. En particulier, les représentations $S_A = \textup{ind}_{A_H}^H( \psi_A)$ construites au Lemme \ref{stonve_von_neumann_premier_lem} fournissent des modèles explicites de la représentation métaplectique associée à $\psi$. Ceux-ci ont été particulièrement étudiés quand $A$ est :
\begin{itemize}[label=$\bullet$] 
\item un lagrangien (modèle de Schrödinger) ;
\item un réseau auto-dual sur un corps local non archimédien (modèle latticiel). \end{itemize}

\paragraph{Modèle de Schrödinger.} Soit $W=X+Y$ une polarisation complète de $W$. Le sous-espace vectoriel $X$ est un sous-espace auto-dual de $W$. On pose $S_X = \textup{ind}_{X_H}^H(\psi_X)$ où $\psi_X((w,t)) = \psi(t)$ est un caractère de $X_H = X \times F$. La restriction à $Y$ définit un isomorphisme de représentations $S_X \simeq C_c^\infty(Y)$ où l'action sur le second membre est rendue explicite pour $h = (w_X + w_Y , t ) \in H$ et $f \in C_c^\infty(Y)$ à l'aide d'un calcul simple :
$$\rho_\psi(h) f : y \in Y \mapsto \psi \bigg( \langle y,w_X \rangle  + \frac{1}{2} \langle w_Y,w_X \rangle  + t \bigg) f(y+ w_Y) \in R.$$

\paragraph{Modèle latticiel.} On suppose que $F$ est local non archimédien. On note $\mathcal{P}_F$ l'idéal maximal de son anneau des entiers $\mathcal{O}_F$. Soit $l_\psi$ le niveau -- ou conducteur -- de $\psi$. C'est le plus petit entier relatif $n$ tel qu'on ait $\mathcal{P}_F^n \subset \text{Ker}(\psi)$. Soit $A$ un réseau de $W$ \textit{i.e.} un sous-$\mathcal{O}_F$-module de type fini et de rang maximal. On a alors :
$$A^\perp = \{ w \in W \ | \ \forall a \in A, \ \langle w,a \rangle  \in \mathcal{P}_F^{l_\psi} \}.$$
C'est de nouveau un réseau de $W$, qui peut s'exprimer facilement dans une base hyperbolique. Par exemple, si $(e_i)_{|i| \in [\![1,\dim W]\!]}$ désigne une base hyperbolique de $W$, il existe des entiers relatifs $a_i$ tels que $A=\bigoplus \mathcal{P}_F^{a_i} e_i$. Alors $A^\perp = \bigoplus \mathcal{P}_F^{l_\psi-a_i} e_{-i}$. Il est auto-dual à condition que l'on ait $l_\psi = a_i + a_{-i}$. Quand cette condition est vérifiée, il existe d'après le Lemme \ref{stonve_von_neumann_premier_lem} un caractère $\psi_A$ de $A_H = A \times F$ dont la restriction à $F$ est $\psi$. Enfin, en posant $S_A = \textup{ind}_{A_H}^H (\psi_A)$, la restriction à $W$ induit un isomorphisme de représentations entre $S_A$ et le sous-espace de fonctions de $C_c^\infty(W)$ qui vérifient :
$$f(a+w) = \psi_A(\langle w,a \rangle )f(w) \text{ , pour tout } a \in A \text{ et } w \in W.$$
L'action de $h=(w,t) \in H$ sur un élément $f \in C_c^\infty(W)$ dans ce sous-espace est rendue explicite à l'aide d'un calcul simple : 
$$ \rho_\psi(h)  f : w' \mapsto  \psi(t) \psi \bigg( \frac{1}{2}\langle w',w \rangle \bigg) f(w'+w).$$

\begin{prop} \label{representation_metaplectique_prop} Soit $\rho_\psi$ la représentation métaplectique associée à $\psi$. On a : 
\begin{enumerate}[label=\textup{\alph*)}]
\item les représentations $\rho_\psi^\vee$ et $\rho_{\psi^{-1}}$ sont isomorphes ;
\item les représentations $\rho_{\psi^{-1}} \otimes \rho_\psi$ et $\textup{ind}_F^H( \psi )$ sont isomorphes, où $H \times H$ agit sur la seconde par multiplication à gauche et à droite ;
\item la représentation $\rho_\psi$ est admissible, absolument irréductible et vérifie le lemme de Schur \textit{i.e.} $\textup{End}_{R[H]}(\rho_\psi) = R$  ;
\item toute représentation lisse $(\rho,S)$ de $H$ qui admet pour caractère central $\psi$ est semi-simple ;
\item soient : 
\begin{itemize}[label=$\bullet$]
\item $(W_1,\langle , \rangle_1)$ et $(W_2,\langle , \rangle_2)$ deux espaces symplectiques sur $F$ ;
\item $W= W_1 \oplus W_2$ leur somme orthogonale ;
\item $H(W_1,\langle , \rangle_1)$ et $H(W_2,\langle , \rangle_2)$ les groupes d'Heinseberg associés ;
\item $\rho_\psi^1$ et $\rho_\psi^2$ les représentations métaplectiques respectives associées à $\psi$.
\end{itemize}
Alors la représentation $\rho_\psi^1 \otimes \rho_\psi^2$ s'identifie à la représentation métaplectique du groupe $H(W,\langle , \rangle )$. La représentation suivante en est donc un modèle :
$$(w_1+w_2,t) \mapsto \psi(t) \times \big( \rho_\psi^1((w_1,0)) \otimes \rho_\psi^2((w_2,0)) \big).$$
\end{enumerate} \end{prop}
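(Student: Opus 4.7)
Le plan est de déduire les points a) et b) de manipulations directes des lemmes établis lors de la preuve du Théorème \ref{stone_von_neumann_modulaire_thm}, puis d'obtenir les points techniques c) et d) à l'aide du modèle de Schrödinger combiné à une extension des scalaires, et enfin d'aborder e) par un argument d'unicité. Pour a), on applique le point b) du Lemme \ref{stonve_von_neumann_premier_lem} à un lagrangien $X$ : la contragrédiente du modèle $S_X = \textup{ind}_{X_H}^H(\psi_X)$ est $\textup{ind}_{X_H}^H(\psi_X^{-1})$, qui est un modèle de $\rho_{\psi^{-1}}$ puisque $\psi_X^{-1}$ prolonge $\psi^{-1}$. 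Pour b), il suffit d'observer que l'isomorphisme $S' \otimes S \simeq \textup{ind}_F^H(\psi)$ construit à la fin de la preuve du Lemme \ref{stonve_von_neumann_deuxieme_lem} est précisément un isomorphisme de $H \times H$-représentations, avec $S \simeq \rho_\psi$ et $S' \simeq \rho_{\psi^{-1}}$.

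Pour l'admissibilité au point c), j'utilise le modèle de Schrödinger $S \simeq C_c^\infty(Y)$ : étant donné un sous-groupe compact ouvert $K$ de $H$, on peut le rétrécir de sorte qu'il contienne $\delta(L_X)$ et $\delta(L_Y)$ pour des compacts ouverts $L_X \subset X$ et $L_Y \subset Y$ convenablement petits. L'invariance sous $\delta(L_X)$ impose à tout vecteur $f \in C_c^\infty(Y)$ d'être supporté dans le compact $L_X^\perp \cap Y$, tandis que l'invariance sous $\delta(L_Y)$ le force à être $L_Y$-périodique ; l'espace des $K$-invariants est donc de dimension finie. Pour l'irréductibilité absolue et le lemme de Schur, j'exploite la compatibilité manifeste du modèle de Schrödinger à l'extension des scalaires : les formules d'action étant $R$-linéaires et ne mettant en jeu que des valeurs de $\psi$, on obtient $\rho_\psi \otimes_R \bar R \simeq C_c^\infty(Y,\bar R)$ comme représentation lisse de $H$ sur $\bar R$. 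C'est donc un modèle de la représentation métaplectique à coefficients dans $\bar R$ par le Théorème \ref{stone_von_neumann_modulaire_thm}, et l'irréductibilité absolue en découle. Le lemme de Schur sur $\bar R$ s'établit alors classiquement en diagonalisant tout endomorphisme sur un espace de $K$-invariants de dimension finie, puis se transporte à $R$ par descente.

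Le point d) est une conséquence formelle des précédents : grâce à b), la représentation $\textup{ind}_F^H(\psi)$ est, comme $H$-module par translation à droite, isomorphe à $S' \otimes S$, donc somme directe de copies de $\rho_\psi$ et par suite semi-simple. Par réciprocité de Frobenius pour l'inclusion centrale $F \hookrightarrow H$, toute représentation lisse $(\rho,V)$ de caractère central $\psi$ est quotient d'une somme directe de copies de $\textup{ind}_F^H(\psi)$, donc semi-simple elle aussi. Pour le point e), l'orthogonalité $\langle w_1,w_2 \rangle = 0$ pour $w_1 \in W_1$ et $w_2 \in W_2$ garantit que $((w_1,t_1),(w_2,t_2)) \mapsto (w_1+w_2,t_1+t_2)$ définit un morphisme surjectif $H(W_1) \times H(W_2) \to H(W)$ de noyau $\{((0,t),(0,-t)) : t \in F\}$. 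Comme $\rho_\psi^1$ et $\rho_\psi^2$ ont toutes deux pour caractère central $\psi$, le produit externe $\rho_\psi^1 \boxtimes \rho_\psi^2$ est trivial sur ce noyau, donc descend à $H(W)$ ; la formule annoncée en est exactement l'expression. L'absolue irréductibilité de chaque facteur, acquise au point c), entraîne celle du produit externe, donc aussi celle de sa descente à $H(W)$ ; son caractère central étant manifestement $\psi$, l'unicité dans le Théorème \ref{stone_von_neumann_modulaire_thm} permet d'identifier cette représentation à la représentation métaplectique de $H(W)$ associée à $\psi$.

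L'obstacle le plus délicat me semble être la justification soigneuse de la compatibilité du Théorème \ref{stone_von_neumann_modulaire_thm} à l'extension des scalaires, qui sous-tend à la fois l'irréductibilité absolue au point c), le lemme de Schur et l'irréductibilité du produit tensoriel externe au point e). Une fois ce principe acquis, les autres énoncés s'enchaînent par des manipulations essentiellement formelles fondées sur les lemmes établis au cours de la preuve du Théorème \ref{stone_von_neumann_modulaire_thm}.
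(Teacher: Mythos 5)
Votre d\'ecoupage a), b) et l'essentiel de c) sont corrects et tr\`es proches du texte : a) et b) sortent bien des deux lemmes de la preuve du th\'eor\`eme de Stone--von Neumann, l'admissibilit\'e se lit sur les $K$-invariants du mod\`ele de Schr\"odinger (le papier obtient la m\^eme conclusion via l'\'egalit\'e $\textup{ind}=\textup{Ind}$, mais les deux arguments sont l\'egitimes), l'irr\'eductibilit\'e absolue vient de la compatibilit\'e de l'induction compacte ou du mod\`ele explicite \`a l'extension des scalaires, et le lemme de Schur s'obtient sur $\bar{R}$ \`a l'aide d'une valeur propre puis redescend \`a $R$. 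Le point e) suit \'egalement le sch\'ema du papier.

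Pour d), vous prenez une route r\'eellement diff\'erente et \`a mon sens plus \'economique. Le papier passe par le foncteur des $\psi_A$-isotypiques pour un sous-groupe auto-dual $A$, montre qu'il est exact, que $S^{\psi_A}$ engendre $S$, et que chaque sous-repr\'esentation engendr\'ee par un \'el\'ement de $S^{\psi_A}$ est irr\'eductible (via une alg\`ebre de Hecke idempot\'ee), d'o\`u la semi-simplicit\'e ; cet argument a l'avantage de passer sans encombre dans le cas fini comme dans le cas local. Vous d\'eduisez plut\^ot la semi-simplicit\'e de $\textup{ind}_F^H(\psi)$ du point b) (elle est somme de copies de $\rho_\psi$ par restriction au second facteur de $H\times H$), puis invoquez le fait que toute $(\rho,V)$ lisse de caract\`ere central $\psi$ est quotient d'une somme de copies de $\textup{ind}_F^H(\psi)$. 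Ce dernier point m\'erite toutefois d'\^etre justifi\'e explicitement : la r\'eciprocit\'e de Frobenius usuelle pour l'induction compacte ($\textup{Hom}_H(\textup{ind}_F^H\psi,V)\simeq\textup{Hom}_F(\psi,\textup{Res}\,V)$) suppose le sous-groupe \emph{ouvert}, ce que $F$ n'est pas. L'\'enonc\'e reste n\'eanmoins vrai ici : pour tout $v\in V$, la formule $c_v(f)=\int_{F\backslash H} f(h)\,\rho(h)^{-1}v\,d\mu(h)$ d\'efinit bien un morphisme $\textup{ind}_F^H(\psi)\to V$ (l'int\'egrande est $F$-invariante \`a gauche car $\psi$ est le caract\`ere central, le support est compact modulo $F$, et la mesure existe sur $F\backslash H\simeq W$), et pour $L$ un ouvert compact de $W$ fixant $v$, l'image par $c_v$ de l'indicatrice de $F\delta(L)$ est un multiple non nul de $v$. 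Il faudrait \'ecrire cela pour que l'argument soit complet.

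Une petite lacune dans e) : vous affirmez que l'irr\'eductibilit\'e absolue des facteurs entra\^ine celle du produit externe, mais ce n'est pas automatique pour des repr\'esentations lisses de dimension infinie. Le papier prend soin d'invoquer en outre l'admissibilit\'e, ce qui permet d'appliquer le r\'esultat de l'appendice (Lemme~\ref{produit_tensoriel_de_rep_irred_lem} ou \ref{produit_de_rep_irred_End_lem}, qui identifie les sous-repr\'esentations de $\pi_1\otimes\pi_2$ aux sous-modules de $\textup{End}_{G_1}(\pi_1)\otimes\textup{End}_{G_2}(\pi_2)$). Vous disposez bien de l'admissibilit\'e et du lemme de Schur gr\^ace \`a c), donc l'argument est r\'ecup\'erable, mais il faut citer le bon r\'esultat et non se contenter de l'irr\'eductibilit\'e absolue des facteurs.
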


\begin{proof} a) b) Résultent de la preuve du point b) du Lemme \ref{stonve_von_neumann_deuxieme_lem}.

c) En prenant un modèle explicite $S_A$ où $A$ est un sous-groupe auto-dual de $W$, on a montré au Lemme \ref{stonve_von_neumann_premier_lem} que $S_A = \textup{ind}_{A_H}^H(\psi_A) = \textup{Ind}_{A_H}^H(\psi_A)$, ce qui entraîne que $S_A$ est admissible. Elle est absolument irréductible car l'induction compacte est compatible à l'extension des scalaires. Cela signifie que si $R'$ est une extension de corps de $R$, la représentation $S_A \otimes R'$, qui est à coefficients dans $R'$, est l'induite compacte à coefficients dans $R'$ du caractère $\psi_A$ vu naturellement comme un caractère à coefficients dans $R'$. Pour terminer, la représentation $\rho_\psi$ est admissible et absolument irréductible, donc elle vérifie le lemme de Schur. En effet, il suffit de le prouver sur une clôture algébrique $\bar{R}$ de $R$. Or $S_A \otimes \bar{R}$ est irréductible et admissible, on a donc $\textup{End}_{\bar{R}[H]}( S_A \otimes \bar{R}) = \bar{R}$ d'après \cite[I.6.9]{vig}. Donc le sous-espace vectoriel $\textup{End}_{R[H]}(S_A) \otimes \bar{R}$ de $\textup{End}_{\bar{R}[H]}( S_A \otimes \bar{R})$ est de dimension $1$, d'où l'on déduit que $\textup{End}_{R[H]}(S_A)$ est de dimension $1$ sur $R$. 

d) Par définition, on a $\rho((0,t))=\psi(t) \textup{Id}_S$ pour tout $t \in F$. Si $F$ est fini, comme $H$ est un $p$-groupe fini et que la caractéristique de $R$ est première à $p$ d'après la Remarque \ref{remarque_car_R_non_p_si_psi_existe}, la représentation $\rho$ est semi-simple. Chaque facteur irréductible est de dimension finie et a comme caractère central $\psi$, donc est isomorphe à $\rho_\psi$ par le théorème de Stone-von Neumann modulaire. 

On suppose dorénavant que $F$ est local non archimédien. Soit $A$ un sous-groupe auto-dual de $W$ et $S_A = \textup{ind}_{A_H}^H(\psi_A)$ un modèle de la représentation métaplectique associée à $\psi$. Pour toute représentation lisse $(\rho,S)$ de $H$, on définit sa partie $\psi_A$-isotypique en posant :
$$S^{\psi_A} = \{ s \in S \ | \ \rho(a)s = \psi_A(a) s \textup{ pour tout } a \in A_H \}.$$ 
D'après \cite[I.4.11]{vig} et la Remarque \ref{remarque_car_R_non_p_si_psi_existe}, le foncteur $S \mapsto S^{\psi_A}$ des $\psi_A$-invariants est exact. En effet, $A_H = A \times F$ est filtré par ses sous-groupes compacts ouverts, dont les pro-ordres sont $p^\infty$, donc inversibles dans $R$.

On montre maintenant que $S_A^{\psi_A}$ est de dimension $1$. Pour ce faire, il suffit de remarquer que toute fonction $f \in S_A^{\psi_A}$ a pour support $A_H$ \textit{i.e.} est un multiple de la fonction $\chi_{0,A}$ -- en reprenant les notations de la preuve du Lemme \ref{stonve_von_neumann_premier_lem} -- qui est, à scalaire près, l'unique fonction dans $S_A$ de support $A_H$. Soit donc $f \in S_A^{\psi_A}$. On a pour tout $a \in A$ et tout $w \in W$, l'égalité $(w,0)(a,0)=(a,\langle w,a \rangle )(w,0)$. D'où :
\begin{eqnarray*}
\psi_A((a,0)) f((w,0))=\rho_\psi((a,0))f((w,0))&=&f((w,0)(a,0)) \\
&=&\psi(\langle w,a \rangle ) \psi_A(a) f((w,0)). 
\end{eqnarray*}
On obtient donc que si $f((w,0)) \neq 0$, alors $\psi(\langle w,a \rangle )=1$ pour tout $a\in A$. En d'autres termes, $w \in A^\perp = A$ \textit{i.e.} le support de $f$ est contenu dans $A_H$. Donc $f$ est un multiple de $\chi_{0,A}$. Comme $S_A^{\psi_A}$ est de dimension $1$, la partie $\psi_A$-isotypique de la représentation métaplectique associée à $\psi$ est non triviale. 

Soit $(\rho,S)$ une représentation lisse de $H$ qui admet comme caractère central $\psi$. Vu que toute représentation non nulle admet un sous-quotient irréductible, on déduit du Théorème \ref{stone_von_neumann_modulaire_thm} et de l'exactitude du foncteur de la partie $\psi_A$-isotypique que $S$ est engendré par $S^{\psi_A}$. Il reste à montrer que pour tout $s \in S^{\psi_A}$, la sous-représentation engendrée par $s$ est irréductible.

Soit $A$ un réseau auto-dual de $W$. Alors en choisissant une mesure de Haar $\mu_W$ de $W$, l'espace de fonctions $\mathcal{H}=\textup{ind}_F^H(\psi^{-1})$ est naturellement muni d'une structure d'algèbre à l'aide du produit de convolution :
$$f \star f' : h \mapsto \int_W f((w,0)) f((-w,0)h) d \mu_W(w).$$
Le sous-espace $\mathcal{H}_A$ des fonctions bi-invariantes par $\psi_A^{-1}$ est de dimension $1$ car il s'identifie à la partie $\psi_A \times \psi_A^{-1}$-isotypique de la $H \times H$-représentation $\textup{ind}_F^H(\psi^{-1})$ :
$$\textup{ind}_F^H(\psi^{-1})^{\psi_A \times \psi_A^{-1}} \simeq (\rho_\psi \otimes \rho_{\psi^{-1}})^{\psi_A \times \psi_A^{-1}} = \rho_\psi^{\psi_A} \otimes \rho_{\psi^{-1}}^{\psi_A^{-1}}$$
où le premier isomorphisme provient du point b). On note $\chi_{0,A}$ la fonction de support $A_H$ telle que $\chi_{0,A}(a) = \psi_A^{-1}(a)$ pour tout $a \in A_H$. Quitte à normaliser la mesure $\mu_W$ sur le compact ouvert $A$, on peut supposer que $\chi_{0,A}$ est un idempotent de $\mathcal{H}$. Ainsi, $\rho(\chi_{0,A})$ est un projecteur dont l'image est $S^{\psi_A}$. Soit $s \in S^{\psi_A}$. La représentation engendré par $s$ est irréductible car sa partie $\psi_A$-isotypique est de dimension $1$. En effet, on a $\rho (\mathcal{H}) S = S$ et $(\rho(\mathcal{H})s)^{\psi_A} = \rho( \chi_{0,A}) \rho(\mathcal{H}) s = \rho(\chi_{0,A} \star \mathcal{H} ) \rho(\chi_{0,A}) s = \rho(\mathcal{H}_A)s$. 

e) La représentation $\rho_\psi^1 \otimes \rho_\psi^2$ du groupe $H(W_1,\langle , \rangle_1) \times H(W_2,\langle , \rangle_2)$, dont le pro-ordre est inversible dans $R$ d'après la Remarque \ref{remarque_car_R_non_p_si_psi_existe}, est irréductible car $\rho_\psi^1$ et $\rho_\psi^2$ sont absolument irréductibles et admissibles. Soit $j$ l'homomorphisme de groupes surjectif :
\begin{eqnarray*} 
H(W_1,\langle , \rangle_1) \times H(W_2,\langle , \rangle_2) & \rightarrow & H(W,\langle , \rangle ) \\
((w_1,t_1),(w_2,t_2)) & \mapsto & (w_1 + w_2 , t_1 + t_2)\end{eqnarray*}
dont le noyau est $\{ ((0,t),(0,-t)) \ | \ t \in F \}$. Par conséquent, la représentation $\rho_\psi^1 \otimes \rho_\psi^2$ se factorise par $H(W,\langle , \rangle )$. Celle-ci reste irréductible, elle est bien évidemment lisse et a pour caractère central $\psi$. Donc c'est un modèle de la représentation métaplectique associée à $\psi$ d'après le Théorème \ref{stone_von_neumann_modulaire_thm}. \end{proof}

\subsection{Changement de modèles $S_{A_1} \rightarrow S_{A_2}$} 

L'unicité affirmée par le Théorème \ref{stone_von_neumann_modulaire_thm} entraîne que deux modèles de la représentation métaplectique de $H$ associée à $\psi$ sont isomorphes. Cette partie explique comment construire ces isomorphismes. Soient $A_1$ et $A_2$ deux sous-groupes auto-duaux de $W$. Puisque $\mathcal{O}_F$ est local, principal et complet, il vient en vertu de \cite[I.C.5]{vig} que le sous-groupe $A_1 + A_2$ est d'indice fini dans un sous-groupe fermé de $W$. Il est donc lui-même fermé, ce qui garantit l'égalité $(A_1 \cap A_2)^\perp = A_1 + A_2$. On note $\psi_{A_1}$ et $\psi_{A_2}$ des caractères de $A_{1,H} = A_1 \times F$ et $A_{2,H}=A_2 \times F$ respectivement, dont la restriction à $F$ est $\psi$. On peut alors définir explicitement un morphisme d'entrelacement entre $S_{A_1}$ et $S_{A_2}$ qu'on appelle \textit{changement de modèles}.

\begin{prop}  \label{entrelac_general} Soit $\mu$ une mesure de Haar sur $A_{1,H} \cap A_{2,H} \backslash A_{2,H} = A_1 \cap A_2 \backslash A_2$ à valeurs dans $R$. Soit $\omega \in W$ vérifiant la condition : 
$$\forall a \in A_1 \cap A_2, \ \psi_{A_1}((a,0)) \psi_{A_2}((a,0))^{-1} = \psi(\langle a , \omega \rangle ).$$
Alors l'application $I_{A_1,A_2,\mu,\omega}$ qui à $f \in S_{A_1}$ associe :
$$I_{A_1,A_2,\mu,\omega}f : h \longmapsto \int_{A_{1,H} \cap A_{2,H} \backslash A_{2,H}} \psi_{A_2}(a)^{-1} f((\omega,0) a h) \ d\mu(a)$$
est un isomorphisme de représentations qui engendre $\textup{Hom}_{R[H]}(S_{A_1},S_{A_2}) \simeq R$. \end{prop}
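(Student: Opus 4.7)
Le plan est de vérifier directement que la formule intégrale définit bien un opérateur d'entrelacement non nul, puis d'invoquer le lemme de Schur (démontré au point c) de la Proposition \ref{representation_metaplectique_prop}) pour conclure que cet opérateur engendre $\textup{Hom}_{R[H]}(S_{A_1}, S_{A_2}) \simeq R$. Avant tout, on commence par noter que la condition d'existence de $\omega$ se résout : $\psi_{A_1} \psi_{A_2}^{-1}$ induit un caractère de $A_1 \cap A_2$ trivial sur le centre, donc un caractère du groupe abélien localement profini $A_1 \cap A_2$, et comme $(A_1 \cap A_2)^\perp = A_1 + A_2$ (utilisant que $A_1 + A_2$ est fermé grâce à \cite[I.C.5]{vig}), l'application $\omega \mapsto \psi(\langle \cdot, \omega \rangle)$ induit un isomorphisme de $W/(A_1+A_2)$ sur ce groupe de caractères, ce qui assure l'existence de $\omega$.

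La première étape consiste à montrer que l'intégrande $a \mapsto \psi_{A_2}(a)^{-1} f((\omega,0) a h)$ est bien défini comme fonction sur $A_1 \cap A_2 \backslash A_2$. Un calcul direct dans le groupe d'Heisenberg donne $(\omega,0)(b,0) = (b, -\langle b,\omega \rangle)(\omega, 0)$ pour tout $b \in A_1 \cap A_2$. En utilisant que $f$ se transforme à gauche suivant $\psi_{A_1}$ et la condition imposée à $\omega$, on vérifie que $\psi(-\langle b,\omega \rangle) \psi_{A_1}((b,0)) = \psi_{A_2}((b,0))$, ce qui donne précisément la bonne transformation. Ensuite, la lissité et la compacité modulo $A_1 \cap A_2$ du support de l'intégrande découlent de la lissité et de la compacité modulo $A_{1,H}$ du support de $f$ : l'ensemble des $a \in A_2$ tels que $(\omega,0) a h$ appartient au support est contenu, modulo $A_1 \cap A_2$, dans un translaté compact, car $A_2 / A_1 \cap A_2 \hookrightarrow W / A_1$. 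L'intégrale est donc une somme finie à scalaires près d'une puissance de $p$, bien définie grâce à la Remarque \ref{remarque_car_R_non_p_si_psi_existe}.

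La deuxième étape est de vérifier que $I_{A_1,A_2,\mu,\omega} f$ appartient à $S_{A_2} = \textup{ind}_{A_{2,H}}^H (\psi_{A_2})$. Pour cela, on multiplie $h$ à gauche par $a_0 \in A_{2,H}$ et, après le même type de manipulation dans le groupe d'Heisenberg, le changement de variable $a \mapsto a_0^{-1} a a_0$ (invariance de $\mu$) fait apparaître le facteur $\psi_{A_2}(a_0)$ recherché. La $H$-équivariance pour la translation à droite est immédiate par le changement de variable sur $h$. De plus, la lissité de $I_{A_1,A_2,\mu,\omega} f$ ainsi que la compacité de son support modulo $A_{2,H}$ se vérifient par un argument standard : un sous-groupe ouvert fixant $f$ à droite fixera aussi $I_{A_1,A_2,\mu,\omega} f$, et le support modulo $A_{2,H}$ se contrôle par celui de $f$ via $A_1 + A_2$ et $\omega$.

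Il reste à montrer que l'opérateur est non nul, ce qui sera probablement l'étape la plus délicate : il s'agit de trouver $f \in S_{A_1}$ et $h_0 \in H$ tels que $I_{A_1,A_2,\mu,\omega} f(h_0) \neq 0$. L'idée est de choisir $h_0 = (\omega,0)^{-1}$ et de prendre pour $f$ la fonction caractéristique $\chi_{0,L}$ (notation de la preuve du Lemme \ref{stonve_von_neumann_premier_lem}) pour un sous-groupe ouvert compact $L$ suffisamment petit de $W$, de sorte que les éléments $a \in A_2$ vérifiant $(\omega,0) a h_0 \in A_{1,H} \delta(L)$ forment un voisinage ouvert compact de $0$ dans $A_2$, modulo $A_1 \cap A_2$. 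La valeur de l'intégrale est alors, à un volume non nul près, égale à $1$. La non-nullité est ainsi assurée, puis comme $\textup{Hom}_{R[H]}(S_{A_1},S_{A_2})$ est de dimension $1$ sur $R$ par le lemme de Schur, $I_{A_1,A_2,\mu,\omega}$ est automatiquement un isomorphisme qui engendre cet espace.
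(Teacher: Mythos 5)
Your proposal follows essentially the same route as the paper: check that the integrand descends to the quotient via a Heisenberg commutator computation combined with the defining relation on $\omega$, verify that the output lies in $S_{A_2}$ and is an intertwiner, show non-vanishing on a suitable indicator function, and conclude by Schur's lemma and the one-dimensionality of $\textup{Hom}_{R[H]}(S_{A_1},S_{A_2})$ from Proposition \ref{representation_metaplectique_prop}~c). The paper's write-up is terser but identical in substance; you additionally justify the existence of $\omega$, which the statement takes as a hypothesis, so that part is extra commentary rather than part of the proof.

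One computational slip to fix in the $\psi_{A_2}$-equivariance step: writing $(I_{A_1,A_2,\mu,\omega}f)(a_0 h) = \int \psi_{A_2}(a)^{-1} f((\omega,0)\,a\,a_0\,h)\,d\mu(a)$, the substitution you want is the \emph{right translation} $a \mapsto a\,a_0^{-1}$, not the conjugation $a \mapsto a_0^{-1}\,a\,a_0$. Since the commutator subgroup of $A_{2,H}$ lies in the centre $F \subset A_{1,H}\cap A_{2,H}$, conjugation by $a_0$ induces the identity on the abelian quotient $A_{1,H}\cap A_{2,H}\backslash A_{2,H} \simeq A_1\cap A_2\backslash A_2$, so it produces no factor. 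With the right translation, $\psi_{A_2}(a\,a_0^{-1})^{-1} = \psi_{A_2}(a)^{-1}\psi_{A_2}(a_0)$ yields the required factor, and the invariance of $\mu$ under this translation is what justifies the substitution.
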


\begin{proof} Il y a un léger abus de notation puisque $a \mapsto \psi_{A_2}(a)^{-1} f((\omega,0) a h)$ est une fonction sur $A_{2,H}$. Mais cette fonction est invariante à gauche par $A_{1,H} \cap A_{2,H}$. Elle est bien à support compact modulo $A_{1,H} \cap A_{2,H}$ car l'hypothèse $A_1 + A_2$ fermé assure que l'image de $A_{2,H}$ dans $A_{1,H} \backslash H$ soit fermée. D'où la consistance de la notation et le caractère bien défini de l'intégrale. On vérifie sans difficulté que la fonction ainsi définie appartient à $S_{A_2}$. L'application de la proposition est donc un morphisme d'entrelacement entre deux représentations irréductibles de $H$. D'après le lemme de Schur, c'est un isomorphisme à condition qu'il soit non nul. Pour ce faire, il suffit de remarquer que l'image d'une fonction indicatrice $\chi_{w,L}$ de $S_{A_1}$ est bien non nulle. Enfin, l'espace vectoriel $\textup{Hom}_{R[H]}(S_{A_1},S_{A_2})$ est de dimension $1$ d'après le point c) de la Proposition \ref{representation_metaplectique_prop}. \end{proof}

\begin{rem} \label{simplification_I_A1_A2_rem} L'expression du morphisme d'entrelacement précédent se simplifie grandement si $\psi_{A_1}((a,t))=\psi(t)$ et $\psi_{A_2}((a,t))=\psi(t)$, auquel cas $\omega=0$ convient. On peut toujours trouver de tels caractères qui étendent $\psi$ << trivialement >> si l'une de ces conditions est vérifiée :
\begin{itemize}[label=$\bullet$]
\item $p \neq 2$ ;
\item $A_1$ et $A_2$ sont des lagrangiens.
\end{itemize} 
Dans ce cas, on a $I_{A_1,A_2,\mu} \in \textup{Hom}_{R[H]}(S_{A_1},S_{A_2})$ où : 
$$I_{A_1,A_2,\mu} f : h \longmapsto \int_{A_1\cap A_2  \backslash A_2} f((a,0)h) d \mu(a).$$ \end{rem}

\section{Représentation de Weil modulaire} \label{representation_de_weil_mod_section}

\subsection{Groupe métaplectique et représentation de Weil modulaires} \label{gp_met_et_rep_de_Weil_subsection}

Soient $\psi \in \hat{F}_R$ un caractère non trivial et $(\rho_\psi,S) \in \textup{Rep}_R(H)$ un modèle de la représentation métaplectique associée à $\psi$ (cf. Section \ref{modeles_rep_metaplectique_section}). Le groupe symplectique $\textup{Sp}(W)$ agit sur le groupe d'Heisenberg $H$ via :
$$\begin{array}{ccc}
G \times H & \rightarrow & H \\
(g,(w,t)) & \mapsto & g \cdot (w,t) = (gw,t) \end{array} .$$
Cette action laisse invariant le centre de $H$. Elle préserve la représentation métaplectique au sens suivant. Pour $g \in \textup{Sp}(W)$, on note $\rho_\psi^g : h \mapsto \rho_\psi(g^{-1} \cdot h)$. Alors $(\rho_\psi^g,S) \in \textup{Rep}_R(H)$ est un autre modèle de la représentation métaplectique associée à $\psi$. Par conséquent, le Théorème \ref{stone_von_neumann_modulaire_thm} affirme que les représentations $\rho_\psi$ et $\rho_\psi^g$ sont isomorphes. Il existe donc $M_g \in \textup{GL}_R(S)$, unique à scalaire près d'après le point c) la Proposition \ref{representation_metaplectique_prop}, tel que pour tout $h \in H$ : 
$$M_g  \rho_\psi(h) M_g^{-1}=\rho_\psi^g( h).$$
Pour tout $g \in \textup{Sp}(W)$, on suppose un tel $M_g$ fixé. On obtient ainsi une représentation projective du groupe symplectique qui ne dépend pas du choix des $M_g$ précédents :
$$g \in \textup{Sp}(W) \mapsto \textsc{red}(M_g) \in \text{PGL}_R(S).$$
On la note $\sigma_S$. Un argument classique consiste à relever cette représentation projective en une vraie représentation d'une extension centrale du groupe symplectique. En effet, on considère :
$$\xymatrix{
\widetilde{\textup{Sp}}_{\psi,S}^R(W) \ar[d]^{p_S} \ar[r]^{\omega_{\psi,S}} & \textup{GL}_R(S) \ar[d]^{\textsc{red}} \\
\textup{Sp}(W) \ar[r]^{\sigma_S} & \textup{PGL}_R(S) 
}$$
où $\widetilde{\textup{Sp}}_{\psi,S}^R(W) = \textup{Sp}(W) \times_{\textup{PGL}_R(S)} \textup{GL}_R(S)$ est le produit fibré dans le catégorie des groupes de $\sigma_S$ et de $\textsc{red}$ au-dessus de $\textup{PGL}_R(S)$. Les morphismes de groupes $p_S$ et $\omega_{\psi,S}$ désignent respectivement la première et la seconde projection.

\begin{defi} \label{representation_de_Weil_def1} On donne à la représentation $(\omega_{\psi,S},S)$ le nom de \textit{représentation de Weil modulaire sur $W$ associée à $\psi$ et $S$}. \end{defi}

\begin{prop} \label{groupe_metaplectique_prop} Le groupe $\widetilde{\textup{Sp}}_{\psi,S}^R(W)$ est une extension centrale de $\textup{Sp}(W)$ par $R^\times$ et s'inscrit dans la suite exacte :
$$ 1 \to R^\times \overset{i_S}{\to} \widetilde{\textup{Sp}}_{\psi,S}^R(W) \overset{p_S}{\to} \textup{Sp}(W) \to 1$$
où $i_S : \lambda \mapsto (\textup{Id}_W, \lambda . \textup{Id}_S)$.  Pour deux modèles $S$ et $S'$ de $\rho_\psi$, et pour tout isomorphisme de représentations $\phi : S \to S'$, l'isomorphisme d'extension centrale :
$$\begin{array}{cccl}	
  \widetilde{\textup{Sp}}_{\psi,S}^R(W) & \to & \widetilde{\textup{Sp}}_{\psi,S'}^R(W) \\
  (g,M) & \mapsto & (g, \phi M \phi^{-1}) 
\end{array}$$  
ne dépend pas du choix de $\phi$. On le note $\Phi_{S,S'}$. Sauf dans le cas $F = \mathbb{F}_3$ et $\dim_F W =2$, il existe un unique isomorphisme d'extension centrale entre $\widetilde{\textup{Sp}}_{\psi,S}^R(W)$ et $\widetilde{\textup{Sp}}_{\psi,S'}^R(W)$, et celui-ci est donné par $\Phi_{S,S'}$. \end{prop}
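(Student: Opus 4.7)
The plan is to verify the three claims in turn. For the short exact sequence, I first unpack the fibre product definition $\widetilde{\textup{Sp}}_{\psi,S}^R(W) = \{(g,M) \in \textup{Sp}(W) \times \textup{GL}_R(S) \mid \textsc{red}(M) = \sigma_S(g)\}$. The kernel of $p_S$ consists of pairs $(\textup{Id}_W, M)$ with $\textsc{red}(M) = 1$, i.e. $M \in R^\times \cdot \textup{Id}_S$; this is exactly $i_S(R^\times)$. Surjectivity of $p_S$ is immediate from the existence of a lift $M_g$ for every $g \in \textup{Sp}(W)$ constructed just above the statement. Centrality of $i_S(R^\times)$ is equally immediate, since scalars commute with every operator in $\textup{GL}_R(S)$.

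For the construction of $\Phi_{S,S'}$, I would fix any isomorphism of $H$-representations $\phi : S \to S'$, which exists by the uniqueness part of Theorem \ref{stone_von_neumann_modulaire_thm}. The map $(g, M) \mapsto (g, \phi M \phi^{-1})$ indeed lands in $\widetilde{\textup{Sp}}_{\psi,S'}^R(W)$: conjugating the intertwining identity $M \rho_\psi(h) M^{-1} = \rho_\psi^g(h)$ on $S$ by $\phi$ yields the same identity on $S'$, so that $\phi M \phi^{-1}$ does implement $\sigma_{S'}(g)$. The map is visibly a group homomorphism of central extensions (with inverse given by conjugation by $\phi^{-1}$, compatible with the projections $p_S$ and $p_{S'}$, and sending $i_S(\lambda)$ to $i_{S'}(\lambda)$). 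Independence of the choice of $\phi$ reduces to Schur's lemma: any two intertwiners differ by a scalar $c \in R^\times$ by the identity $\textup{End}_{R[H]}(\rho_\psi) = R$ of Proposition \ref{representation_metaplectique_prop}, and this scalar cancels inside the conjugation $\phi M \phi^{-1}$.

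For the uniqueness statement, given a second isomorphism of central extensions $\Psi : \widetilde{\textup{Sp}}_{\psi,S}^R(W) \to \widetilde{\textup{Sp}}_{\psi,S'}^R(W)$, the composite $\Psi \circ \Phi_{S,S'}^{-1}$ is an automorphism of $\widetilde{\textup{Sp}}_{\psi,S'}^R(W)$ inducing the identity on both $\textup{Sp}(W)$ and the central $R^\times$. Such automorphisms are in canonical bijection with group homomorphisms $\chi : \textup{Sp}(W) \to R^\times$, via the rule $\tilde g \mapsto i_{S'}(\chi(p_{S'}(\tilde g))) \cdot \tilde g$. Hence the question reduces to showing $\textup{Sp}(W)$ admits no non-trivial abelian character outside the exceptional case, i.e. that $\textup{Sp}(W)$ equals its own derived subgroup.

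I expect this last step to be the only real point of substance: everything else is a routine unpacking of the fibre product. The perfectness of $\textup{Sp}(W)$ follows from its generation by symplectic transvections together with the fact that any two transvections of the same line are conjugate by a suitable element of the group; explicit commutator identities then exhibit every transvection as a commutator, provided one can find enough non-scalar elements to perform the conjugation. Under the standing assumption $\textup{char}(F) \neq 2$, the only place where this argument runs out of room is when $F$ is so small and $W$ so low-dimensional that $\textup{Sp}(W)$ has too few elements to carry out the commutator computation; a direct case analysis (or the classical tables) pinpoints this to $F = \mathbb{F}_3$ and $\dim_F W = 2$, where $\textup{SL}_2(\mathbb{F}_3)$ admits $\mathbb{Z}/3\mathbb{Z}$ as its abelianisation. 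This identifies exactly the exceptional case in the statement.
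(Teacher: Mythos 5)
Your proof is correct and follows essentially the same route as the paper: kernel computation for the short exact sequence, Schur's lemma (via Proposition \ref{representation_metaplectique_prop} c) for well-definedness of $\Phi_{S,S'}$, and reduction of the uniqueness claim to the perfectness of $\textup{Sp}(W)$, which fails only for $\textup{SL}_2(\mathbb{F}_3)$. You spell out the transvection argument for perfectness, which the paper simply invokes as known, but this is the only (minor) difference.
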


\begin{proof} Le noyau de $p_S$ est donné par la flèche $i_S$, d'où la suite exacte en question. Ensuite, d'après le point c) de la Proposition \ref{representation_metaplectique_prop}, l'espace vectoriel $\textup{Hom}_{R[H]}(S,S')$ est de dimension $1$ sur $R$. Donc tout morphisme non nul est un isomorphisme de représentations et l'application $M \mapsto \phi M \phi^{-1}$ ne dépend pas du choix du morphisme non nul $\phi \in \textup{Hom}_{R[H]}(S,S')$. On appelle $\Phi_{S,S'}$ le morphisme de groupes $(g,M) \mapsto (g,\phi M \phi^{-1})$, qui est bien un isomorphisme d'extensions centrales $\phi_{S,S'}$ \textit{i.e.} un isomorphisme de groupes tel que $p_{S'} \circ \Phi_{S,S'} = p_S$ et $\Phi_{S,S'} \circ i_S = i_{S'}$.

Pour terminer, le groupe symplectique est parfait \textit{i.e.} égale à son groupe dérivé, sauf dans le cas exceptionnel où $\textup{Sp}(W)=\text{SL}_2(\mathbb{F}_3)$. Or, deux isomorphismes d'extensions centrales différent par un caractère $\chi : \textup{Sp}(W) \rightarrow R^\times$. Si l'on exclut le cas exceptionnel, ce caractère est forcément trivial et $\Phi_{S,S'}$ est l'unique isomorphisme d'extensions centrales en question. \end{proof}

En d'autres termes, la classe d'isomorphisme de l'extension centrale $\widetilde{\textup{Sp}}^R_{\psi,S}(W)$ ne dépend pas du choix du $S$ et il existe des identifications canoniques données par les isomorphismes d'extensions centrales $\Phi_{S,S'}$. 

\begin{defi} \label{groupe_metaplectique_definition} On appelle \textit{groupe métaplectique modulaire sur $W$ associé à $\psi$} la classe d'isomorphisme définie par les extensions centrales précédentes. Par extension, tout élément de cette classe d'isomorphisme est encore désigné comme \og groupe métaplectique \fg{}. \end{defi}

Dans le cas exceptionnel seulement, il existe d'autres isomorphismes d'extensions centrales. En reprenant les notations du diagramme défini plus haut, les représentations $(\omega_{\psi,S'} \circ \Phi_{S,S'},S')$ et $(\omega_{\psi,S},S)$ sont isomorphes. 

\begin{theo} \label{groupe_metaplectique_theoreme} On note $\widehat{\textup{Sp}}_{\psi,S}^R(W)$ le groupe dérivé de $\widetilde{\textup{Sp}}_{\psi,S}^R(W)$.
\begin{enumerate}[label=\textup{\alph*)}]
\item Si $F$ est fini, ou si la caractéristique $\ell$ de $R$ est $2$, il existe un morphisme de groupes (injectif) :
$$\textup{Sp}(W) \to \widetilde{\textup{Sp}}_{\psi,S}^R(W)$$
qui scinde la suite exacte de la Proposition \ref{groupe_metaplectique_prop}. Hormis dans le cas exceptionnel $F= \mathbb{F}_3$ et $\textup{dim}_F W= 2$ où le groupe symplectique n'est pas parfait, cet homomorphisme est unique. Le plongement précédent de $\textup{Sp}(W)$ induit un isomorphisme :
$$\widehat{\textup{Sp}}_{\psi,S}^R(W) \simeq [\textup{Sp}(W),\textup{Sp}(W)]$$
où $[\textup{Sp}(W),\textup{Sp}(W)]=\textup{Sp}(W)$, sauf dans le cas exceptionnel.
\item Si $F$ est local non archimédien et $\ell \neq 2$, un tel homomorphisme n'existe pas. En revanche, on a une suite exacte :
$$1 \to \{ \pm 1 \} \overset{i_S}{\to} \widehat{\textup{Sp}}_{\psi,S}^R(W) \overset{p_S}{\to} \textup{Sp}(W) \to 1.$$
Le groupe $\widehat{\textup{Sp}}_{\psi,S}^R(W)$ est l'unique sous-groupe de $\widetilde{\textup{Sp}}_{\psi,S}^R(W)$ s'inscrivant dans une telle suite exacte.
\item Le groupe $\widehat{\textup{Sp}}_{\psi,S}^R(W)$ est un groupe parfait. \end{enumerate} \end{theo}

\begin{proof} \textup{a)} Si un tel morphisme $\sigma$ existe, il induit un isomorphisme d'extension centrale :
$$(g,\lambda) \in \textup{Sp}(W) \times R^\times \to \sigma(g) i_S(\lambda) \in \widetilde{\textup{Sp}}_{\psi,S}^R(W).$$
Or, l'ensemble de ces isomorphismes d'extensions centrales est en bijection avec l'ensemble des caractères $\textup{Sp}(W) \rightarrow R^\times$, qui est trivial sauf dans le cas exceptionnel. De plus, l'identification des groupes dérivés résulte de cet isomorphisme. Le morphisme en question existe bien quand $F$ est fini en vertu de \cite[Th. 3.3.]{stei} : le groupe symplectique est alors son propre revêtement universel au sens de \cite{moore}. Cela signifie que toute extension centrale du groupe symplectique est scindée. On traite le cas local non archimédien et $R$ de caractéristique $2$ dans la preuve qui suit.

\textup{b)} On suppose maintenant que $F$ est local non archimédien. Quand $R=\mathbb{C}$, \cite{weil} montre qu'il existe un caractère $A_{\mathbb{C}} \to \mathbb{C}^\times$ d'une extension centrale $A_{\mathbb{C}}$ de $\textup{Sp}(W)$ par $\mathbb{C}^\times$, dont le noyau est $\widehat{\textup{Sp}}(W)$, l'unique extension centrale non triviale de $\textup{Sp}(W)$ par $\{\pm 1\}$. À ce titre, ce groupe est parfait. Il est donc le groupe dérivé de $A_\mathbb{C}$. Ce résultat a été généralisé \cite[\S 5]{ct} au cas d'un anneau intègre $R$ de caractéristique $\ell$ différente de $p$ et pour lequel il existe un caractère non trivial $\psi : F \to R^\times$. Cela constitue précisément les hypothèses qui portent sur le corps $R$ dans cet article. On en déduit qu'il existe un caractère $A_R \to R^\times$ d'une extension centrale $A_R$ de $\textup{Sp}(W)$ par $R^\times$ dont le noyau est $\widehat{\textup{Sp}}(W)$ quand $l \neq 2$ et $\textup{Sp}(W)$ quand $l = 2$. Ces deux groupes sont parfaits. On explique dans l'Annexe \ref{lien_avec_ct_section} comment traduire ces résultats dans le contexte ici présent en identifiant $A_R$ et $\widetilde{\textup{Sp}}_{\psi,S}^R(W)$.

On obtient donc un caractère $\widetilde{\textup{Sp}}_{\psi,S}^R(W) \to R^\times$ dont le noyau est un groupe parfait, donc égal au groupe dérivé. Quand $\ell =2$, on applique le même argument que dans le cas fini pour montrer que le morphisme d'inclusion est l'unique morphisme qui scinde la suite exacte de la Proposition \ref{groupe_metaplectique_prop}. Quand $\ell \neq 2$, un tel sous-groupe parfait est unique. En effet, le groupe $\widetilde{\textup{Sp}}_{\psi,S}^R(W)$ ne contient pas le groupe $\textup{Sp}(W)$ comme sous-groupe, d'après \cite[Th. 5.4]{ct}, donc le sous-groupe en question doit être l'unique extension centrale non triviale de $\textup{Sp}(W)$ par $\{ \pm 1 \}$, qui est un groupe parfait, donc contenu dans le groupe dérivé. Par suite, il est égal au groupe dérivé. \end{proof}

On munit $R$ de la topologie discrète. Soit $S$ un espace vectoriel sur $R$. On le munit également de la topologie discrète. La topologie compacte-ouverte sur $\textup{GL}_R(S)$ est engendrée par la prébase $S_{s,s'} = \{g \in \textup{GL}_R(S) \ | \ g s = s'\}$ où $s$ et $s '$ parcourent $S$. Alors, une représentation d'un groupe topologique $G$ est lisse si et seulement si le morphisme $G \to \textup{GL}_R(S)$ associé est continu.

\begin{prop} \label{gp_met_ext_centrale_top_prop} Le groupe métaplectique $\widetilde{\textup{Sp}}_{\psi,S}^R(W)$ est le produit fibré dans la catégorie des groupes topologiques des morphismes de groupes continus $\sigma_S$ et $\textsc{red}$. À ce titre, c'est un sous-groupe topologique de $\textup{Sp}(W) \times \textup{GL}_R(S)$ qui est une extension centrale topologique de $\textup{Sp}(W)$ par $R^\times$. De plus, les isomorphismes $\Phi_{S,S'}$ décrits précédemment sont des isomorphismes d'extensions centrales topologiques. \end{prop}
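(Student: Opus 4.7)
Mon plan est de munir $\widetilde{\textup{Sp}}_{\psi,S}^R(W)$ de la topologie induite par $\textup{Sp}(W) \times \textup{GL}_R(S)$, puis de vérifier en trois temps qu'elle en fait bien un produit fibré dans la catégorie des groupes topologiques, une extension centrale topologique, et qu'elle rend les $\Phi_{S,S'}$ continus. Je commencerais par observer que, $S$ étant discret, la topologie compacte-ouverte sur $\textup{GL}_R(S)$ se ramène à celle de la convergence simple ; une base de voisinages ouverts de l'identité est donnée par les fixateurs de parties finies de $S$. Pour tout $s \neq 0$, les ouverts $\{M : Ms = \lambda s\}$ séparent les scalaires distincts, ce qui entraîne que $R^\times \cdot \textup{Id}_S$ est discret et fermé dans $\textup{GL}_R(S)$, que $\textup{PGL}_R(S)$ est séparé et que $\textsc{red}$ est un morphisme quotient continu et ouvert.

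Le point technique central est la continuité de $\sigma_S$. Je fixerais $g_0 \in \textup{Sp}(W)$, un relevé $M_{g_0}$ de $\sigma_S(g_0)$, et une famille finie $s_1,\ldots,s_n \in S$ ; il s'agit alors de trouver un voisinage ouvert $V$ de $g_0$ tel que $\sigma_S(V) \subset \textsc{red}(M_{g_0} \cdot \textup{Fix}(s_1,\ldots,s_n))$. Par lissité de $\rho_\psi$, il existe un sous-groupe compact ouvert $K$ de $H$ fixant chaque $M_{g_0}s_i$. La continuité de l'action $\textup{Sp}(W) \times H \to H$, $(g,(w,t)) \mapsto (gw,t)$, fournit alors un voisinage ouvert $V$ de $g_0$ tel que $g^{-1} \cdot K' \subset K$ pour tout $g \in V$, où $K'$ est un sous-groupe compact ouvert adéquat de $H$. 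L'identité $M_g \rho_\psi(h) M_g^{-1} = \rho_\psi(g^{-1} \cdot h)$, appliquée à $h \in K'$ et à des vecteurs bien choisis de $S$, permet de normaliser $M_g$ à scalaire près de sorte que $M_g s_i = M_{g_0} s_i$ pour tout $i$, ce qui fournit la continuité recherchée.

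La continuité de $\sigma_S$ et de $\textsc{red}$ étant acquise, le sous-ensemble $\{(g,M) : \sigma_S(g) = \textsc{red}(M)\}$ est fermé dans $\textup{Sp}(W) \times \textup{GL}_R(S)$ car $\textup{PGL}_R(S)$ est séparé ; muni de la topologie induite, c'est donc un sous-groupe topologique. La propriété universelle du produit fibré se vérifie alors de manière formelle : tout couple $(f,h)$ de morphismes continus vers $\textup{Sp}(W)$ et $\textup{GL}_R(S)$ compatibles avec $\sigma_S$ et $\textsc{red}$ se relève en le morphisme continu $x \mapsto (f(x),h(x))$. Pour la structure d'extension centrale topologique, $p_S$ est continue comme projection, et $i_S : \lambda \mapsto (\textup{Id}_W,\lambda\textup{Id}_S)$ est un plongement topologique puisque son image, contenue dans $\{\textup{Id}_W\} \times R^\times \textup{Id}_S$, hérite de la topologie discrète déjà identifiée.

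Pour la dernière assertion, un entrelaceur non nul $\phi : S \to S'$ entre deux modèles discrets induit une application continue $M \mapsto \phi M \phi^{-1}$ pour les topologies compactes-ouvertes, les applications $\phi$ et $\phi^{-1}$ étant automatiquement continues entre espaces discrets ; d'où la continuité de $\Phi_{S,S'}$ et, par symétrie, celle de son inverse $\Phi_{S',S}$. L'obstacle principal de ce plan est clairement la continuité de $\sigma_S$ : il faut combiner la lissité de $\rho_\psi$ avec la continuité de l'action de $\textup{Sp}(W)$ sur $H$ pour obtenir un contrôle uniforme sur les $M_g$ quand $g$ varie au voisinage de $g_0$, les autres vérifications restant essentiellement formelles.
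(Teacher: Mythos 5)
Le plan général est raisonnable et les points annexes sont corrects : la topologie compacte-ouverte sur $\textup{GL}_R(S)$ est bien celle de la convergence simple quand $S$ est discret, $\textup{PGL}_R(S)$ est séparé, $\textsc{red}$ est quotient ouvert, la propriété universelle du produit fibré est formelle, et les $\Phi_{S,S'}$ sont bien des homéomorphismes puisque la conjugaison par un isomorphisme $\phi : S \to S'$ entre espaces discrets est bicontinue. Ce dernier point joue d'ailleurs le même rôle que dans la preuve du papier.

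Le problème se situe à l'étape que tu identifies toi-même comme l'obstacle principal : la continuité de $\sigma_S$. Ton argument consiste, pour $g$ dans un voisinage $V$ de $g_0$, à choisir un compact ouvert $K$ de $H$ fixant les $M_{g_0}s_i$, puis un $K'$ tel que $g^{-1}K' \subset K$ pour tout $g \in V$. L'identité $M_g\rho_\psi(h)M_g^{-1} = \rho_\psi(g^{-1}h)$ appliquée aux $M_{g_0}s_i$ montre alors que $\rho_\psi(h)\,M_g^{-1}M_{g_0}\,s_i = M_g^{-1}M_{g_0}\,s_i$ pour tout $h \in K'$, c'est-à-dire que $M_g^{-1}M_{g_0}\,s_i$ appartient aux $K'$-invariants $S^{K'}$. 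Mais cela ne permet pas de conclure que $M_g^{-1}M_{g_0}$ agit sur chacun des $s_i$ par un \emph{même} scalaire. Or c'est précisément ce qu'il faut : le voisinage $\textsc{red}\bigl(M_{g_0}\cdot\textup{Fix}(s_1,\dots,s_n)\bigr)$ de $\sigma_S(g_0)$ contient $\sigma_S(g)$ si et seulement s'il existe $\lambda \in R^\times$ tel que $\lambda M_g s_i = M_{g_0}s_i$ pour tout $i$ simultanément. Rien dans ce que tu obtiens ne force l'existence d'un tel $\lambda$ global ; savoir que $M_g^{-1}M_{g_0}$ stabilise un certain sous-espace $S^{K'}$ ne dit pas qu'il y agit diagonalement sur les $s_i$ donnés. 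L'étape \og permet de normaliser $M_g$ à scalaire près \fg{} est donc non justifiée et, à mon sens, ne peut pas être réparée à ce niveau de généralité.

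La preuve du papier contourne cette difficulté de façon essentiellement différente : elle se ramène d'abord, grâce aux homéomorphismes $\Phi_{S,S'}$, au modèle latticiel $S_L$. Pour ce modèle il existe une application $N : k \mapsto N_k$ définie sur un sous-groupe compact ouvert $K$ (le stabilisateur du réseau, ou un sous-groupe plus petit si $p = 2$) qui est une \emph{vraie} représentation lisse, donc continue, et qui relève $\sigma_{S_L}|_K$. La relation de cocycle est résolue canoniquement sur $K$, de sorte qu'il n'y a aucune ambiguïté scalaire à contrôler. La continuité de $\sigma_{S_L}$ au voisinage de $1$ s'ensuit, puis en tout point par la propriété de morphisme de groupes vers le groupe topologique $\textup{PGL}_R(S_L)$. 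C'est exactement la \og cohérence des choix de relevés \fg{} que ton argument ne fournit pas ; il te faudrait introduire le modèle latticiel ou un mécanisme équivalent (section lisse du cocycle au voisinage de l'identité) pour que le point central de ta preuve tienne.
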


\begin{proof} Dans le cas où $F$ est fini, la topologie en jeu est partout la topologie discrète. Les groupes en question sont donc automatiquement des groupes topologiques. On suppose dorénavant que $F$ est local non archimédien. Tout d'abord, $\textsc{red}$ est continu par définition de la topologie quotient. Il s'agit ensuite de montrer que $\sigma_S$ est continue. Il suffit de trouver un seul modèle de la représentation métaplectique pour lequel ce soit vrai. En effet, le morphisme de groupes $\Phi_{S,S'}$ induit un isomorphisme de groupes topologiques :
$$\begin{array}{cccl}	
 \textup{GL}_R(S) & \to & \textup{GL}_R(S') \\
  M & \mapsto & \phi M \phi^{-1} 
\end{array}$$
que l'on note encore $\Phi_{S,S'}$. Celui-ci induit à son tour un isomorphisme de groupes topologiques entre $\textup{PGL}_R(S)$ et $\textup{PGL}_R(S')$. On en déduit que $\sigma_S$ est continu si et seulement $\Phi_{S,S'} \circ \sigma_S = \sigma_{S'}$ est continue. La proposition résulte alors de :
\begin{lem} \label{modele_latticiel_continu_lisse_prop} Soit $L$ un réseau auto-dual de $W$. Soit $S_L$ le modèle latticiel associé à $L$. Alors $\sigma_{S_L}$ est continu. \end{lem}

\begin{proof} Soit $(\rho_\psi,S_L)$ le modèle latticiel de la représentation métaplectique défini dans la Section \ref{modeles_rep_metaplectique_section}. Soit $K$ le stabilisateur de $L$ dans $\textup{Sp}(W)$. C'est un sous-groupe compact ouvert. Soient $k \in K$ et $N_k$ l'application linéaire :
$$\begin{array}{cccl}	
\textup{ind}_{L_H}^H(\psi_L) & \to & \textup{ind}_{L_H}^H(\psi_L^k) \\
 f & \mapsto & k \cdot f 
\end{array}$$
où $k \cdot f : h \mapsto f(k^{-1} \cdot h)$ et $\psi_L^k : (l,t) \mapsto \psi_L((k^{-1} l, t))$. 

Quand $p \neq 2$, on peut choisir le caractère $\psi_L$ de sorte que $\psi_L((l,t))=\psi(t)$ d'après la Remarque \ref{simplification_I_A1_A2_rem}. Ainsi, on a $\psi_L^k=\psi_L$ pour tout $k \in K$. Alors $N_k \in \textup{GL}_R(S_L)$ vérifie :
$$N_k \circ \rho_\psi = \rho_\psi^{k^{-1}} \circ N_k.$$
On déduit de cette dernière égalité qu'on a $N_k \circ \rho_\psi^k = \rho_\psi \circ N_k$. En d'autres termes, $N : k \in K \mapsto N_k \in \textup{GL}_R(S_L)$ est une représentation de $K$ qui relève $\sigma_{S_L}|_K$ au sens où $\sigma_{S_L}(k) = \textsc{red}(N_k)$ pour tout $k \in K$. De plus, cette représentation $N$ est lisse. En effet, il suffit d'observer que l'action de $k \in K$ sur $f \in S_L \simeq C_c^{\infty}(W)$ est donnée par $k.f : w \mapsto f(k^{-1} w)$. 

Quand $p = 2$, le caractère $\psi_L$ ne peut pas être étendu << trivialement >> à $L_H$. Il existe en revanche un entier positif $n$ de sorte que $\varpi^n L \times \textup{Ker}(\psi)$ soit un sous-groupe de $\textup{Ker}(\psi_L)$. On fixe un tel $n$. Soit $K_n$ le noyau du morphisme de réduction $K \to \textup{GL}(L / \varpi^n L)$. Alors par les mêmes arguments que précédemment, l'application linéaire $N_k$ est dans $\textup{GL}_R(S_L)$ pour tout $k \in K_n$. La représentation $k \in K_n \mapsto N_k \in \textup{GL}_R(S)$ est lisse et relève $\sigma_{S_L}|_{K_n}$.

Par conséquent, on a montré qu'il existe un sous-groupe compact ouvert $K$ de $\textup{Sp}(W)$ et une représentation lisse de $K$ notée $\sigma$ tels que $\sigma_{S_L}(k) = \textsc{red}( \sigma(k))$. L'application $\sigma_{S_L}$ est donc continue au voisinage de $1_{\textup{Sp}(W)}$ puisque $\sigma$ et $\textsc{red}$ le sont, on en déduit que $\sigma_{S_L}$ est continu partout puisque c'est un morphisme de groupes. \end{proof}

On peut donc former le produit fibré $\widetilde{\textup{Sp}}_{\psi,S}^R(W)$ de $\sigma_S$ et $\textsc{red}$ au-dessus de $\textup{PGL}_R(S)$ dans la catégorie des groupes topologiques. Par définition, il est un sous-groupe topologique du groupe topologique $\textup{Sp}(W) \times \textup{GL}_R(S)$ qui est muni de la topologie produit. Enfin, l'isomorphisme d'extension centrale $\Phi_{S,S'}$ est bicontinu d'après la discussion qui précède le lemme. \end{proof}

En particulier, le morphisme de groupes $\omega_{\psi,S} : \widetilde{\textup{Sp}}_{\psi,S}^R(W) \to \textup{GL}_R(S)$ est continu en tant que seconde projection du produit fibré. Comme mentionné plus haut, ce dernier point est équivalent à :

\begin{cor} \label{rep_de_weil_lisse_cor1} La représentation de Weil modulaire $(\omega_{\psi,S},S)$ est lisse. \end{cor}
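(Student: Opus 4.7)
Le plan est de tirer la lissité comme conséquence immédiate de la Proposition \ref{gp_met_ext_centrale_top_prop}, via la caractérisation énoncée juste avant celle-ci : pour un groupe topologique $G$ et un $R$-espace vectoriel $S$ muni de la topologie discrète, une représentation $G \to \textup{GL}_R(S)$ est lisse si et seulement si le morphisme associé est continu pour la topologie compacte-ouverte sur $\textup{GL}_R(S)$.

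Je commencerais par rappeler que, d'après la Proposition \ref{gp_met_ext_centrale_top_prop}, le groupe $\widetilde{\textup{Sp}}_{\psi,S}^R(W)$ est le produit fibré, dans la catégorie des groupes topologiques, de $\sigma_S$ et $\textsc{red}$ au-dessus de $\textup{PGL}_R(S)$, et que $\omega_{\psi,S}$ en est la seconde projection. Par la propriété universelle du produit fibré, ou simplement parce que $\widetilde{\textup{Sp}}_{\psi,S}^R(W)$ est un sous-groupe topologique de $\textup{Sp}(W) \times \textup{GL}_R(S)$ muni de la topologie produit, la projection $\omega_{\psi,S} : \widetilde{\textup{Sp}}_{\psi,S}^R(W) \to \textup{GL}_R(S)$ est continue.

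Appliquer ensuite la caractérisation lissité-continuité donnée au-dessus de la Proposition \ref{gp_met_ext_centrale_top_prop} conclut immédiatement : la représentation $(\omega_{\psi,S},S)$ est lisse.

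L'étape cruciale a en réalité déjà été franchie dans la preuve de la Proposition \ref{gp_met_ext_centrale_top_prop}, via le Lemme \ref{modele_latticiel_continu_lisse_prop} sur le modèle latticiel, qui fournit la continuité de $\sigma_S$ (le cas $p = 2$ exigeant de passer à un sous-groupe de congruence $K_n$ suffisamment petit pour compenser l'impossibilité de prolonger $\psi$ \og trivialement \fg{} à $L_H$). Ici il n'y a donc rigoureusement aucune difficulté : c'est un énoncé de traduction entre la continuité, déjà acquise, et la lissité, et je le présenterais en une ou deux lignes.
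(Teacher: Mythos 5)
Votre preuve est correcte et reprend exactement l'argument du texte : la continuité de $\omega_{\psi,S}$ en tant que seconde projection du produit fibré topologique établi dans la Proposition \ref{gp_met_ext_centrale_top_prop}, combinée à l'équivalence continuité/lissité rappelée juste avant cette proposition. Le commentaire sur le Lemme \ref{modele_latticiel_continu_lisse_prop} situe utilement où le véritable travail a été accompli, mais ne change rien au raisonnement.
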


La première projection $p_S$ est également continue et fournit une structure plus riche :

\begin{prop} \label{sections_globales_métaplectiques_prop} La projection $p_S : \widetilde{\textup{Sp}}_{\psi,S}^R(W) \to \textup{Sp}(W)$ muni le groupe métaplectique d'une structure de revêtement topologique de base $\textup{Sp}(W)$ et de fibre $R^\times$. De plus, il existe des sections globales de $p_S$. \end{prop}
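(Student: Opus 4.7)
Le plan se divise en deux étapes : établir la structure de revêtement localement autour de l'identité, puis construire une section globale par translation le long d'une partition en co-classes à gauche.

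Tout d'abord, je m'appuie sur la preuve du Lemme \ref{modele_latticiel_continu_lisse_prop}. Celle-ci exhibe, dans le modèle latticiel $S_L$, un sous-groupe compact ouvert $K$ de $\textup{Sp}(W)$ (à savoir le stabilisateur de $L$ quand $p \neq 2$, et le noyau $K_n$ d'une réduction modulo $\varpi^n$ sinon) et une représentation lisse $N : K \to \textup{GL}_R(S_L)$ relevant $\sigma_{S_L}|_K$. L'application $s_K : k \mapsto (k,N_k)$ est alors une section continue de $p_{S_L}$ au-dessus de $K$, qui se transporte en une section continue sur $K$ pour tout autre modèle $S$ via les isomorphismes bicontinus $\Phi_{S_L,S}$ de la Proposition \ref{gp_met_ext_centrale_top_prop} (le cas où $F$ est fini étant trivial puisque toutes les topologies en jeu sont discrètes).

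Ensuite, je montre que l'application
$$(k,\lambda) \in K \times R^\times \longmapsto s_K(k) \cdot i_S(\lambda) \in p_S^{-1}(K)$$
est un homéomorphisme : l'injectivité et la surjectivité proviennent directement de la suite exacte centrale de la Proposition \ref{groupe_metaplectique_prop}, et la bicontinuité s'obtient en rappelant que $R^\times$ est muni de la topologie discrète et que $s_K$ est continue. Par translation à gauche -- choisissant pour tout $g \in \textup{Sp}(W)$ un relevé quelconque $\tilde{g} \in p_S^{-1}(g)$ -- on obtient de même un homéomorphisme $(gK) \times R^\times \simeq p_S^{-1}(gK)$. Comme $gK$ est un voisinage ouvert de $g$, ceci établit la structure de revêtement topologique de base $\textup{Sp}(W)$ et de fibre discrète $R^\times$.

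Pour la section globale, j'utilise que $\textup{Sp}(W)$ étant localement profini, il se décompose comme une union disjointe $\textup{Sp}(W) = \bigsqcup_{i \in I} g_i K$ de co-classes à gauche, où les $g_i K$ sont à la fois ouverts et fermés. Je choisis un relevé $\tilde{g}_i \in p_S^{-1}(g_i)$ pour chaque $i \in I$ et je définis
$$\tilde{\sigma}(g_i k) = \tilde{g}_i \cdot s_K(k), \quad k \in K.$$
La restriction de $\tilde{\sigma}$ à chaque $g_i K$ est continue comme composée d'une translation (bicontinue) avec $s_K$, et comme les $g_i K$ forment une partition ouverte-fermée, le recollement $\tilde{\sigma} : \textup{Sp}(W) \to \widetilde{\textup{Sp}}_{\psi,S}^R(W)$ est continu. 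Il satisfait $p_S \circ \tilde{\sigma} = \textup{Id}_{\textup{Sp}(W)}$ par construction. Le point délicat de la démonstration est la construction de la section locale $s_K$, mais celle-ci a été effectuée lors du Lemme \ref{modele_latticiel_continu_lisse_prop} ; le reste n'est qu'organisation topologique exploitant la structure localement profinie de $\textup{Sp}(W)$.
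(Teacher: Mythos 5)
Ta preuve est correcte et suit essentiellement la même démarche que celle du papier : construction d'une section locale $s_K$ sur un sous-groupe compact ouvert $K$ via le modèle latticiel (Lemme \ref{modele_latticiel_continu_lisse_prop}), transport aux autres modèles par les isomorphismes $\Phi_{S_L,S}$, trivialisation locale $K \times R^\times \simeq p_S^{-1}(K)$, puis recollement sur une décomposition en co-classes ouvertes-fermées. Le seul écart est d'ordre présentationnel : le papier déduit d'abord l'existence de sections globales de celle, supposée, de sections locales, avant d'établir ces dernières, tandis que tu procèdes dans l'ordre logique direct -- les deux versions sont équivalentes.
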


\begin{proof} Tout d'abord, la première assertion implique la seconde. En effet, si $p_S$ définit un revêtement topologique, alors $p_S$ admet des sections locales par définition. On va construire une section globale en exploitant le fait que la topologie sur $\textup{Sp}(W)$ est totalement discontinue. Par hypothèse, il existe un sous-groupe ouvert $U$ de $\textup{Sp}(W)$ et une section locale $\sigma$ de $p_S$ définie sur $U$ de sorte que l'application :
$$(g,\lambda) \in U \times R^\times \mapsto \sigma(g) \lambda \in p_S^{-1}(U)$$
soit un homéomorphisme. On considère le quotient $\textup{Sp}(W)/U$ qui est un ensemble discret. On se dote d'un système de représentants des classes à gauche modulo $U$ \textit{i.e.} d'une application $h : \textup{Sp}(W)/U \to \textup{Sp}(W)$ telle que $h(i) U = i$. L'image $h(\textup{Sp}(W)/U)$ de $h$ est un sous-ensemble discret de $\textup{Sp}(W)$. Alors, pour toute section $A$ de $p_S$ sur le sous-ensemble discret $h(\textup{Sp}(W)/U)$, \textit{i.e.} telle que $p_S(A_{h(i)})=h(i)$ pour tout $h(i) \in h(\textup{Sp}(W)/U)$, l'application suivante est une section globale de $p_S$ :
$$g \in \textup{Sp}(W) \mapsto A_{h(gU)} \times \sigma(h(gU)^{-1} g) \in \widetilde{\textup{Sp}}_{\psi,S}^R(W).$$

Maintenant, il reste à montrer la première partie de l'énoncé, à savoir que $p_S$ admet des sections locales. Cela suffit car les fibres de $p_S$ sont bien isomorphes à $R^\times$ muni de la topologie discrète. Comme $p_S$ est un morphisme de groupes, on peut seulement concentrer ses efforts au voisinage de $1_{\textup{Sp}(W)}$. Or, dans la preuve du Lemme \ref{modele_latticiel_continu_lisse_prop}, on a montré qu'il existait un sous-groupe ouvert $K$ de $\textup{Sp}(W)$ et une représentation lisse $\sigma : K \to \textup{GL}_R(S_L)$ de sorte que $\sigma_{S_L}(k) = \textsc{red} (\sigma(k))$. L'application suivante est alors un homéomorphisme de groupes :
$$(k,\lambda) \in K \times R^\times \mapsto (k, \lambda \sigma(k)) \in p_{S_L}^{-1}(K).$$
Donc $k \mapsto (k,\sigma(k))$ est une section locale de $p_{S_L}$. 

Enfin, comme les applications $\Phi_{S_L,S}$ sont des isomorphismes d'extensions centrales topologiques, on récupère le résultat pour tout modèle $S$ de la représentation métaplectique. \end{proof}

\begin{rem} En particulier, toute section globale $\sigma$ de $p_S$ définit un isomorphisme d'extensions centrales topologiques :
$$(g, \lambda) \in \textup{Sp}(W) \times_c R^\times \mapsto \sigma(g) (1_{\textup{Sp}(W)}, \lambda \textup{Id}_S) \in \widetilde{\textup{Sp}}_{\psi,S}^R(W)$$
le groupe de gauche étant muni de la topologie produit et sa loi donnée par le $2$-cocyle :
$$c(g,g') = i_S^{-1}(\sigma(g) \sigma(g') \sigma(g g')^{-1}).$$
De plus, l'application $(g,g') \in G \times G \mapsto c(g,g') \in R^\times$ est localement constante car continue. \end{rem}

\begin{cor} \
\begin{enumerate}[label=\textup{\alph*)}]
\item Le groupe métaplectique $\widetilde{\textup{Sp}}_{\psi,S}^R(W)$ est un groupe localement profini. 
\item Le groupe $\widehat{\textup{Sp}}_{\psi,S}^R(W)$ est un sous-groupe ouvert de $\widetilde{\textup{Sp}}_{\psi,S}^R(W)$. \end{enumerate} \end{cor}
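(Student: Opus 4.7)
Both parts reduce to producing a compact open subgroup in the right place, so the plan is first to exhibit one in $\widetilde{\textup{Sp}}_{\psi,S}^R(W)$ and then to ensure it can be chosen inside the derived subgroup $\widehat{\textup{Sp}}_{\psi,S}^R(W)$.

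For (a) I would take the lattice model $S = S_L$ and reuse the section constructed in the proof of Lemma \ref{modele_latticiel_continu_lisse_prop}: on a compact open subgroup $K$ of $\textup{Sp}(W)$ (the stabilizer of $L$ when $p \neq 2$, a principal congruence subgroup $K_n$ otherwise), the map $\sigma : k \mapsto (k, N_k)$ is a continuous group homomorphism lifting $\textup{Id}_K$ through $p_{S_L}$. By the local trivialization of the covering $p_{S_L}$ furnished by Proposition \ref{sections_globales_métaplectiques_prop} and the discreteness of the fibre $R^\times$, the image $\sigma(K)$ is compact (as the continuous image of a compact) and open (it is $K \times \{1\}$ in the chart $K \times R^\times \simeq p_{S_L}^{-1}(K)$). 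Transporting along $\Phi_{S_L,S}$ gives a compact open subgroup of $\widetilde{\textup{Sp}}_{\psi,S}^R(W)$, which is therefore locally profinite.

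For (b), knowing from (a) that $\widetilde{\textup{Sp}}_{\psi,S}^R(W)$ is locally profinite, the openness of $\widehat{\textup{Sp}}_{\psi,S}^R(W)$ reduces to showing it contains an open neighbourhood of the identity. Theorem \ref{groupe_metaplectique_theoreme} provides, in every case, a group homomorphism $\chi : \widetilde{\textup{Sp}}_{\psi,S}^R(W) \to R^\times$ whose kernel is exactly $\widehat{\textup{Sp}}_{\psi,S}^R(W)$ (the trivial character when $F$ is finite; in the local non-archimedean case, the character read off from the extension $A_R$ of \cite{ct} via the identification made in Annexe \ref{lien_avec_ct_section}). As $R^\times$ carries the discrete topology, openness of $\widehat{\textup{Sp}}_{\psi,S}^R(W) = \chi^{-1}(\{1\})$ amounts to continuity of $\chi$.

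The only non-formal step is therefore the continuity of $\chi$, and this will be the main obstacle. In the finite case it is automatic. In the non-archimedean case, the strategy is to compose $\chi$ with the continuous section $\sigma$ from the first step: the resulting map $\chi \circ \sigma : K \to R^\times$ is locally constant because the explicit expression of $\chi$ supplied by Annexe \ref{lien_avec_ct_section} involves only Weil factors, which are locally constant on the relevant parameter spaces by point e) of Proposition \ref{facteur_de_weil_non_norm_prop}. Since $\sigma(K)$ is a neighbourhood of the identity in $\widetilde{\textup{Sp}}_{\psi,S}^R(W)$, $\chi$ is continuous at $\textup{Id}$, and being a group homomorphism it is continuous everywhere, so $\widehat{\textup{Sp}}_{\psi,S}^R(W)$ is open. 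The crucial input deferred to the appendix is precisely the explicit realization of $\chi$ in terms of the material of Section \ref{facteur_de_weil_non_norm_sect}.
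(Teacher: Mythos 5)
For part (a) your argument follows the paper's proof essentially verbatim: the homeomorphism $(k,\lambda) \mapsto (k,\lambda\sigma(k))$ onto $p_{S}^{-1}(K)$ built in the proof of Proposition \ref{sections_globales_métaplectiques_prop} supplies a neighbourhood basis of compact open subgroups, and that is exactly what the paper does.

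For part (b) you take a genuinely different route, replacing the paper's argument by the existence of a character $\chi$ with kernel $\widehat{\textup{Sp}}_{\psi,S}^R(W)$. The idea itself is sound in outline, but there is a real gap in the way you try to close it. You reduce everything to the continuity of $\chi$, and then justify that continuity by asserting that Annexe \ref{lien_avec_ct_section} gives an ``explicit expression'' of $\chi$ in terms of Weil factors, so that point e) of Proposition \ref{facteur_de_weil_non_norm_prop} finishes the job. That is not in fact what Annexe \ref{lien_avec_ct_section} does: it identifies the two constructions of the metaplectic group (ours and that of \cite{weil,ct}) as abstract extensions, but it writes down no formula for the character coming from \cite[\S 5]{ct}, and nothing in Section \ref{facteur_de_weil_non_norm_sect} is invoked there. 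Moreover, continuity of $\chi$ cannot be taken for granted just because $\widetilde{\textup{Sp}}_{\psi,S}^R(W)$ is now known to be locally profinite: an abstract character of a profinite group into a discrete group need not be continuous (think of wild characters of $\mathbb{Z}_p$). So as written, the key step of your proof of (b) is unsupported.

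The paper avoids this entirely. Its proof of (b) forward-references Proposition \ref{scindage_groupe_compact_prop}, which shows directly (without any character) that the lattice-model section $k \mapsto (k,I_k)$ of the compact open subgroup $K = \textup{Stab}(A) \cap \textup{Stab}(\psi_A)$ takes values in $\widehat{\textup{Sp}}_{\psi,S_A}^R(W)$. The argument is an explicit computation: for a complete polarization $W = X + Y$ compatible with $A$, the restriction of the lattice section to $N(X) \cap K$ coincides with the canonical parabolic embedding of Proposition \ref{scinde_si_ss_gp_d_un_parabolique_prop}, hence lands in the derived subgroup; same for $N(Y) \cap K$; and these two subgroups generate $K$. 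This gives an open subgroup inside $\widehat{\textup{Sp}}_{\psi,S}^R(W)$ with no appeal to the character. If you want to keep your character-based route, the missing input is precisely such a concrete open piece of $\widehat{\textup{Sp}}_{\psi,S}^R(W)$ — but once you have it, the character detour becomes unnecessary.
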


\begin{proof} Il ne s'agit de prouver ces énoncés que quand $F$ est local non archimédien puisque, quand $F$ est fini, la topologie n'intervient pas.

\textup{a)} Dans la preuve de la Proposition \ref{sections_globales_métaplectiques_prop}

lm
 précédente, les homéomorphismes de groupes :
$$(k,\lambda) \in K \times R^\times \mapsto (k, \lambda \sigma(k)) \in p_S^{-1}(K)$$
fournissent une base de voisinages de $(1_{\textup{Sp}(W)},\textup{Id}_S)$ dans le groupe métaplectique constituée d'ouverts compacts. Par conséquent, le groupe métaplectique est un groupe localement profini. 

\textup{b)} On montrera lors de la Proposition \ref{scindage_groupe_compact_prop} qu'il existe un sous-groupe ouvert de $K$ dont l'image par l'homéomorphisme précédent :
$$k \mapsto (k,\sigma(k))$$
est contenue dans $\widehat{\textup{Sp}}_{\psi,S}^R(W)$. Ce sous-groupe contenant un ouvert, il est donc bien ouvert. Cet argument montre même que ce groupe possède une structure de revêtement topologique de base $\textup{Sp}(W)$ et de fibre $\{ \pm 1 \}$ quand  $\ell \neq 2$. \end{proof}

\subsection{Modèles de la représentation de Weil modulaire}

\subsubsection{Décalque des représentations $S_A$} \label{decalque_des_rep_S_A_section}

On note $(\rho_d,\textup{Ind}_F^H(\psi))$ la représentation où $H$ agit par translation à droite. Pour tout sous-groupe auto-dual $A$ de $W$, on peut voir le modèle $(\rho_\psi,S_A)=(\rho_\psi,\textup{ind}_{A_H}^H(\psi_A))$ de la représentation métaplectique comme contenu canoniquement dans $\rho_d$. 

L'action d'un élément $g \in \textup{Sp}(W)$ sur $H$ donne un isomorphisme :
$$\begin{array}{ccccl}	
I_g : & \textup{ind}_{A_H}^H(\psi_A) & \to & \textup{ind}_{g A_H}^H(\psi_A^g) \\
 & f & \mapsto & g \cdot f 
\end{array}$$
où $g \cdot f : h \mapsto f(g^{-1} \cdot h)$ et $\psi_A^g : a \in gA_H \mapsto \psi_A(g^{-1} \cdot h) \in R^\times$. Alors, pour tout $h \in H$ :
$$I_g \circ \rho_d(h) = \rho_d (g^{-1} \cdot h) \circ I_g.$$ 
En composant avec les morphismes d'entrelacement $I_{gA,A,\mu,\omega}$ de la Proposition \ref{entrelac_general}, on obtient : 
$$S_A \overset{I_g}{\longrightarrow} S_{gA} \overset{I_{gA,A,\mu,\omega}}{\longrightarrow} S_A,$$
qui vérifie :
$$I_{gA,A,\mu,\omega} \circ I_g \circ \rho_d(h) = \rho_d(g^{-1} \cdot h) \circ I_{gA,A,\mu,\omega} \circ I_g.$$
Ainsi :
$$(g,I_{gA,A,\mu,\omega} \circ I_g) \in \widetilde{\textup{Sp}}_{\psi,S_A}^R(W).$$
En particulier, comme pour tout $g \in \textup{Stab}(A) \cap \textup{Stab}(\psi_A)$ l'opérateur $I_{gA,A,\mu,\omega}$ est un multiple de l'identité, l'application :
$$g \in \textup{Stab}(A) \cap \textup{Stab}(\psi_A) \mapsto (g , I_g) \in \widetilde{\textup{Sp}}_{\psi,S_A}^R(W)$$
est un morphisme de groupes.

\paragraph{Modèle de Schr\"odinger.} Soit $X$ un lagrangien de $W$. On considère le modèle de Sch\"odinger sur $X$ de la représentation métaplectique associé à $\psi$. On rappelle que dans les modèles de Schr\"odinger, le caractère $\psi_X$ est trivial sur $X$ \textit{i.e.} $\psi_X((x,t))=\psi(t)$ pour tout $x \in X$ et $t \in F$. Alors, en notant $P(X)$ le parabolique de $\textup{Sp}(W)$ qui stabilise le sous-espace $X$ et le caractère $\psi_X$, on a d'après la remarque précédente un morphisme de groupes :
$$p \in P(X) \mapsto (p,I_p) \in  \widetilde{\textup{Sp}}_{\psi,S_X}^R(W).$$
En choisissant une polarisation complète $W=X+Y$, on identifie $S_X$ avec $C_c^\infty(Y)$ via la restriction des fonctions à $Y$. Alors, en notant $p = \left[ \begin{array}{cc} a & b \\
0 & (a^*)^{-1} \end{array} \right] \in P(X)$ :
$$I_p f : (y,0) \mapsto \psi(\frac{1}{2} \langle a^* y,b^* y\rangle ) f((a^* y,0)).$$
En particulier, cela donne des plongements du Levi $M(X)$ et du radical unipotent $N(X)$ dans le groupe métaplectique. Avant de décrire l'image de ce plongement, on remarque que pour un élément $g = \left[ \begin{array}{cc} 0 & c \\
(c^*)^{-1} & 0 \end{array} \right] \in \textup{Sp}(W)$, on a d'après la Remarque \ref{simplification_I_A1_A2_rem} :
$$(g,I_{Y,X,\mu_X} \circ I_g) \in \widetilde{\textup{Sp}}_{\psi,S_X}^R(W)$$
où $I_{Y,X,\mu_X}$ est simplement la transformée de Fourier. Un calcul donne :
$$I_{Y,X,\mu_X} \circ I_g f : (y,0) \mapsto \int_X \psi(\langle x,y\rangle ) f(c^{-1} x) d \mu_X (x).$$

\begin{rem} \label{extension_des_scalaires_reduction_des_scalaires_rem} Ces formules fournissent des compatibilités à l'extension et à la réduction des scalaires. On développe maintenant ces deux points.

\begin{itemize}[label=$\bullet$]
\item Soit $R'$ une extension de corps de $R$. Alors $S_X^{R'} = S_X \otimes_R R'$ est un modèle de la représentation métaplectique associée à $\psi$ et à coefficients dans $R'$. On a alors une inclusion évidente $i_{R'}$ induite par celle de $S_X$ dans $S_X^{R'}$ :
$$\widetilde{\textup{Sp}}_{\psi,S_X}^R(W) \subset \widetilde{\textup{Sp}}_{\psi,S_X \otimes_R R'}^{R'}(W)$$
compatible aux applications de projection. On écrit par abus de notation :
$$\omega_{\psi,S_X^{R'}} = \omega_{\psi,S_X} \otimes_R R'.$$
\item Soit maintenant $\mathcal{A}$ un anneau local de corps des fractions $R$ et de corps résiduel $k$ de caractéristique résiduelle $\ell \neq p$. Soit $S_X^\mathcal{A}$ le sous-espace de $S_X$ qui s'identifie à $C_c^\infty(Y,\mathcal{A})$ dans $C_c^\infty(Y)$. D'après les formules précédentes, c'est un $\mathcal{A}$-module qui est stable par l'action du groupe métaplectique réduit. On note cette représentation  :
$$(\omega_{\psi,S_X}^{\mathcal{A}},S_X^{\mathcal{A}}) \in \textup{Rep}_{\mathcal{A}}(\widehat{\textup{Sp}}_{\psi,S_X}^R(W))$$
car l'action est la restriction de celle de $\omega_{\psi,S_X}$. Si $\psi$ est à valeurs dans $\mathcal{A}$, alors sa réduction $\psi_k$ est bien défini et est un caractère non trivial à valeurs dans $k$. Cette situation est toujours réalisable quand $\mathcal{A}$ est un anneau de valuation par exemple. Soit $S_X^k$ le modèle de la représentation métaplectique associé à $\psi_k$ et $X$ à valeurs dans $k$. Alors on a :
$$S_X^{\mathcal{A}} \otimes_\mathcal{A} k = S_X^k$$
en tant que représentation du groupe d'Heisenberg. Cela induit un morphisme :
$$i_k : \widehat{\textup{Sp}}_{\psi,S_X}^R(W) \to \widehat{\textup{Sp}}_{\psi_k,S_X^k}^k(W)$$
qui est surjectif et compatible aux projections. On écrit par abus de notation :
$$\omega_{\psi,S_X}^\mathcal{A} \otimes_{\mathcal{A}} k = \omega_{\psi_k,S_X^k}.$$ \end{itemize} \end{rem} 

\paragraph{Modèle de Schrödinger "mixte".} \label{schrodinger_mixte} Soit $X$ un sous-espace totalement isotrope non nul et non maximal de $W$. Soit $Y$ un sous-espace totalement isotrope de $W$ dual de $X$ de sorte que $W= X  + W^0 + Y$, où $W^0$ est le sous-espace symplectique orthogonal, dans $W$, au sous-espace symplectique $X+Y$. Soit $(\rho_\psi^0,S^0)$ un modèle de la représentation métaplectique de $H(W^0,\langle , \rangle )$ associé à $\psi$. On réalise la représentation métaplectique de $H(X+Y,\langle , \rangle )$ à travers le modèle de Schr\"odinger $C_c^\infty(Y)$ associé à $X$ et $\psi$. Alors, d'après le point e) de la Proposition \ref{representation_metaplectique_prop}, la représentation $S=C_c^\infty(Y) \otimes S^0$ est un modèle de la représentation métaplectique de $H(W,\langle , \rangle )$ associé à $\psi$. On note $P(X)$ le parabolique de $\textup{Sp}(W)$ qui stabilise le sous-espace $X$ et $j : P(X) \rightarrow \textup{Sp}(W^0)$ la projection naturelle sur la partie symplectique du Levi $M(X)$ de $P(X)$. Cette dernière possède une section naturelle donnée par l'inclusion de $\textup{Sp}(W^0)$ dans $M(X)$. Par abus de notation $p \in P(X) \mapsto (p u^{-1} , u) \in \textup{Ker} (j) \rtimes \textup{Sp}(W^0)$ est un isomorphisme de groupes. On vérifie sans difficulté :

\begin{lem} \label{isomorphisme_parabolique_gp_met_lem} On a un isomorphisme de groupes :
$$\begin{array}{cccl}	
P(X) \times_{\textup{Sp}(W^0)}  \widetilde{\textup{Sp}}_{\psi,S^0}^R(W^0)  & \overset{\sim}{\to} & p_S^{-1}(P(X)) \\
(p,\tilde{u}) & \mapsto & (p , I_{p u^{-1}} \otimes \omega_{\psi,S^0} (\tilde{u}))
\end{array}$$
où le produit fibré de gauche est donné par $j$ et $p_{S^0}$. \end{lem}
\noindent En particulier, on peut considérer l'action de $\textup{Ker} (j)$ via le morphisme de groupes :
$$p \in \textup{Ker} j \mapsto (p,I_p \otimes \textup{Id}_{S^0}) \in \widetilde{\textup{Sp}}_{\psi,S}^R(W).$$
On donne maintenant les actions de sous-groupes remarquables -- même si le dernier n'en est pas un ! -- pour $f \in C_c^\infty(Y) \otimes S^0 = C_c^\infty(Y,S^0)$ :
\begin{itemize}[label=$\bullet$]
\item pour tout $p=(a,u) \in M(X) = \textup{GL}(X) \times \textup{Sp}(W^0)$, on a :
$$(p,\tilde{u}) \cdot f : y \mapsto \omega_{\psi,S^0}(\tilde{u}) \cdot (f (a^* y)) \ ;$$
\item pour tout $p = \left[ \begin{array}{ccc} \textup{Id}_X & 0 & s \\
0 & \textup{Id}_{W^0} & 0 \\
0 & 0 & \textup{Id}_Y \end{array} \right] \in \textup{Sp}(W)$, on a :
$$(p,(1_{\textup{Sp}(W)},\textup{Id}_{S^0})) \cdot f : y \mapsto \psi( \frac{1}{2} \langle s y , y \rangle ) f(y) \ ;$$
\item pour tout $p = \left[ \begin{array}{ccc} \textup{Id}_X & v & 0 \\
0 & \textup{Id}_{W^0} & - v^* \\
0 & 0 & \textup{Id}_Y \end{array} \right] \in \textup{Sp}(W)$, on a :
$$(p,(1_{\textup{Sp}(W)},\textup{Id}_{S^0})) \cdot f : y \mapsto  \rho_\psi^0((v^* y,0)) \cdot (f(y)).$$
\end{itemize}

\paragraph{Modèle latticiel.} Soit $F$ local non archimédien de caractéristique résiduelle différente de $2$. Soit $A$ un réseau autodual de $W$. L'hypothèse sur $F$ permet d'étendre le caractère $\psi$ trivialement à $A$ d'après la Remarque \ref{simplification_I_A1_A2_rem} \textit{i.e.} de choisir $\omega=0$. Pour $g \in \textup{Sp}(W)$, $A/gA\cap A$ est fini. On le munit de la mesure de comptage $\mu$. Un calcul explicite donne :
$$I_{gA,A,\mu} \circ I_g f : (w,0) \mapsto \sum_{a \in A/gA \cap A} \psi(\frac{1}{2} \langle a,w \rangle) f((g^{-1}(a+w),0)).$$
Ainsi, en notant $K$ le stabilisateur de $A$ dans $\textup{Sp}(W)$, on a un morphisme de groupes  :
$$k \in K \mapsto (k,I_k) \in  \widetilde{\textup{Sp}}_{\psi,S_A}(W)$$
qui définit une représentation lisse $k \in K \mapsto \omega_{\psi,S_A}((k,I_k))=I_k \in \textup{GL}_R(S_A)$.

\begin{rem} Le modèle latticiel existe en caractéristique résiduelle $2$. Seulement, le fait que le caractère $\psi_A$ ne soit plus trivial sur $A$ amène à des formules qui ne sont guère exploitables. On les donne à titre de remarque. Soit $A$ un réseau auto-dual de $W$ et $\psi_A$ un caractère qui étend $\psi$ à $A \times F$. Pour tout $g \in \textup{Sp}(W)$, on munit systématiquement $A/ gA \cap A$ de la mesure de comptage. Pour tout $k$ dans $K$, on a :
$$I_{kA,A,\mu,\omega_k} \circ I_k f : (w,0) \mapsto  f((k^{-1} \omega_k,0)(k^{-1}w,0)) = \psi (\frac{1}{2} \langle \omega_k, w \rangle ) f((k^{-1}( \omega_k +w),0))$$
où $\omega_k$ est pris comme dans la Proposition \ref{entrelac_general}, et peut être choisi nul si $k$ appartient au sous-groupe ouvert compact $K \cap \textup{Stab}(\psi_A) = \{ k \in K \ | \ \psi_A^k=\psi_A\}$. Pour $g \in \textup{Sp}(W)$, en choisissant $\omega_g$ comme dans la Proposition \ref{entrelac_general}, la fonction $I_{gA,A,\mu,\omega_g} \circ I_g f$ est :
$$(w,0) \mapsto \sum_{a \in A/ g A \cap A} \psi_A((a,0))^{-1} \psi(\frac{1}{2}\langle \omega_g , a \rangle) \times \psi(\frac{1}{2} \langle \omega_g + a , w \rangle)f((g^{-1}(\omega_g+a+w),0)).$$ \end{rem}

\subsubsection{Un autre modèle}

Soit $(\rho_\psi,S)$ un modèle de la représentation métaplectique de $H$. Soit $g \in \textup{Sp}(W)$. Fixons une mesure de Haar $\mu_g$ sur l'espace vectoriel $W / \textup{Ker} (\textup{Id}_W -g^{-1})$. On vérifie que pour tout $s \in S$, la fonction :
$$w \in W \mapsto \psi(\frac{\langle w,g^{-1} w \rangle }{2}) \rho_\psi((\textup{Id}_W-g^{-1})w,0) s \in S$$
est constante sur les classes modulo $\textup{Ker}(\textup{Id}_W-g^{-1})$. 

\begin{lem} Soient $g \in \textup{Sp}(W)$ et $\mu_g$ une mesure de Haar de $W / \textup{Ker}(\textup{Id}_W-g^{-1})$.

\begin{itemize}[label=$\bullet$]
\item si $F$ est fini on définit $M[g] \in \textup{End}_R(S)$ par :
$$M[g] : s \mapsto \int_{W/\textup{Ker}(1-g^{-1})}\psi(\frac{\langle w,g^{-1} w \rangle }{2}) \rho_\psi  ((\textup{Id}_W-g^{-1})w,0)) s \ d\mu_g (w) ;$$
\item si $F$ est local non archimédien, pour tout réseau $L$ de $W/\textup{Ker}(\textup{Id}_W-g^{-1})$ on définit l'application :
$$M_L[g] : s \mapsto \int_L \psi(\frac{\langle w,g^{-1} w \rangle }{2}) \rho_\psi  ((\textup{Id}_W-g^{-1})w,0)) s \ d \mu_g w$$
Pour tout $s \in S$, l'élément $M_L[g] s$ ne dépend pas de $L$ au sens suivant :
il existe un réseau $L_s \subset W/\textup{Ker}(\textup{Id}_W-g^{-1})$, et un élément noté $M[g] s \in S$ tels que si $L$ est un réseau de $W/\textup{Ker}(\textup{Id}_W-g^{-1})$, si $L_s \subset L$, on a l'égalité $M_L[g] s=M[g] s$. L'application ainsi définie $M[g] : s \mapsto M[g] s$ appartient à $\textup{End}_R(S)$.
\end{itemize}
Alors $M[g] \in \textup{Hom}_H(\rho_\psi,\rho_\psi^g)$ \textit{i.e.} $(g,M[g]) \in \widetilde{\textup{Sp}}_{\psi,S}^R(W)$.
\end{lem}

\begin{proof}  On commence par une remarque d'ordre plus générale pour comprendre d'où viennent ces formules. Soit $G$ un groupe localement profini. Soient $(\pi,V)$ et $(\pi',V)$ deux représentations lisses de $G$ sur le même $R$-espace vectoriel $V$. Soit $H$ un sous-groupe compact. On le suppose de pro-ordre inversible dans $R$ -- ou contenant un sous-groupe ouvert dont le pro-ordre est inversible -- et on choisit une mesure $\mu_H$ de $H$ dans $R$. Alors l'endomorphisme :
$$f : v \in V \mapsto \int_H \pi'(k)^{-1} \pi(k) v \ d \mu_K(k) \in V$$
est dans $\textup{Hom}_H(\pi,\pi')$.

On prend dorénavant $G=W/\textup{Ker}(\textup{Id}_W-g^{-1})$, $H=L$, $V=S$, $\pi=\rho_\psi$ et $\pi'=\rho_\psi^g$. Comme cela a été dit plus haut, la fonction : 
$$w \in W \mapsto \rho_\psi^g((w,0))^{-1} \rho_\psi((w,0)) s = \psi(\frac{\langle w , g^{-1} w \rangle}{2})\rho_\psi(((\textup{Id}_W-g^{-1}) w,0))s \in S$$
est bi-invariante par $\textup{Ker}(\textup{Id}_W-g^{-1})$, ce qui permet de la considérer comme un élément de $C^\infty(W/\textup{Ker}(\textup{Id}_W-g^{-1}),S)$. Alors en l'intégrant sur le compact $L$, qu'on peut prendre égal à $W$ dans le cas fini, on obtient que $M_L[g] \in \textup{Hom}_L(\rho_\psi,\rho_\psi^g)$. Pour montrer l'existence du réseau $L_s$, la preuve est identique à celle de \cite[Chap.2, Lem. II.2]{mvw}. Ensuite, $M[g]$ est un endomorphisme de $S$ et $M_L[g] s$ ne dépend pas de $L$, donc $M[g] \in \textup{Hom}_H(\rho_\psi,\rho_\psi^g)$ puisque $H$ est réunion de ses sous-groupes compacts et $M_L \in \textup{Hom}_L(\rho_\psi,\rho_\psi^g)$. Comme $M[g]$ est non nul, il est inversible car les représentations $\rho_\psi$ et $\rho_\psi^g$ sont irréductibles. On a bien $(g,M[g])$ qui appartient au groupe métaplectique. \end{proof}

\begin{lem} \label{paires-duales-commutant-lem} Soient $g_1, g_2 \in \textup{Sp}(W)$. Supposons que $g_1 g_2 = g_2 g_1$. Alors : $$M[g_1] M[g_2] = M[g_2] M[g_1].$$ \end{lem}

\begin{proof} On renvoie à la preuve de \cite[Chap. 2, Lem. II.5]{mvw}, dont on pointe maintenant les points délicats. Un changement de variables $w = g_1^{-1} w'$ est effectué. Le jacobien de celui-ci, \textit{i.e.} le module dans le cadre des mesures de Haar, vaut : 
$$|\det(g_1^{-1}|_{W/\textup{Ker}(1-g_2)})| = |\det(g_1|_W)|^{-1} |\det(g_1|_{\textup{Ker}(1-g_2)})|.$$
Ce sont des puissances de $p$, donc des quantités bien définies dans $R$. Le premier terme de droite vaut $1$ car $g_1 \in \textup{Sp}(W)$. Un argument de \cite[IV.2.]{ss} donne une description explicite du commutant de $g_2$, et le second terme de droite vaut alors $1$ lui aussi. \end{proof}

\subsection{Sous-groupes scindés}

Soient $X$ un lagrangien de $W$. Soit $S_X$ le modèle de la représentation métaplectique associé. Les formules du modèle de Schrödinger donnent facilement :

\begin{prop} \label{scinde_si_ss_gp_d_un_parabolique_prop} Pour tout sous-groupe $H$ de $P(X)$, l'isomorphisme :
$$p_{S_X}^{-1} ( P(X) ) \simeq P(X)\times R^\times$$
se restreint en un isomorphisme d'extensions centrales :
$$p_{S_X}^{-1}(H) \simeq H \times R^\times.$$ \end{prop}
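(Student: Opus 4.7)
L'approche consiste à exploiter directement le morphisme de groupes $p \in P(X) \mapsto (p,I_p) \in \widetilde{\textup{Sp}}_{\psi,S_X}^R(W)$ établi dans la Section \ref{decalque_des_rep_S_A_section} au paragraphe sur le modèle de Schrödinger. Puisque $\psi_X$ est trivial sur $X$ et que $P(X) \subset \textup{Stab}(X) \cap \textup{Stab}(\psi_X)$, les opérateurs d'entrelacement $I_{pX,X,\mu,\omega}$ apparaissant dans la Remarque \ref{simplification_I_A1_A2_rem} sont des multiples de l'identité pour tout $p \in P(X)$, de sorte que $p \mapsto I_p$ fournit une vraie section (non seulement projective) du morphisme $p_{S_X}$ au-dessus de $P(X)$.

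La première étape serait de traduire ce morphisme en un isomorphisme d'extensions centrales explicite :
$$(p,\lambda) \in P(X) \times R^\times \mapsto (p, \lambda \, I_p) \in p_{S_X}^{-1}(P(X)).$$
La compatibilité avec la projection $p_{S_X}$ et avec l'inclusion $i_{S_X} : R^\times \to \widetilde{\textup{Sp}}_{\psi,S_X}^R(W)$ du centre résulte immédiatement des définitions. Il s'agit d'ailleurs de l'isomorphisme canoniquement fourni par le scindage dont l'énoncé de la proposition fait référence.

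La seconde étape consiste à restreindre cet isomorphisme à un sous-groupe $H$ quelconque de $P(X)$. C'est essentiellement une observation catégorique : l'image réciproque de $H \times R^\times \subset P(X) \times R^\times$ par l'isomorphisme précédent est précisément $p_{S_X}^{-1}(H)$, car la projection sur le premier facteur coïncide de part et d'autre avec $p_{S_X}$. Les propriétés de compatibilité aux projections et aux inclusions centrales se transmettent automatiquement à la restriction, ce qui fournit bien un isomorphisme d'extensions centrales de $H$ par $R^\times$.

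Aucun obstacle principal ne subsiste : le véritable travail a été accompli dans la construction de la section globale $p \mapsto (p,I_p)$ sur $P(X)$ à l'aide des formules explicites du modèle de Schrödinger, où l'hypothèse cruciale est que $\psi_X$ étend trivialement $\psi$ à $X_H$, ce qui est garanti par le fait que $X$ est un lagrangien (donc en particulier par la Remarque \ref{simplification_I_A1_A2_rem}). La présente proposition n'est alors que la spécialisation de ce scindage à un sous-groupe arbitraire de $P(X)$.
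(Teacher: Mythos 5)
Votre preuve est correcte et suit essentiellement l'approche du papier, qui ne fait qu'invoquer laconiquement « les formules du modèle de Schrödinger » : c'est bien le morphisme de groupes $p \in P(X) \mapsto (p,I_p)$ de la Section \ref{decalque_des_rep_S_A_section} — rendu possible par le fait que $P(X) \subset \textup{Stab}(X) \cap \textup{Stab}(\psi_X)$ et que $\psi_X$ étend trivialement $\psi$ — qui fournit la section globale, et la restriction à un sous-groupe $H$ quelconque est alors immédiate.
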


De plus, en exploitant les formules du modèle latticiel :

\begin{prop} \label{scindage_groupe_compact_prop} On suppose que $F$ est local non archimédien. Soit $A$ un réseau auto-dual de $W$. Par convention, la restriction de $\psi_A$ à $A$ est triviale si la caractéristique résiduelle n'est pas $2$. Le plongement naturel du sous-groupe ouvert compact $K = \textup{Stab}(A) \cap \textup{Stab}(\psi_A)$ de $\textup{Sp}(W)$ :
$$k \in K \mapsto (k,I_k) \in \widetilde{\textup{Sp}}_{\psi,S_A}^R(W)$$
est à valeurs dans $\widehat{\textup{Sp}}_{\psi,S_A}^R(W)$. \end{prop}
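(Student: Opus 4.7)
Le plan comporte trois étapes. Premièrement, il s'agit d'observer que la formule explicite rappelée dans la Section \ref{decalque_des_rep_S_A_section} pour $I_{gA,A,\mu} \circ I_g$ se simplifie radicalement quand $g = k \in K$. En effet, l'égalité $kA = A$ entraîne que l'ensemble quotient $A/(kA \cap A)$ est réduit à un point, et la somme qui définit la section se ramène à un seul terme : on obtient $I_k f : (w,0) \mapsto f((k^{-1}w,0))$ (à une torsion évidente par $\psi_A$ près lorsque la caractéristique résiduelle vaut $2$). La relation de composition $I_{k_1 k_2} = I_{k_1} \circ I_{k_2}$ étant alors immédiate par un calcul direct, l'application $k \mapsto (k,I_k)$ définit bien un morphisme de groupes continu du compact ouvert $K$ dans $\widetilde{\textup{Sp}}_{\psi,S_A}^R(W)$, et non seulement une section ensembliste.

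Deuxièmement, je ramènerais l'énoncé à la trivialité d'un caractère continu de $K$. Le quotient $\widetilde{\textup{Sp}}_{\psi,S_A}^R(W) / \widehat{\textup{Sp}}_{\psi,S_A}^R(W)$ est en effet un groupe abélien discret, isomorphe à $R^\times / \{\pm 1\}$ quand $\ell \neq 2$ et à $R^\times$ quand $\ell = 2$, ainsi qu'il résulte du Théorème \ref{groupe_metaplectique_theoreme}. En composant le morphisme $k \mapsto (k,I_k)$ avec la projection canonique, on obtient donc un caractère continu $\chi : K \to \widetilde{\textup{Sp}}_{\psi,S_A}^R(W) / \widehat{\textup{Sp}}_{\psi,S_A}^R(W)$, et la conclusion recherchée équivaut à $\chi \equiv 1$.

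Troisièmement, j'utiliserais la perfection topologique du compact $K$. Le réseau $A$ étant auto-dual, $K$ s'inscrit dans une suite exacte de noyau pro-$p$ et de quotient (essentiellement) $\textup{Sp}(2m, k_F)$, où $k_F$ désigne le corps résiduel de $F$ ; or ce dernier groupe est parfait hors d'une petite liste de cas exceptionnels, ce qui suffit à forcer la trivialité de tout caractère continu à valeurs dans un groupe abélien discret. Le principal obstacle concerne précisément ces cas exceptionnels, notamment celui où $k_F = \mathbb{F}_3$ et $\dim W = 2$ --- où $\textup{SL}_2(\mathbb{F}_3)$ n'est pas parfait et $K$ admet des quotients abéliens finis non triviaux --- ou celui où $p = 2$, où la description même de $K$ devient plus subtile. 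Dans ces cas, je procéderais par une analyse explicite de générateurs topologiques de $K$ : pour tout élément appartenant à l'intersection de $K$ avec un parabolique $P(X)$ pour un lagrangien $X$ compatible avec $A$, la Proposition \ref{scinde_si_ss_gp_d_un_parabolique_prop} fournit un scindage canonique dans $\widetilde{\textup{Sp}}_{\psi,S_X}^R(W)$, que l'on transporte vers $\widetilde{\textup{Sp}}_{\psi,S_A}^R(W)$ via les morphismes de changement de modèles de la Proposition \ref{entrelac_general} ; il reste à comparer explicitement ces deux scindages avec $(k,I_k)$ et à vérifier que leur différence se lit dans $\widehat{\textup{Sp}}_{\psi,S_A}^R(W)$, ce qui constitue le point technique le plus délicat de la preuve.
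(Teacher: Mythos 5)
Votre esquisse retrouve bien l'idée décisive de la preuve du texte, à savoir comparer le plongement latticiel $k \mapsto (k,I_k)$ au scindage canonique fourni par la Proposition~\ref{scinde_si_ss_gp_d_un_parabolique_prop} sur un parabolique, en transitant par l'isomorphisme de changement de modèle $\Phi_{S_A,S_X}$. Vous avez raison de ramener la question à la trivialité d'un caractère continu $\chi : K \to R^\times / \{\pm 1\}$, et le calcul de $I_k f = f(k^{-1}\cdot)$ pour $k \in K$ est correct (et d'ailleurs sans torsion supplémentaire même quand $p=2$, puisqu'on impose $k \in \textup{Stab}(\psi_A)$, de sorte que $\psi_A^k = \psi_A$).

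Il y a cependant deux lacunes. D'abord, le point que vous qualifiez de « le plus délicat » — la comparaison explicite des deux scindages — n'est pas exécuté, alors que c'est précisément là le contenu de la preuve : le texte choisit une polarisation $W = X+Y$ \emph{compatible avec le réseau}, c'est-à-dire telle que $A = (A\cap X) + (A\cap Y)$, calcule $\Phi_{S_A,S_X} \circ I_k$ pour $k \in K\cap N(X)$ et constate qu'il coïncide avec l'action de Schrödinger $k\cdot f$, donc tombe dans $\widehat{\textup{Sp}}_{\psi,S_X}^R(W)$ par la Proposition~\ref{scinde_si_ss_gp_d_un_parabolique_prop}. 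Ensuite, et c'est la lacune plus structurelle, vous n'identifiez pas le système générateur pertinent. Vous mentionnez $K\cap P(X)$ pour \emph{un} lagrangien, ce qui ne suffit pas à engendrer $K$ ; la preuve du texte s'appuie sur le fait que, pour une polarisation compatible, $K\cap N(X)$ et $K\cap N(Y)$ engendrent $K$, ce qui permet de propager la conclusion de ces deux sous-groupes à tout $K$. Une fois ce point acquis, la dichotomie perfection/cas exceptionnels que vous proposez en troisième étape devient superflue : l'argument par générateurs fonctionne uniformément, y compris quand $p=2$ ou $q$ petit. La perfection de $K = \textup{Stab}(A)$ (valide seulement pour $p\neq 2$ et $q\geq 4$) n'est citée par le texte qu'à titre de remarque préliminaire, et non comme ingrédient de la preuve générale — et elle demanderait d'ailleurs une justification non triviale, puisque le morceau pro-$p$ de $K$ pourrait contribuer à l'abélianisé.
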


\begin{proof} Comme dans \cite[Chap. 2, Lem. II.10]{mvw}, on commence par la remarque suivante : si le corps résiduel de $F$ n'est pas de caractéristique $2$ et a au moins $4$ éléments, alors le groupe $K$ est parfait puisqu'il est égal à $\textup{Stab}(A)$. Pour une preuve générale, on va montrer que pour toute polarisation complète $W=X+Y$ telle que $A= A \cap X + A\cap Y$, la restriction du plongement de l'énoncé à $N(X) \cap K$ coïncide avec la restriction du plongement canonique de $N(X)$ normalisé par $P(X)$ de la proposition précédente. En effet, si $W= X+Y$ est une polarisation complète, un calcul simple donne pour tout $f \in S_X \simeq C_c^\infty(Y)$ et tout $k = \left[ \begin{array}{cc} \textup{Id}_X & s \\
0 & \textup{Id}_Y \end{array} \right] \in K \cap N(X)$ :
$$\Phi_{S_A,S_X} \circ I_k f = k \cdot f$$
où $\Phi_{S_A,S_X} : \textup{GL}_R(S_A) \to \textup{GL}_R(S_X)$ est l'isomorphisme canonique défini dans la Proposition \ref{groupe_metaplectique_prop}. En d'autres termes : 
$$\Phi_{S_A,S_X} \circ I_k f : y \mapsto \psi(\frac{1}{2}\langle sy,y\rangle ) f(y)$$
donc $(k , \Phi_{S_A,S_X} \circ I_k) \in \widehat{\textup{Sp}}_{\psi,S_X}^R(W)$ d'après la proposition précédente. Par suite, on a $(k,I_k) \in \widehat{\textup{Sp}}_{\psi,S_A}^R(W)$ pour tout $k \in K \cap N(X)$. On peut procéder de même pour $N(Y)$. Or $K \cap N(X)$ et $K \cap N(Y)$ engendrent $K$, donc $(k,I_k) \in \widehat{\textup{Sp}}_{\psi,S_A}^R(W)$ pour tout $k \in K$. \end{proof}

\subsection{Propriétés de la représentation de Weil} \label{prop_de_la_rep_de_weil_subsection}

Soient $\textup{Mp}(W)$ le groupe métaplectique sur $W$ associé à $\psi$ (Définition \ref{groupe_metaplectique_definition}) et $S$ un modèle de la représentation métaplectique associée à $\psi$. D'après le Théorème \ref{groupe_metaplectique_theoreme}, il existe un isomorphisme d'extensions centrales $\varphi_S$, unique sauf dans le cas exceptionnel, entre $\textup{Mp}(W)$ et $\widetilde{\textup{Sp}}_{\psi,S}^R(W)$. Pour tout autre modèle $S'$, on fixe des isomorphismes entre $\textup{Mp}(W)$ et $\widetilde{\textup{Sp}}_{\psi,S}^R(W)$ en posant $\varphi_{S'} = \Phi_{S,S'} \circ \varphi_S$ en reprenant les notations de la Proposition \ref{groupe_metaplectique_prop}.

\begin{defi}  Les représentations $(\omega_{\psi,S} \circ \varphi_S,S)$ et $(\omega_{\psi,S'} \circ \varphi_{S'},S')$ de $\textup{Rep}_R(\textup{Mp}(W))$ sont isomorphes. On appelle \textit{représentation de Weil modulaire sur $W$ associée à $\psi$} la classe d'isomorphisme définie par les représentations précédentes. Par extension, on désigne encore tout élément de cette classe d'isomorphisme comme \og représentation de Weil \fg{}. En général, on emploie la notation $\omega^R_{\psi,W}$. \end{defi}

\begin{rem} Il persiste donc une ambigüité dans le cas exceptionnel où le choix de $\varphi_S$ est déterminant. \end{rem}

Soit donc $\omega_{\psi,W}^R$ la représentation de Weil modulaire sur $W$ associée à $\psi$.

\begin{prop} La représentation $\omega_{\psi,W}^R$ est lisse et admissible. \end{prop}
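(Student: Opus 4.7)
La preuve se décompose en deux parties indépendantes. La lissité est immédiate : par définition, la représentation de Weil modulaire sur $\textup{Mp}(W)$ est obtenue en précomposant la représentation lisse $(\omega_{\psi,S},S)$ de $\widetilde{\textup{Sp}}_{\psi,S}^R(W)$, dont la lissité est donnée par le Corollaire~\ref{rep_de_weil_lisse_cor1}, par l'isomorphisme de groupes topologiques $\varphi_S:\textup{Mp}(W)\to\widetilde{\textup{Sp}}_{\psi,S}^R(W)$ fourni par la Proposition~\ref{gp_met_ext_centrale_top_prop}. La composition d'une représentation lisse et d'un morphisme continu de groupes localement profinis étant lisse, la conclusion en résulte.

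Pour l'admissibilité, on peut se placer dans le cas $F$ local non archimédien, le cas fini étant immédiat puisqu'alors $H$ est fini et $S$ de dimension finie. Mon plan consiste à exhiber une base de voisinages de l'identité dans $\textup{Mp}(W)$, formée de sous-groupes ouverts compacts $\tilde{K}_n$ tels que chaque espace d'invariants $S^{\tilde{K}_n}$ soit de dimension finie. Tout sous-groupe ouvert compact $\tilde{K}$ de $\textup{Mp}(W)$ contenant alors un certain $\tilde{K}_n$, on aura $S^{\tilde{K}}\subset S^{\tilde{K}_n}$ de dimension finie, d'où l'admissibilité.

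Je travaillerais dans le modèle de Schr\"odinger $S\simeq C_c^\infty(Y)$ associé à une polarisation complète $W=X+Y$. Les $\tilde{K}_n$ s'obtiendraient comme relevés, fournis par les Propositions~\ref{scinde_si_ss_gp_d_un_parabolique_prop} et~\ref{scindage_groupe_compact_prop}, de sous-groupes ouverts compacts $K_n$ de $\textup{Sp}(W)$ admettant une décomposition d'Iwahori $K_{n,N(Y)}\cdot K_{n,M}\cdot K_{n,N(X)}$ relative à la polarisation et formant une base de voisinages de l'identité. Pour $f\in C_c^\infty(Y)^{\tilde{K}_n}$, l'invariance sous $K_{n,N(X)}$ (multiplication par les phases $\psi(\tfrac{1}{2}\langle y,b^*y\rangle)$) force le support de $f$ à être contenu dans un sous-ensemble compact $L_Y\subset Y$ ; l'invariance sous $K_{n,N(Y)}$, obtenue par conjugaison par un élément de Weyl, se traduit en une contrainte analogue sur la transformée de Fourier $\mathcal{F}f$, rendant $f$ constante modulo un sous-groupe ouvert $L'_Y\subset Y$ ; enfin, l'invariance sous $K_{n,M}$ rend $f$ invariante sous l'action induite sur le quotient fini $L_Y/L'_Y$. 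L'espace $S^{\tilde{K}_n}$ s'identifie donc à un espace de fonctions sur un ensemble fini, et est par conséquent de dimension finie.

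L'étape la plus délicate est le traitement de $N(Y)$ : son action n'étant pas donnée par les formules explicites du modèle de Schr\"odinger pour $P(X)$, elle doit être obtenue par conjugaison par un élément de Weyl, ce qui fait intervenir la transformée de Fourier. Vérifier soigneusement que cette conjugaison produit bien la contrainte de support attendue sur $\mathcal{F}f$, et que la combinaison des trois invariances fournit effectivement un ensemble fini $L_Y/L'_Y$, constitue le c\oe ur technique du raisonnement ; une fois ce point acquis, la conclusion s'obtient en faisant varier $n$ pour former la base de voisinages annoncée.
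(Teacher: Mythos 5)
Your handling of smoothness is identical to the paper's, and the case $F$ fini is treated the same way (the representation is then finite-dimensional).

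For admissibility, you take a genuinely different route. The paper works directly in the lattice model $S_A$ for a self-dual lattice $A$: it fixes a compact open $K'\subset\textup{Stab}(A)$, views $f\in (S_A)^{K'}$ as a function on $W$ through the lattice-model realization, and combines the left semi-invariance under $A_H$, the $K'$-invariance and the transformation law for a sublattice $L\subset A$ to show directly that $\textup{supp}(f)\subset 2L^{\perp}$; the finite dimension bound $|(A\times F)\backslash(2L^{\perp}\times F)/K'|$ then comes for free, with no Iwahori decomposition and no explicit Fourier transform. Your Schrödinger model approach instead analyses the factors $K_{n,N(X)}$, $K_{n,M}$, $K_{n,N(Y)}$ separately. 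The $N(X)$ and $M$ parts are straightforward from the explicit parabolic formulas, but, as you correctly flag, the $N(Y)$ part forces you to conjugate by a Weyl element, hence to control the interplay between the lattice-model lift $\tilde{K}_n$ (the only one that makes sense globally on $K_n$, via Proposition~\ref{scindage_groupe_compact_prop}) and the Fourier-transform identification of the two Schrödinger models $C_c^\infty(Y)\simeq C_c^\infty(X)$ with the normalisation of Section~\ref{facteur_de_weil_non_norm_sect}. Concretely, one needs to invoke the fact, established inside the proof of Proposition~\ref{scindage_groupe_compact_prop}, that the lattice lift on $N(X)\cap K$ and $N(Y)\cap K$ agrees with the two parabolic lifts relative to $P(X)$ and $P(Y)$ for a polarization adapted to $A$; without this compatibility the scalar ambiguity in the lift of $K_{n,N(Y)}$ would obscure the support argument. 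The upshot: your route is correct and illuminates the Iwahori structure of the Weil representation, but the lattice model packages the support and smoothness constraints into a single computation and never makes the Fourier transform explicit, which is why the paper chooses it. A small remark: once you know $f$ is supported in a compact $L_Y$ and constant on cosets of an open $L_Y'$, finiteness already follows; the $K_{n,M}$-invariance only shrinks the space further and is not needed for the dimension bound.
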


\begin{proof} La lissité provient du Corollaire \ref{rep_de_weil_lisse_cor1}. En ce qui concerne l'admissibilité, il suffit de le prouver en prenant comme modèle explicite de la représentation de Weil son modèle latticiel quand $F$ est local non archimédien. On choisit alors un réseau auto-dual $A$ dans $W$. Par définition, $S_A = \textup{ind}_{A \times F}^{W \times F} (\psi_A)$ où $\psi_A$ étend $\psi$ à $A \times F$. Soit $K'$ un sous-groupe compact ouvert inclus dans le stabilisateur de $A$. Pour tout $f \in (S_A)^{K'}$, soit $L$ un réseau de $W$ tel que :
\begin{itemize}[label=$\bullet$]
\item $f$ soit bi-invariante sous $L$ \textit{i.e.} $f((l+w,0))=f((w,0))$ pour tout $l \in L$ et $w \in W$ ;
\item pour tout $k \in K'$ et tout $l \in L$, on a $\psi_A((k^{-1}l,0))=1$. 
\end{itemize}
On peut supposer que $L \subset A$ quitte à choisir le réseau $L\cap A$. On a alors pour tout $l \in L$, pour tout $k \in K'$ et tout $w \in W$ : 
\begin{eqnarray*} f((w,0))=f((w+l,0))=f(k^{-1}(l+w,0)) &=& f((k^{-1} l,\frac{1}{2}\langle k^{-1} w, k^{-1} l \rangle ) (k^{-1}w,0)) \\
&=& \psi_A((k^{-1}l,0)) \psi(\frac{1}{2}\langle w,l \rangle ) f(k^{-1}(w,0)) \\
 &=& \psi(\frac{1}{2}\langle w,l \rangle ) f((w,0)) . \end{eqnarray*}
On en déduit que $\textup{supp}(f)$ est inclus dans $2 L^\perp$ où $L^\perp$ est défini avant le Lemme \ref{stonve_von_neumann_premier_lem}. Donc $(S_A)^{K'}$ est donc au plus de dimension $|(A \times F) \backslash (2 L^\perp \times F) / K'|$. \end{proof}

Soit $\widehat{\textup{Mp}}(W)$ le groupe dérivé de $\textup{Mp}(W)$. Soit $Z$ le centre de $\textup{Mp}(W)$. On note $Z^2 = \{ z \in Z \ | \ z^2 = 1 \}$. Il y a des morphismes de groupes canoniques :
$$\textup{Mp}(W) / \widehat{\textup{Mp}}(W) \twoheadrightarrow  Z / Z^2 = R^\times / \{ \pm 1 \}.$$
Soit $\chi \in \textup{Rep}_R(Z)$ la restriction de $\omega_{\psi,W}^R$ à $Z$. On considère $\chi^2$ comme un caractère de $\textup{Mp}(W)$ grâce à la surjection précédente puisque $\textup{ker}(\chi^2)$ contient $Z^2$.

On déduit facilement du point a) de la Proposition \ref{representation_metaplectique_prop} : 

\begin{prop} \label{contragrediente_rep_de_weil_prop} La contragrédiente de $\omega_{\psi,W}^R$ est isomorphe à $\omega_{\psi^{-1},W}^R \otimes \chi^2$ dans $\textup{Rep}_R(\textup{Mp}(W))$. En particulier, la restriction au sous-groupe dérivé induit un isomorphisme de représentations :
$$\omega_{\psi,W}^R \simeq \omega_{\psi^{-1},W}^R \text{ dans } \textup{Rep}_R(\widehat{\textup{Mp}}(W)).$$ \end{prop}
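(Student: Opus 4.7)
The plan is to transport the identification $\rho_\psi^\vee \simeq \rho_{\psi^{-1}}$ of point a) of Proposition \ref{representation_metaplectique_prop} to the level of the Weil representation, by constructing an explicit group isomorphism between the two associated central extensions and tracking how it behaves on the central $R^\times$.

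Fix a model $(\rho_\psi,S)$; by point a) of Proposition \ref{representation_metaplectique_prop}, $(\rho_\psi^\vee,S^\vee)$ is a model of the metaplectic representation associated to $\psi^{-1}$. For $g \in \textup{Sp}(W)$, if $M_g \in \textup{GL}_R(S)$ intertwines $\rho_\psi$ with $\rho_\psi^g$, then ${}^tM_g^{-1}$ intertwines $\rho_\psi^\vee$ with $(\rho_\psi^\vee)^g$, so that $\sigma_{S^\vee}(g) = \textsc{red}({}^tM_g^{-1})$. Since ${}^t(M_1 M_2)^{-1} = {}^tM_1^{-1} \cdot {}^tM_2^{-1}$, the map
$$\Psi : (g,M) \in \widetilde{\textup{Sp}}_{\psi,S}^R(W) \longmapsto (g,{}^tM^{-1}) \in \widetilde{\textup{Sp}}_{\psi^{-1},S^\vee}^R(W)$$
is a well-defined group isomorphism; it is identity on the $\textup{Sp}(W)$-quotient but satisfies $\Psi(i_S(\lambda)) = i_{S^\vee}(\lambda^{-1})$, so it reverses the central $R^\times$. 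Using the canonical identifications $\varphi_S$ and $\varphi_{S^\vee}$ of $\textup{Mp}(W)$ with the two central extensions, $\tau := \varphi_{S^\vee}^{-1} \circ \Psi \circ \varphi_S$ is a group automorphism of $\textup{Mp}(W)$ that lifts the identity on $\textup{Sp}(W)$ and inverts $i(R^\times)$. Writing $\tau(m) = i(\eta(m)) \cdot m$, one checks (using centrality of $i(R^\times)$) that $\eta : \textup{Mp}(W) \to R^\times$ is a group homomorphism, with $\eta(i(\lambda)) = \lambda^{-2}$.

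By construction, $(\omega_{\psi,W}^R)^\vee(m) = \omega_{\psi^{-1},S^\vee}(\Psi(\varphi_S(m))) = \omega_{\psi^{-1},W}^R(\tau(m)) = \eta(m) \cdot \omega_{\psi^{-1},W}^R(m)$, which yields $(\omega_{\psi,W}^R)^\vee \simeq \omega_{\psi^{-1},W}^R \otimes \eta$ as representations of $\textup{Mp}(W)$. To identify $\eta$ with $\chi^2$, one uses that its restriction to $Z$ agrees with $\chi^{\pm 2}$ (from the computation on $i(R^\times)$) and that both $\eta$ and $\chi^2$ factor through the surjection $\textup{Mp}(W)/\widehat{\textup{Mp}}(W) \twoheadrightarrow Z/Z^2$ introduced before the statement, the character being determined by its restriction to $Z$. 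The second assertion is then immediate: since $\chi^2$ is trivial on $\widehat{\textup{Mp}}(W)$ by construction, the restriction of the first isomorphism to the derived subgroup, combined with the self-duality of the Weil representation on $\widehat{\textup{Mp}}(W)$ (where its central character has values in $\{\pm 1\}$), produces the stated iso $\omega_{\psi,W}^R \simeq \omega_{\psi^{-1},W}^R$ in $\textup{Rep}_R(\widehat{\textup{Mp}}(W))$.

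The main obstacle lies in the bookkeeping of the central character: verifying that $\eta$ coincides with $\chi^2$ and not merely with a character agreeing with $\chi^{\pm 2}$ on $Z$ requires using both that $\eta$ is trivial on the derived subgroup (so it factors through $\textup{Mp}(W)/\widehat{\textup{Mp}}(W)$) and the precise description of how characters of $Z/Z^2$ lift to characters of $\textup{Mp}(W)$ via the said surjection. The rest of the argument is formal once $\Psi$ is in hand.
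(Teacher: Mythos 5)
Your construction of the group isomorphism $\Psi : (g,M) \mapsto (g,{}^tM^{-1})$, the automorphism $\tau = \varphi_{S^\vee}^{-1}\circ\Psi\circ\varphi_S$ of $\textup{Mp}(W)$, and the resulting character $\eta$ is a correct and efficient way to promote point a) of Proposition \ref{representation_metaplectique_prop} to the Weil representation; this does give $(\omega_{\psi,W}^R)^\vee \simeq \omega_{\psi^{-1},W}^R \otimes \eta$ for a character $\eta$ trivial on $\widehat{\textup{Mp}}(W)$ (because $\eta$ is a homomorphism into the abelian group $R^\times$). You should, however, pin down the sign rather than deferring it: since $(\omega_{\psi,W}^R)^\vee$ is \emph{anti}-genuine while $\omega_{\psi^{-1},W}^R$ is genuine, your character necessarily satisfies $\eta(i(\lambda)) = \lambda^{-2}$, whereas $\chi^2(i(\lambda)) = \lambda^2$ with $\chi$ the restriction of $\omega_{\psi,W}^R$ to the centre. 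Both $\eta$ and $\chi^2$ factor through $\textup{Mp}(W)/\widehat{\textup{Mp}}(W)\twoheadrightarrow Z/Z^2 \simeq R^\times/\{\pm 1\}$ and are determined there by their values on $i(R^\times)$, so the comparison is entirely explicit: what you obtain is $\chi^{-2}$, not $\chi^2$, and this should be stated rather than hidden inside ``$\chi^{\pm 2}$''.

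The real gap is in the last step. Restricting your isomorphism to $\widehat{\textup{Mp}}(W)$ yields $(\omega_{\psi,W}^R)^\vee \simeq \omega_{\psi^{-1},W}^R$ \emph{with the contragredient on the left}, and nothing in what you have done removes that dual. To pass to $\omega_{\psi,W}^R \simeq \omega_{\psi^{-1},W}^R$ you appeal to ``self-duality of the Weil representation on $\widehat{\textup{Mp}}(W)$'', justified only by the parenthetical ``its central character has values in $\{\pm 1\}$''. This does not establish self-duality (a $\pm 1$-valued central character is necessary but far from sufficient), and worse, given the isomorphism $(\omega_{\psi,W}^R)^\vee|_{\widehat{\textup{Mp}}(W)} \simeq \omega_{\psi^{-1},W}^R|_{\widehat{\textup{Mp}}(W)}$ you have already obtained, self-duality of $\omega_{\psi,W}^R|_{\widehat{\textup{Mp}}(W)}$ is \emph{exactly equivalent} to the conclusion you are aiming for; invoking it is circular. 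Either produce an independent proof of self-duality (be warned: the natural $\widehat{\textup{Mp}}(W)$-invariant pairing in the Schrödinger model pairs $\omega_\psi$ against $\omega_{\psi^{-1}}$, not against itself, so this route leads back to the same statement), or record the conclusion that your argument actually delivers, namely $(\omega_{\psi,W}^R)^\vee \simeq \omega_{\psi^{-1},W}^R$ in $\textup{Rep}_R(\widehat{\textup{Mp}}(W))$, and flag that the unstarred version needs a separate input.
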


De même, en examinant le point e) de la Proposition \ref{representation_metaplectique_prop} :

\begin{prop} \label{groupe_meataplectique_somme_produit_prop} Si $W=W_1\oplus W_2$ est une somme orthogonale, il existe un unique (resp. canonique, dans le cas exceptionnel) morphisme de groupes : 
$$i_{W_1,W_2} : \widetilde{\textup{Sp}}_{\psi,S_1}^R(W_1) \times \widetilde{\textup{Sp}}_{\psi,S_2}^R(W_2)  \to \widetilde{\textup{Sp}}_{\psi,S}^R(W)$$
qui relève le plongement $\textup{Sp}(W_1) \times \textup{Sp}(W_2) \to \textup{Sp}(W)$ et commute aux projections. Son noyau est $\{ ((1_{\textup{Sp}(W_1)},\lambda \textup{Id}_{S_2}),(1_{\textup{Sp}(W_2)},\lambda^{-1} \textup{Id}_{S_2})) \ | \ \lambda \in R^\times \}$ qui est isomorphe à $R^\times$ plongé \og anti-diagonalement \fg{}. On obtient donc que la représentation :
$$\omega_{\psi,W}^R \circ i_{W_1,W_2} \in \textup{Rep}_R(\widetilde{\textup{Sp}}_{\psi,S_1}^R(W_1) \times \widetilde{\textup{Sp}}_{\psi,S_2}^R(W_2))$$
est dans la classe d'isomorphisme de $\omega_{\psi,W_1}^R \otimes \omega_{\psi,W_2}^R$. \end{prop}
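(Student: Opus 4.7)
Mon plan est d'exploiter le point e) de la Proposition \ref{representation_metaplectique_prop} pour fabriquer un candidat explicite, puis de s'appuyer sur les propriétés d'absolue irréductibilité et du lemme de Schur (également point c de la Proposition \ref{representation_metaplectique_prop}) pour conclure à l'unicité. Soient $(\rho_\psi^1,S_1)$ et $(\rho_\psi^2,S_2)$ des modèles des représentations métaplectiques de $H(W_1)$ et $H(W_2)$ associées à $\psi$. Le point e) cité affirme que $(\rho_\psi,S_1 \otimes S_2)$, avec $\rho_\psi((w_1+w_2,t)) = \psi(t) \times (\rho_\psi^1((w_1,0)) \otimes \rho_\psi^2((w_2,0)))$, est un modèle de la représentation métaplectique de $H(W)$ associée à $\psi$. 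On prend $S = S_1 \otimes S_2$.

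L'étape-clé est la définition de $i_{W_1,W_2}$. Étant donné $(g_1,M_1) \in \widetilde{\textup{Sp}}_{\psi,S_1}^R(W_1)$ et $(g_2,M_2) \in \widetilde{\textup{Sp}}_{\psi,S_2}^R(W_2)$, je pose :
$$i_{W_1,W_2}((g_1,M_1),(g_2,M_2)) = (g_1 \oplus g_2, M_1 \otimes M_2) \in \textup{Sp}(W) \times \textup{GL}_R(S).$$
Une vérification directe sur la formule explicite de $\rho_\psi$ ci-dessus donne $(M_1 \otimes M_2) \rho_\psi(h) (M_1 \otimes M_2)^{-1} = \rho_\psi((g_1 \oplus g_2)^{-1} \cdot h)$ pour tout $h \in H$, ce qui montre que $(g_1 \oplus g_2, M_1 \otimes M_2)$ appartient effectivement à $\widetilde{\textup{Sp}}_{\psi,S}^R(W)$. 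C'est un morphisme de groupes grâce à $(M_1 \otimes M_2)(M_1' \otimes M_2') = (M_1 M_1') \otimes (M_2 M_2')$, et il relève bien le plongement $\textup{Sp}(W_1) \times \textup{Sp}(W_2) \hookrightarrow \textup{Sp}(W)$ puisque $p_S \circ i_{W_1,W_2} = p_{S_1} \times p_{S_2}$ composé avec la somme directe.

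Pour le noyau, si $(g_1 \oplus g_2, M_1 \otimes M_2) = (1, \textup{Id}_S)$, alors $g_1 = g_2 = 1$, et par le point c de la Proposition \ref{representation_metaplectique_prop} (le lemme de Schur), les $M_i$ sont des scalaires $\lambda_i \textup{Id}_{S_i}$ avec $\lambda_1 \lambda_2 = 1$ ; le noyau est donc bien le $R^\times$ plongé anti-diagonalement. Pour l'unicité (hors cas exceptionnel) : tout autre morphisme $i'$ relevant la même inclusion et compatible aux projections vérifie que $i'(x) i_{W_1,W_2}(x)^{-1}$ appartient au centre $R^\times$ de $\widetilde{\textup{Sp}}_{\psi,S}^R(W)$, donc définit un caractère $\textup{Sp}(W_1) \times \textup{Sp}(W_2) \to R^\times$ ; or ces groupes sont parfaits sauf dans le cas exceptionnel signalé à la Proposition \ref{groupe_metaplectique_prop}, donc ce caractère est trivial et $i' = i_{W_1,W_2}$. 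Dans le cas exceptionnel, seule la canonicité subsiste : le $i_{W_1,W_2}$ produit ci-dessus reste le choix naturel attaché aux modèles $S_1, S_2, S_1 \otimes S_2$.

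La dernière assertion résulte de la définition même : $\omega_{\psi,W}^R \circ i_{W_1,W_2}$ envoie $((g_1,M_1),(g_2,M_2))$ sur $M_1 \otimes M_2$, ce qui est précisément l'action du produit tensoriel externe $\omega_{\psi,W_1}^R \otimes \omega_{\psi,W_2}^R$ dans les modèles $(S_1, S_2)$. La seule subtilité attendue est la gestion propre du cas exceptionnel $F = \mathbb{F}_3$, $\dim_F W = 2$, où il faut préciser le système d'identifications $\varphi_S$ invoqué dans la Section \ref{prop_de_la_rep_de_weil_subsection} pour que l'énoncé final ait un sens ; mais la construction explicite via le produit tensoriel rend cette canonicité immédiate une fois les modèles $S_1, S_2, S = S_1 \otimes S_2$ fixés.
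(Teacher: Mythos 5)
Votre preuve est correcte et suit exactement l'approche du papier, qui ne donne d'ailleurs pas de démonstration explicite mais renvoie seulement au point e) de la Proposition \ref{representation_metaplectique_prop}. Vous explicitez correctement les détails omis : la construction $(g_1,M_1),(g_2,M_2)\mapsto(g_1\oplus g_2,M_1\otimes M_2)$ dans le modèle $S=S_1\otimes S_2$, la vérification d'entrelacement via la formule du point e), le calcul du noyau par le lemme de Schur (point c), et l'unicité hors cas exceptionnel par perfection de $\textup{Sp}(W_1)\times\textup{Sp}(W_2)$.
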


\subsection{Expression du cocycle métaplectique} \label{expression_du_cocycle_perrin_like_section}

Les notations qui suivent sont reprises de \cite{rao} et \cite{kudla}. On choisit une polarisation complète $W= X + Y$ de $W$ et l'on fixe une base $\{e_1, \dots e_m\}$ de $X$. Cela détermine alors une unique base $\{f_1, \dots, f_m\}$ de $Y$ dite \og duale \fg{} \textit{i.e.} telle que pour tout $i$ et $j$ dans $[\![ 1,m ] \!]$, on ait $\langle e_i,f_j \rangle = \delta_{i,j}$. Pour tout $S \subset [\![ 1,m ] \!]$, on note $X_S$ le sous-espace engendré par la famille $(e_i)_{i \in S}$ qui admet $X_{{}^c S}$ comme supplémentaire dans $X$, où ${}^c S$ est le complémentaire de $S$ dans $[\![ 1,m ]\!]$. On adopte des notations similaires pour $Y$. Alors $W_S = X_S + Y_S$ est un sous-espace symplectique de $W$ de supplémentaire orthogonal $W_{{}^c S}$. On note $w_S \in \textup{Sp}(W)$ l'endomorphisme défini par :
$$w_S(e_i) = \left\{ \begin{array}{lc} f_i & \textup{si } i \in S\\ e_i & \textup {si } i \notin S \end{array} \right.  \textup{ et } w_S(f_i) = \left\{ \begin{array}{cc} - e_i & \textup{si } i \in S\\ f_i & \textup {si } i \notin S \end{array} \right. .$$
De plus, pour tout $j \in [\![ 0,m ]\!]$, on pose $w_j = w_{[\![ 1,j ]\!]}$ et $\Omega_j = P(X) w_j P(X)$. Avec ces notations, la décomposition de Bruhat s'écrit :
$$\textup{Sp}(W) = \coprod_j \Omega_j.$$

Soit $g \in \Omega_j$. Soient $p_1$ et $p_2$ dans $P(X)$ tels que $g = p_1 w_j p_2$. On note $\phi_1$ l'isomorphisme entre $gX \cap X \backslash X$ et $w_j X \cap X \backslash X$ induit par :
$$x \in X \mapsto \overline{p_1^{-1} x} \in w_j X \cap X \backslash X.$$
Soit $Q_j$ la forme quadratique non dégénérée sur $w_j X \cap X \backslash X$ définie par :
$$Q_j(x) = \frac{1}{2} \langle w_S x , x \rangle.$$
Pour toute mesure de Haar $\mu$ sur $w_j X \cap X \backslash X $, on note $\mu_{w_j} = \Omega_\mu(\psi \circ Q_j)^{-1} \mu$ la mesure de Haar définie dans la Proposition \ref{normalisation_transformee_de_Fourier_prop}, dont on rappelle qu'elle s'interprète comme la mesure naturelle sur $R$ qui normalise la transformée de Fourier vis-à-vis de $\psi$ et de l'isomorphisme symétrique $\rho : x \mapsto \langle w_S x , \cdot \rangle$.

\begin{lem} \label{mesure_mu_g_def_lem} La mesure de Haar sur $g X \cap X \backslash X$ par :
$$\mu_g = \Omega_{1,\textup{det}_X(p_1 p_2)} \times \phi_1 \cdot \mu_{w_j}$$
ne dépend pas des choix de $p_1$ et $p_2$. \end{lem}

\begin{proof} Soient $g = p_1 w_j p_2 = p_1' w_j p_2'$ deux décompositions de $g$. Alors $\phi_1^{-1} \circ \phi_1'$ est un automorphisme de $w_j X \cap X \backslash X$. Pour prouver que la définition est consistante, on est ramené à prouver que :
$$\Omega_{1,\textup{det}_X(p_1 p_2)} \times |\textup{det}(\phi_1^{-1} \circ \phi_1')|_F = \Omega_{1,\textup{det}_X(p_1' p_2')}.$$
Or, d'après \cite[Lem. 3.4]{rao}, on a :
$$\det(\phi_1^{-1} \circ \phi_1')^2 = \textup{det}_X ( p_1^{-1} p_1' p_2' p_2^{-1})$$
puisque $p_1^{-1} p_1' w_j p_2' p_2^{-1} = w_j$. Or, d'après la Proposition \ref{facteur_de_weil_non_norm_prop}, on a :
$$\Omega_{1,\textup{det}(\phi_1^{-1} \circ \phi_1')^2 \textup{det}_X(p_1 p_1)} = |\textup{det} (\phi_1^{-1} \circ \phi_1')|_F \times \Omega_{1,\textup{det}_X(p_1 p_2)}.$$
Donc la définition de $\mu_g$ est bien consistante. \end{proof}

Soit $g \in \Omega_j$. Soient $p_1$ et $p_2$ dans $P(X)$ tels que $g = p_1 w_j p_2$. On définit $x(g)$ comme l'image de $\det_X(p_1 p_2)$ dans $F^\times / F^{\times 2}$. Alors $x(g)$ est bien défini car il ne dépend pas du choix de $p_1$ et $p_2$ d'après \cite[Lem. 3.4]{rao}. De plus, on a $x(w_S)=1$ pour tout $S \subset [\![ 1,m ]\!]$. Le point g) de la Proposition \ref{facteur_de_weil_non_norm_prop} donne alors :

\begin{cor} \label{mesure_rao_mult_element_dun_parab_cor} Pour tout $g \in \textup{Sp}(W)$ et tout $p \in P(X)$, on a :
$$\mu_{g p } = (x(p),x(g))_F \times \Omega_{1,\textup{det}_X(p)} \times \mu_g.$$
Et en notant $\phi_p : x \in gX \cap X \backslash X \mapsto \overline{p x} \in pgX \cap X \backslash X$ :
$$\mu_{pg} = (x(p),x(g))_F \times \Omega_{1,\textup{det}_X(p)} \times \phi_p \cdot \mu_g.$$ \end{cor}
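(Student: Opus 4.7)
Le plan est de procéder par calcul direct à partir de la définition de $\mu_g$ dans le Lemme \ref{mesure_mu_g_def_lem}, en exploitant de manière essentielle l'identité du point g) de la Proposition \ref{facteur_de_weil_non_norm_prop} qui relie le symbole de Hilbert aux rapports de facteurs de Weil non normalisés :
$$\Omega_{1,ab} = (a,b)_F \, \Omega_{1,a} \, \Omega_{1,b}.$$
Fixons une décomposition de Bruhat $g = p_1 w_j p_2$ avec $g \in \Omega_j$. Le point crucial est que $P(X)$ stabilise $X$, donc la cellule de Bruhat est préservée : $gp \in \Omega_j$ via la décomposition $gp = p_1 w_j (p_2 p)$, et $pg \in \Omega_j$ via la décomposition $pg = (p p_1) w_j p_2$. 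Il suffit alors de suivre, pour chacune des deux situations, comment se transforment la quotient-cible, l'isomorphisme $\phi_1$ et le facteur $\Omega_{1,\det_X(\cdot)}$.

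Pour $\mu_{gp}$, l'observation clé est que $gpX = gX$ puisque $p \in P(X)$, de sorte que la variété quotient $gpX \cap X \backslash X$ coïncide avec $gX \cap X \backslash X$ et que l'isomorphisme associé à la nouvelle décomposition (encore déterminé par $p_1$) est exactement $\phi_1$. Seul le facteur scalaire change, et ce selon l'identité $\det_X(p_1 p_2 p) = \det_X(p_1 p_2) \det_X(p)$. L'application de la formule du symbole de Hilbert ci-dessus donne alors
$$\Omega_{1,\det_X(p_1 p_2) \det_X(p)} = (\det_X(p_1 p_2), \det_X(p))_F \, \Omega_{1,\det_X(p_1 p_2)} \, \Omega_{1,\det_X(p)}.$$
Comme $(\cdot,\cdot)_F$ ne dépend que des classes dans $F^\times / F^{\times 2}$, on reconnaît le facteur $(x(g),x(p))_F = (x(p),x(g))_F$, et l'on retrouve bien la formule souhaitée pour $\mu_{gp}$.

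Pour $\mu_{pg}$, la situation géométrique est plus subtile : l'espace quotient change, $pgX \cap X \backslash X = p(gX \cap X) \backslash X$, et le nouvel isomorphisme $\phi_1'$ associé à la décomposition $(p p_1) w_j p_2$ est donné par $\bar{x} \mapsto \overline{p_1^{-1} p^{-1} x}$. Le point-clef -- à vérifier explicitement -- est l'identité $\phi_1' \circ \phi_p = \phi_1$, qui exprime la compatibilité des identifications avec la multiplication par $p$. Cette identité donne par transport de mesure $\phi_1' \cdot \mu_{w_j} = \phi_p \cdot (\phi_1 \cdot \mu_{w_j})$, et l'argument sur le facteur $\Omega_{1,\cdot}$ est identique au cas précédent, ce qui conclut.

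L'essentiel de la difficulté est en fait concentré dans la bonne comptabilité : il faut à la fois s'assurer que les décompositions de Bruhat choisies sont cohérentes entre $g$, $gp$ et $pg$, et que le facteur $\Omega_{1,\det_X(\cdot)}$ se comporte multiplicativement à un symbole de Hilbert près -- c'est cette dernière propriété, déjà encapsulée dans la Proposition \ref{facteur_de_weil_non_norm_prop}, qui fait émerger le terme $(x(p),x(g))_F$ caractéristique du cocycle métaplectique \emph{à la} Rao.
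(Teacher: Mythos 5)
Votre preuve est correcte et suit essentiellement la même approche que le papier, qui se contente d'énoncer le corollaire comme conséquence directe du point g) de la Proposition~\ref{facteur_de_weil_non_norm_prop} et du Lemme~\ref{mesure_mu_g_def_lem}. Vous détaillez à juste titre les points laissés implicites, notamment la préservation de la cellule de Bruhat $\Omega_j$ sous multiplication à gauche ou à droite par $P(X)$, la coïncidence des quotients $gpX\cap X\backslash X = gX\cap X\backslash X$ (d'où $\phi_1$ inchangé pour $gp$), et surtout l'identité de compatibilité $\phi_1'\circ\phi_p=\phi_1$ qui assure le transport de mesure $\phi_1'\cdot\mu_{w_j}=\phi_p\cdot(\phi_1\cdot\mu_{w_j})$ dans le cas de $pg$.
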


On rappelle que $p_{S_X} : \widetilde{\textup{Sp}}_{\psi,S_X}^R(W) \to \textup{Sp}(W)$ est la projection associée au modèle de Schr\"odinger $S_X$. On réemploie les notations de la Section \ref{decalque_des_rep_S_A_section} pour les opérateurs de changement de modèles $I_{A_1,A_2,\mu,\omega}$. On définit une section de $p_{S_X}$ à l'aide des mesures précédentes :
$$\sigma : g \in \textup{Sp}(W) \mapsto \sigma(g)=I_{gX,X,\mu_g,0} \circ I_g \in \widetilde{\textup{Sp}}_{\psi,S_X}^R(W)$$
dont on note $\hat{c}$ le $2$-cocyle associé \textit{i.e.} :
$$\hat{c} : (g,g') \in \textup{Sp}(W) \times \textup{Sp}(W) \mapsto \sigma(g) \sigma(g') \sigma(gg')^{-1} \in R^\times.$$

\begin{lem} \label{cocycle_metaplectique_reduit_lem} \
\begin{enumerate}[label=\textup{\alph*)}]
\item Pour tout $g \in \textup{Sp}(W)$ et tout $p \in P(X)$ : 
$$\hat{c}(g,p) = \hat{c}(p,g) = \hat{c}(p^{-1},g)= (x(p),x(g))_F.$$
\item  Pour tout $S$ et $S'$ dans $[\![1,m]\!]$, en notant $l = | S \cap S'|$, on a :
$$\hat{c}(w_S,w_{S'}) = (-1,-1)_F^{\frac{l(l-1)}{2}}.$$
\item Soient $S \subset [\![1, m]\!]$ et $W = W_S + W_{{}^c S}$, de sorte que le sous-groupe de $\textup{Sp}(W)$ :
$$G_S = \{ g \in \textup{Sp}(W) \ | \ g(W_S) \subset W_S \textup{ et } g|_{W_S} = \textup{Id}_{W_S} \}$$ 
soit canoniquement isomorphe à $\textup{Sp}(W_{{}^c S})$ via la restriction à $W_{{}^c S}$. On adopte des notations similaires pour ${}^c S$. Alors pour tout $g \in G_S$ et tout $g' \in G_{{}^c S}$, on a l'égalité :
$$\hat{c}(g,g') = \hat{c}(g',g) = (x(g) , x(g'))_F.$$ \end{enumerate} \end{lem}

\begin{proof} a) Soient $p \in P(X)$ et $g \in \textup{Sp}(W)$. Alors, comme $\sigma(p) = \Omega_{1,\textup{det}_X(p)}\times I_p$, on déduit du Corollaire \ref{mesure_rao_mult_element_dun_parab_cor} précédent que :
$$\sigma(pg) = (x(p) , x(g))_F \times \sigma(g) \sigma(p) \textup{ et } \sigma(pg) = (x(p),x(g))_F \times \sigma(p) \sigma(g).$$
Cela montre que $\hat{c}(p,g)=\hat{c}(g,p) = (x(p),x(g))_F$. Enfin, $x(p) = x(p^{-1})$ par définition.

\noindent b) On note $\gamma_S$ l'isomorphisme induit par $w_S$ de $X_S$ vers $Y_S$. Alors pour tout $f \in C_c^\infty(Y)$ :
$$\sigma(w_S) f : y \mapsto \int_{w_S X \cap X \backslash X} \psi(\langle a,y \rangle) f((-\gamma_S a,0)) d \mu_{w_S}(a).$$
On pose $\rho_S : x \in X_S \mapsto \langle \gamma_S x , \cdot \rangle \in X_S^*$ qui un isomorphisme symétrique. En reprenant les notations de la Proposition \ref{normalisation_transformee_de_Fourier_prop}, soit :
$$M_{\gamma_S} : f \in C_c^\infty(Y_S) \mapsto \mathcal{F}_{\mu_{\rho_S}} (f \circ (-\gamma_S) ) \circ (-\gamma_S)^{-1} \in C_c^\infty(Y_S)$$
où pour $f' \in C_c^\infty(X_S)$ et $x \in X_S$ :
$$\mathcal{F}_{\mu_{\rho_S}} f' (x) =\int_{X_S} \psi( \langle \gamma_S x, a \rangle) f'(a) d \mu_{w_S}(a).$$
Dans la décomposition $C_c^\infty(Y) = C_c^\infty(Y_S) \otimes C_c^\infty(Y_{{}^c S})$, on a $\sigma(w_S) = M_{\gamma_S} \otimes \textup{Id}$. Il en va de même avec $w_{S'}$ et $\sigma(w_{S'})$.

Maintenant, la décomposition $C_c^\infty(Y) = C_c^\infty(Y_{S \cap S'}) \otimes C_c^\infty(Y_{S \Delta S'}) \otimes C_c^\infty(Y_{{}^c (S \cup S')})$ donne que la composée $\sigma(w_S) \circ \sigma(w_{S'})$ s'écrit comme $(M_{\gamma_S} \circ M_{\gamma_{S'}}) \otimes M_{\gamma_{S \Delta S'}} \otimes \textup{Id}$ où $\gamma_{S \Delta S'}$ est associé à $w_{S \Delta S'}$. Or, la restriction de $M_{\gamma_S}$ et $M_{\gamma_{S'}}$ à $C_c^\infty(Y_{S \cap S'})$ sont toutes deux égales à $M_{\gamma_{S \cap S'}}$. Ensuite :
\begin{eqnarray*} M_{\gamma_{S \cap S'}}^2 f &=& \mathcal{F}_{\mu_{\rho_{ S\cap S'}}} (M_{\gamma_{S \cap S'}} f \circ (-\gamma_{S \cap S'})) \circ (-\gamma_{S\cap S'})^{-1} \\
&=& \mathcal{F}_{\mu_{\rho_{ S\cap S'}}} \big( \mathcal{F}_{\mu_{\rho_{ S\cap S'}}}  (f \circ (-\gamma_{S \cap S'})) \big) \circ (-\gamma_{S\cap S'})^{-1} \\
&=& \mathcal{F}_{\mu_{\rho_{S \cap S'}}}^2 (f \circ (-\gamma_{S \cap S'}) ) \circ (-\gamma_{S\cap S'})^{-1}. \end{eqnarray*}
Finalement, la Proposition \ref{normalisation_transformee_de_Fourier_prop} donne que pour tout $f \in C_c^\infty(Y_{S \cap S'})$ et tout $y \in Y_{S \cap S'}$ :
$$M_{\gamma_{S \cap S'}}^2 f (y )= (-1, \textup{det}(Q_{\frac{1}{2} \rho_{S \cap S'}}))_F (\Omega_{-1,1})^l \times  f(-y).$$

En remarquant que $w_S w_{S'}=  w_{S \Delta S'} a_{S \cap S'}$ où $a_{S \cap S'} =(-\textup{Id}_{W_{S \cap S'}}) \oplus \textup{Id}_{W_{{}^c (S \cap S')}}$ est dans $P(X)$, la mesure $\mu_{w_S w_{S'}} = \Omega_{1, (-1)^l} \times \mu_{w_{S \cap S'}}$. Dans la décomposition précédente, $\sigma(w_S w_{S'})$ s'écrit alors $A_{S \cap S'} \otimes M_{\gamma_{S \Delta S'}} \otimes \textup{Id}$ où $A_{S \cap S'} f(y) = \Omega_{1, (-1)^l} \times f(-y)$. Par conséquent, comme :
$$(-1,\textup{det}(Q_{\frac{1}{2} \rho_{S \cap S'}}))_F = (-1,2^{-l})_F = (-1 , 2^l)_F = \big( (-1,2)_F \big)^l  = \big( (-1,-1)_F \big)^l,$$
il vient :
$$ \hat{c}(w_S,w_{S'}) = \big( (-1,-1)_F \Omega_{-1,1} \big)^l \times \Omega_{(-1)^l,1}. $$
En appliquant récursivement le point g) de la Proposition \ref{facteur_de_weil_non_norm_prop} :
$$\Omega_{(-1)^l,1} = (\Omega_{-1,1})^l \times (-1,-1)_F^{\frac{l(l-1)}{2}}.$$
Enfin, on conclut grâce à l'égalité $(\Omega_{-1,1})^2 = (-1,-1)_F$ que $\hat{c}(w_S,w_{S'}) = (-1,-1)_F^{\frac{l(l-1)}{2}}$.

\noindent c) D'une part, pour tout $f \in S_X$, un calcul explicite de $\sigma(g) \circ \sigma(g') f ((0,0))$ donne :
$$\int_{gX \cap X \backslash X} \int_{g'X \cap X \backslash X} f(((g')^{-1} a' , 0 )  ((g')^{-1} g^{-1} a , 0 ) ) d \mu_{g'}(a') d \mu_g(a).$$
D'autre part, le morphisme de réduction :
$$x \in X \mapsto (p_g(x),p_{g'}(x)) \in (g' X \cap X \backslash X ) \times ( gX \cap X \backslash X )$$
a pour noyau $g g' X \cap X$. Or, $g$ et $g'$ commutent entre eux et chacun de ces morphismes induit le morphisme identité par passage au quotient sur $g'X \cap X \backslash X$ et $g X \cap X \backslash X$ respectivement. De plus, on déduit du point g) de la Proposition \ref{facteur_de_weil_non_norm_prop} qu'on a :
$$\mu_g \otimes \mu_{g'} = (x(g),x(g'))_F \mu_{g g'}.$$
Par conséquent, un changement de variable dans l'intégrale donne l'égalité :
$$\sigma(g) \circ \sigma(g') f ((0,0)) = (x(g),x(g'))_F \times \sigma(g g') f ((0,0))$$
pour tout $f \in S_X$. Cela entraîne donc l'égalité $\sigma(g) \circ \sigma(g') = (x(g),x(g'))_F \times \sigma(g g')$.\end{proof}

\begin{defi} On note $\textup{Sp}(W) \times_{\hat{c}} R^\times$ l'ensemble $\textup{Sp}(W) \times R^\times$ muni de la loi de groupe :
$$(g, \lambda) \cdot (g', \lambda') = (g g' , \hat{c}(g,g') \lambda \lambda').$$ \end{defi}

\begin{theo} \label{cocycle_metaplectique_thm} On distingue les cas comme dans le Théorème \ref{groupe_metaplectique_theoreme}.
\begin{enumerate}[label=\textup{\alph*)}]
\item Si  $F$ est fini, ou si la caractéristique $\ell$ de $R$ est $2$, alors $\hat{c}$ est le $2$-cocyle trivial. Par conséquent, $\sigma$ est une section de $p_{S_X}$ qui est un morphisme de groupes. De plus, un tel morphisme de groupes est unique sauf dans le cas exceptionnel $F= \mathbb{F}_3$ et $\textup{dim}_F W= 2$, et définit toujours un isomorphisme d'extensions centrales :
$$(g, \lambda) \in \textup{Sp}(W) \times R^\times \mapsto (g,\lambda \sigma(g)) \in \widetilde{\textup{Sp}}_{\psi,S_X}^R(W).$$
\item  Si $F$ est local non archimédien et $\ell \neq 2$, alors $\hat{c}$ est à valeurs dans $\{ \pm 1 \}$. Par conséquent, $\sigma$ est l'unique section réalisant l'isomorphisme d'extensions centrales :
$$(g, \lambda) \in \textup{Sp}(W) \times_{\hat{c}} R^\times \mapsto (g, \lambda \sigma(g)) \in \widetilde{\textup{Sp}}_{\psi,S_X}^R(W).$$
et dont la restriction à $\textup{Sp}(W) \times_{\hat{c}} \{ \pm 1 \}$ induit un isomorphisme d'extensions centrales avec $\widehat{\textup{Sp}}_{\psi,S_X}^R(W)$. \end{enumerate} \end{theo}

\begin{proof} Dans le premier cas, on va montrer que $\hat{c}$ est le cocycle trivial ; dans le second, que $\hat{c}$ est à valeurs dans $\{ \pm 1 \}$. Les formules obtenues sont valables dans les deux cas, bien que triviales dans le premier. Soient $p_1, p_2, p \in P(X)$ et $g_1, g_2 \in \textup{Sp}(W)$. Alors on déduit du Lemme \ref{cocycle_metaplectique_reduit_lem} en décomposant $\sigma(g_1 p^{-1}) \circ \sigma(p g_2)$ :
$$\hat{c} (g_1 p^{-1} , p g_2) =  \hat{c}(p,p) \hat{c}(g_1 , g_2) \hat{c}(g_1 ,g_2)$$
ainsi que $\sigma(p_1 g_1) \circ \sigma(g_2 p_2)$ :
$$\hat{c}(p_1g_1,g_2p_2)=\hat{c}(p_1 ,g_1) \hat{c}(g_2,p_2) \hat{c}(p_1,g_1g_2) \hat{c}(p_1 g_1 g_2 ,p_2) \hat{c}(g_1,g_2).$$
Par conséquent :
$$\hat{c}(p_1 g_1 p^{-1} , p g_2 p_2) \hat{c} (g_1,g_2)^{-1} = \left\{ \begin{array}{ll} 1 & \textup{ dans le premier cas ;} \\
\pm 1 & \textup{ dans le second}. \end{array} \right.$$

Il reste à prouver que pour $g_1$ et $g_2$ bien choisis, le cocycle $\hat{c}(g_1,g_2)$ est trivial ou à valeurs dans $\{ \pm 1 \}$. Pour ce faire, on utilise une décomposition associé à l'invariant de Leray \cite[Th. 2.16]{rao}. Pour tout $g_1$ et $g_2$ dans $\textup{Sp}(W)$, il existe $S_1$, $S_2$ et $S$ dans $[\![1,m]\!]$, et un isomorphisme $\rho : Y_S \to X_S$ antisymétrique, \textit{i.e.} $\rho^* = -\rho$, de sorte que $(S_1 \cup S_2) \cap S = \emptyset$ et  $g_1 = p_1 w_{S \cup S_1} u_{\rho} p^{-1}$ et $g_2 = p w_{S \cup S_2} p_2$. Il suffit donc de calculer pour le morphisme $\rho$ en question $\hat{c}(w_{S \cup S_1} u_\rho, w_{S\cup S_2})$. Or, en calculant $\sigma(w_{S \cup S_1} u_\rho) \circ \sigma(w_{S\cup S_2})$ de différentes manières, on obtient :
$$\hat{c}(w_{S \cup S_1} u_\rho, w_{S\cup S_2}) = \hat{c}(w_S u_\rho, w_S) \hat{c}(w_{S_1} w_{S_2} , w_S u_\rho w_S)\hat{c}(w_{S_1} ,w_{S_2}).$$
Et d'après le Lemme \ref{cocycle_metaplectique_reduit_lem}, on sait que $\hat{c}(w_{S_1} w_{S_2} , w_S u_\rho w_S)=((-1)^{|S_1 \cap S_2|} , x(w_S u_\rho w_S))_F$ et $\hat{c}(w_{S_1},w_{S_2})$ est une puissance de $(-1,-1)_F$.

Il reste à étudier le facteur $\hat{c}(w_S u_\rho, w_S)$. Le lemme suivant permet de conclure que $\hat{c}$ prend bien les valeurs recherchées :

\begin{lem} \label{cocycle_formule_w_s_u_rho_w_s_lem} On a :
$$\hat{c}(w_S u_\rho,w_S)=(-2, \textup{det}(Q_{\gamma_S \rho \gamma_S}))_F \times h_F(Q_{\gamma_S \rho \gamma_S})$$
où $Q_{\gamma_S \rho \gamma_S}(x)= \langle x , \gamma_S \rho \gamma_S x \rangle$ est une forme quadratique non dégénérée sur $X_S$. \end{lem}

\begin{proof} Il s'agit donc de calculer, pour tout $S \subset [\![1,m]\!]$ et tout $\rho : Y_S \to X_S$ de sorte que $u_\rho \in \textup{Sp}(W)$,   la composée $\sigma(w_S u_\rho) \circ \sigma(w_S)$ en termes de $\sigma(w_S u_\rho w_S)$. Soit $f \in S_X$. Comme $\sigma(w_S u_\rho) = \sigma(w_S) \circ  \sigma(u_\rho)$ d'après le Lemme \ref{cocycle_metaplectique_reduit_lem} :
$$\sigma(w_S u_\rho) \circ \sigma(w_S) f ((0,0)) = \int_{X_S} (\sigma(u_\rho) \circ \sigma(w_S) f)((w_S^{-1} a,0)) d \mu_{w_S}(a).$$
Or, $(\sigma(u_\rho) \circ \sigma(w_S) f)((w_S^{-1} a,0)) = \psi(\frac{1}{2} \langle w_S^{-1} a , (-\rho) w_S^{-1} a \rangle) \times ( \sigma(w_S) f ) ((w_S^{-1} a, 0 ))$ d'après les formules du modèle de Schr\"odinger de la Section \ref{decalque_des_rep_S_A_section}. Mais :
\begin{eqnarray*} \sigma(w_S) f ((w_S^{-1} a , 0 )) &=& \int_{X_S} f((w_S^{-1} a',0)(w_S^{-2} a , 0 ) d \mu_{w_S}(a') \\
&=& \int_{X_S} \psi( \langle a', w_S^{-1} a \rangle) f((w_S^{-1}a',0))) d \mu_{w_S}(a') \\
&=& |\phi_{\rho,S}|^{-1} \int_{X_S} \psi( \langle w_S^{-1}a, \rho w_S^{-1} a'' \rangle) f((- w_S^{-1} \rho w_S^{-1} a'',0) d \mu_{w_S}(a'')
\end{eqnarray*}
où le changement de variable opéré est $a'=\phi_{\rho,S} (a'') = -\rho w_S^{-1} a''$ pour l'automorphisme $\phi_{\rho,S}$ de $X_S$ induit par $-\rho w_S^{-1}$ sur $X_S$.

Ensuite, comme $u_\rho^{-1} = u_{- \rho}$ :
$$- w_S^{-1} \rho w_S^{-1} a'' = w_S^{-1} u_\rho^{-1} w_S^{-1} a'' - w_S^{-2} a''.$$
Par conséquent :
$$f((-w_S^{-1} \rho w_S^{-1} a'',0))= \psi(\frac{1}{2} \langle w_S^{-1} a'' , (-\rho) w_S^{-1} a'' \rangle) f((w_S^{-1} u_\rho^{-1} w_S^{-1} a'' , 0)).$$
On obtient donc pour $\sigma(w_S u_\rho) \circ \sigma(w_S) f ((0,0))$ au facteur $|\phi_{\rho,S}|^{-1}$ près :
$$\int_{X_S} \int_{X_S} \psi \bigg(\frac{1}{2} \langle w_S^{-1} (a-a'') , (-\rho) w_S^{-1} (a - a'') \rangle \bigg) \ (I_{w_S u_\rho w_S}f)((a'',0)) d\mu_{w_S}(a'') d \mu_{w_S}(a).$$
À l'aide du facteur de Weil non normalisé défini dans la Section \ref{facteur_de_weil_non_norm_sect}, cette dernière expression se simplifie en :
$$\Omega_{\mu_{w_S}}(\psi \circ Q_S) \times \int_{X_S} (I_{w_S u_\rho w_S} f)((a'',0)) d\mu_{w_S}(a'')$$
où $Q_S(x) = - \frac{1}{2} \langle x , w_S \rho w_S^{-1} x \rangle$.

De plus $w_S u_\rho w_S \in G_{{}^c S}$ et admet comme décomposition dans $W_S = X_S + Y_S$ :
$$w_S u_\rho w_S =  \left[ \begin{array}{cc} * & * \\
\gamma_S \rho \gamma_S & * \end{array} \right] \textup{ où } w_S = \left[ \begin{array}{cc} * & * \\
\gamma_S & * \end{array} \right] \textup{ et } \gamma_S^*=-\gamma_S.$$
Comme $\gamma_S \rho \gamma_S$ est de rang $|S|$, il existe $p_1$ et $p_2$ dans $P(X_S)$ telles que $w_S u_\rho w_S = p_1 w_S p_2$. De plus, il existe une décomposition de la forme :
$$w_S u_\rho w_S =  \left[ \begin{array}{cc} \textup{Id}_{X_S} & * \\
0 & \textup{Id}_{Y_S} \end{array} \right] w_S \left[ \begin{array}{cc} a & * \\
0 & (a^*)^{-1} \end{array} \right] .$$
En particulier, une telle décomposition impose que $\gamma_S a= \gamma_S \rho \gamma_S$. En d'autres termes $a = \rho \gamma_S \in \textup{GL}_F(X_S)$. Avec ces notations, on a $\phi_{\rho,S} = \rho \gamma_S$ et $Q_S(x) =  \frac{1}{2} \langle x , \gamma_S \rho \gamma_S x \rangle$. 

L'expression de la mesure $\mu_{w_S u_\rho w_S}$ est alors :
$$\mu_{w_S u_\rho w_S} = \Omega_{1, \textup{det}(\rho \gamma_S)} \times  \mu_{w_S}.$$
On obtient donc la formule :
$$\hat{c}(w_S u_\rho , w_S) = |\phi_{\rho,S}|^{-1} \times \Omega_{\mu_{w_S}}(\psi \circ Q_S) \times  \Omega_{\textup{det}(\rho \gamma_S),1},$$
que l'on se propose de simplifier dans la suite de la preuve.

D'une part, dans la base standard $\mathcal{B}$ de $X_S$, on a d'après le Corollaire \ref{facteur_de_weil_inv_de_hasse_cor} :
$$\Omega_{\mu_{w_S}} ( \psi \circ Q_{\frac{1}{2} \gamma_S \rho \gamma_S}) = \Omega_{\textup{det}_\mathcal{B}(Q_{\frac{1}{2} \gamma_S \rho \gamma_S}),1} \times h_F(Q_{\frac{1}{2} \gamma_S \rho \gamma_S}) \times \Omega_{\mu_{w_S}} (\psi \circ Q_{\gamma_S}).$$
Or $\Omega_{\mu_{w_S}}(\psi \circ Q_{\gamma_S}) = (\Omega_{1,\frac{1}{2}})^{|S|} \times \Omega_{\mu_{w_S}}(\psi \circ Q_{\frac{1}{2}\gamma_S}) = (\Omega_{1,\frac{1}{2}})^{|S|}$ où la dernière égalité se déduit par définition de la mesure $\mu_{w_S}$. De plus : 
$$h_F(Q_{\frac{1}{2} \gamma_S \rho \gamma_S}) = (2,\textup{det}(Q_{\gamma_S \rho \gamma_S})^{|S|-1})_F \times h_F(Q_{\gamma_S \rho \gamma_S}).$$
et :
$$\Omega_{\textup{det}_\mathcal{B}(Q_{\frac{1}{2} \gamma_S \rho \gamma_S}),1} = (2^{-|S|},\textup{det}(Q_{\gamma_S \rho \gamma_S}))_F \times \Omega_{2^{-|S|},1} \times \Omega_{\textup{det}_\mathcal{B}(Q_{\gamma_S \rho \gamma_S}), 1}$$
En remarquant que $\Omega_{2^{-|S|},1} = (\Omega_{\frac{1}{2},1})^{|S|}$, il vient :
$$\Omega_{\mu_{w_S}} ( \psi \circ Q_{\frac{1}{2} \gamma_S \rho \gamma_S}) = (2, \textup{det}(Q_{\gamma_S \rho \gamma_S}))_F \times \Omega_{\textup{det}_\mathcal{B}(Q_{\gamma_S \rho \gamma_S}),1}  h_F(Q_{\gamma_S \rho \gamma_S}).$$

D'autre part, la représentation matricielle de la forme quadratique $Q_{\gamma_S \rho \gamma_S}$ dans la base $\mathcal{B}$ est $Q_{\gamma_S \rho \gamma_S}(x) = {}^t X M X$ où $M = \textup{Mat}_\mathcal{B}(\rho \gamma_S)$ et $ X \in F^{|S|}$ est le vecteur coordonné associé à $x \in X$ dans la base $\mathcal{B}$. Cela signifie que $\textup{det}_\mathcal{B}(Q_{\gamma_S \rho \gamma_S}) = \textup{det}(\rho \gamma_S)$. Comme pour tout $a \in F^\times$, on a $(\Omega_{a,1})^2 = |a| \times (a,a)_F = |a| \times (-1,a)_F$ :
$$\hat{c}(w_S u_\rho,w_S) = (-2,\textup{det}(Q_{\gamma_S \rho \gamma_S}) )_F \times h_F (Q_{\gamma_S \rho \gamma_S}).$$ \end{proof}

Pour terminer la preuve du point a), la propriété d'unicité dans le cas non exceptionnel vient de l'énoncé du Théorème \ref{groupe_metaplectique_theoreme}. Celle du point b) est un fait classique pour les isomorphismes d'extensions centrales car $\textup{Sp}(W)$ est un groupe parfait. \end{proof}

\begin{rem} La formule que l'on obtient dans la preuve pour $\hat{c}(w_S u_\rho, w_S)$ diverge de celle de \cite{rao}, mais seulement en apparence, au sens où ces deux cocycles sont égaux en cohomologie. En effet, le choix de mesure qu'il opère correspond avec les notations de la Proposition \ref{mesure_mu_g_def_lem} à $\mu_{g,\textup{Rao}} = \Omega_{\frac{1}{2}, \frac{1}{2} \textup{det}_X(p_1 p_2)} \times \phi_1 \cdot \mu_{w_j} = (2, x(g))_F \times \mu_g$ donc :
$$\sigma_{\textup{Rao}}(g) = (2,x(g))_F \times \sigma(g).$$
Plus généralement, un choix de mesure du type $\mu_{g,\alpha} = \Omega_{\alpha, \alpha \textup{det}_X(p_1 p_2)} \times \phi_1 \cdot \mu_{w_j}$ pour $\alpha \in F^\times$ donne un $2$-cocycle $\hat{c}_\alpha$ dans la même classe de cohomologie que $\hat{c}$. Il faut modifier en conséquence le point b) du Lemme \ref{cocycle_metaplectique_reduit_lem} et le Lemme \ref{cocycle_formule_w_s_u_rho_w_s_lem} pour $\hat{c}_\alpha$, mais tout le développement précédent reste valable. \end{rem}

\begin{cor} Excepté dans le cas exceptionnel $F=\mathbb{F}_3$ et $\textup{dim}_F(W)=2$, la section $\sigma$ est à valeurs dans $ \widehat{\textup{Sp}}_{\psi,S_X}^R(W)$. \end{cor}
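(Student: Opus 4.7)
Le plan est d'invoquer directement le Théorème \ref{cocycle_metaplectique_thm}, en distinguant les deux cas qui y sont traités. Dans chacun, on dispose d'une description très explicite de l'extension centrale $\widetilde{\textup{Sp}}_{\psi,S_X}^R(W)$ et de son sous-groupe dérivé $\widehat{\textup{Sp}}_{\psi,S_X}^R(W)$ faisant intervenir la section $\sigma$. Il suffira dès lors de traduire l'appartenance à $\widehat{\textup{Sp}}_{\psi,S_X}^R(W)$ dans chacune des descriptions.

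Dans le cas où $F$ est local non archimédien et $\ell \neq 2$, je rappellerais que l'énoncé \textup{b)} du Théorème \ref{cocycle_metaplectique_thm} affirme que l'application $(g,\lambda) \mapsto \lambda \sigma(g)$ réalise un isomorphisme d'extensions centrales entre $\textup{Sp}(W) \times_{\hat{c}} R^\times$ et $\widetilde{\textup{Sp}}_{\psi,S_X}^R(W)$, dont la restriction à $\textup{Sp}(W) \times_{\hat{c}} \{ \pm 1 \}$ induit un isomorphisme avec $\widehat{\textup{Sp}}_{\psi,S_X}^R(W)$. Or, pour tout $g \in \textup{Sp}(W)$, l'élément $\sigma(g)$ est l'image par cet isomorphisme du couple $(g,1) \in \textup{Sp}(W) \times_{\hat{c}} \{ \pm 1 \}$. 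L'appartenance $\sigma(g) \in \widehat{\textup{Sp}}_{\psi,S_X}^R(W)$ en résulte aussitôt.

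Dans le cas où $F$ est fini ou $\ell = 2$, l'énoncé \textup{a)} du Théorème \ref{cocycle_metaplectique_thm} garantit en dehors du cas exceptionnel que $\sigma : \textup{Sp}(W) \to \widetilde{\textup{Sp}}_{\psi,S_X}^R(W)$ est un morphisme de groupes. Je m'appuierais ensuite sur le Théorème \ref{groupe_metaplectique_theoreme} pour rappeler que, hors du cas exceptionnel $F = \mathbb{F}_3$ et $\dim_F W = 2$, le groupe $\textup{Sp}(W)$ est parfait. Il s'ensuit l'égalité $\sigma(\textup{Sp}(W)) = [\sigma(\textup{Sp}(W)), \sigma(\textup{Sp}(W))]$, qui est trivialement incluse dans le groupe dérivé $\widehat{\textup{Sp}}_{\psi,S_X}^R(W)$ de $\widetilde{\textup{Sp}}_{\psi,S_X}^R(W)$.

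Il n'y a pas de véritable obstacle ici, l'essentiel du travail ayant déjà été effectué dans le Théorème \ref{cocycle_metaplectique_thm} et la description cohomologique du sous-groupe dérivé fournie par le Théorème \ref{groupe_metaplectique_theoreme}. La seule subtilité est de bien noter que l'exclusion du cas exceptionnel est nécessaire dans les deux situations : d'un côté pour assurer la perfection de $\textup{Sp}(W)$, de l'autre pour que l'isomorphisme d'extensions centrales induit par $\sigma$ soit celui produit par le théorème.
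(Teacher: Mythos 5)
Your proof is correct and follows exactly the route suggested by the structure of the paper, which states this corollary immediately after Théorème \ref{cocycle_metaplectique_thm} without a separate proof: in the case \textup{b)} you read off the membership $\sigma(g)\in\widehat{\textup{Sp}}_{\psi,S_X}^R(W)$ directly from the fact that $\sigma(g)$ is the image of $(g,1)\in\textup{Sp}(W)\times_{\hat{c}}\{\pm 1\}$ under the isomorphism; in the case \textup{a)}, $\sigma$ being a group homomorphism and $\textup{Sp}(W)$ being perfect outside the exceptional case gives $\sigma(\textup{Sp}(W))=\sigma([\textup{Sp}(W),\textup{Sp}(W)])\subset\widehat{\textup{Sp}}_{\psi,S_X}^R(W)$. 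One small inaccuracy in your closing commentary: the exclusion of the exceptional case is only operative in case \textup{a)} with $F$ finite, since in case \textup{b)} $F$ is local non archimédien and thus never $\mathbb{F}_3$; this does not affect the validity of the argument.
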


\begin{cor}  \label{cocycle_metaplectique_reduit_cor} Soient $g_1$ et $g_2$ dans $\textup{Sp}(W)$. Par définition de l'invariant de Leray, il existe $p_1, p_2, p \in P(X)$, $S\subset[\![1,m]\!]$, un isomorphisme antisymétrique $\rho : Y_S \to X_S$  et $S_1,S_2 \subset {}^c S$ tels que $g_1 = p_1 w_{S \cup S_1} u_\rho p^{-1}$ et $g_2 = p w_{S \cup S_2} p_2$. Avec ces notations, et en posant $l= |S_1 \cap S_2|$, on a :
\begin{multline*} \hat{c}(g_1,g_2) = (x(g_1),x(g_2))_F \times  (x(g_1)x(g_2),-x(g_1 g_2))_F \times (-1,-1)_F^{\frac{l(l -1)}{2}} \\
\times ((-1)^l,x(w_S u_\rho w_S))_F \times \hat{c}(w_S u_\rho , w_S).
\end{multline*} \end{cor}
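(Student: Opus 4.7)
L'idée est de combiner le calcul déjà mené dans la démonstration du Théorème \ref{cocycle_metaplectique_thm} avec une comptabilité de symboles de Hilbert. Posons $h_1 = w_{S \cup S_1} u_\rho$ et $h_2 = w_{S \cup S_2}$, de sorte que $g_1 = p_1 h_1 p^{-1}$, $g_2 = p h_2 p_2$ et donc $g_1 g_2 = p_1 h_1 h_2 p_2$. Écrivons $\alpha = x(p_1)$, $\beta = x(p_2)$, $\gamma = x(p)$ dans $F^\times/F^{\times 2}$, ainsi que $a = x(h_1)$, $b = x(h_2)$, $c = x(h_1 h_2)$. La multiplicativité de $x$ sous multiplication par un élément de $P(X)$ fournit
$$x(g_1) = \alpha \gamma a,\qquad x(g_2) = \gamma \beta b,\qquad x(g_1 g_2) = \alpha \beta c\quad \text{dans } F^\times/F^{\times 2}.$$

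Je commencerais par appliquer la relation de cocycle $\hat{c}(xy,z)\hat{c}(x,y)=\hat{c}(x,yz)\hat{c}(y,z)$ de manière itérée pour extraire les contributions paraboliques et obtenir une identité du type
$$\hat{c}(g_1,g_2) = K(\alpha,\beta,\gamma,a,b,c)\cdot \hat{c}(h_1,h_2),$$
où chaque facteur intermédiaire $\hat{c}(\cdot,\cdot)$ impliquant au moins un élément de $P(X)$ est un symbole de Hilbert via le point a) du Lemme \ref{cocycle_metaplectique_reduit_lem}. Un choix commode est de décomposer $\hat{c}(g_1,g_2) = \hat{c}(p_1 h_1 h_2, p_2)\,\hat{c}(p_1 h_1 p^{-1}, p h_2)\,\hat{c}(p h_2, p_2)^{-1}$ puis de traiter le facteur central par une seconde application de la relation de cocycle ; tous les symboles qui apparaissent sont alors de la forme $(\gamma,\cdot)_F$, $(\alpha,\cdot)_F$ ou $(\beta,\cdot)_F$ et se regroupent par bilinéarité.

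Pour le facteur $\hat{c}(h_1,h_2)$, j'utiliserais les factorisations $w_{S \cup S_1} = w_{S_1} w_S$ et $w_{S \cup S_2} = w_S w_{S_2}$ (licites car $S$ et $S_i$ sont disjoints, et $w_S, w_{S_i}$ commutent) ainsi que la commutation de $u_\rho$ avec $w_{S_2}$ (puisque $u_\rho$ agit uniquement sur $W_S$ tandis que $w_{S_2}$ agit sur $W_{S_2} \subset W_{{}^c S}$). On obtient ainsi, exactement comme dans la preuve du Théorème \ref{cocycle_metaplectique_thm}, l'égalité
$$\hat{c}(w_{S \cup S_1} u_\rho,\, w_{S \cup S_2}) = \hat{c}(w_S u_\rho,\, w_S)\, \hat{c}(w_{S_1} w_{S_2},\, w_S u_\rho w_S)\, \hat{c}(w_{S_1}, w_{S_2}).$$
Le dernier facteur vaut $(-1,-1)_F^{l(l-1)/2}$ d'après le point b) du Lemme \ref{cocycle_metaplectique_reduit_lem}. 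Pour le facteur central, $w_{S_1} w_{S_2} \in G_S$ (car $S_1,S_2 \subset {}^c S$) et $w_S u_\rho w_S \in G_{{}^c S}$ (observation déjà utilisée dans la preuve du Lemme \ref{cocycle_formule_w_s_u_rho_w_s_lem}), donc le point c) du Lemme \ref{cocycle_metaplectique_reduit_lem} s'applique. Comme $w_{S_1} w_{S_2} = w_{S_1 \Delta S_2} a_{S_1 \cap S_2}$ avec $x(w_{S_1 \Delta S_2})=1$ et $x(a_{S_1 \cap S_2}) = (-1)^l$, on en tire $\hat{c}(w_{S_1} w_{S_2}, w_S u_\rho w_S) = ((-1)^l, x(w_S u_\rho w_S))_F$.

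La dernière étape, qui constitue l'obstacle principal, consiste à comparer le préfacteur $K(\alpha,\beta,\gamma,a,b,c)$ issu de la première étape au produit $(x(g_1),x(g_2))_F\,(x(g_1)x(g_2), -x(g_1g_2))_F$ attendu. La simplification cruciale est que $a = b = 1$ modulo $F^{\times 2}$ : en effet $x(w_T)=1$ pour tout $T$ et $u_\rho \in N(X) \subset P(X)$ vérifie $\textup{det}_X(u_\rho)=1$. Avec $a=b=1$, tous les termes mixtes faisant intervenir simultanément $a$ ou $b$ et d'autres variables disparaissent, et la vérification se ramène à une manipulation élémentaire de symboles de Hilbert utilisant la bilinéarité, la symétrie, et l'identité $(t,t)_F = (t,-1)_F$, laquelle permet de reconstituer exactement $(\alpha \gamma, \beta \gamma)_F \cdot (\alpha \beta, -\alpha\beta c)_F$, c'est-à-dire $(x(g_1),x(g_2))_F \cdot (x(g_1)x(g_2),-x(g_1 g_2))_F$. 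En assemblant les trois étapes, on obtient la formule annoncée.
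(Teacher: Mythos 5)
Ta démarche est essentiellement celle du papier : on utilise la relation de $2$-cocycle pour « décrocher » les contributions des éléments paraboliques, chaque facteur résiduel étant un symbole de Hilbert via le point a) du Lemme \ref{cocycle_metaplectique_reduit_lem}, puis on ramène le reste à $\hat{c}(w_{S\cup S_1} u_\rho, w_{S\cup S_2})$ que l'on factorise exactement comme dans la preuve du Théorème \ref{cocycle_metaplectique_thm} à l'aide des points b) et c) de ce même lemme. La seule différence est cosmétique : tu décroches d'abord $p_2$ à droite et $p$ au milieu là où le papier commence par $p_1$ à gauche ; les deux stratégies sont équivalentes modulo l'ordre d'application de la relation de cocycle.

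Ce qui distingue légèrement ta rédaction est de mettre en avant explicitement l'observation $x(w_{S\cup S_1} u_\rho) = x(w_{S\cup S_2}) = 1$ (car $x(w_T)=1$ et $u_\rho \in N(X)$ donc $\textup{det}_X(u_\rho)=1$), ce qui neutralise tous les termes « mixtes » dans la comptabilité de symboles de Hilbert et fait tomber $(x(g_1),x(g_2))_F \cdot (x(g_1)x(g_2),-x(g_1g_2))_F$ sur $(\alpha\gamma,\gamma\beta)_F \cdot (\alpha\beta,c)_F$. Cette simplification est implicitement utilisée dans le papier mais n'y est pas soulignée ; c'est une présentation un peu plus transparente du même calcul. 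Note qu'il reste à écrire en détail le déploiement du préfacteur $K(\alpha,\beta,\gamma,c)$ pour retrouver exactement la forme attendue — le papier le fait, en compressant certains détails (« Les détails sont omis car similaires ») — mais les ingrédients que tu invoques (bilinéarité, $(t,t)_F=(t,-1)_F$, $(t,-t)_F=1$) suffisent bien à clore ce calcul.
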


\begin{proof} En reprenant la preuve précédente :
\begin{multline*} \hat{c}(p_1 w_{S \cup S_1} u_\rho p^{-1} , p w_{S \cup S_2} p_2) = \hat{c}(p_1 , w_{S \cup S_1} u_\rho p^{-1}) \times \hat{c}(p_1 ,w_{S \cup S_1} u_\rho w_{S \cup S_2} p_2) \\ \times \hat{c}(w_{S \cup S_1} u_\rho p^{-1} , p w_{S \cup S_2} p_2)
\end{multline*}
Or $\hat{c}(p_1 , w_{S \cup S_1} u_\rho p^{-1}) =  (x(p_1),x(p))_F$ et : 
$$\hat{c}(p_1 ,w_{S \cup S_1} u_\rho w_{S \cup S_2} p_2) = (x(p_1), x(w_{S \cup S_1} u_\rho w_{S \cup S_2}) x(p_2))_F.$$
Comme $x(w_{S \cup S_1} u_\rho w_{S \cup S_2}) = x(w_{S_1} w_{S_2} ) x(w_S u_\rho w_S) =(-1)^l x(w_S u_\rho w_S)$ :
$$\hat{c}(g_1,g_2) = (x(p_1) , x(p))_F \times (x(p_1),(-1)^l x(w_S u_\rho w_S))_F \times \hat{c}(w_{S \cup S_1} u_\rho p^{-1} , p w_{S \cup S_2} p_2).$$
Les détails sont omis car similaires, mais on a de même :
\begin{multline*}
\hat{c}(w_{S \cup S_1} u_\rho p^{-1} , p w_{S \cup S_2} p_2) = (x(p),x(p_2))_F \times ((-1)^l x(w_S u_\rho w_S),x(p_2))_F \\
\times \hat{c}(w_{S \cup S_1} u_\rho p^{-1} , p w_{S \cup S_2}).
\end{multline*}
De plus $x(g_1 g_2 ) =x(p_1) (-1)^l x(w_S u_\rho w_S) x(p_2)$. Et puisque :
$$\hat{c}(w_{S \cup S_1} u_\rho p^{-1} , p w_{S \cup S_2}) = (x(p),x(p))_F \times \hat{c}(w_{S \cup S_1} u_\rho , w_{S \cup S_2}),$$
il vient :
$$\hat{c}(g_1,g_2) = (x(g_1) ,x(g_2))_F \times (x(g_1) x(g_2), - x(g_1) x(g_2))_F \times \hat{c}(w_{S \cup S_1} u_\rho, w_{S \cup S_2}).$$
Et d'après la discussion préliminaire au Lemme \ref{cocycle_formule_w_s_u_rho_w_s_lem} :
$$ \hat{c}(w_{S \cup S_1} u_\rho, w_{S \cup S_2}) = ((-1)^l,x(w_S u_\rho w_S))_F \times (-1,-1)_F^\frac{l(l-1)}{2} \times \hat{c}(w_S u_\rho , w_S).$$
\end{proof}

\section{Relevés de paires duales et scindages} \label{releves_de_paires_duales_scindages_section}

Soit $(H_1,H_2)$ une paire duale réductive dans le groupe symplectique $\textup{Sp}(W)$. On rappelle (cf. \cite[Chap. I, 1.17]{mvw}) que $H_1$ et $H_2$ sont deux sous-groupes de $\textup{Sp}(W)$ qui sont les centralisateurs l'un de l'autre. Ils sont de plus réductifs.

Soit $p_S : \widetilde{\textup{Sp}}_{\psi,S}^R(W) \to \textup{Sp}(W)$ la projection associée à un modèle $S$ de la représentation de Weil modulaire. On note alors :
$$\widetilde{H}_{1,S} = p_S^{-1} (H_1) \textup{ et } \widetilde{H}_{2,S} = p_S^{-1}(H_2).$$

\begin{prop} Le centralisateur de $\widetilde{H}_{1,S}$ dans $\widetilde{\textup{Sp}}_{\psi,S}^R(W)$ est $\widetilde{H}_{2,S}$, et vice-versa. \end{prop}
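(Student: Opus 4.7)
On prouve les deux inclusions $\widetilde{H}_{2,S} \subset Z_{\widetilde{\textup{Sp}}_{\psi,S}^R(W)}(\widetilde{H}_{1,S})$ et $Z_{\widetilde{\textup{Sp}}_{\psi,S}^R(W)}(\widetilde{H}_{1,S}) \subset \widetilde{H}_{2,S}$. L'énoncé symétrique pour le centralisateur de $\widetilde{H}_{2,S}$ se déduit en échangeant les rôles des deux groupes, puisque la définition d'une paire duale est parfaitement symétrique.

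L'observation préalable est que, pour tout $g \in \textup{Sp}(W)$, l'opérateur $M[g]$ construit avant le Lemme \ref{paires-duales-commutant-lem} fournit un relevé $(g,M[g]) \in \widetilde{\textup{Sp}}_{\psi,S}^R(W)$ de $g$, et la fibre $p_S^{-1}(g)$ est alors exactement $\{(g,\lambda M[g]) \ | \ \lambda \in R^\times \}$, puisque le sous-groupe $R^\times = \textup{Ker}(p_S)$ agit simplement transitivement sur chaque fibre par multiplication (et que $\textup{Hom}_H(\rho_\psi,\rho_\psi^g) \simeq R$ d'après le point c) de la Proposition \ref{representation_metaplectique_prop}).

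\emph{Première inclusion.} Soient $\tilde{h}_1 = (h_1, \lambda_1 M[h_1]) \in \widetilde{H}_{1,S}$ et $\tilde{h}_2 = (h_2, \lambda_2 M[h_2]) \in \widetilde{H}_{2,S}$ où $h_i \in H_i$ et $\lambda_i \in R^\times$. Comme $(H_1,H_2)$ est une paire duale, les groupes $H_1$ et $H_2$ se centralisent mutuellement, donc $h_1 h_2 = h_2 h_1$. Le Lemme \ref{paires-duales-commutant-lem} donne alors $M[h_1] M[h_2] = M[h_2] M[h_1]$. Les scalaires $\lambda_i$ étant dans $R^\times$ et donc centraux dans $\textup{GL}_R(S)$, on en déduit directement $\tilde{h}_1 \tilde{h}_2 = \tilde{h}_2 \tilde{h}_1$. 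C'est là le point \og technique \fg{} de la preuve, qui repose entièrement sur le Lemme \ref{paires-duales-commutant-lem}.

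\emph{Seconde inclusion.} Soit $\tilde{g} \in \widetilde{\textup{Sp}}_{\psi,S}^R(W)$ qui centralise $\widetilde{H}_{1,S}$. En projetant par $p_S$, qui est un morphisme de groupes surjectif, l'élément $p_S(\tilde{g})$ centralise $p_S(\widetilde{H}_{1,S}) = H_1$ dans $\textup{Sp}(W)$. Par la définition même d'une paire duale, le centralisateur de $H_1$ dans $\textup{Sp}(W)$ est précisément $H_2$, donc $p_S(\tilde{g}) \in H_2$, d'où $\tilde{g} \in p_S^{-1}(H_2) = \widetilde{H}_{2,S}$. Cette partie est essentiellement tautologique une fois l'hypothèse de paire duale invoquée.
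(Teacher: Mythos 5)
Ta preuve est correcte et suit essentiellement la même stratégie que celle du papier : la projection par $p_S$ donne l'inclusion du centralisateur de $\widetilde{H}_{1,S}$ dans $\widetilde{H}_{2,S}$, et le Lemme \ref{paires-duales-commutant-lem} donne l'inclusion réciproque. Tu explicites en plus la paramétrisation de la fibre $p_S^{-1}(g)$ par $R^\times$ via les opérateurs $M[g]$, ce qui rend l'argument un peu plus détaillé que le texte original, mais il s'agit du même raisonnement.
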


\begin{proof} Soit $g$ dans le centralisateur de $\widetilde{H}_{1,S}$. Alors $p_S(g)$ est dans le centralisateur de $H_1$ c'est-à-dire $p_S(g) \in H_2$. De plus, le Lemme \ref{paires-duales-commutant-lem} assure que le centralisateur de $H_2$ contient $\widetilde{H}_{1,S}$. D'où l'égalité. Enfin, les mêmes arguments s'appliquent pour $\widetilde{H}_{1,S}$ et le centralisateur de $\widetilde{H}_{2,S}$. \end{proof}

Par conséquent, la paire $(\widetilde{H}_{1,S},\widetilde{H}_{2,S})$ est une paire duale dans $\widetilde{\textup{Sp}}_{\psi,S}^R(W)$. Dorénavant, toutes les paires duales considérées dans le groupe métaplectique seront de ce type. On dit que cette paire est le \textit{relevé au groupe métaplectique} de la paire $(H_1,H_2)$. En général, la théorie se ramène à l'étude des paires duales réductives qui sont irréductibles. Elles sont en un certain sens minimales puisqu'elles ne proviennent pas de produits de paires duales plus petites -- pour une définition rigoureuse \cite[\textit{Ibid.}]{mvw}. Leur classification se divise entre les paires de type I et celles de type II.

Soit $H$ un sous-groupe de $\textup{Sp}(W)$. Le groupe $\widetilde{H}_S = p_S^{-1}(H)$ est dit \textit{scindé} s'il existe un morphisme de groupes $H \to \widetilde{H}_S$ qui est une section de $p_S$. Dans ce cas, il existe un isomorphisme d'extensions centrales entre l'extension centrale triviale $H \times R^\times$ et $\widetilde{H}_S$. De plus, l'ensemble de ces isomorphismes est en bijection avec l'ensemble des caractères de $H$ à valeurs dans $R^\times$.

D'après le Théorème \ref{cocycle_metaplectique_thm}, tout relevé d'un sous-groupe de $\textup{Sp}(W)$ est scindé si $F$ est fini ou si $R$ est de caractéristique $2$. On suppose donc dorénavant, et ce jusqu'à la fin de cette partie, que $F$ est local non archimédien et $R$ n'est pas de caractéristique $2$.

\subsection{Scindages des relevés de paires duales via un parabolique}

Voici une première réponse concernant les scindages de relevés de paires duales :

\begin{prop} \label{scindage_releve_paires_cas_scinde_prop} Soit $(H_1,H_2)$ une paire duale irréductible dans $\textup{Sp}(W)$. 
\begin{itemize}[label=$\bullet$]
\item Si cette paire est de type II, alors $\widetilde{H}_{1,S}$ et $\widetilde{H}_{2,S}$ sont scindés.
\item Si elle est de type I, il existe $W = W_1 \otimes W_2$ telle que $(H_1,H_2)=(U(W_1),U(W_2))$. Si $W_2$ est scindé, alors $\widetilde{H}_{1,S}$ est scindé.  \end{itemize} \end{prop}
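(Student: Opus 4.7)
Mon idée est de ramener les deux cas à une application directe de la Proposition \ref{scinde_si_ss_gp_d_un_parabolique_prop}, qui affirme que pour tout lagrangien $X$ de $W$ et tout sous-groupe $H$ de $P(X)$, le relevé $\widetilde{H}_{S_X}$ est scindé. Il suffira donc, dans chacune des deux situations, d'exhiber un sous-espace totalement isotrope maximal de $W$ stable sous l'action du groupe dont on veut montrer le scindage. Modulo les identifications données par les isomorphismes canoniques $\Phi_{S,S_X}$ de la Proposition \ref{groupe_metaplectique_prop}, cela suffit pour conclure, puisque la propriété d'être scindé est préservée par un isomorphisme d'extensions centrales.

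\textbf{Cas de type II.} On écrit explicitement $W = \textup{Hom}_D(V_2,V_1) \oplus \textup{Hom}_D(V_1,V_2)$ avec $H_1 = \textup{GL}_D(V_1)$ et $H_2 = \textup{GL}_D(V_2)$, la forme symplectique étant donnée par la trace $(f,f') \mapsto \textup{tr}_{D/F}(\textup{tr}(f \circ f'))$. Les deux sous-espaces $X = \textup{Hom}_D(V_2,V_1)$ et $Y = \textup{Hom}_D(V_1,V_2)$ forment alors une polarisation complète de $W$ en lagrangiens \emph{tous deux} stables sous $H_1$ et $H_2$ (par composition à gauche pour $H_1$ et à droite pour $H_2$). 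On a donc simultanément $H_1 \subset P(X)$ et $H_2 \subset P(X)$, d'où le scindage des deux relevés.

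\textbf{Cas de type I.} L'espace $W = W_1 \otimes W_2$ est symplectique pour $\langle w_1 \otimes w_2 , w_1' \otimes w_2' \rangle_W = \textup{tr}_{D/F} \bigl( \langle w_1 , w_1' \rangle_{W_1} \cdot \overline{\langle w_2, w_2' \rangle_{W_2}} \bigr)$ (au signe près de la conjugaison hermitienne), et l'hypothèse \og $W_2$ scindé \fg{} fournit un lagrangien $L_2 \subset W_2$. On pose $X = W_1 \otimes L_2$. Un calcul direct sur les tenseurs élémentaires montre que $X$ est totalement isotrope dans $W$, et la comparaison des dimensions $\dim_F X = \dim_F W_1 \cdot \frac{1}{2} \dim_F W_2 = \frac{1}{2} \dim_F W$ assure qu'il s'agit bien d'un lagrangien. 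Comme $U(W_1)$ agit trivialement sur le second facteur, $X$ est stable sous $H_1$, soit $H_1 \subset P(X)$, et la Proposition \ref{scinde_si_ss_gp_d_un_parabolique_prop} permet de conclure.

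Le principal point délicat est la vérification, dans chacun des deux cas, que le sous-espace que l'on exhibe est effectivement totalement isotrope et de la bonne dimension, ce qui repose sur une écriture soigneuse de la forme symplectique de $W$ en fonction des structures $\varepsilon$-hermitiennes (ou bilinéaires en type II) dont elle provient. Une fois cette étape d'algèbre linéaire effectuée, le reste découle immédiatement de la théorie déjà développée. On notera enfin que le cas type I où $W_2$ n'est pas scindé -- non traité ici -- est strictement plus subtil, puisqu'alors il n'existe aucun lagrangien de $W$ stable sous $H_1$ et le scindage de $\widetilde{H}_{1,S}$ devra être examiné ultérieurement via les outils de la Section \ref{expression_du_cocycle_perrin_like_section} (facteur de Weil non normalisé, cocycle métaplectique explicite).
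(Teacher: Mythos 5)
Votre preuve suit essentiellement le même chemin que celle du papier : ramener l'énoncé à la Proposition \ref{scinde_si_ss_gp_d_un_parabolique_prop} en exhibant un lagrangien stable sous le groupe considéré, à savoir $X = \textup{Hom}_D(V_2,V_1)$ pour le type II (stable par $H_1$ et $H_2$) et $X = W_1 \otimes X_2$ pour le type I avec $W_2$ scindé. La seule différence est que vous détaillez à la main les vérifications d'isotropie et de dimension, là où le papier renvoie directement à \cite[Chap. I, 1.19]{mvw}.
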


\begin{proof} Grâce à la Proposition \ref{scinde_si_ss_gp_d_un_parabolique_prop}, il suffit de prouver que les groupes en question sont inclus dans un groupe parabolique de $\textup{Sp}(W)$. Tout d'abord, si la paire est de type II, il existe un lagrangien $X$ de $W$ tel que $H_1$ et $H_2$ soient contenus dans $P(X)$ d'après \cite[Chap. I, 1.19]{mvw}. Si celle-ci est de type I, il existe \cite[\textit{Ibid}]{mvw} une décomposition $W = W_1 \otimes W_2$ telle que la paire duale $(H_1,H_2)$ soit de la forme énoncée. Si $W_2$ est scindé, alors $U(W_1)$ est inclus dans le parabolique $P(W_1 \otimes X_2)$ pour tout lagrangien $X_2$ de $W_2$. \end{proof}

Il est possible de décrire de manière plus précise où vit ce scindage. La question suivante est intéressante à divers titres : existe-t-il un scindage de $\widetilde{H}_{1,S}$ à valeurs dans $\widehat{\textup{Sp}}_{\psi,S}(W)$ ? La réponse sera précisée à l'aide de :

\begin{lem} \label{scindage_parab_groupe_met_reduit_lem} On rappelle que le contexte considéré ici est $F$ local non archimédien et $R$ de caractéristique différente de $2$. Soit $X$ un lagrangien dans $W$.
\begin{enumerate}[label=\textup{\alph*)}]
\item Si $F$ est une extension finie de $\mathbb{Q}_2$ et $-1$ n'est pas un carré dans $F$, alors il n'existe pas scindages de $P(X)$ à valeurs dans $\widehat{\textup{Sp}}_{\psi,S}(W)$.
\item Sinon, en posant $s=\textup{val}_F(2)$ où $\textup{val}_F$ est la valuation normalisée dans $F$, il existe $4 \times q^s$ scindages de $P(X)$ à valeurs dans $\widehat{\textup{Sp}}_{\psi,S}(W)$.
\end{enumerate}  \end{lem}

\begin{rem} Quand la caractéristique résiduelle est impaire, on a $s=0$. \end{rem}

\begin{proof} On traite d'abord le cas où $W$ est de dimension $2$. On choisit comme modèle $S=S_X$. D'après le Lemme \ref{cocycle_metaplectique_reduit_lem}, il est équivalent de dénombrer l'ensemble des sections de :
$$(p,\lambda) \in P(X) \times_{\hat{c}} R^\times \mapsto p \in P(X)$$
qui sont des morphismes de groupes à valeurs dans $P(X) \times_{\hat{c}} \{ \pm 1\}$. On rapelle qu'ici $P(X) \simeq M(X) \ltimes N(X) = F^\times \ltimes F$ et :
$$\hat{c} : (x,y)=((a_x,n_x),(a_y,n_y)) \in P(X) \mapsto (a_x,a_y)_F \in \{\pm 1 \}.$$
Comme l'application $n \in N(X) \mapsto (n,1) \in P(X) \times_{\hat{c}} R^\times$ est un morphisme de groupes, et que son image est distinguée, l'existence d'une section qui est un morphisme de groupes de $P(X)$ à valeurs dans $P(X) \times_{\hat{c}} \{ \pm 1 \}$ est équivalente à l'existence d'une section de $\varepsilon : (a,\varepsilon) \in F^\times \times_{\hat{c}} \{ \pm 1 \} \mapsto a \in F^\times$ qui est un morphisme de groupes. De manière équivalente, la question se ramène à : existe-t-il une application $\varepsilon : a \in F^\times \mapsto \varepsilon_a \in \{ \pm 1 \}$ de sorte que $a \in F^\times \mapsto (a,\varepsilon_a) \in F^\times \times_{\hat{c}} \{ \pm 1 \}$ soit un morphisme de groupes ?

Étant donné que $\hat{c}(a,a') = (a,a')_F$, l'application $\varepsilon$ doit vérifier pour tout $a, a' \in F^\times$ :
$$\varepsilon_a \varepsilon_{a'} = (a,a')_F \varepsilon_{a a'}.$$
En particulier, on déduit de cette relation et de $(a,a)_F = (-1,a)_F$ pour tout $a \in F$ que : 
\begin{itemize}[label=$\bullet$]
\item si $-1$ est un carré dans $F$, cette relation impose que $\varepsilon$ est constante sur les classes modulo les carrés, donc se factorise en une application $F^\times/F^{\times 2} \to \{ \pm 1 \}$, encore notée $\varepsilon$ ;
\item si $-1$ n'est pas un carré dans $F$, alors $\varepsilon$ est constante sur les classes modulo les puissances $4$-ème, donc se factorise en une application $F^\times/F^{\times 4} \to \{ \pm 1 \}$, encore notée $\varepsilon$.
\end{itemize}

\noindent a) Pour régler ce cas, on utilise un résultat plus fort. On rappelle que l'espace symplectique considéré $W$ est de dimension $2$ :

\begin{lem} Si $F$ est une extension finie de $\mathbb{Q}_2$ et $-1$ n'est pas un carré dans $F$, il n'existe pas de scindages de $\{ \pm \textup{Id}_W \}$ à valeurs $\widehat{\textup{Sp}}_{\psi,S}(W)$. Il en existe deux à valeurs dans $\widetilde{\textup{Sp}}_{\psi,S}(W)$. \end{lem}

\begin{proof} En effet, soit $\lambda$ un scindage de $\{ \pm \textup{Id}_W \}$ à valeurs dans $\{ \pm \textup{Id}_W \} \times_{\hat{c}} R^\times$ qui est un morphisme de groupes. Cela impose en particulier les relations $\lambda(\textup{Id}_W) = (\textup{Id}_W , 1)$ et $\lambda(-\textup{Id}_W) = (-\textup{Id}_W, \alpha)$ où $\alpha \in R^\times$. Or comme $\lambda$ est un morphisme de groupes :
$$\lambda(-\textup{Id}_W) \lambda(-\textup{Id}_W) =  \lambda(\textup{Id}_W).$$
Par conséquent $\alpha^2  = (-1,-1)_F = -1$. Une racine carrée de $-1$ appartient bien à $R$ car il existe un caractère additif $\psi : F \to R^\times$ non trivial. Donc il n'existe pas de scindage de $\{ \pm \textup{Id}_W \}$ à valeurs dans $\widehat{\textup{Sp}}_{\psi,S}(W)$, bien qu'il en existe toujours à valeurs dans $\widetilde{\textup{Sp}}_{\psi,S}(W)$. Ces derniers sont donnés par $\alpha \in \{ \pm i \}$, ce qui en fait bien deux puisque $R$ n'est pas de caractéristique $2$.

Quand $W$ est de dimension quelconque, on peut toujours se ramener à la situation du lemme précédent en considérant un sous-espace symplectique de dimension $2$. En effet, il existe deux sous-espaces symplectiques $W'$ et $W''$ de $W$ tels que $W= W' \oplus W''$, la dimension de $W'$ est $2$ et $X= X \cap W' \oplus X \cap W''$. Alors le groupe $\{ (\pm \textup{Id}_{W'},\textup{Id}_{W''}) \}$ est un sous-groupe évident de $\textup{Sp}(W)$, contenu dans $P(X)$. D'après le lemme précédent, il n'existe pas de scindages de $\{ (\pm \textup{Id}_{W'},\textup{Id}_{W''}) \}$ à valeurs dans $\widehat{\textup{Sp}}_{\psi,S_X}(W)$. Donc \textit{a fortiori}, il n'en existe pas pour $P(X)$. \end{proof}

\noindent b) On suppose pour commencer que $-1$ est un carré dans $F$. L'ensemble des caractères de $F^\times / F^{\times 2}$ à valeurs dans $\{ \pm 1 \}$ sont aux nombres de $|F^\times / F^{\times 2}| = 4 \times q^s$, où $s=0$ si la caractéristique résiduelle est impaire ; et $s$ est le degré de ramification de $F$ sur $\mathbb{Q}_2$ sinon. Par conséquent, s'il existe un scindage à valeurs dans $\widetilde{\textup{Sp}}_{\psi,S}(W)$, il en existe en tout $4 \times q^s$. Il s'agit maintenant de construire un tel scindage. Le groupe $F^\times / F^{\times 2}$ est muni d'une structure de $\mathbb{F}_2$-espace vectoriel évidente et pour laquelle l'application :
$$(a,a') \in F^\times/F^{\times 2} \mapsto (a,a')_F \in \{ \pm 1 \}$$
est une forme bilinéaire symétrique non dégénéré en identifiant $\{ \pm 1 \}$ à $\mathbb{F}_2$. De plus, tout vecteur est isotrope car $-1$ est un carré dans $F$. On peut donc trouver une base hyperbolique de $F^\times/F^{\times 2}$ notée $\{ e_i , f_i \}_{1 \leq i \leq q^s}$ qui vérifie :
$$(e_i,f_j)_F = - (-1)^{\delta_{i,j}}.$$
On pose alors $\varepsilon_{e_i} = \varepsilon_{f_i} = 1$. Et pour tout $x \in F^\times$, dont on note $\{x_{e_i},x_{f_i} \}_{1 \leq i \leq q^s}$ les coordonnées dans la base précédente, on pose enfin :
$$\varepsilon_{x} = \prod_{1 \leq i \leq q^s} (x_{e_i},x_{f_i})_F.$$
Il est aisé de vérifier que pour tout $x, y \in F^\times/F^{\times 2}$, on a la relation $\varepsilon_x \varepsilon_y = (x,y)_F \varepsilon_{x y}$. Ce qui donne un morphisme de groupes $p =(a,n) \in P(X) \mapsto (p,\varepsilon_a) \in P(X) \times_{\hat{c}} \{ \pm 1 \}$, qui est le scindage désiré quand $W$ est de dimension $2$.

Enfin, il reste à étudier le cas où $-1$ n'est pas un carré dans un corps local non archimédien $F$ de caractéristique résiduelle impaire. Le groupe $F^\times/F^{\times 4} = < -1 , \varpi_F>$, où $\varpi_F$ est une uniformisante dans $F^\times$, est isomorphe à $\mathbb{Z}/2\mathbb{Z} \times \mathbb{Z}/4 \mathbb{Z}$. Par conséquent, il existe seulement $4$ caractères à valeurs dans $\{ \pm 1 \}$. Comme dans le paragraphe précédent, il suffit de construire une application $\varepsilon : a \in F^\times/F^{\times 4} \mapsto \varepsilon_a \in \{ \pm 1 \}$ qui donne un scindage $a \in F^\times \mapsto (a,\varepsilon) \in F^\times \times_{\hat{c}} \{ \pm 1 \}$. On rappelle les relations $(a,a)_F = (-1,a)_F$ et :
$$(-1,\varpi_F)_F=(-1,-\varpi_F)_F=-1 \textup{ et } (-1,-1)_F =1.$$
On tire de $\varepsilon_a \varepsilon_{a'} = (a,a')_F \varepsilon_{a a'}$ des conditions sur $\varepsilon$ à savoir :
\begin{itemize}[label=$\bullet$]
\item $\varepsilon_1 =1$ \textup{ et } $\varepsilon_{\varpi_F^2}=-1$ ;
\item $\varepsilon_{\varpi_F^3} = - \varepsilon_{\varpi_F}$ \textup{ et } $\varepsilon_{-\varpi_F^3} = - \varepsilon_{-\varpi_F}$ ;
\item $\varepsilon_{\varpi_F} \varepsilon_{-\varpi_F} = \varepsilon_{-1}$ \textup{ et } $\varepsilon_{-\varpi_F^2} = -\varepsilon_{-1}$.
\end{itemize}
Selon les valeurs de $\varepsilon_{-1}$, on trouve que : quand $\varepsilon_{-1}=- 1$ on doit avoir $\varepsilon_{\varpi_F} = -\varepsilon_{- \varpi_F}$ ; quand $\varepsilon_{-1}=1$ on doit avoir $\varepsilon_{\varpi_F} = \varepsilon_{- \varpi_F}$. Ce qui restreint à $2$ choix dans le premier cas, et $2$ dans le deuxième, soit $4$ au total. Ainsi, n'importe lequel de ces candidats devrait être un scindage. Pour ce faire, on vérifie à la main que l'application définie par $\varepsilon_{-1} = -1$ et $\varepsilon_{\varpi_F} = \varepsilon_{-\varpi_F} = 1$, et soumise aux trois relations précédentes, est consistante vis-à-vis de la relation $\varepsilon_a \varepsilon_{a'} = (a,a')_F \varepsilon_{a a'}$. Ce problème comporte un nombre finie de vérifications étant donné que $F^\times/F^{\times 4}$ est fini. Comme le symbole de Hilbert est symétrique, il suffit de considérer les $8$ cas diagonaux $a=a'$ et les $(64-8)/2 = 28$ cas non diagonaux $a \neq a'$. On peut retirer les cas où $a=1$ car immédiats, ce qui laisse $28-7=21$ vérifications aussi fastidieuses qu'élémentairement vraies.

Quand $W$ est de dimension quelconque, le groupe dérivé de $P(X)$ est isomorphe à $\textup{SL}(X) \ltimes N(X)$. Ce dernier possède un (unique) scindage à valeurs dans $\textup{Sp}(W) \times_{\hat{c}} R^\times$ donné par $p \in \textup{SL}(X) \ltimes N(X) \mapsto (p,1) \in \textup{Sp}(W) \times_{\hat{c}} R^\times$, et dont l'image encore notée $\textup{SL}(X) \ltimes N(X)$ est distinguée dans $P(X) \times_{\hat{c}} R^\times$. On peut donc quotienter par le sous-groupe $\textup{SL}(X) \ltimes N(X)$ pour se ramener au problème équivalent de trouver un scindage de $P(X)/(\textup{SL}(X) \ltimes N(X)) = F^\times$ à valeurs dans :
$$(P(X) \times_{\hat{c}} \{ \pm 1 \}) / (\textup{SL}(X) \ltimes N(X)) = F^\times \times_{\hat{c}} \{ \pm 1 \}.$$
Ce qui ramène au cas de la dimension $2$ que l'on vient d'étudier. \end{proof}

On tire donc l'amélioration suivante en examinant la preuve de la Proposition \ref{scindage_releve_paires_cas_scinde_prop} à la lumière du Lemme \ref{scindage_parab_groupe_met_reduit_lem} :

\begin{cor} \label{scindage_paire_dual_parab_gp_met_reduit_cor} Soit $(H_1,H_2)$ une paire duale irréductible dans $\textup{Sp}(W)$. On note  $\widehat{H}_{1,S}$ et $\widehat{H}_{2,S}$ les images réciproques respectives dans $\widehat{\textup{Sp}}_{\psi,S}(W)$. On suppose que l'on exclut le cas où $F$ est une extension finie de $\mathbb{Q}_2$ et $-1$ n'est pas un carré dans $F$.
\begin{itemize}[label=$\bullet$]
\item Si cette paire est de type II, alors $\widehat{H}_{1,S}$ et $\widehat{H}_{2,S}$ sont scindés ;
\item si elle est de type I, il existe $W = W_1 \otimes W_2$ telle que $(H_1,H_2)=(U(W_1),U(W_2))$. Si $W_2$ est scindé, alors $\widehat{H}_{1,S}$ est scindé.
\end{itemize} \end{cor}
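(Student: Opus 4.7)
Le plan est de déduire ce corollaire de la combinaison de la Proposition \ref{scindage_releve_paires_cas_scinde_prop} et du Lemme \ref{scindage_parab_groupe_met_reduit_lem}. La première donne un scindage de certains sous-groupes à travers l'inclusion dans un parabolique $P(X)$ bien choisi, et le second raffine l'information sur le scindage du parabolique en précisant qu'il peut être choisi à valeurs dans le groupe métaplectique réduit $\widehat{\textup{Sp}}_{\psi,S}(W)$ (hors le cas exclu).

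Plus précisément, je commencerais par rappeler les plongements explicites obtenus dans la preuve de la Proposition \ref{scindage_releve_paires_cas_scinde_prop}. Pour une paire duale irréductible de type II, \cite[Chap. I, 1.19]{mvw} fournit un lagrangien $X$ de $W$ tel que $H_1 \subset P(X)$ et $H_2 \subset P(X)$. Pour une paire duale irréductible de type I avec $W_2$ scindé, la décomposition $W = W_1 \otimes W_2$ et le choix d'un lagrangien $X_2$ de $W_2$ donnent $H_1 = U(W_1) \subset P(W_1 \otimes X_2)$, qui est un parabolique de $\textup{Sp}(W)$ attaché à un lagrangien $X = W_1 \otimes X_2$ de $W$.

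Il suffit ensuite d'invoquer le Lemme \ref{scindage_parab_groupe_met_reduit_lem}. Sous l'hypothèse excluant le cas où $F$ est une extension finie de $\mathbb{Q}_2$ avec $-1$ non carré, ce lemme fournit un morphisme de groupes $\lambda : P(X) \to P(X) \times_{\hat{c}} \{ \pm 1 \}$ qui est une section de la projection $p_S$ à valeurs dans $\widehat{\textup{Sp}}_{\psi,S}(W)$. Par restriction à $H_1$ (respectivement à $H_1$ et $H_2$ dans le cas de type II), on obtient les scindages désirés à valeurs dans $\widehat{\textup{Sp}}_{\psi,S}(W)$.

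Je ne vois pas d'obstacle sérieux dans cette preuve : tout le contenu non trivial a été concentré dans le Lemme \ref{scindage_parab_groupe_met_reduit_lem}, dont la démonstration délicate repose sur une analyse fine du symbole de Hilbert modulo les carrés (ou modulo les puissances quatrièmes selon que $-1$ est ou non un carré dans $F$). Le seul point à vérifier proprement est la cohérence du choix de parabolique pour les deux sous-groupes de la paire de type II : comme le même $X$ convient simultanément à $H_1$ et $H_2$ d'après \cite[Chap. I, 1.19]{mvw}, la restriction du même scindage $\lambda$ fournit simultanément les scindages pour $\widehat{H}_{1,S}$ et $\widehat{H}_{2,S}$.
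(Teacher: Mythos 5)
Your proposal is correct and takes essentially the same route as the paper: the corollary is derived exactly by combining the Proposition \ref{scindage_releve_paires_cas_scinde_prop} (which places the relevant groups inside a parabolic $P(X)$ and splits them via the Schrödinger model) with the Lemme \ref{scindage_parab_groupe_met_reduit_lem} (which, outside the excluded case, produces a splitting of $P(X)$ landing in $\widehat{\textup{Sp}}_{\psi,S}(W)$), then restricting that splitting to $H_1$ (and to $H_2$ in type II, where the same $X$ serves both). Your remark about checking that the same lagrangian can be used for both members of a type II pair is precisely the point the paper leaves implicit by citing \cite[Chap.\ I, 1.19]{mvw}.
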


\subsection{Scindages de relevés de paires duales irréductibles de type I} \label{scindages_de_releves_paires_irr_type_I_section}

Soit $D$ une algèbre à division dont le centre $E$ contient le corps local non archimédien $F$. Soit $\tau : D \to D$ une involution de $D$ \textit{i.e.} d'un anti-automorphisme de carré l'application identité. On suppose que $F$ est l'ensemble des points fixes de $\tau$. Les possibilités pour $(D,E,F,\tau)$ sont :
\begin{enumerate}
\item $D=E=F$ et $\tau=\textup{Id}_D$ ;
\item $D$ est l'unique algèbre de quaternion sur $E=F$ et $\tau$ est l'involution canonique ;
\item $D=E$ est une extension quadratique de $F$ et $F$ est le générateur de $\textup{Gal}(E/F)$.
\end{enumerate}
Soient $(W_1,\langle , \rangle_1)$ un espace $\varepsilon_1$-hermitien (à droite) sur $D$ et $(W_2,\langle , \rangle_2)$ un espace $\varepsilon_2$-hermitien (à droite) sur $D$ avec $\varepsilon_2=-\varepsilon_1$. On peut munir $W_2$ d'une structure d'espace vectoriel à gauche pour tout $d \cdot w_2 = w_2 \tau(d)$ pour tout $d \in D$ et $w_2 \in W_2$.

Soit $t_{D/F} : D \to F$ une trace de $D$ sur $F$ \textit{i.e.} $t_{D/F}$ est une forme linéaire non nulle telle que pour tout $d , d' \in D$ :
$$t_{D/F}(\tau(d) d') =t_{D/F}(d \tau(d')).$$
Autrement dit $t_{D/F}$ est une trace invariante par $\tau$, ce qui inclut le cas de la trace réduite. Il existe alors un unique produit symplectique $\langle , \rangle$ sur le $F$-espace vectoriel $W = W_1 \otimes_D W_2$ de sorte que pour tout $w_1, w_1' \in W_1$ et tout $w_2, w_2' \in W_2$ :
$$\langle w_1 \otimes w_2 , w_1' \otimes w_2' \rangle = t_{D/F} \bigg( \langle w_1 , w_1' \rangle_1 \times \tau \big( \langle w_2 ,w_2' \rangle_2 \big) \bigg).$$
Dans cette situation, en considérant l'action naturelle des groupes d'isométries $U(W_1)$ et $U(W_2)$ sur $W = W_1 \otimes W_2$, la paire $(U(W_1),U(W_2))$ est une paire duale irréductible de type I dans $\textup{Sp}(W)$ qui ne provient pas d'une restriction des scalaires sur $F$ d'après \cite[Chap. I, I.20]{mvw}. Réciproquement, toute paire duale $(H_1,H_2)$ dans un groupe symplectique $\textup{Sp}(W)$ qui est irréductible de type I et ne provient pas d'une restriction des scalaires sur $F$ est obtenue à partir d'un tel procédé \cite[\textit{Ibid.}]{mvw}.

Soit donc $(H_1,H_2)=(U(W_1),U(W_2))$ une paire duale dans un groupe symplectique $\textup{Sp}(W)$ où $W= W_1 \otimes_D W_2$. Soit $S$ un modèle de la représentation métaplectique associée à un caractère additif $\psi : F \to R^\times$. On généralise \cite[Th. 3.1]{kudla}. En revenant avec les distinctions précédentes pour $(D,E,F,\tau)$, les paires sont constituées de :
\begin{enumerate}
\item un groupe symplectique et un groupe orthogonal ;
\item deux groupes unitaires quaternioniques, l'un hermitien et l'autre anti-hermitien ;
\item deux groupes unitaires classiques, l'un hermitien et l'autre anti-hermitien.
\end{enumerate}

\begin{theo} On suppose que $R$ contient une racine carrée de $q$ et on la fixe.
\begin{enumerate}[label=\textup{\alph*)}]
\item Si $W_1$ est symplectique et $W_2$ est orthogonal de dimension impaire, alors l'extension $\widetilde{H}_{1,S}$ est isomorphe au groupe métaplectique sur $W_1$. En particulier, cette extension n'est pas scindée sur $H_1$.
\item Dans tous les autres cas, l'extension $\widetilde{H}_{1,S}$ est scindée sur $H_1$.
\end{enumerate} \end{theo}

\begin{proof} Quand $R$ contient une racine carrée de $q$, on peut définir le facteur de Weil classique $\omega$ d'après le point \ref{facteur_de_weil_non_nomr_vs_classique_pt} de la Proposition \ref{facteur_de_weil_non_norm_prop}. En reprenant les notations du Corollaire \ref{cocycle_metaplectique_reduit_cor}, le $2$-cocyle $\hat{c}$ est alors cohomologiquement équivalent au $2$-cocycle :
$$c(g_1,g_2) = \omega(\psi \circ Q_{\frac{1}{2} \rho}).$$
Étant donné les propriétés du facteur de Weil classique, cette quantité ne dépend que de la classe d'isomorphisme de $\rho$ \textit{i.e.} de son déterminant, de son invariant de Hasse et de son rang. Ce $2$-cocyle est à valeurs dans $\mu_8(R) = \{ \lambda \in R^\times \ | \ \lambda^8 =1 \}$.

Par conséquent, l'extension $\widetilde{H}_{1,S}$ est isomorphe à $H_1 \times_{c_1} R^\times$  où le $2$-cocyle $c_1$ est obtenu comme le composé de $c$ avec le plongement $i_1 : h_1 \in H_1 \mapsto h_1 \otimes_D \textup{Id}_{W_2} \in \textup{Sp}(W)$. Dans le cas b), et en supposant que $W_1$ est scindé, la résolution du $2$-cocycle $c_1$ est en tout point similaire à celle explicitée dans \cite[Th. 3.1]{kudla} en choisissant des bases adaptées de $W_1$ et $W_2$. Il en va de même pour le cas a).

Il reste alors à considérer le cas b) quand $W_1$ n'est pas scindé. La technique de doublement effectuée dans \cite[Prop. 4.1]{kudla} est encore valable. On la rappelle succinctement. On note $-W_1$ l'espace $\varepsilon_1$-hermitien $(W_1,-\langle , \rangle_1)$. Alors l'espace $\varepsilon_1$-hermitien $\mathbb{W}_1 = W_1 \oplus (-W_1)$ est scindé et $\mathbb{W} = (W_1 \otimes_D W_2) \oplus ((-W_1) \otimes_D W_2) = \mathbb{W}_1 \otimes_D W_2$ est un espace symplectique sur $F$. De plus, en posant $W = W_1 \otimes_D W_2$ et $-W=(-W_1) \otimes_D W_2$, on a une identification canonique entre $\textup{Sp}(W)$ et $\textup{Sp}(-W)$ en tant que sous-groupes de $\textup{GL}_F(W)$. Le morphisme naturel :
$$(u,u') \in \textup{Sp}(W) \times \textup{Sp}(W) \to u \oplus u' \in \textup{Sp}(\mathbb{W})$$
se relève aux groupes métaplectiques grâce la Proposition \ref{groupe_meataplectique_somme_produit_prop} pour $\mathbb{W}=W \oplus (-W)$, en choisissant des modèles des représentations métaplectiques $S$ et $S^-$ associées à $W_1 \otimes_D W_2$ et $(-W_1) \otimes_D W_2$ pour le même caractère $\psi$. Ce relevé s'écrit :
$$\widetilde{\textup{Sp}}_{\psi,S}(W) \times \widetilde{\textup{Sp}}_{\psi, S^-}(-W) \to \widetilde{\textup{Sp}}_{\psi,S \otimes S^-} (\mathbb{W}).$$
Son noyau étant $\{ \big((\textup{Id}_W,\lambda \textup{Id}_S),(\textup{Id}_W,\lambda^{-1} \textup{Id}_{S^-})\big) \ | \ \lambda \in R^\times\}$, le sous-groupe $\widetilde{H}_{1,S} \times \{ 1 \}$ s'injecte dans le groupe de droite.

La paire $(H_1',H_2') = (U((W_1 \oplus (-W_1)), U(W_2))$ est une paire duale irréductible de type I dans $\textup{Sp}(\mathbb{W})$. D'après le paragraphe précédent, l'application :
$$(h_1 \otimes \textup{Id}_{W_2} , M) \in \widetilde{H}_{1,S} \mapsto ((h_1 \otimes \textup{Id}_{W_S}) \oplus \textup{Id}_W, M \otimes \textup{Id}_{S^-}) \in \widetilde{\textup{Sp}}_{\psi,S \otimes S^-} (\mathbb{W})$$
est un morphisme de groupes injectif. L'image de $\widetilde{H}_{1,S}$ est contenue dans $\widetilde{H}_{1,S \otimes S^-}'$, qui est un sous-groupe scindé car $W_1 \oplus (-W_1)$ est scindé et $W_2$ n'est pas orthogonal de dimension impaire. \end{proof}

\begin{rem} En toute généralité, il faut redoubler d'attention quand on se place sur un corps $R$ qui est minimal. Par exemple, si $F = \mathbb{F}_3((t))$ et $R=\mathbb{Q}(j)$, il n'est pas garanti que tout relevé de paire duale qui tombe dans le cas b) soit scindée. En revanche, en considérant le corps $R' = \mathbb{Q}(j,i) =\mathbb{Q}(j,\sqrt{3})$, tout relevé l'est. \end{rem}

On a le résultat suivant qui permet de délimiter les cas potentiellement problématiques issus de la remarque précédente :

\begin{cor} On suppose exclu le cas $W_1$ symplectique et $W_2$ orthogonal de dimension impaire.
\begin{enumerate}[label=\textup{\alph*)}]
\item Si $-1$ est un carré dans $F^\times$, alors $\widehat{H}_{1,S}$ est scindé ;
\item Si $F$ est de caractéristique résiduelle $2$, alors $\widetilde{H}_{1,S}$ est scindé ;
\item Si la paire est symplectique-orthogonale et $p \neq 2$, alors $\widehat{H}_{1,S}$ est scindé.
\end{enumerate} \end{cor}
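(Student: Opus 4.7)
L'idée directrice est de raffiner, cas par cas, la construction du scindage $\sigma : H_1 \to \widetilde{H}_{1,S}$ fournie dans la preuve du théorème précédent. Sous l'hypothèse $\sqrt{q} \in R$ de ce théorème, le cocycle $\hat c$ est cohomologue au cocycle de Weil $c(g_1,g_2) = \omega(\psi \circ Q_{\frac{1}{2}\rho})$, et $\sigma$ s'obtient via un cochain $\eta : H_1 \to R^\times$ dont les valeurs sont des produits explicites de facteurs de Weil. Comme $\ell \neq 2$ et $F$ est local non archimédien, le Théorème \ref{groupe_metaplectique_theoreme} identifie $\widehat{\textup{Sp}}_{\psi,S}^R(W)$ à la préimage des cocycles à valeurs dans $\{\pm 1\}$ ; démontrer que $\sigma$ est à valeurs dans $\widehat{H}_{1,S}$ revient donc à vérifier que $\eta$ peut être choisi à valeurs dans $\{\pm 1\}$.

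Pour le point a), l'hypothèse $-1 \in F^{\times 2}$ entraîne la trivialité du symbole de Hilbert $(-1,\cdot)_F$. D'après le point \ref{facteur_de_weil_non_norm_hilbert_symbol_pt} de la Proposition \ref{facteur_de_weil_non_norm_prop}, le carré du facteur de Weil est gouverné par des symboles de Hilbert impliquant $-1$ ; ceux-ci s'annulent sous l'hypothèse et l'image du facteur de Weil se restreint à $\mu_2(R) = \{\pm 1\}$. La formule explicite de $\eta$ comme produit multiplicatif de tels facteurs donne alors directement un scindage à valeurs dans $\widehat{H}_{1,S}$.

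Pour le point c), dans le cas symplectique-orthogonal avec $p \neq 2$, l'exclusion impose que $\dim_F W_2$ soit paire. Les calculs explicites à la \cite{rao,kudla} expriment $\eta(h_1)$ comme un produit de symboles de Hilbert mettant en jeu le déterminant et l'invariant de Hasse de $W_2$, ainsi que les déterminants associés à $h_1 \in \textup{Sp}(W_1)$ ; chacun de ces symboles étant dans $\{\pm 1\}$, on conclut. Le Corollaire \ref{facteur_de_weil_inv_de_hasse_cor} fournit la décomposition en invariants élémentaires nécessaire pour rendre cet argument rigoureux, l'hypothèse $p \neq 2$ permettant d'employer librement le facteur de Weil classique $\omega$.

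Pour le point b), la stratégie diffère car l'hypothèse $\sqrt{q} \in R$ du théorème précédent peut échouer en caractéristique résiduelle $2$. On contourne le facteur de Weil classique en utilisant directement le facteur de Weil non normalisé $\Omega$ introduit dans la Section \ref{facteur_de_weil_non_norm_sect}, dont l'existence ne requiert aucune racine carrée. Le Corollaire \ref{cocycle_metaplectique_reduit_cor} donne déjà une expression du cocycle $\hat c$ en termes de $\Omega$ qui s'adapte à ce cadre, et l'on en déduit un scindage à valeurs dans $\widetilde{H}_{1,S}$, sans contrôle, à ce stade, sur son appartenance à $\widehat{H}_{1,S}$. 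L'écueil principal sera précisément ce point b) : il faudra s'assurer que les manipulations fines de formes quadratiques effectuées dans la preuve du théorème précédent survivent à l'usage exclusif de $\Omega$ au lieu de $\omega$, notamment dans les identifications d'invariants de Leray du Lemme \ref{cocycle_formule_w_s_u_rho_w_s_lem}.
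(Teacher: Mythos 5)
Le point le plus sérieux concerne le point b), où votre prémisse est factuellement inversée. Vous affirmez que l'hypothèse $\sqrt q \in R$ du théorème précédent «~peut échouer en caractéristique résiduelle~$2$~» et proposez un contournement via $\Omega$ que vous ne menez pas au bout. En réalité, c'est exactement l'inverse~: la caractéristique résiduelle $2$ \emph{force} $\sqrt q \in R$. En effet, $\textup{car}(F) \neq 2$ impose que $F$ est une extension finie de $\mathbb{Q}_2$, donc l'existence d'un caractère additif lisse non trivial $\psi : F \to R^\times$ exige que $\mu^2(R) \simeq \mathbb{Q}_2/\mathbb{Z}_2$, donc $\zeta_8 \in R$, d'où $\sqrt 2 = \zeta_8 + \zeta_8^{-1} \in R$ et $\sqrt q = (\sqrt 2)^f \in R$ avec $q = 2^f$. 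Le théorème précédent s'applique alors directement et donne le scindage de $\widetilde{H}_{1,S}$ sans aucun travail supplémentaire~: tout le point b) de votre proposition est un détour inutile.

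Le point c) comporte aussi des lacunes. D'abord vous n'examinez que le sous-cas $H_1 = \textup{Sp}(W_1)$ (vous écrivez explicitement $h_1 \in \textup{Sp}(W_1)$), alors que le cas $H_1 = O(W_1)$ avec $W_2$ symplectique (donc scindé) est à traiter lui aussi~: c'est lui que le papier expédie via le Corollaire \ref{scindage_paire_dual_parab_gp_met_reduit_cor} et l'inclusion dans un parabolique. Ensuite, votre calcul direct sur les formules de Kudla ne règle pas la question de $\sqrt q \notin R$ (rien ne garantit $\sqrt q \in R$ quand $p \neq 2$), et l'affirmation que chaque facteur du produit est dans $\{\pm 1\}$ n'est pas justifiée a priori puisque le facteur de Weil peut prendre des valeurs dans $\mu_8$. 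Le papier contourne ces deux points d'un coup par un argument de descente~: on passe à une extension $R'$ contenant $\sqrt q$ où le théorème s'applique, puis la perfection de $\textup{Sp}(W_1)$ force l'image du scindage à tomber dans le groupe dérivé $H_1 \times_{\hat c_1} \{\pm 1\}$, donc dans $H_1 \times_{\hat c_1} R^\times$. Enfin, pour le point a), votre argument est proche de celui du papier mais omet la vérification, faite a fortiori dans le papier, que $\sqrt q \in R$ lorsque $-1 \in F^{\times 2}$, vérification nécessaire pour que le facteur de Weil classique soit défini sur $R$.
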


\begin{proof} a) Tout d'abord, la condition $-1 \in F^{\times 2}$ entraîne, d'après \cite[4]{ct}, que l'image du facteur de Weil classique est à valeurs dans $\{ \pm 1\}$. Ainsi, les $2$-cocyles $c$ et $\hat{c}$ admettent des résolutions à valeurs dans $\{ \pm 1 \}$ en examinant les formules issues de \cite[Th. 3.1]{kudla}. Cela signifie bien que $\widehat{H}_{1,S}$ est scindé. On remarque \textit{a fortiori} que $q$ admet bien une racine carrée. En effet, la situation se divise en trois cas. Soit $q$ est une puissance impaire de $p \neq 2$,  auquel cas $p \equiv 1 [4]$ et $R$ contient une racine carrée de $p$ d'après un argument clasique sur les sommes de Gauss. Soit $q$ est une puissance paire de $p \neq 2$, ce qui signifie que $F$ contient l'unique extension non ramifiée de degré $2$ de $\mathbb{Q}_p$ ou $\mathbb{F}_p((t))$. Soit $p=2$ et $F$ contient le corps $\mathbb{Q}_2(\sqrt{-1})$.

\noindent b) Ensuite, si $F$ est de caractéristique résiduelle $2$, l'existence d'un caractère additif non trivial de $F$ entraîne que $R$ contient des racines carrées de $-1$ et $2$. En particulier, il contient une racine carrée de $q$.

\noindent c) Si $W_1$ est orthogonal, alors $W_2$ est symplectique. En particulier, l'espace $W_2$ est scindé et on peut donc utiliser le Corollaire  \ref{scindage_paire_dual_parab_gp_met_reduit_cor} en considérant $U(W_1)$ comme un sous-groupe d'un parabolique $P(W_1 \otimes_F X_2)$ où $X_2$ est un lagrangien de $W_2$.

Si $W_1$ est symplectique et $W_2$ est orthogonal de dimension paire, on sait alors qu'il existe une résolution du cocycle métaplectique à valeurs dans une extension $R'$ de $R$ contenant une racine carrée de $q$. En notant $\hat{c}_1$ le $2$-cocycle métaplectique réduit sur $\textup{Sp}(W_1 \otimes_F W_2)$ restreint à $H_1=\textup{Sp}(W_1)$, on a une inclusion évidente $H_1 \times_{\hat{c}_1} R^\times$ dans $H_1 \times_{\hat{c}_1} R'^\times$. Or, le groupe $H_1$ est parfait, donc son image par un scindage à valeurs dans $H_1 \times_{\hat{c}_1} R'^\times$ est incluse dans le groupe dérivé $H_1 \times_{\hat{c}_1} \{ \pm 1\}$, et donc incluse dans le sous-groupe $H_1 \times_{\hat{c}_1} R^\times$. Le scindage de $\widetilde{H}_{1,S}$ ainsi obtenu est donc défini sur $R$ et à valeurs dans le groupe métaplectique réduit \textit{i.e.} $\widehat{H}_{1,S}$ est scindé. De plus, ce scindage est l'unique scindage de $\widetilde{H}_{1,S}$ puisque le groupe symplectique est parfait. \end{proof}

\begin{rem} Un cas subsiste dans lequel on ne sait pas dire si le scindage est défini ou non pour un corps minimal $R$ ou ne contenant pas de racine de $q$. Il est le suivant. Quand $p$ est impair et $-1$ est n'est pas un carré dans $F$ -- ceci implique que $p \equiv 3 [4]$, on ne sait pas dire si $\widetilde{H}_{1,S}$ est scindé ou non sur un corps $R$ minimal quand $D$ est quaternionique ou quadratique. On donne un exemple sous forme de question pour illustrer ce propos. Soit $F = \mathbb{F}_3((t))$ et $R = \mathbb{Q}(j)$. Soit $W$ un espace symplectique de dimension $2$ et $W = W_1 \otimes_D W_2$ une décomposition de $W$ où $W_1$ et $W_2$ sont deux espaces unitaires de dimension $1$, l'un hermitien et l'autre anti-hermitien, avec $D=E$ une extension quadratique de $F$. En posant $H_1 = U(W_1)$, a-t-on que $\widetilde{H}_{1,S}$ est scindé ? \end{rem}

\section{En direction d'une correspondance thêta modulaire}

Soit $S$ un modèle de la représentation métaplectique sur $R$ associée à $\psi$. Pour toute paire duale $(H_1,H_2)$ dans $\textup{Sp}(W)$, la représentation de Weil modulaire $\omega_{\psi,S}$ du groupe métaplectique $\widetilde{\textup{Sp}}_{\psi,S}(W)$ se restreint au produit $\widetilde{H}_{1,S} \times \widetilde{H}_{2,S}$ des relevés de ces paires duales, que l'on note encore $\omega_{\psi,S}$. Dans la Section \ref{releves_de_paires_duales_scindages_section}, on donne des conditions sur ces relevés pour que la représentation $\omega_{\psi,S}$ se restreigne -- ou non -- en une vraie représentation de $H_1 \times H_2$, selon que ces relevés soient scindés ou non.

Pour éviter ces considérations portant sur des scindages, on étudie la représentation $\omega_{\psi,S}$ en considérant pour tout sous-groupe fermé $H$ de $\textup{Sp}(W)$ la catégorie abélienne :
$$\textup{Rep}_R^{\textup{gen}}(\widetilde{H}_S) = \{ \pi \in \textup{Rep}_R(\widetilde{H}_S) \ | \ \pi((\textup{Id}_W,\lambda \textup{Id}_S)) = \lambda \textup{Id}_S \}.$$
On en note $\textup{Irr}_R^{\textup{gen}}(\widetilde{H}_{1,S})$ les objets simples. En général, la contragrédiente de $\pi$ appartient à $\textup{Rep}_R(\widetilde{H}_S)$ mais n'appartient pas à $\textup{Rep}_R^{\textup{gen}}(\widetilde{H}_S)$. Cependant, quitte à tensoriser par un caractère ou à considérer la catégorie $\textup{Rep}_R^{\textup{gen}}(\widehat{H}_S)$ sur le relevé réduit $\widehat{H}_S$ comme dans la Proposition \ref{contragrediente_rep_de_weil_prop}, on peut se servir d'une notion analogue à la contrégradiente de $\pi$ dans $\textup{Rep}_R^{\textup{gen}}(\widetilde{H}_S)$. Aussi est-il plus adéquat de considérer la catégorie $\textup{Rep}_R^{\textup{gen}}(\widehat{H}_S)$, ce qui ne change pas la teneur des énoncés en jeu puisque les catégories $\textup{Rep}_R^{\textup{gen}}(\widehat{H}_S)$ et $\textup{Rep}_R^{\textup{gen}}(\widetilde{H}_S)$ sont équivalentes.

Le Lemme \ref{pi-coinvariant} formalise la définition de plus grand quotient isotypique pour une représentation irréductible donnée. Une simple application de ce lemme permet de définir, à l'aide d'une représentation irréductible du premier relevé, une représentation du second, connue sous le nom de $\Theta$-lift.

\begin{theo} \label{Theta_pi_1_def_theo} Soient $\pi_1 \in \textup{Irr}_R^{\textup{gen}} (\widehat{H}_{1,S})$ admissible et $D_1 = \textup{End}_{\widehat{H}_{1,S}}(\pi_1)$.

Il existe alors une représentation $\Theta(\pi_1)$ dans $\textup{Rep}_R^{\textup{gen}}(\widehat{H}_{2,S})$, munie d'une structure de $D_1$-module à droite compatible avec sa structure de représentation, de sorte que le plus grand quotient $\pi_1$-isotypique $(\omega_{\psi,S})_{\pi_1}$ de $\omega_{\psi,S}$ soit isomorphe à $\Theta(\pi_1) \otimes_{D_1} \pi_1$. De plus, en tant que $R[\widehat{H}_{1,S}]-D_1$-bimodule, le bimodule $\Theta(\pi_1)$ est unique à isomorphisme près. \end{theo}

Ce résultat est bien connu quand $R$ est le corps des nombres complexes. Quand le corps $R$ est algébriquement clos, ou que $\pi_1$ est absolument irréductible, le corps $D_1$ précédent est simplement $R$. La structure de $D_1$-module à droite correspond alors à la structure naturelle de $R$-module (à gauche) de la représentation $\Theta(\pi_1)$. En toute généralité, comme $\pi_1$ est admissible, on sait que le corps $D_1$ est une algèbre à division de dimension finie sur son centre, qui est une extension finie du corps central $R$.

On discutera dans la sous-section suivante des conjectures qui portent sur $\Theta(\pi_1)$ et qui constituent le cadre d'étude pour la \og correspondance thêta module \fg{}. Ensuite, en se basant sur la Remarque \ref{extension_des_scalaires_reduction_des_scalaires_rem} concernant les modèles explicites, les deux dernières sous-sections étudieront le comportement de $\Theta(\pi_1)$ vis-à-vis de l'extension des scalaires, ainsi que des possibilités de réduction des scalaires.

\subsection{Définition de la correspondance modulaire et résultats connus} \label{definition_de_la_corresp_modulaire_thm}

Dorénavant, le corps $F$ est local non archimédien. Pour un panorama des résultats connus sur la correspondance thêta classique sur un corps local non archimédien -- c'est-à-dire pour les représentations à coefficients complexes -- et de ses contributeurs, on indique l'introduction du présent travail. On rappelle simplement que, quand $R$ est le corps des nombres complexes, les deux énoncés ($\Theta_2$) et ($\Theta_3$) ci-dessous constituent ce que l'on appelle communément la \og correspondance thêta classique sur un corps local non archimédien \fg{}. Ils sont transposés ici pour les représentations à coefficients quelconques \textit{i.e.} pour les représentations modulaires. On suppose toutefois que $R$ est algébriquement clos.

\begin{defi}[Correspondance modulaire] Pour tout $\pi_1$ et $\pi_1'$ dans $\textup{Irr}_R^{\textup{gen}} (\widehat{H}_{1,S})$, on considère les assertions suivantes :
\begin{enumerate}
\item[($\Theta_1$)] la représentation $\Theta(\pi_1)$ est de longueur finie, donc admet un co-socle noté $\theta(\pi_1)$ ;
\item[($\Theta_2$)]  soit $\Theta(\pi_1)$ est nulle, soit $\theta(\pi_1)$ est irréductible ;
\item[($\Theta_3$)] quand $\Theta(\pi_1) \neq 0$, on a $\theta(\pi_1) \simeq \theta(\pi_1')$ si et seulement $\pi_1 \simeq \pi_1'$. 
\end{enumerate}
Quand ces trois énoncés sont valides, la \og correspondance thêta $R$-modulaire sur un corps local non archimédien \fg{} est alors définie comme la bijection induite par $\theta$ entre les ensembles : 
$$\{ \pi_1 \in \textup{Irr}_R^{\textup{gen}} (\widehat{H}_{1,S}) \ | \ \Theta(\pi_1) \neq 0\} \overset{\theta}{\simeq} \{ \pi_2 \in \textup{Irr}_R^{\textup{gen}} (\widehat{H}_{2,S}) \ | \ \Theta(\pi_2) \neq 0\}$$
où $\Theta(\pi_2)$ est obtenu en calculant les $\pi_2$-coinvariants vis-à-vis de $\widehat{H}_{2,S}$ au lieu de $\widehat{H}_{1,S}$, c'est-à-dire en inversant l'ordre des paires duales dans le Théorème \ref{Theta_pi_1_def_theo}. \end{defi}

\paragraph{État des connaissances.} La question de déterminer la validité de ces énoncés en toute généralité constitue un problème difficile en soi qui dépasse largement l'objet ce travail. Cependant, on ne s'attend pas à ce que les trois énoncés précédents soient vrais en toute généralité \textit{i.e.} sans condition plus précise sur la caractéristique $\ell$ de $R$ vis-à-vis du pro-ordre des groupes $H_1$ et $H_2$ qui constituent la paire duale étudiée. Pour justifier cette remarque, on cite les résultats prouvés par A. M\'inguez pour les paires duales de type II. On dit que la caractéristique d'un corps est banale si elle ne divise pas le pro-ordre des groupes qui sont en jeu.

\begin{theo}[\cite{minguez_thesis}] Soit $(H_1,H_2)$ une paire de type II dans un groupe symplectique $\textup{Sp}(W)$ avec $F$ local non archimédien. Alors, si $R=\overline{\mathbb{F}_\ell}$ avec $\ell$ banal vis-à-vis des pro-ordres de $H_1$ et $H_2$, les énoncés \textup{($\Theta_1$)-($\Theta_2$)-($\Theta_3$)} sont vrais pour toute représentation irréductible. \end{theo}

Par conséquent, on ne peut parler de la correspondance thêta $R$-modulaire que dans le cadre de ce théorème, qui permet de considérer la bijection définie par $\theta$ entre les sous-ensembles donnés plus haut. Il développe également \cite[Sec. 4.5.2]{minguez_thesis} un contre-exemple qui met en défaut ($\Theta_2$), toujours pour les paires duales de type II, quand $\ell$ est non banal vis-à-vis des groupes $H_1$ et $H_2$. Son travail repose cependant sur l'utilisation d'un analogue \textit{ad hoc} de la représentation de Weil pour ces paires-là en remplaçant les formules connues sur $\mathbb{C}$ pour ces modèles par des formules similaires sur $\overline{\mathbb{F}_\ell}$.

Le présent travail permet de définir un cadre complet d'étude pour les représentations à coefficients dans un corps $R$ quelconque en généralisant les résultats connus pour les représentations à coefficients complexes, à savoir : le théorème de Stone-von Neumann ; le groupe métaplectique ; la représentation de Weil.

En ce qui concerne les paires duales de type I, étudier la validité des trois énoncés ($\Theta_1$)-($\Theta_2$)-($\Theta_3$) constitue un problème complètement nouveau. Cependant, traiter de telles questions nécessiterait de nombreux développements qui dépassent le cadre ici présent, aussi repoussera-t-on cette étude aux articles qui feront suite. On s'attend à ce que ces énoncés ($\Theta_1$)-($\Theta_2$)-($\Theta_3$) soient vrais quand $R = \overline{\mathbb{F}_\ell}$ avec $\ell$ suffisamment grand vis-à-vis des pro-ordres de la paire duale considérée. Néanmoins, la condition minimale sur $\ell$ pourrait se révéler plus forte qu'une simple hypothèse de banalité vis-à-vis de $(H_1,H_2)$. Il existe également un contre-exemple à ($\Theta_2$) pour les paires de type I en caractéristique non banale qu'on peut d'ores-et-déjà consulter \cite[Sec. 7.2]{trias_thesis}. 
 
\subsection{Compatibilités à l'extension des scalaires} \label{compatibilité_extension_des_scalaires_section}

Le corps $F$ est encore local non archimédien ici. On ne suppose pas dans ce paragraphe que le corps $R$ est algébriquement clos. Si l'on fixe $\bar{R}$ une clôture algébrique de $R$, on peut donner des énoncés analogues sur $R$ à ceux sur $\bar{R}$ du paragraphe précédent ($\Theta_1$)-($\Theta_2$)-($\Theta_3$). Le but de ce paragraphe est de donner une compatibilité à l'extension des scalaires pour $\Theta(\pi_1)$ ; et de prouver ainsi que les énoncés analogues ($\Theta_1'$)-($\Theta_2'$)-($\Theta_3'$) sur $R$ définis ci-dessous sont équivalents, en un sens que l'on va préciser, aux énoncés sur $\bar{R}$, à condition que $R$ soit un corps parfait. On considère donc les trois énoncés suivants. Pour tout $\pi_1$ et $\pi_1'$ dans $\textup{Irr}_R^{\textup{gen}} (\widehat{H}_{1,S})$ :
\begin{enumerate}
\item[($\Theta_1'$)] le $R[\widehat{H}_{2,S}]-D_1$-bimodule $\Theta(\pi_1)$ est de longueur finie, on note $\theta(\pi_1)$ son co-socle ;
\item[($\Theta_2'$)]  soit $\Theta(\pi_1)$ est nulle, soit $\theta(\pi_1)$ est un $R[\widehat{H}_{2,S}]-D_1$-bimodule simple ;
\item[($\Theta_3'$)] quand $\Theta(\pi_1) \neq 0$, on a $\theta(\pi_1) \simeq \theta(\pi_1')$ si et seulement $\pi_1 \simeq \pi_1'$.
\end{enumerate}

\begin{rem} Il est sous-entendu que l'isomorphisme $\theta(\pi_1) \simeq \theta(\pi_1')$ dans ($\Theta_3'$) signifie en particulier que $D_1 = \textup{End}_{\widehat{H}_{1,S}}(\pi_1) \simeq \textup{End}_{\widehat{H}_{1,S}}(\pi_1')$. \end{rem}

\paragraph{Extension des scalaires.} On suppose dorénavant que le corps $R$ est un corps parfait. Soit $\pi_1 \in \textup{Irr}_R^{\textup{gen}} (\widehat{H}_{1,S})$. On considère la décomposition provenant du Théorème \ref{decomposition_extension_des_scalaires_thm} de l'extension des scalaires à $\bar{R}$ :
$$\pi_1 \otimes_R \bar{R} \simeq  m_1 \bigg(\bigoplus_{w \in \textup{Gal}_R(E(\rho_1),\bar{R})} w \rho_1 \bigg)$$
où $E_1$ est le centre de $D_1$, la représentation $\rho_1 \in \textup{Rep}_{\bar{R}}^{\textup{gen}}(\widehat{H}_{1,S})$ est irréductible et son corps de rationalité $E(\rho_1)$ dans $\bar{R}$ est isomorphe à $E_1$.

\begin{lem} \label{big_theta_ext_des_scal_lem} On a un isomorphisme de représentations dans $\textup{Rep}_{\bar{R}}(\widehat{H}_{1,S} \times \widehat{H}_{2,S})$ :
$$(\Theta(\pi_1) \otimes_{D_1} \pi_1) \otimes_R \bar{R} \simeq \bigoplus_{w \in \textup{Gal}_R(E(\rho_1),\bar{R})} \Theta(w \rho_1) \otimes_{\bar{R}} w \rho_1.$$
De plus, en considérant $\Theta(\pi_1)$ comme une représentation dans $\textup{Rep}_{E_1}^{\textup{gen}}(\widehat{H}_{2,S})$, il existe une bijection $\varphi : \textup{Gal}_R(E(\rho_1), \bar{R}) \to \textup{Gal}_R(E_1,\bar{R})$ telle que :
$$\Theta(\pi_1) \otimes_{E_1,\varphi(w)} \bar{R} \simeq m_1 \big( \Theta(w \rho_1) \big).$$ \end{lem}

\begin{proof} En notant $S_{\bar{R}} = S \otimes_R \bar{R}$, il vient $\omega_{\psi,S}^R \otimes_R \bar{R} = \omega_{\psi,S_{\bar{R}}}^{\bar{R}}$. Le Théorème \ref{extension_des_scalaires_coinvariants_last_thm}, appliqué pour $V = \omega_{\psi,S}^R$, assure alors que :
$$(\omega_{\psi,S}^R)_{\pi_1} \otimes_R \bar{R} \simeq \bigoplus_{w \in \textup{Gal}_R(E(\rho_1),\bar{R})} (\omega_{\psi,S_{\bar{R}}}^{\bar{R}})_{w \rho_1}.$$
Par définition des membres de gauche et de droite, cela donne le premier isomorphisme de l'énoncé.

Ensuite, toute représentation $V_{\pi_1} \otimes_{E_1,w'} \bar{R}$ pour $w' \in \textup{Gal}_R(E_1, \bar{R})$ est isomorphe à une, et une seule, représentation $(V \otimes_R \bar{R})_{w \rho_1}$ pour $w \in \textup{Gal}_R(E(\rho_1),\bar{R})$. Cela définit donc la bijection $\varphi$.

Pour terminer, l'isomorphisme $(\Theta(\pi_1) \otimes_{D_1} \pi_1) \otimes_{E_1,\varphi(w)} \bar{R} \simeq \Theta(w \rho_1) \otimes_{\bar{R}} w \rho_1$ induit l'isomorphisme recherché grâce au fait que $D_1' = D_1 \otimes_{E_1,\varphi(w)} \otimes \bar{R} \simeq M_{m_1}(\bar{R})$ et :
\begin{eqnarray*}
(\Theta(\pi_1) \otimes_{D_1} \pi_1) \otimes_{E_1,\varphi(w)} \bar{R}  & \simeq &  (\Theta(\pi_1) \otimes_{E_1,\varphi(w)} \bar{R}) \otimes_{D_1'} (\pi_1 \otimes_{E_1,\varphi(w)} \bar{R}) \\
  & \simeq & (\Theta(\pi_1) \otimes_{E_1,\varphi(w)} \bar{R}) \otimes_{D_1'} (m_1 (w\rho_1)). \end{eqnarray*} \end{proof}

\begin{prop} \label{extension_des_scal_THETA1_prop} Les assertions suivantes sont équivalentes :
\begin{enumerate}[label=\textup{\alph*)}]
\item le $R[\widehat{H}_{2,S}]-D_1$-bimodule $\Theta(\pi_1)$ est de longueur finie ;
\item toutes les représentations $\Theta(w \rho_1)$ sont de longueur finie ;
\item il existe $w \rho_1$ tel que $\Theta(w \rho_1)$ soit de longueur finie.
\end{enumerate}
Cela prouve que $(\Theta_1) \Leftrightarrow (\Theta_1')$. \end{prop}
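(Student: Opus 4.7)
L'outil principal est le Lemme \ref{big_theta_ext_des_scal_lem}, qui fournit d'une part l'isomorphisme de décomposition
$$(\Theta(\pi_1) \otimes_{D_1} \pi_1) \otimes_R \bar{R} \simeq \bigoplus_{w \in \textup{Gal}_R(E(\rho_1),\bar{R})} \Theta(w \rho_1) \otimes_{\bar{R}} w \rho_1$$
et, d'autre part, les isomorphismes $\Theta(\pi_1) \otimes_{E_1,\varphi(w)} \bar{R} \simeq m_1 \Theta(w \rho_1)$ pour chaque $w$. Le plan consiste à traduire la propriété de longueur finie du bimodule $\Theta(\pi_1)$ en une propriété de longueur finie des représentations $\Theta(w \rho_1)$ dans $\textup{Rep}_{\bar{R}}^{\textup{gen}}(\widehat{H}_{2,S})$, via l'extension des scalaires de $E_1$ à $\bar{R}$ couplée à la transitivité de l'action galoisienne.

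L'implication b) $\Rightarrow$ c) est triviale. Pour c) $\Rightarrow$ b), l'idée est d'exploiter le fait que la représentation $\omega_{\psi,S_{\bar{R}}}^{\bar{R}}$ est obtenue par extension des scalaires depuis $R$, donc est stable par l'action naturelle de $\textup{Gal}_R(\bar{R}, \bar{R})$. Par conséquent, le foncteur $\Theta$ commute à l'action galoisienne et on obtient $\Theta(w \rho_1) \simeq w \cdot \Theta(\rho_1)$ ; comme l'action galoisienne préserve la longueur, si un élément de l'orbite donne lieu à un $\Theta$ de longueur finie, il en va de même pour tous. Pour l'équivalence a) $\Leftrightarrow$ b), on invoque le principe général suivant : pour toute algèbre $A$ sur $E_1$ et tout $A$-module $M$, on a que $M$ est de longueur finie si et seulement si $M \otimes_{E_1} \bar{R}$ est de longueur finie comme $A \otimes_{E_1} \bar{R}$-module (c'est une conséquence de la fidèle platitude et de la préservation de la semi-simplification par extension d'un corps). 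Appliqué à $A = R[\widehat{H}_{2,S}] \otimes_R D_1^{\textup{op}}$ et $M = \Theta(\pi_1)$, ce principe combiné à l'isomorphisme $\Theta(\pi_1) \otimes_{E_1,\varphi(w)} \bar{R} \simeq m_1 \Theta(w \rho_1)$ fournit l'équivalence voulue (la multiplicité $m_1$ ne modifiant pas le caractère de longueur finie).

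Reste enfin à déduire l'équivalence globale ($\Theta_1$) $\Leftrightarrow$ ($\Theta_1'$). Pour cela, on observe que la correspondance $\pi_1 \mapsto \{w\rho_1\}_{w}$ est une bijection entre $\textup{Irr}_R^{\textup{gen}}(\widehat{H}_{1,S})$ et les orbites galoisiennes dans $\textup{Irr}_{\bar{R}}^{\textup{gen}}(\widehat{H}_{1,S})$ (c'est le contenu de l'Annexe \ref{representations_un_produit_groupes_section} via le Théorème \ref{decomposition_extension_des_scalaires_thm}). Dire que ($\Theta_1$) est vraie pour toutes les représentations irréductibles à coefficients dans $\bar{R}$ équivaut à dire qu'elle est vraie pour un représentant de chaque orbite, ce qui d'après l'équivalence b) $\Leftrightarrow$ c) déjà obtenue équivaut à ($\Theta_1'$) pour toutes les représentations irréductibles à coefficients dans $R$.

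L'obstacle principal me semble être la rédaction précise du point a) $\Leftrightarrow$ b), car il faut manipuler la structure de bimodule $R[\widehat{H}_{2,S}]-D_1$ en tenant compte du fait que $D_1$ est une algèbre à division de centre $E_1$ distincte en général de $R$. Il faudra notamment s'assurer, au moment d'appliquer le théorème d'extension des scalaires aux coinvariants, que le passage aux $\pi_1$-coinvariants est bien compatible à la structure de bimodule et que la longueur comme $R[\widehat{H}_{2,S}]$-module lisse correspond bien à la longueur comme $R[\widehat{H}_{2,S}] \otimes_R D_1^{\textup{op}}$-module -- ce qui devrait découler du fait que $D_1$ est de dimension finie sur $R$.
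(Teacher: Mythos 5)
Le plan est globalement correct, mais il y a un point délicat que votre « principe général » passe sous silence. L'énoncé « pour toute algèbre $A$ sur $E_1$ et tout $A$-module $M$, $M$ est de longueur finie si et seulement si $M\otimes_{E_1}\bar{R}$ l'est » est faux en toute généralité : la direction $M$ de longueur finie $\Rightarrow M\otimes_{E_1}\bar{R}$ de longueur finie échoue dès qu'un sous-quotient simple de $M$ a une algèbre d'endomorphismes de dimension infinie sur $E_1$ (le produit tensoriel par $\bar{R}$ peut alors produire un module de longueur infinie). La direction inverse, elle, est bien une conséquence de la fidèle platitude. Or le point crucial de l'argument -- et c'est exactement ce sur quoi la preuve du papier s'appuie -- est l'admissibilité des représentations irréductibles lisses d'un groupe réductif $p$-adique (Corollaire \ref{rep_irred_gp_red_sont_adm_cor}) : c'est elle qui garantit que les algèbres d'endomorphismes des sous-quotients simples de $\Theta(\pi_1)$ sont de dimension finie sur $R$, et donc que le Théorème \ref{decomposition_extension_des_scalaires_thm} s'applique pour montrer que l'extension des scalaires préserve la longueur finie. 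Votre remarque finale sur « $D_1$ de dimension finie sur $R$ » ne suffit pas : $D_1$ contrôle l'algèbre d'endomorphismes de $\pi_1$ du côté $\widehat{H}_{1,S}$, mais pas celles des sous-quotients irréductibles de $\Theta(\pi_1)$ en tant que représentation de $\widehat{H}_{2,S}$. Il faut invoquer l'admissibilité de ces derniers explicitement.

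Sur la structure logique, votre preuve de c) $\Rightarrow$ b) via l'équivariance galoisienne du $\Theta$-lift est une alternative élégante au chemin du papier (qui fait c) $\Rightarrow$ a) puis a) $\Rightarrow$ b)). Elle se justifie d'ailleurs via le Lemme \ref{big_theta_ext_des_scal_lem} lui-même : comme tous les $\Theta(w\rho_1)$ proviennent d'une seule et même représentation $\Theta(\pi_1)$ par les différents plongements $\varphi(w)$, ils sont bien conjugués sous Galois, et la conjugaison galoisienne préserve la longueur. Le papier, lui, déduit c) $\Rightarrow$ a) directement en observant que l'isomorphisme $\Theta(\pi_1)\otimes_{E_1,\varphi(w)}\bar{R}\simeq m_1\,\Theta(w\rho_1)$ est compatible à l'action à droite de $D_1\otimes_{E_1,\varphi(w)}\bar{R}\simeq M_{m_1}(\bar{R})$, ce qui permet de contrôler la longueur comme bimodule. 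Les deux approches aboutissent, mais la vôtre demande ensuite d'établir a) $\Leftrightarrow$ b) indépendamment, là où le papier obtient a) directement depuis c).
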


\begin{proof} Il suffit de montrer que a) $\Rightarrow$ b) et c) $\Rightarrow$ a), l'implication b) $\Rightarrow$ c) étant évidente.

Pour la première implication a) $\Rightarrow$ b), on remarque d'abord que si $\Theta(\pi_1)$ est un bimodule de longueur finie, alors $\Theta(\pi_1) \otimes_{D_1} \pi_1$ est une représentation de longueur finie dans $\textup{Rep}_R( \widehat{H}_{1,S} \times \widehat{H}_{2,S})$. Ensuite, comme toute représentation irréductible de $\widehat{H}_{1,S} \times \widehat{H}_{2,S}$ est admissible d'après le Corollaire \ref{rep_irred_gp_red_sont_adm_cor}, on déduit de l'exactitude de l'extension des scalaires à $\bar{R}$ et du Théorème \ref{decomposition_extension_des_scalaires_thm} que la représentation $(\Theta(\pi_1)\otimes_{D_1} \pi_1 ) \otimes_R \bar{R}$ est de longueur finie dans $\textup{Rep}_{\bar{R}}( \widehat{H}_{1,S} \times \widehat{H}_{2,S})$. Le Lemme \ref{big_theta_ext_des_scal_lem} permet de conclure que $\Theta(w \rho_1)$ est de longueur finie pour tout $w \rho_1$.

Pour la dernière implication c) $\Rightarrow$ a), on utilise encore le Lemme \ref{big_theta_ext_des_scal_lem}. L'égalité $\Theta(\pi_1) \otimes_{E_1,\varphi(w)} \bar{R} \simeq m_1 \big(\Theta(w\rho_1)\big)$ est compatible à l'action de $D_1 \otimes_{E_1,\varphi(w)} \bar{R} \simeq M_{m_1}(\bar{R})$ à droite, par conséquent, le bimodule $\Theta(\pi_1) \otimes_{E_1,\varphi(w)} \bar{R}$ est un $\bar{R}[\widehat{H}_{2,S}]-M_{m_1}(\bar{R})$-bimodule de longueur finie à condition que $\Theta(w \rho_1)$ soit une représentation de longueur finie dans $\textup{Rep}_{\bar{R}}(\widehat{H}_{2,S})$. Cela implique que $\Theta(\pi_1)$ est un $R[\widehat{H}_{2,S}]-D_1$-bimodule de longueur finie.
\end{proof}

\begin{prop} \label{extension_des_scal_THETA2_prop} On suppose que $\Theta(\pi_1)$ est un $R[\widehat{H}_{2,S}]-D_1$-bimodule de longueur finie, dont on note $\theta(\pi_1)$ le co-socle. Quand $\Theta(\pi_1) \neq 0$, les assertions suivantes sont équivalentes :
\begin{enumerate}[label=\textup{\alph*)}]
\item le $R[\widehat{H}_{2,S}]-D_1$-bimodule $\theta(\pi_1)$ est irréductible ;
\item toutes les représentations $\theta(w \rho_1)$ sont irréductibles ;
\item il existe $w \rho_1$ tel que $\theta(w \rho_1)$ soit irréductible.
\end{enumerate}
Cela prouve que $(\Theta_2) \Leftrightarrow (\Theta_2')$ si $(\Theta_1')$ est vraie pour $\pi_1$.
\end{prop}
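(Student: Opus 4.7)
Le plan est de s'inspirer de la preuve de la Proposition \ref{extension_des_scal_THETA1_prop} en exploitant le Lemme \ref{big_theta_ext_des_scal_lem} ainsi que le cadre d'extension des scalaires de l'Annexe \ref{representations_un_produit_groupes_section}. La Proposition \ref{extension_des_scal_THETA1_prop} assurant déjà que $\Theta(\pi_1)$ est de longueur finie si et seulement si chaque $\Theta(w \rho_1)$ l'est, le co-socle $\theta(\pi_1)$ est bien défini sous l'hypothèse $(\Theta_1')$, ainsi que chacun des $\theta(w \rho_1)$. L'implication b) $\Rightarrow$ c) étant triviale, il reste à prouver c) $\Rightarrow$ b) et l'équivalence a) $\Leftrightarrow$ c).

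L'implication c) $\Rightarrow$ b) utilise l'action galoisienne. Comme $\Theta(\pi_1)$ est défini sur $R$, sa réduction des scalaires $\Theta(\pi_1) \otimes_R \bar{R}$ hérite d'une action naturelle de $\textup{Gal}_R(E(\rho_1),\bar{R})$ -- étendue via la bijection $\varphi$ du Lemme \ref{big_theta_ext_des_scal_lem} -- qui permute les facteurs $\Theta(w\rho_1)$ de la décomposition. Le groupe de Galois agit transitivement sur $\{ w \rho_1 \}$ et un tel twist galoisien préserve l'irréductibilité des représentations ainsi que l'opération de passage au co-socle (qui commute à l'équivalence de catégories $\rho \mapsto \sigma \cdot \rho$ induite par $\sigma$). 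Il en résulte que si un $\theta(w\rho_1)$ est irréductible, tous le sont.

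L'équivalence a) $\Leftrightarrow$ c) constitue le c\oe ur de la preuve. L'idée est que, $\pi_1$ étant un $(R[\widehat{H}_{1,S}],D_1)$-bimodule de type \og progénérateur \fg{} sur $D_1$, le foncteur $M \mapsto M \otimes_{D_1} \pi_1$ préserve la structure de radical de Jacobson : $\theta(\pi_1) \otimes_{D_1} \pi_1$ s'identifie alors au plus grand quotient semi-simple de $\Theta(\pi_1) \otimes_{D_1} \pi_1$ dans $\textup{Rep}_R(\widehat{H}_{1,S} \times \widehat{H}_{2,S})$, et $\theta(\pi_1)$ est un bimodule simple si et seulement si $\theta(\pi_1) \otimes_{D_1} \pi_1$ est une représentation irréductible. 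D'après le Lemme \ref{big_theta_ext_des_scal_lem} et le Théorème \ref{decomposition_extension_des_scalaires_thm}, l'extension des scalaires à $\bar{R}$ donne :
$$(\theta(\pi_1) \otimes_{D_1} \pi_1) \otimes_R \bar{R} \simeq \bigoplus_{w \in \textup{Gal}_R(E(\rho_1),\bar{R})} \theta(w \rho_1) \otimes_{\bar{R}} w\rho_1,$$
et une représentation sur $R$ est irréductible si et seulement si son extension à $\bar{R}$ est isotypique et composée de représentations absolument irréductibles. Comme chaque $w \rho_1$ est absolument irréductible et que les différents facteurs $\theta(w\rho_1) \otimes w\rho_1$ ont des supports de Bernstein distincts sur le facteur $\widehat{H}_{1,S}$, l'irréductibilité de $\theta(\pi_1) \otimes_{D_1} \pi_1$ équivaut à l'irréductibilité de chacun des $\theta(w\rho_1)$.

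L'obstacle principal réside dans la justification rigoureuse de la commutation du co-socle avec l'extension des scalaires à $\bar{R}$, ainsi que dans le contrôle Morita assurant que $\theta(\pi_1) \otimes_{D_1} \pi_1$ représente fidèlement le co-socle de $\Theta(\pi_1) \otimes_{D_1} \pi_1$. Ces points reposent sur l'hypothèse de longueur finie fournie par $(\Theta_1')$, sur le caractère séparable -- donc étale -- de l'extension $E_1 / R$ via la perfection de $R$, et sur le lemme de Schur admissible (Corollaire \ref{rep_irred_gp_red_sont_adm_cor}) qui garantit que $D_1$ est une algèbre à division de dimension finie sur $E_1$.
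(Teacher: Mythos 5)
Votre plan général est cohérent avec celui du papier — utiliser le Lemme \ref{big_theta_ext_des_scal_lem} et l'extension des scalaires — mais il y a plusieurs écarts, dont un est une vraie lacune.

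\textbf{Sur la structure.} Votre chemin est b) $\Rightarrow$ c) trivial, puis c) $\Rightarrow$ b) via Galois, puis a) $\Leftrightarrow$ c). Le papier ne fait pas de c) $\Rightarrow$ b) : il fait a) $\Rightarrow$ b) par contraposée et c) $\Rightarrow$ a) par contraposée (puis b) $\Rightarrow$ c) trivial), ce qui suffit pour boucler le cycle. L'argument Galois pour c) $\Rightarrow$ b) est une alternative viable et peut-être plus conceptuelle : il revient à dire que $\theta$ commute à l'action de Galois, ce qui est encodé dans la seconde partie du Lemme \ref{big_theta_ext_des_scal_lem} via $\Theta(\pi_1)\otimes_{E_1,\varphi(w)}\bar R \simeq m_1 \Theta(w\rho_1)$. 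Mais il vous faut explicitement en déduire $\theta(w\rho_1) \simeq w\,\theta(\rho_1)$, par exactitude à droite de $\otimes_{E_1,\varphi(w)}\bar R$ appliquée au co-socle ; vous le passez sous silence.

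\textbf{La lacune.} Votre critère pivot — \og une représentation sur $R$ est irréductible si et seulement si son extension à $\bar R$ est isotypique et composée de représentations absolument irréductibles \fg{} — est faux tel qu'énoncé. Le Théorème \ref{decomposition_extension_des_scalaires_thm} montre justement que l'extension de $\pi_1$ irréductible sur $R$ (parfait) à $\bar R$ est $m_1 \bigl(\bigoplus_w w\rho_1\bigr)$, qui n'est pas isotypique dès que $E_1 \neq R$. Inversement, votre \og si \fg{} tel quel ne suffit pas : il faut en plus que les facteurs forment une unique orbite galoisienne, condition que vous n'imposez pas et que vous ne vérifiez pas pour $\bigoplus_w \theta(w\rho_1)\otimes w\rho_1$. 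De même, l'argument sur les \og supports de Bernstein distincts sur le facteur $\widehat H_{1,S}$ \fg{} ne fait que séparer les facteurs ; il ne dit rien sur l'orbite galoisienne ni sur la descente à $R$. Le papier court-circuite tout cela : pour c) $\Rightarrow$ a) par contraposée, il utilise directement $\theta(\pi_1)\otimes_{E_1,\varphi(w)}\bar R = m_1\,\theta(w\rho_1)$ et l'exactitude de l'extension des scalaires pour conclure par un simple décompte de longueurs (si $\theta(\pi_1)$ n'est pas simple, le membre de gauche est de longueur $\geq 2m_1$, donc $\theta(w\rho_1)$ est de longueur $\geq 2$). Pour a) $\Rightarrow$ b), il exhibe un quotient non irréductible de $\Theta(\pi_1)\otimes_{D_1}\pi_1$ puis redescend via le Lemme \ref{pi-coinvariant-sous-rep}.

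\textbf{Sur Morita.} Votre affirmation que $M \mapsto M\otimes_{D_1}\pi_1$ est compatible aux co-socles (parce que $\pi_1$ est \og progénérateur \fg{}) est correcte sur le fond et c'est exactement le rôle du Lemme \ref{pi-coinvariant-sous-rep} du papier, qui établit la bijection entre sous-bimodules de $V_2$ et sous-représentations de $V_2\otimes_{D_1}V_1$. Citez ce lemme au lieu de le reléguer parmi les \og obstacles \fg{} : c'est un ingrédient disponible, pas une difficulté à résoudre.

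En résumé : l'ossature est salvable, mais il faut (i) remplacer le critère d'irréductibilité sur $R$ — soit par un énoncé précis d'orbite galoisienne, soit plutôt par le décompte de longueurs du papier, plus économique ; (ii) fonder le pas Morita sur le Lemme \ref{pi-coinvariant-sous-rep} ; (iii) justifier l'équivariance galoisienne de $\theta$ si vous gardez c) $\Rightarrow$ b) par Galois.
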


\begin{proof} Comme précédemment, l'implication b) $\Rightarrow$ c) est évidente. 

Pour ce qui est de a) $\Rightarrow$ b), on prouve la contraposée. On suppose donc qu'il existe $w \rho_1$ telle que $\theta(w \rho_1)$ ne soit pas irréductible. Alors il existe $\tau_2$ et $\tau_2'$ deux représentations irréductibles dans $\textup{Rep}_{\bar{R}}^{\textup{gen}}(\widehat{H}_{2,S})$ telles que $\Theta(w \rho_1) \otimes_{\bar{R}} (w \rho_1)$ admette la représentation $(\tau_2 \otimes_{\bar{R}} (w \rho_1)) \oplus (\tau_2' \otimes_{\bar{R}} (w \rho_1))$ comme quotient.

D'une part, la représentation $\textup{Res}^R(\tau_2 \otimes_{\bar{R}} (w \rho_1))$ est $\pi_1$-isotypique en tant que représentation de $\widetilde{H}_{1,S}$.  D'après le Lemme \ref{pi-coinvariant}, il existe un $R[\widehat{H}_{2,S}]-D_1$-bimodule $\sigma_2$ tel que $\textup{Res}^R(\tau_2 \otimes_{\bar{R}} (w \rho_1)) \simeq \sigma_2 \otimes_{D_1} \pi_2$. De plus, elle est semi-simple en tant que représentation dans $\textup{Rep}_R(\widehat{H}_{1,S} \times \widehat{H}_{1,S})$. Donc $\sigma_2$ est isotypique et on note $\pi_2$ un facteur irréductible quelconque de $\sigma_2$. Il en va de même pour $\tau_2'$ avec des notations similaires $\sigma_2'$ et $\pi_2'$.

D'autre part, le Lemme \ref{big_theta_ext_des_scal_lem} entraîne que $\Theta(w \rho_1) \otimes_{\bar{R}} (w \rho_1) \simeq (\Theta(\pi_1) \otimes_{D_1} \pi_1) \otimes_{E_1,\varphi(w)} \bar{R}$. Par suite, on déduit de l'inclusion évidente de $\Theta(\pi_1) \otimes_{D_1} \pi_1$ dans le membre de droite que $\Theta(\pi_1) \otimes_{D_1} \pi_1$ admet comme quotient $(\pi_2 \otimes_{D_1} \pi_1) \oplus (\pi_2' \otimes_{D_1} \pi_1)$. Le noyau de ce quotient est de la forme $\sigma_2'' \otimes_{D_1} \pi_1$ où $\sigma_2''$ est un sous-$R[\widehat{H}_{2,S}]-D_1$-bimodule de $\Theta(\pi_1)$ d'après le Lemme \ref{pi-coinvariant-sous-rep}. Par conséquent, ce quotient induit un quotient $\Theta(\pi_1) \to \pi_2 \oplus \pi_2'$ de $R[\widehat{H}_{2,S}]-D_1$-bimodules dont le noyau est précisément $\sigma_2''$. D'où $\theta(\pi_1)$ n'est pas irréductible.

Enfin, pour l'implication c) $\Rightarrow$ a), on raisonne à nouveau par contraposée. On suppose donc que $\theta(\pi_1)$ n'est pas irréductible. Il s'agit de montrer que pour tout $w \rho_1$, la représentation $\theta(w \rho_1)$ n'est pas irréductible. Or, $\Theta(\pi_1) \otimes_{D_1} \pi_1$ a pour quotient $\theta(\pi_1) \otimes_{D_1} \pi_1$. Par conséquent, $(\Theta(\pi_1) \otimes_{D_1} \pi_1)\otimes_R \bar{R}$ a pour quotient $(\theta(\pi_1) \otimes_{D_1} \pi_1) \otimes_R \bar{R}$. Or la représentation $(\Theta(\pi_1) \otimes_{D_1} \pi_1)\otimes_{E_1,\varphi(w)} \bar{R}$ admet comme quotient $(\theta(\pi_1) \otimes_{D_1} \pi_1) \otimes_{E_1,\varphi(w)} \pi_1$ et $\theta(\pi_1) \otimes_{E_1,\varphi(w)} \bar{R} = m_1 (\theta(w \rho_1))$. D'où $\theta(w \rho_1)$ n'est pas irréductible puisque $\theta(\pi_1)$ ne l'est pas et l'exactitude de l'extension des scalaires donne que $\theta(\pi_1) \otimes_{E_1,\varphi(w)} \bar{R}$ est au moins de longueur $2 m_1$. \end{proof}

\begin{prop} \label{extension_des_scal_THETA3_prop} On suppose que $\Theta(\pi_1)$ et $\Theta(\pi_1')$ sont deux $R[\widehat{H}_{2,S}]-D_1$-bimodules de longueur finie dont les co-socles respectifs $\theta(\pi_1)$ et $\theta(\pi_1')$ sont irréductibles. On note $\rho_1$ et $\rho_1'$ les représentations qui interviennent dans l'extension des scalaires à $\bar{R}$ de $\pi_1$ et $\pi_1'$. On suppose que $D_1 = \textup{End}_{\widehat{H}_{1,S}}(\pi_1) \simeq \textup{End}_{\widehat{H}_{1,S}}(\pi_1')$. Alors les assertions suivantes sont équivalentes :
\begin{enumerate}[label=\textup{\alph*)}]
\item $\theta(\pi_1) \simeq \theta(\pi_1')$ en tant que $R[\widehat{H}_{2,S}]-D_1$-bimodules ;
\item pour tout $w \rho_1$, il existe $w' \rho_1'$ tel que $\theta(w \rho_1) \simeq \theta(w' \rho_1')$ ;
\item il existe $w \rho_1$ et $w' \rho_1'$ tel que $\theta(w \rho_1) \simeq \theta(w \rho_1')$.
\end{enumerate}
Cela prouve que $(\Theta_3) \Leftrightarrow (\Theta_3')$ si $(\Theta_1')$ et $(\Theta_2')$ sont vraies pour $\pi_1$ et $\pi_1'$.
\end{prop}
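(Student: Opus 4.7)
Le plan consiste à établir la chaîne d'implications (a) $\Rightarrow$ (b) $\Rightarrow$ (c) $\Rightarrow$ (a), l'implication du milieu étant triviale. L'outil crucial pour les deux autres est l'analogue du Lemme \ref{big_theta_ext_des_scal_lem} pour les co-socles, à savoir
$$\theta(\pi_1) \otimes_{E_1,\varphi(w)} \bar{R} \simeq m_1 \big( \theta(w\rho_1) \big),$$
qui se déduit du lemme par exactitude de l'extension des scalaires et par sa compatibilité à la formation du plus grand quotient semi-simple ; une formule analogue vaut pour $\pi_1'$. L'isomorphisme $D_1 \simeq \textup{End}_{\widehat{H}_{1,S}}(\pi_1')$ fixé par l'hypothèse restreint en un isomorphisme $E_1 \simeq E_1'$ entre les centres, puis en une identification des plongements $\textup{Gal}_R(E_1,\bar{R}) \simeq \textup{Gal}_R(E_1',\bar{R})$ et, par l'intermédiaire des bijections $\varphi$ et $\varphi'$, des orbites galoisiennes $\{w\rho_1\}$ et $\{w'\rho_1'\}$.

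Pour (a) $\Rightarrow$ (b), fixons $w$ et notons $w'$ l'élément qui lui correspond sous cette identification. L'application du foncteur $- \otimes_{E_1,\varphi(w)} \bar{R}$ à un isomorphisme de $R[\widehat{H}_{2,S}]-D_1$-bimodules $\theta(\pi_1) \simeq \theta(\pi_1')$ fournit immédiatement un isomorphisme $m_1 \theta(w\rho_1) \simeq m_1' \theta(w'\rho_1')$ dans $\textup{Rep}_{\bar{R}}^{\textup{gen}}(\widehat{H}_{2,S})$. Comme les deux membres sont isotypiques d'après la Proposition \ref{extension_des_scal_THETA2_prop} appliquée à $\pi_1$ et $\pi_1'$, et comme $m_1 = m_1'$ (qui sont les dimensions des algèbres à division $D_1 \simeq D_1'$ sur leur centre commun), la comparaison des longueurs d'une série de Jordan-Hölder fournit directement $\theta(w\rho_1) \simeq \theta(w'\rho_1')$.

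Pour (c) $\Rightarrow$ (a), on part d'un isomorphisme $\theta(w\rho_1) \simeq \theta(w'\rho_1')$. On exploite d'abord l'équivariance galoisienne du foncteur $\Theta$, qui découle de sa construction via le plus grand quotient $\pi$-isotypique : pour tout $\tau \in \textup{Gal}(\bar{R}/R)$, on a $\theta(w\rho_1)^\tau \simeq \theta(\tau w\rho_1)$, et de même pour $\pi_1'$. Ceci permet d'étendre l'isomorphisme ponctuel en une bijection équivariante entre les orbites galoisiennes telle que $\theta(w_0\rho_1) \simeq \theta(w_0'\rho_1')$ dans chaque paire. En sommant sur l'orbite, on en tire un isomorphisme
$$\theta(\pi_1) \otimes_R \bar{R} \simeq \theta(\pi_1') \otimes_R \bar{R}$$
de $\bar{R}[\widehat{H}_{2,S}]-(D_1 \otimes_R \bar{R})$-bimodules qui est galois-équivariant par construction. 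Comme $R$ est parfait, la descente galoisienne fournit alors l'isomorphisme souhaité $\theta(\pi_1) \simeq \theta(\pi_1')$ de $R[\widehat{H}_{2,S}]-D_1$-bimodules.

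La difficulté principale dans ce plan est précisément le passage de l'isomorphisme ponctuel donné par (c) à une bijection galois-équivariante entre les orbites : chaque isomorphisme $\theta(w_0\rho_1) \simeq \theta(w_0'\rho_1')$ n'est défini qu'à multiplication près par un scalaire dans $\bar{R}^\times$ (lemme de Schur appliqué à l'irréductible $\theta(w_0\rho_1)$), et cette ambiguïté doit être levée de manière cohérente le long de l'orbite en faisant un usage essentiel de l'isomorphisme $D_1 \simeq D_1'$ comme isomorphisme d'algèbres sur $R$ et non pas seulement abstrait, afin que l'isomorphisme assemblé respecte non seulement l'action de $\widehat{H}_{2,S}$ mais aussi celle à droite de $D_1$. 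Pour conclure à l'équivalence annoncée $(\Theta_3) \Leftrightarrow (\Theta_3')$ lorsque $(\Theta_1')$ et $(\Theta_2')$ sont vérifiées, il suffit alors de combiner l'équivalence (a) $\Leftrightarrow$ (c) avec la reformulation : $\pi_1 \simeq \pi_1'$ si et seulement s'il existe $w$ et $w'$ tels que $w\rho_1 \simeq w'\rho_1'$, laquelle résulte du Théorème \ref{decomposition_extension_des_scalaires_thm}.
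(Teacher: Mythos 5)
Your argument for \textup{(a)} $\Rightarrow$ \textup{(b)} and \textup{(b)} $\Rightarrow$ \textup{(c)} matches the paper's: apply $- \otimes_{E_1,\varphi(w)}\bar{R}$ to the given isomorphism of bimodules and compare isotypic components, noting $m_1 = m_1'$. The implication \textup{(c)} $\Rightarrow$ \textup{(a)} is where you diverge, and where there is a genuine gap.

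You propose to upgrade the pointwise isomorphism $\theta(w\rho_1) \simeq \theta(w'\rho_1')$ to a \emph{Galois-equivariant} isomorphism $\theta(\pi_1)\otimes_R\bar{R}\simeq\theta(\pi_1')\otimes_R\bar{R}$ and then invoke Galois descent. You yourself flag the obstruction: each component isomorphism is only defined up to a scalar in $\bar{R}^\times$ (Schur), so assembling a coherent equivariant isomorphism along the orbit requires resolving a $1$-cocycle with values in $\bar{R}^\times$. You leave this unresolved; invoking Hilbert 90 could in principle trivialise the cocycle, but one would still have to track the right $D_1$-action and the compatibility of $\varphi$ with $\varphi'$, which is exactly the bookkeeping that makes this route delicate. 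As written, the step ``la descente galoisienne fournit alors l'isomorphisme souhaité'' is not justified.

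The paper never requires Galois-equivariance of the extended isomorphism. From the bijection $\psi$ on $\textup{Gal}_R(E(\rho_1),\bar{R})$ matching $\theta(w_0\rho_1)\simeq\theta(\psi(w_0)\rho_1')$, it only records that $\theta(\pi_1)\otimes_R\bar{R}$ and $\theta(\pi_1')\otimes_R\bar{R}$ are isomorphic as $\bar{R}[\widehat{H}_{2,S}]-(D_1\otimes_R\bar{R})$-bimodules — no equivariance needed. It then applies \emph{restriction of scalars}: $\textup{Res}^R(\theta(\pi_1)\otimes_R\bar{R})$ is $\theta(\pi_1)$-isotypic as an $R[\widehat{H}_{2,S}]-D_1$-bimodule, and likewise $\textup{Res}^R(\theta(\pi_1')\otimes_R\bar{R})$ is $\theta(\pi_1')$-isotypic; since these restrictions are isomorphic, $\theta(\pi_1)\simeq\theta(\pi_1')$. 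Replacing your Galois-descent step with this restriction-of-scalars observation both fills the gap and shortens the argument.
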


\begin{proof} L'implication b) $\Rightarrow$ c) est toujours évidente.

En ce qui concerne a) $\Rightarrow$ b), l'isomorphisme $\theta(\pi_1) \otimes_{E_1,\varphi(w)} \bar{R} \simeq m_1 (\theta(w \rho_1))$ est un isomorphisme de $\bar{R}[\widehat{H}_{2,S}]-(D_1 \otimes_{E_1,\varphi(w)} \bar{R})$-bimodules. Un isomorphisme similaire est valable pour $\theta(\pi_1')$ et un certain $\theta(w'\rho_1')$. Si $\theta(\pi_1) \simeq \theta(\pi_1')$, alors $\theta(w \rho_1) \simeq \theta(w' \rho_1')$ dans $\textup{Rep}_{\bar{R}}(\widetilde{H}_{2,S})$.

Pour la dernière implication c) $\Rightarrow$ a), il existe d'après le paragraphe précédent un isomorphisme $\theta(\pi_1) \otimes_{E_1,\varphi(w)} \bar{R} \simeq \theta(\pi_1') \otimes_{E_1,\varphi'(w')} \bar{R}$ puisque $\theta(w\rho_1) \simeq \theta(w' \rho_1')$. Soit $w_0 \in \textup{Gal}_R(E(\rho_1),\bar{R})$. Alors $w_0(\theta(\pi_1) \otimes_{E_1,\varphi(w)} \bar{R}) \simeq \theta(\pi_1) \otimes_{E_1 , \varphi(w_0 w)} \bar{R}$. En particulier, cela implique que $\theta(w_0 w \rho_1) \simeq \theta(w_0 w' \rho_1)$. Il existe donc une bijection $\psi$ de $\textup{Gal}_R(E(\rho_1),\bar{R})$ telle que pour tout $w_0 \in \textup{Gal}_R(E(\rho_1),\bar{R})$, on ait $\theta(w_0 \rho_1) \simeq \theta(\psi(w_0) \rho_1')$. Ainsi, on a des isomorphismes $\bar{R}[\widehat{H}_{2,S}]-(D_1\otimes_R \bar{R})$-bimodules :
$$\theta(\pi_1) \otimes_R \bar{R} \simeq \oplus_{w_0} \theta(\pi_1) \otimes_{E_1 , \varphi(w_0)} \bar{R} \simeq \theta(\pi_1') \otimes_R \bar{R}.$$
Par restriction des scalaires, $\textup{Res}^R(\theta(\pi_1) \otimes_R \bar{R})$ est un $R[\widehat{H}_{2,S}]-D_1$-bimodule qui est à la fois $\theta(\pi_1)$-isotypique et $\theta(\pi_1')$-isotypique. Donc $\theta(\pi_1) \simeq \theta(\pi_1')$. \end{proof}

En examinant les trois propositions précédentes \ref{extension_des_scal_THETA1_prop}-\ref{extension_des_scal_THETA2_prop}-\ref{extension_des_scal_THETA3_prop}, on obtient facilement le résultat suivant.

\begin{theo} \label{correspondance_sur_un_corps_non_alg_clos_equiv_thm} On rappelle que $R$ est un corps parfait dont on fixe une clôture algébrique $\bar{R}$. Alors les assertions suivantes sont équivalentes :
\begin{enumerate}[label=\textup{\alph*)}]
\item \textup{($\Theta_1'$)-($\Theta_2'$)-($\Theta_3'$)} est vrai pour toute représentation irréductible dans $\textup{Rep}_R^{\textup{gen}}(\widehat{H}_{1,S})$ ;
\item \textup{($\Theta_1$)-($\Theta_2$)-($\Theta_3$)} est vrai pour toute représentation irréductible dans $\textup{Rep}_{\bar{R}}^{\textup{gen}}(\widehat{H}_{1,S})$.
\end{enumerate} \end{theo}

\begin{rem} En particulier, cela signifie qu'il est suffisant de vérifier la validité des énoncés de la correspondance thêta locale pour les représentations à coefficients dans un corps algébriquement clos. Cela simplifie considérablement la situation considérée car dans ce cas on a toujours $D_1 = \textup{End}_{\widehat{H}_{1,S}}(\pi_1)= \bar{R}$. \end{rem}

\paragraph{Un exemple.} Soit $\rho_1 \in \textup{Irr}_{\mathbb{C}}^{\textup{gen}}(\widehat{H}_{1,S})$. Soit $K$ le corps des fractions de $\mathcal{A}$, qui est le corps de décomposition d'une famille de polynômes de $\mathbb{C}[X]$, donc un sous-corps de $\mathbb{C}$. Alors d'après le Lemme \ref{unicite_restriction_des_scalaires_induction_lem}, il existe une unique représentation $\pi_1(\rho_1,K) \in \textup{Rep}_K^{\textup{gen}}(\widehat{H}_{1,S})$ irréductible telle que $\pi_1 \otimes_K \mathbb{C}$ contienne $\rho_1$. Pour tout $w \in \textup{Gal}_K(\mathbb{C})$ :
$$\theta(w \rho_1) = \theta(\pi_1) \otimes_{E_1,\varphi(ww_0)} \mathbb{C} \simeq w( \theta(\pi_1) \otimes_{E_1,\varphi(w_0)} \mathbb{C}) = w (\theta(\rho_1)).$$

\subsection{Compatibilités à la réduction pour les paires de type I} \label{compatibilite_a_la_reduction_type_I_section}

Soit $(H_1,H_2)$ une paire duale de type I dans $\textup{Sp}(W)$ avec $F$ local non archimédien. Soit $\ell$ un premier qui ne divise pas le pro-ordre de $H_1$. Pour une extension algébrique $k$ du corps fini $\mathbb{F}_\ell$, on note $W(k)$ l'anneau des vecteurs de Witt et $K$ le corps des fractions de $W(k)$. C'est un anneau local complet de caractéristique $0$ d'idéal maximal $(\ell)$ et pour lequel on note $\mathfrak{r}_\ell : W(k) \to k$ la réduction modulo $\ell$. On note $R$ quand on veut signifier indistinctement un anneau parmi $K$, $W(k)$ et $k$.

Soit $\mu$ une mesure de Haar normalisée de $H_1$ à valeurs dans $K$. En particulier une telle mesure $\mu$ est à valeurs dans $W(k)$ puisque $\ell$ est banal. On note $\mathfrak{r}_\ell(\mu)$ sa réduction à valeurs dans $k$, qui est encore une mesure de Haar normalisée. Les algèbres de Hecke $\mathcal{H}_K(H_1)$ et $\mathcal{H}_{W(k)}(H_1)$ sont alors associées à la même mesure $\mu$ ; et l'algèbre de Hecke $\mathcal{H}_k(H_1)$ associée à $\mathfrak{r}_\ell(\mu)$. On note génériquement $\mathcal{H}_R(H_1)$ ces algèbres. Pour les choix de mesures précédents, on a des morphismes d'algèbres évidents :
$$\mathcal{H}_{W(k)}(H_1) \hookrightarrow \mathcal{H}_K(H_1) \textup{ et } \mathcal{H}_{W(k)}(H_1) \twoheadrightarrow \mathcal{H}_k(H_1).$$

\paragraph{Centre de Bernstein.}  Le centre de la catégorie $\textup{Rep}_R(H_1)$ s'identifie à la limite projective des centres des $\mathcal{H}_R(H_1,K_1)$ où $K_1$ parcourt les sous-groupes compacts ouverts de $H_1$ et les applications de transitions sont pour $K_1' \subset K_1$ :
$$f \in \mathcal{H}_R(H_1,K_1') \mapsto e_{K_1} f e_{K_1} \in \mathcal{H}_R(H_1,K_1) =  e_{K_1} \mathcal{H}_R(H_1,K_1') e_{K_1}$$
où $e_{K_1}$ est l'idempotent associé à $K_1$. On écrira ainsi $e=(e(K_1))_{K_1}$ si $e$ est un élément du centre de $\textup{Rep}_R(H_1)$, qu'on appelle aussi centre de Bernstein. En particulier l'unité du centre de Bernstein est $1 = (e_{K_1})_{K_1}$. Ces définitions dépendent du choix de la mesure de Haar sur $H_1$.

\paragraph{Idempotents centraux et décomposition.} Soit $S$ un sous-ensemble de $\textup{Irr}_R(H_1)$. On note $\textup{Rep}_R^S(H_1)$ la sous-catégorie pleine de $\textup{Rep}_R(H_1)$ dont les objets ont tous leurs sous-quotients irréductibles dans $S$. On dit qu'un sous-ensemble $S$ de $\textup{Irr}_R(H_1)$ décompose $\textup{Rep}_R(H_1)$ si l'on a  un produit de catégories :
$$\textup{Rep}_R(H_1) = \textup{Rep}_R^S(H_1) \times \textup{Rep}_R^{{}^cS}(H_1).$$
Il existe alors un (unique) idempotent central $e_S$ dans le centre de $\textup{Rep}_R(H_1)$ qui donne la décomposition précédente \textit{i.e.} de sorte que :
$$e_S \textup{Rep}_R(H_1)= \textup{Rep}_R^S(H_1) \textup{ et } (1-e_S)\textup{Rep}_R(H_1) = \textup{Rep}_R^{{}^cS}(H_1).$$
Réciproquement, tout idempotent central $e$ du cetre de Bernstein induit une décomposition de la catégorie $\textup{Rep}_R(H_1)$. Par définition, une telle décomposition induit une partition à deux éléments de $\textup{Irr}_R(H_1)$. On dit qu'un idempotent central $e$ est primitif si la catégorie $e \textup{Rep}_R(H_1)$ est une catégorie indécomposable. Ceci est équivalent au fait que $e$ ne s'écrive pas comme somme de deux idempotents centraux. On dit qu'un sous-ensemble $S$ non vide de $\textup{Irr}_R(H_1)$ définit un bloc dans $\textup{Rep}_R(H_1)$ si $S$ décompose $\textup{Rep}_R(H_1)$ et s'il n'existe pas de sous-ensemble propre non vide de $S$ qui décompose $\textup{Rep}_R(H_1)$. Enfin, l'idempotent central $e_S$ est primitif si et seulement si $S$ définit un bloc.

\paragraph{Représentations cuspidales.} Soit $\Pi_1 \in \textup{Rep}_K(H_1)$ une représentation cuspidale absolument irréductible. Comme le centre de $H_1$ est compact, la représentation $\Pi_1$ est un objet projectif et injectif de la catégorie $\textup{Rep}_K(H_1)$. Le fait que $\Pi_1$ soit projective et injective se reformule en disant que $\{\Pi_1\}$ décompose $\textup{Rep}_K(H_1)$. De plus, la catégorie $\textup{Rep}_K^{\{\Pi_1\}}(H_1)$ est semi-simple puisque toutes les représentations sont $\Pi_1$-isotypiques et seule compte la multiplicité de $\Pi_1$. On note $e_{\Pi_1}$ l'idempotent central primitif associé. 

On définit le foncteur de réduction modulo $\ell$ pour les représentations à coefficients dans $W(k)$ à l'aide de $\mathfrak{r}_\ell : W(k) \to k$ et que l'on le note encore :
$$\mathfrak{r}_\ell : V \in \textup{Rep}_{W(k)}(H_1) \mapsto V/\ell V = V \otimes_{W(k)} k \in \textup{Rep}_k(H_1).$$

\begin{rem} Ce foncteur n'est \textit{pas} le foncteur usuel de réduction modulo $\ell$, plutôt noté $r_\ell$ en général \cite[II.5.11.b]{vig}. On peut néanmoins obtenir $r_\ell$ pour les $W(k)$-réseaux admissibles en composant $\mathfrak{r}_\ell$ avec la semi-simplification pour les représentations dans $\textup{Rep}_k(H_1)$. Cependant, le foncteur $r_\ell$ présente le désavantage de n'être défini que pour les représentations entières et de supprimer toute information concernant les choix de réseaux. Il faut faire attention car, si $\Pi_1 \in \textup{Rep}_K(H_1)$ est entière, on a $\mathfrak{r}_\ell(\Pi_1) = 0$ alors que $r_\ell(\Pi_1)$ est une représentation semi-simple non nulle. \end{rem}

On suppose maintenant que $H_1$ admet des sous-groupes discrets co-compacts. Il en existe quand, par exemple, $F$ est de caractéristique $0$. D'après \cite[Cor. 6.10]{dat_nu}, tout $W(k)$-réseau stable de $\Pi_1$ se réduit modulo $\ell$ en une représentation cuspidale absolument irréductible, qui est unique à isomorphisme près. On la note $\pi_1$. En d'autres termes, si l'on considère $\Pi_1$ comme une représentation à coefficients dans $W(k)$, cela signifie que tous ses sous-quotients irréductibles sont isomorphes à $\pi_1$. Comme le centre de $H_1$ est compact et $\ell$ est banal vis-à-vis de $H_1$, la représentation $\pi_1$ est un objet projectif et injectif de la catégorie $\textup{Rep}_k(H_1)$.   De même qu'au paragraphe précédent, le singleton $\{ \pi_1 \}$ décompose $\textup{Rep}_k(H_1)$. On note $e_{\pi_1}$ l'idempotent central primitif associé. 

\paragraph{Compatibilité des décompositions.} On souhaite montrer que $e_{\Pi_1}$ est dans le centre de $\textup{Rep}_{W(k)}(H_1)$ au sens où $e_{\Pi_1}(K_1)$ est dans le centre de $\mathcal{H}_{W(k)}(H_1,K_1)$ pour tout sous-groupe ouvert $K_1$ de $H_1$. Le morphisme d'algèbres donné par la réduction modulo $\ell$ :
$$\mathfrak{r}_\ell : \mathcal{H}_{W(k)}(H_1) \twoheadrightarrow \mathcal{H}_k(H_1)$$
induit un morphisme évident entre centres de Bersntein. On aimerait alors montrer que, si $H_1$ admet des sous-groupes discrets co-compacts,  $r_\ell$ envoie $e_{\Pi_1}(K_1)$ sur $e_{\pi_1}(K_1)$ pour tout sous-groupe ouvert $K_1$ de $H_1$. Par conséquent, en notant $\mathfrak{r}_\ell(e_{\Pi_1})=(e_{\pi_1}(K_1))_{K_1}$, cela entraînerait l'égalité :
$$\mathfrak{r}_\ell(e_{\Pi_1}) = e_{\pi_1}.$$

\begin{lem} \label{idempotent_central_Pi1_coeff_W(k)_lem} On suppose que $F$ est de caractéristique résiduelle impaire. Alors pour tout sous-groupe ouvert compact $K_1$ de $H_1$ :
$$e_{\Pi_1}(K_1) \in \mathcal{H}_{W(k)}(H_1,K_1).$$ \end{lem}

\begin{proof} Soit $\Pi_1^{K_1}$ l'ensemble des vecteurs $K_1$-invariants de $\Pi_1$. Alors $\Pi_1^{K_1}$ est un $\mathcal{H}_K(H_1,K_1)$-module qui est soit irréductible, soit nul. Comme $\Pi_1$ est admissible, on a toujours que $\Pi_1^{K_1}$ est de dimension finie sur $K$. Soit $\mathcal{B}=(v_i)_{i \in I}$ une base de $\Pi_1^{K_1}$ sur $K$. On note $(v_i^\vee)_{i \in I}$ la base duale de $\mathcal{B}$ dans $(\Pi_1^\vee)^{K_1}$. La fonction :
$$e(K_1) : g \in H_1 \mapsto \sum_{i \in I} v_i^\vee(g^{-1}v_i) \in K$$ 
est $K_1$-bi-invariante. Elle est à support compact puisque le centre de $H_1$ est compact et que $\Pi_1$ est cuspidale, donc ses coefficients sont à support compact. Cette fonction appartient donc à $\mathcal{H}_K(H_1,K_1)$. Elle ne dépend pas du choix de la base $\mathcal{B}$ et appartient au centre de $\mathcal{H}_K(H_1,K_1)$ d'après \cite[I.7.8 e)]{vig}. De plus, cette définition est compatible aux applications de transitions. On en déduit que $e =(e(K_1))_{K_1}$ est un élément du centre de Bernstein.

La représentation $\Pi_1$ est cuspidale. Comme le centre de $H_1$ est compact, son caractère central se factorise par le caractère d'un groupe fini. Ce caractère central est donc à valeurs dans $W(k)^\times$, ce qui entraîne que la représentation $\Pi_1$ est entière. Ainsi la fonction $e(K_1)$ définie précédemment est à valeurs dans $W(k)$. Donc $e$ appartient au centre de $\textup{Rep}_{W(k)}(H_1)$.

L'élément $e$ du centre de Bernstein n'est pas forcément un idempotent, mais on décrit maintenant comme le normaliser pour qu'il le soit. La représentation $\Pi_1$ étant projective, elle admet un degré formel. Par définition, le degré formel est un élément $d_{\Pi_1} \in K^\times$ qui normalise $e=(e(K_1))_{K_1}$ de sorte que $d_{\Pi_1} e = (d_{\Pi_1} e(K_1))_{K_1}$ soit un idempotent central. Tel que défini ici, le degré formel dépend du choix de la mesure normalisée $\mu$ qui définit le produit de convolution dans $\mathcal{H}_K(H_1)$. Cependant, il définit une unique classe dans $K^\times/W(k)^\times$. Par conséquent, s'il existe une mesure normalisée $\mu$ telle que $d_{\Pi_1} \in W(k)^\times$, alors le degré formel appartient à $W(k)^\times$ pour toute mesure normalisée. En particulier on a $e_{\Pi_1} = d_{\Pi_1} e$.

Le but est de prouver que $e_{\Pi_1}$ appartient au centre de Bernstein de $\textup{Rep}_{W(k)}(H_1)$. Pour ce faire, il suffit de montrer que $d_{\Pi_1} \in W(k)^\times$ puisque $e$ appartient au centre de $\textup{Rep}_{W(k)}(H_1)$. Comme la caractéristique résiduelle de $F$ n'est pas $2$, toute représentation irréductible cuspidale a un type cuspidal \cite{kurinczuk_stevens} sur une clôture algébrique de $K$. Comme $\Pi_1$ est absolument irréductible, cela entraîne l'existence d'un sous-groupe compact ouvert $J_1$ de $H_1$ et d'une représentation absolument irréductible $\lambda_1 \in \textup{Rep}_K(J_1)$ tels que :
$$\Pi_1 \simeq \textup{ind}_{J_1}^{H_1}(\lambda_1).$$ 

Comme $\Pi_1$ est de plus projective, les degrés formels de $\Pi_1$ et de $\lambda_1$ sont égaux d'après \cite[I.8.4]{vig}. Or, le degré formel de $\lambda_1$ est celui d'une représentation d'un groupe fini $J_1'$ quotient de $J_1$. Donc le cardinal $J_1'$ divise le pro-ordre de $J_1$. On a alors \cite[I.7.8.b)]{vig} en prenant pour $\mu$ la mesure normalisée sur $J_1$ :
$$d_{\lambda_1}=\textup{dim}_K(\lambda_1)$$
où $\textup{dim}_K(\lambda_1)$ divise le cardinal de $J_1'$ puisque $\ell$ est banal et $\lambda_1$ est absolument irréductible. Donc $d_{\Pi_1} = \textup{dim}_K(\lambda_1) \in W(k)^\times$ puisque le cardinal de $J_1'$ est inversible dans $W(k)$. \end{proof}

\begin{lem} On suppose que $H_1$ admet des sous-groupes discrets co-compacts et que la caractéristique résiduelle de $F$ est impaire. On a une décomposition induite par $e_{\Pi_1}$ :
$$\textup{Rep}_{W(k)}(H_1) = e_{\Pi_1} \textup{Rep}_{W(k)}(H_1) \times (1- e_{\Pi_1}) \textup{Rep}_{W(k)}(H_1).$$
De plus $e_{\Pi_1}$ est un idempotent central primitif du centre de Bernstein de $\textup{Rep}_{W(k)}(H_1)$ dont l'image par $r_\ell$ est $e_{\pi_1}$. On a enfin que $\{\pi_1\}$ décompose $\textup{Rep}_{W(k)}(H_1)$ et :
$$e_{\Pi_1} \textup{Rep}_{W(k)}(H_1)= \textup{Rep}_{W(k)}^{\{\pi_1\}}(H_1)$$
est un bloc de $\textup{Rep}_{W(k)}(H_1)$. \end{lem}

\begin{proof} On sait déjà que l'idempotent central $e_{\Pi_1}$ appartient au centre de la catégorie $\textup{Rep}_{W(k)}(H_1)$, ce qui induit la première décomposition de l'énoncé. Ensuite, le morphisme d'algèbres :
$$\mathfrak{r}_\ell : \mathcal{H}_{W(k)}(H_1) \twoheadrightarrow \mathcal{H}_k(H_1)$$
induit un morphisme entre centres de Bernstein via $e=(e(K))_K \mapsto \mathfrak{r}_\ell(e) = (\mathfrak{r}_\ell \circ e(K))_K$ où la fonction $\mathfrak{r}_\ell \circ e(K)\in \mathcal{H}_k(H_1,K_1)$ est l'image de $e(K) \in \mathcal{H}_{W(k)}(H_1,K_1)$ par la réduction modulo $\ell$, que l'on note encore $\mathfrak{r}_\ell : W(k) \twoheadrightarrow k$.

Ensuite, soit $L$ un réseau entier de $\Pi_1$. Alors $L^\vee$ est un réseau naturel de $\Pi_1^\vee$ et en choisissant une base $\mathcal{B}=(v_i)_{i \in I}$ du $W(k)$-module $L^{K_1}$, la base duale de $\mathcal{B}$ est une base du $W(k)$-module $L^\vee$. Un rappel de la preuve précédente donne que la fonction :
$$e(K_1) : g \in H_1 \mapsto \sum_{i \in I} v_i^\vee(g^{-1}v_i) \in K$$
induit, à un facteur $d_{\Pi_1} \in W(k)^\times$ près, un idempotent central dans $\mathcal{H}_{W(k)}(H_1,K_1)$. Or $\mathfrak{r}_\ell(L)=\pi_1$ est une représentation cuspidale irréductible d'après \cite[Cor. 6.10]{dat_nu}. De plus $L^{K_1} \otimes_{W(k)} k \simeq (L \otimes_{W(k)} k)^{K_1} = (\pi_1)^{K_1}$ car $\{1_{K_1}\}$ décompose $\textup{Rep}_{K}(H_1)$ et l'idempotent central associée $e_{1_{K_1}}$, de réduction $\mathfrak{r}_\ell(e_{1_{K_1}})= e_{\mathfrak{r}_\ell(1_{K_1})}$, est dans le centre de $\textup{Rep}_{W(k)}(H_1)$. 

Comme $\mathfrak{r}_\ell ( L^\vee \otimes_{W(k)} L) \simeq \pi_1^\vee \otimes_k \pi_1$, on en déduit que :
$$\mathfrak{r}_\ell \circ e(K_1) : g \in H_1 \mapsto \mathfrak{r}_\ell \bigg( \sum_{i \in I} v_i^\vee(g^{-1}v_i) \bigg) = \sum_{i \in I} \mathfrak{r}_\ell(v_i^\vee)(g^{-1} \mathfrak{r}_\ell(v_i))\in k$$
où $\mathfrak{r}_\ell(\mathcal{B})=(\mathfrak{r}_\ell(v_i))_{i \in I}$ est une base de $(\pi_1)^{K_1}$, de base duale $(\mathfrak{r}_\ell(v_i^\vee))_{i\in I}$ dans $(\pi_1)^\vee$. Donc c'est à un facteur près l'idempotent central $e_{\pi_1}$ de $\mathcal{H}_k(H_1,K_1)$ associé à la représentation cuspidale projective $\pi_1$. Or, si $d_{\Pi_1} \in W(k)^\times$ normalise $e$ pour une certaine mesure normalisée $\mu$, alors le facteur de normalisation de $\mathfrak{r}_\ell \circ e$ est $d_{\pi_1} = \mathfrak{r}_\ell(d_{\Pi_1}) \in k^\times$ pour la mesure normalisée $\mathfrak{r}_\ell(\mu)$. Ainsi $\mathfrak{r}_\ell(e_{\Pi_1})= \mathfrak{r}_\ell(d_{\Pi_1} e) = d_{\pi_1} \mathfrak{r}_\ell(e) = e_{\pi_1}$.

Il est clair que $e_{\Pi_1}$ est un idempotent central primitif du centre de $\textup{Rep}_K(H_1)$. Comme l'inclusion d'algèbres :
$$\mathcal{H}_{W(k)}(H_1) \to \mathcal{H}_K(H_1)$$
induit une inclusion des centres de Bernstein, on en déduit que $e_{\Pi_1}$, en tant qu'élément du centre de $\textup{Rep}_{W(k)}(H_1)$, est primitif puisqu'il l'est dans le centre de $\textup{Rep}_K(H_1)$.\end{proof}

\begin{prop} On suppose que $H_1$ admet des sous-groupes discrets co-compacts et que la caractéristique résiduelle de $F$ est impaire. Soit $V \in \textup{Rep}_{W(k)}(H_1)$. Alors pour tout $V \in \textup{Rep}_{W(k)}(H_1)$ :
$$\mathfrak{r}_\ell(e_{\Pi_1} V) = e_{\pi_1} \mathfrak{r}_\ell(V).$$ \end{prop}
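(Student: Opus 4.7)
La stratégie consiste à réduire l'énoncé global, portant sur tous les éléments de $\textup{Rep}_{W(k)}(H_1)$, à une vérification niveau par niveau sur les sous-espaces de vecteurs $K_1$-invariants pour $K_1$ parcourant les sous-groupes ouverts compacts de $H_1$. Par lissité, tout $V$ est la réunion croissante de ses $V^{K_1}$, donc pour démontrer l'égalité $\mathfrak{r}_\ell(e_{\Pi_1} V) = e_{\pi_1} \mathfrak{r}_\ell(V)$ il suffit d'obtenir pour chaque $K_1$ une égalité naturelle :
$$\bigl(\mathfrak{r}_\ell(e_{\Pi_1} V)\bigr)^{K_1} = \bigl(e_{\pi_1} \mathfrak{r}_\ell(V)\bigr)^{K_1}.$$

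Premièrement, j'observerais que, puisque le pro-ordre de $K_1$ est une puissance de $p$ donc inversible dans $W(k)$ (rappelons que $\ell \neq p$), le foncteur des $K_1$-invariants est donné par la multiplication par l'idempotent $e_{K_1} = \mu(K_1)^{-1} \mathbf{1}_{K_1}$ de $\mathcal{H}_{W(k)}(H_1)$, et l'on a $\mathfrak{r}_\ell(e_{K_1}) = e_{K_1}$ puisque les mesures choisies vérifient $\mathfrak{r}_\ell(\mu)(K_1) = \mathfrak{r}_\ell(\mu(K_1))$. Ceci entraîne la commutation $\mathfrak{r}_\ell(V^{K_1}) = \mathfrak{r}_\ell(V)^{K_1}$, qui est le premier ingrédient structurel. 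Par définition du centre de Bernstein, l'action de $e_{\Pi_1}$ sur $V$ est celle, niveau par niveau, de $e_{\Pi_1}(K_1) \in \mathcal{H}_{W(k)}(H_1, K_1)$ sur $V^{K_1}$ ; c'est licite car le Lemme \ref{idempotent_central_Pi1_coeff_W(k)_lem} garantit que $e_{\Pi_1}(K_1) \in \mathcal{H}_{W(k)}(H_1,K_1)$. On a donc l'identité :
$$(e_{\Pi_1} V)^{K_1} = e_{\Pi_1}(K_1) \cdot V^{K_1}.$$

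Deuxièmement, je combinerais ces deux faits avec la compatibilité $\mathfrak{r}_\ell(e_{\Pi_1}) = e_{\pi_1}$ (niveau par niveau) établie dans le lemme précédant l'énoncé. On peut alors enchaîner :
$$\mathfrak{r}_\ell\bigl((e_{\Pi_1}V)^{K_1}\bigr) = \mathfrak{r}_\ell\bigl(e_{\Pi_1}(K_1) \cdot V^{K_1}\bigr) = \mathfrak{r}_\ell(e_{\Pi_1}(K_1)) \cdot \mathfrak{r}_\ell(V^{K_1}) = e_{\pi_1}(K_1) \cdot \mathfrak{r}_\ell(V)^{K_1},$$
la deuxième égalité provenant de la $W(k)$-linéarité de $\mathfrak{r}_\ell$ et de la compatibilité de la réduction modulo $\ell$ au produit de convolution (les mesures étant choisies de manière compatible), et la dernière de la commutation $\mathfrak{r}_\ell \circ (-)^{K_1} = (-)^{K_1} \circ \mathfrak{r}_\ell$. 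Le dernier terme vaut $(e_{\pi_1} \mathfrak{r}_\ell(V))^{K_1}$ par définition de l'action du centre de Bernstein, ce qui conclut.

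Les obstacles attendus sont principalement de nature bureaucratique : il faut vérifier avec soin que l'action des éléments du centre de Bernstein sur une représentation lisse se fait effectivement niveau par niveau, et que la réduction modulo $\ell$ est compatible à la structure d'algèbre de convolution lorsque les mesures de Haar sont bien normalisées, ce qu'assurent les choix faits au début de la section \ref{compatibilite_a_la_reduction_type_I_section}. Une fois ces points établis, le cœur de la preuve se ramène à une ligne d'algèbre exploitant $\mathfrak{r}_\ell(e_{\Pi_1}(K_1)) = e_{\pi_1}(K_1)$, qui constitue la vraie substance déjà acquise dans le lemme précédent.
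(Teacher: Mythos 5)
Votre preuve est correcte, mais suit une route légèrement différente de celle du papier. Le papier procède de manière plus globale : il observe d'abord que $\ell e_{\Pi_1} V = e_{\Pi_1}(\ell V)$ et que le foncteur $V \mapsto e_{\Pi_1} V$ est exact, ce qui donne directement $\mathfrak{r}_\ell(e_{\Pi_1} V) = e_{\Pi_1}V/\ell e_{\Pi_1}V = e_{\Pi_1}\mathfrak{r}_\ell(V)$ ; il conclut ensuite en remarquant que $\ell$ annule $\mathfrak{r}_\ell(V)$, de sorte que l'action du centre de Bernstein sur $\mathfrak{r}_\ell(V)$ se factorise par $\mathfrak{r}_\ell$, d'où $e_{\Pi_1}\mathfrak{r}_\ell(V) = \mathfrak{r}_\ell(e_{\Pi_1})\mathfrak{r}_\ell(V) = e_{\pi_1}\mathfrak{r}_\ell(V)$. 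Vous, en revanche, descendez au niveau de chaque algèbre de Hecke $\mathcal{H}_{W(k)}(H_1,K_1)$ et recollez à la fin : c'est plus bureaucratique mais tout à fait rigoureux, et cela rend explicite le point (implicite chez l'auteur) que la commutation $\mathfrak{r}_\ell(V^{K_1}) = \mathfrak{r}_\ell(V)^{K_1}$ et la réduction $\mathfrak{r}_\ell(e_{\Pi_1}(K_1)) = e_{\pi_1}(K_1)$ suffisent. Les deux arguments reposent in fine sur le même lemme précédent et sur l'exactitude d'un foncteur idempotent ($e_{\Pi_1}$ dans le papier, $e_{K_1}$ chez vous) ; la version du papier est simplement plus compacte et conceptuelle. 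Un seul point de vigilance dans votre rédaction : l'égalité $\mathfrak{r}_\ell(e_{\Pi_1}(K_1)\cdot V^{K_1}) = \mathfrak{r}_\ell(e_{\Pi_1}(K_1))\cdot\mathfrak{r}_\ell(V^{K_1})$ n'est pas une simple conséquence de la linéarité (l'image d'une application ne commute pas à la réduction en général) mais repose sur le fait que $e_{\Pi_1}(K_1)$ est un \emph{idempotent}, ce qui fournit une décomposition en somme directe compatible à $\mathfrak{r}_\ell$ ; il serait bon de le dire explicitement.
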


\begin{rem} On insiste sur le fait que le foncteur $r_\ell$ n'est pas exact à gauche. En effet, pour tout $W(k)$-réseau stable $L_1$ de $\Pi_1$, on a $\mathfrak{r}_\ell(L_1)=\pi_1$ ; alors que $\mathfrak{r}_\ell(\Pi_1)=0$. \end{rem}

\begin{proof} On a $\ell e_{\Pi_1} V = e_{\Pi_1} (\ell V)$ et le foncteur $V \mapsto e_{\pi_1} V$ est exact. Donc :
$$\mathfrak{r}_\ell(e_{\Pi_1} V) = e_{\Pi_1} V / \ell e_{\Pi_1} V = e_{\Pi_1} \mathfrak{r}_\ell(V).$$
Or, l'action naturelle de $W(k)$ sur $\mathfrak{r}_\ell(V)$ se factorise par $k$ puisque $\ell (\mathfrak{r}_\ell (V)) = 0$. Ainsi :
$$e_{\Pi_1} \mathfrak{r}_\ell(V) = \mathfrak{r}_\ell(e_{\Pi_1})( \mathfrak{r}_\ell(V)) = e_{\pi_1} \mathfrak{r}_\ell(V).$$ \end{proof}

\paragraph{Réduction modulo $\ell$ de la représentation de Weil.} On rappelle que $(H_1,H_2)$ est une paire duale de type I dans un groupe symplectique $\textup{Sp}(W)$ sur un corps local non archimédien $F$. Soit $k$ un corps parfait de caractéristique $\ell$ tel qu'il existe un caractère additif non trivial de $F$. Soit $\psi$ un caractère lisse non trivial de $F$ à valeurs dans $W(k)$. Alors $\psi$ est en particulier à valeurs dans $K$, et il se réduit en un caractère non trivial à valeurs dans $k$, encore noté $\psi$. On se permet cet abus de notation car le contexte est toujours clair.

Soit $X$ un lagrangien de $W$. On note $V_X^R$ le modèle de la représentation métaplectique à coefficients dans $R$ associé à $\psi$ et $X$, où $V_X^{W(k)}$ est défini dans la Remarque \ref{extension_des_scalaires_reduction_des_scalaires_rem}. On a des morphismes équivariants pour l'action du groupe d'Heisenberg :
$$V_X^{W(k)} \hookrightarrow V_X^K \textup{ et } V_X^{W(k)} \twoheadrightarrow V_X^k$$
qui sont donnés par l'inclusion naturelle $W(k) \to K$ et le morphisme de réduction $\mathfrak{r}_\ell : W(k) \to k$. Le modèle de la représentation de Weil sur $R$ associé à $X$ est :
$$(\omega_{\psi,V_X^R}^R,V_X^R) \in \textup{Rep}_R(\widehat{\textup{Sp}}_{\psi,V_X}^K(W)),$$
où l'on considère ces représentations comme celles d'un même groupe $\widehat{\textup{Sp}}_{\psi,V_X}^K(W)$, conformément aux compatibilités définies dans la Remarque \ref{extension_des_scalaires_reduction_des_scalaires_rem}. En particulier, les morphismes précédents sont $\widehat{\textup{Sp}}_{\psi,V_X}^K(W)$-équivariants :
$$\omega_{\psi,V_X^{W(k)}}^{W(k)} \hookrightarrow \omega_{\psi,V_X^K}^K \textup{ et } \omega_{\psi,V_X^{W(k)}}^{W(k)} \twoheadrightarrow \omega_{\psi,V_X^k}^k.$$
Enfin, on note $\widehat{H}_1$ et $\widehat{H}_2$ les images réciproques de $H_1$ et $H_2$ dans $\widehat{\textup{Sp}}_{\psi,V_X}^K(W)$. Ainsi :
$$\omega_{\psi,V_X^R}^R \in \textup{Rep}_R( \widehat{H}_1 \times \widehat{H}_2).$$

\paragraph{Réduction de la correspondance thêta classique dans le cas banal.} On suppose maintenant de plus que $\ell$ ne divise pas le pro-ordre de $H_1$. Quand $\widehat{H}_1$ et $\widehat{H}_2$ sont scindés sur $H_1$ et $H_2$, on a des équivalences de catégories :
$$\textup{Rep}_R^{\textup{gen}}(\widehat{H}_1) \simeq \textup{Rep}_R(H_1)  \textup{ et } \textup{Rep}_R^{\textup{gen}}(\widehat{H}_2) \simeq \textup{Rep}_R(H_2).$$
Ces catégories partagent donc les mêmes propriétés au sens suivant : une représentation dans $\textup{Rep}_R(\widehat{H}_1)$ est projective, respectivement injective ou cuspidale ou entière, si et seulement si son image dans $\textup{Rep}_R(H_1)$ l'est. En outre, les centres de ces catégories sont isomorphes.  Quand les paires ne sont pas scindés, il faut remplacer partout $H_1$ par le relevé $\widehat{H}_1$, puis $H_2$ par $\widehat{H}_2$, mais la théorie reste la même car la théorie des types, du centre de Bernstein et le principe de Brauer Nesbitt se traitent identiquement.

Soit $\Pi_1$ une représentation cuspidale absolument irréductible dans $\textup{Rep}_K^{\textup{gen}}(\widehat{H}_1)$. En considérant $\Pi_1$ comme une représentation à coefficients dans $W(k)$, on note comme précédemment $\pi_1$ l'unique sous-quotient absolument irréductible de $\Pi_1$. En particulier $\pi_1$ est aussi une représentation cuspidale dans $\textup{Rep}_k^{\textup{gen}}(\widehat{H}_1)$. Un résultat bien connu du cas complexe \cite[Chap. 3, IV.4 Th. 1)a)]{mvw} assure alors que la représentation $\Theta(\Pi_1)$ est irréductible quand elle est non nulle. Si de plus $\Theta(\Pi_1)$ admet un $W(k)$-réseau $L$ stable par $\widehat{H}_2$, cette représentation est entière. Le principe de Brauer-Nesbitt impose alors que la semi-simplifiée de $\mathfrak{r}_\ell(L)$ est de longueur finie et ne dépend pas du choix de $L$.

\begin{prop} \label{reduction_big_theta_ss_brauer_nesbitt_prop} On suppose que $H_1$ admet des sous-groupes discrets co-compacts et que la caractéristique résiduelle de $F$ est impaire. On rappelle que $\ell$ est supposé banal vis-à-vis de $H_1$. Alors la représentation $\Theta(\Pi_1)$ est entière et les semi-simplifiées de $\mathfrak{r}_\ell (\Theta(\Pi_1))$ et $\Theta(\pi_1)$ sont isomorphes. En particulier, la représentation $\Theta(\pi_1)$ est de longueur finie. \end{prop}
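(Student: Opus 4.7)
L'idée centrale sera d'utiliser l'idempotent central $e_{\Pi_1}$ du Lemme \ref{idempotent_central_Pi1_coeff_W(k)_lem} pour fabriquer directement un réseau stable naturel à partir de la représentation de Weil entière. Posons $\Omega^{W(k)} := \omega_{\psi,V_X^{W(k)}}^{W(k)}$, qui est libre sur le DVR $W(k)$ puisque réalisé concrètement par $C_c^\infty(Y,W(k))$. Comme $e_{\Pi_1}$ appartient au centre de $\textup{Rep}_{W(k)}(\widehat{H}_1)$ et commute à l'action de $\widehat{H}_2$, le sous-module
$$L := e_{\Pi_1}\,\Omega^{W(k)}$$
est un $W(k)[\widehat{H}_1\times\widehat{H}_2]$-module sans torsion, donc libre sur $W(k)$. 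Après extension des scalaires à $K$, la projectivité de $\Pi_1$ (cuspidale absolument irréductible, à centre compact) dans son propre bloc entraînera l'identification $L\otimes_{W(k)}K = e_{\Pi_1}\omega_{\psi,V_X^K}^K \simeq \Theta(\Pi_1)\otimes_K\Pi_1$. Symétriquement, la proposition précédente donnant $\mathfrak{r}_\ell(e_{\Pi_1}) = e_{\pi_1}$, couplée à la projectivité de $\pi_1$ (conséquence de la banalité de $\ell$), fournira $L\otimes_{W(k)}k \simeq \Theta(\pi_1)\otimes_k\pi_1$.

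Pour extraire un réseau de $\Theta(\Pi_1)$ lui-même, j'utiliserai le type cuspidal $(J_1,\lambda_1)$ pour $\Pi_1$ déjà invoqué dans la preuve du Lemme \ref{idempotent_central_Pi1_coeff_W(k)_lem} via \cite{kurinczuk_stevens} (d'où la restriction $p\neq 2$), vérifiant $\Pi_1\simeq\textup{ind}_{J_1}^{\widehat{H}_1}(\lambda_1)$ avec $\lambda_1$ absolument irréductible se factorisant par un quotient fini $J_1'$. Puisque $|J_1'|$ est inversible dans $W(k)$, je choisirai un relèvement $\lambda_1^\circ$ de $\lambda_1$ en un $W(k)[J_1']$-module libre de rang fini sur $W(k)$, automatiquement projectif sur $W(k)[J_1']$ via un argument de Maschke relatif. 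Je définirai alors
$$L_{\Theta(\Pi_1)} \;:=\; \textup{Hom}_{J_1}\!\bigl(\lambda_1^\circ,\,L\bigr),$$
muni de l'action résiduelle naturelle de $\widehat{H}_2$.

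Le point technique central sera la commutation de $\textup{Hom}_{J_1}(\lambda_1^\circ,-)$ aux extensions $\otimes_{W(k)}K$ et $\otimes_{W(k)}k$, qui découlera de la projectivité et de la présentation finie de $\lambda_1^\circ$ sur $W(k)[J_1']$ combinées à la liberté de $L$ sur $W(k)$. Couplée à la réciprocité de Frobenius et aux identités $\textup{End}_{\widehat{H}_1}(\Pi_1)=K$, $\textup{End}_{\widehat{H}_1}(\pi_1)=k$, cela entraînera
$$L_{\Theta(\Pi_1)}\otimes_{W(k)}K \;\simeq\; \Theta(\Pi_1) \qquad\text{et}\qquad L_{\Theta(\Pi_1)}\otimes_{W(k)}k \;\simeq\; \Theta(\pi_1).$$
Ainsi $L_{\Theta(\Pi_1)}$ est bien un $W(k)$-réseau stable de $\Theta(\Pi_1)$, dont la réduction modulo $\ell$ est même littéralement $\Theta(\pi_1)$ -- ce qui est bien plus fort que l'isomorphisme des semi-simplifiées demandé. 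La finitude de longueur de $\Theta(\pi_1)$ suivra immédiatement de celle, classique, de $\Theta(\Pi_1)$ sur $K$ (irréductible ou nulle dans le cas cuspidal d'après \cite[Chap.~3,~IV.4]{mvw}) via le principe de Brauer-Nesbitt.

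L'obstacle principal me paraît être la vérification rigoureuse de cette commutation Hom--tensoriel : il s'agira de justifier soigneusement la projectivité de $\lambda_1^\circ$ sur $W(k)[J_1']$ par Maschke relatif, puis la formule $\textup{Hom}_{W(k)[J_1']}(\lambda_1^\circ,-)\otimes_{W(k)}k \simeq \textup{Hom}_{k[J_1']}(\lambda_1^\circ\otimes k,\, -\otimes k)$ pour des modules libres sur $W(k)$. Une fois ce point établi, le reste de la preuve se déroule de manière formelle, en exploitant les propriétés déjà démontrées du bloc cuspidal $e_{\Pi_1}\textup{Rep}_{W(k)}(\widehat{H}_1)$.
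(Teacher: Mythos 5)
Votre preuve prend une route véritablement différente de celle du papier, et aboutit à un résultat plus fort. Le papier se contente d'exhiber le réseau $e_{\Pi_1}\omega_{\psi,V_X^{W(k)}}^{W(k)}$ du produit tensoriel $\Theta(\Pi_1)\otimes_K\Pi_1$, vérifie par \cite[I.9.2]{vig} que c'est bien un réseau libre (absence de droites, longueur finie sur $K$), puis compare sa réduction avec celle d'un réseau arbitraire $L_{\Theta(\Pi_1)}\otimes_{W(k)} L_1$ \emph{via} Brauer--Nesbitt, ce qui ne donne qu'une égalité entre semi-simplifiées. Vous construisez au contraire un réseau explicite $L_{\Theta(\Pi_1)}=\textup{Hom}_{J_1}(\lambda_1^\circ, L)$ à partir du type cuspidal $(J_1,\lambda_1)$, et établissez par commutation Hom--tensoriel (projectivité de $\lambda_1^\circ$ sur $W(k)[J_1']$ par Maschke relatif, exactitude des $J_1^\circ$-invariants pour $\ell\neq p$, réciprocité de Frobenius pour l'induite compacte, et $\textup{End}_{\widehat{H}_1}(\Pi_1)=K$, $\textup{End}_{\widehat{H}_1}(\pi_1)=k$) que $L_{\Theta(\Pi_1)}\otimes K\simeq\Theta(\Pi_1)$ et $L_{\Theta(\Pi_1)}\otimes k\simeq\Theta(\pi_1)$ \emph{littéralement}, et non seulement à semi-simplification près. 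Cela évite Brauer--Nesbitt pour la comparaison principale et répond précisément à la question posée par la Remarque qui suit cette proposition dans le papier, où l'auteur signale qu'une telle description explicite serait une amélioration souhaitable. Deux points mériteraient d'être étoffés : d'une part l'affirmation \og torsion-free donc libre sur $W(k)$ \fg{} pour $L$ et $L_{\Theta(\Pi_1)}$ demande une hypothèse de dénombrabilité de la dimension sur $K$ (le papier la tire de la longueur finie \emph{via} \cite[I.9.2]{vig}, vous pouvez la tirer de l'admissibilité de $\Theta(\Pi_1)$) ; d'autre part il faut justifier que la réduction $\bar\lambda_1=\lambda_1^\circ\otimes k$ fournit bien $\pi_1\simeq\textup{ind}_{J_1}^{\widehat{H}_1}(\bar\lambda_1)$, ce qui suit de \cite[Cor.\ 6.10]{dat_nu} et de l'unicité de la réduction par banalité, mais devrait être dit. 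Comme le papier, vous invoquez la théorie des types (d'où $p\neq 2$) et le résultat complexe de MVW pour la longueur finie de $\Theta(\Pi_1)$ sur $K$ : les deux approches reposent donc sur les mêmes ingrédients extérieurs, la vôtre étant plus constructive et la leur plus économe en manipulations homologiques.
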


\begin{proof} Comme la catégorie $\textup{Rep}_K^{\{\Pi_1\}}(\widehat{H}_1)$ est semi-simple, il vient :
$${\big(\omega_{\psi,V_X^K}^K \big)}_{\Pi_1} \simeq e_{\Pi_1} \omega_{\psi,V_X^K}^K.$$
Cette construction est compatible à l'action de $\widehat{H}_2$, donc cet isomorphisme est entre représentations de $\textup{Rep}_K(\widehat{H}_1 \times \widehat{H}_2)$. Comme $\Pi_1$ est absolument irréductible, en fixant un plongement de $K$ dans $\mathbb{C}$, on a que $\Pi_1 \otimes_K \mathbb{C}$ est irréductible et :
$$\Theta(\Pi_1 \otimes_K \mathbb{C}) \simeq \Theta(\Pi_1) \otimes_K \mathbb{C}.$$
Les énoncés $(\Theta_1)$-$(\Theta_2)$-$(\Theta_3)$ étant valides sur $\mathbb{C}$, il vient que $\Theta(\Pi_1 \otimes_K \mathbb{C})$ est soit nulle soit irréductible. Donc cela implique que $\Theta(\Pi_1)$ est absolument irréductible quand elle est non nulle. Cela montre que $e_{\Pi_1} \omega_{\psi,V_X^K}^K = \Theta(\Pi_1) \otimes_K \Pi_1$ est irréductible.

En voyant $e_{\Pi_1} \omega_{\psi,V_X^K}^K$ comme une représentation à coefficients dans $W(k)$, le sous-module:
$$e_{\Pi_1} \omega_{\psi,V_X^{W(k)}}^{W(k)} \in \textup{Rep}_{W(k)}(\widehat{H}_1 \times \widehat{H}_2)$$
est l'image de $\omega_{\psi,V_X^{W(k)}}^{W(k)}$ par l'idempotent central $e_{\Pi_1}$, vu comme un élément du centre de Bernstein de la catégorie $\textup{Rep}_{W(k)}(\widehat{H}_1 \times \widehat{H}_2)$. Or :
$$\mathfrak{r}_\ell(e_{\Pi_1} \omega_{\psi,V_X^{W(k)}}^{W(k)}) = e_{\pi_1} \mathfrak{r}_\ell(\omega_{\psi,V_X^{W(k)}}^{W(k)}) = e_{\pi_1} \omega_{\psi,V_X^k}^k = \Theta(\pi_1) \otimes_k \pi_1.$$

Pour conclure, il s'agit maintenant de montrer que $e_{\Pi_1} \omega_{\psi,V_X^{W(k)}}^{W(k)} \in \textup{Rep}_{W(k)}(\widehat{H}_1 \times \widehat{H}_2)$ est un $W(k)$-réseau stable de la représentation irréductible :
$$e_{\Pi_1} \omega_{\psi,V_X^K}^K \simeq \Theta(\Pi_1) \otimes_K \Pi_1 \in \textup{Rep}_K(\widehat{H}_1 \times \widehat{H}_2).$$
L'anneau $W(k)$ est local, principal et complet ; la dimension sur $K$ de la représentation $e_{\Pi_1} \omega_{\psi,V_X^K}^K$ est dénombrable car elle est de longueur finie. D'après \cite[I.9.2]{vig}, tout sous-$W(k)$-module stable de $e_{\Pi_1} \omega_{\psi,V_X^K}^K$ qui ne contient pas de droite est $W(k)$-libre. Or $\omega_{\psi,V_X^{W(k)}}^{W(k)}$ s'identifie à un espace de fonctions qui ne contient pas de droite. Comme $e_{\Pi_1}$ est un idempotent central, on a :
$$\omega_{\psi,V_X^{W(k)}}^{W(k)}=e_{\Pi_1} \omega_{\psi,V_X^{W(k)}}^{W(k)} \oplus (1-e_{\Pi_1})\omega_{\psi,V_X^{W(k)}}^{W(k)}.$$
Donc $e_{\Pi_1} \omega_{\psi,V_X^{W(k)}}^{W(k)}$ ne contient pas de droites. C'est donc bien un $W(k)$-réseau stable de la représentation $e_{\Pi_1} \omega_{\psi,V_X^K}^K$, ce qui entraîne que la représentation irréductible $\Theta(\Pi_1)$ est entière.

Par le principe de Brauer-Nesbitt les semi-simplifiées des réductions modulo $\ell$ de tout $W(k)$-réseau stable de $e_{\Pi_1} \omega_{\psi,V_X^K}^K$ sont égales. En d'autres termes, pour tous $W(k)$-réseaux stables $L_1$ de $\Pi_1$ et $L_{\Theta(\Pi_1)}$ de $\Theta(\Pi_1)$, les semi-simplifiées de :
$$\mathfrak{r}_\ell(e_{\Pi_1} \omega_{\psi,V_X^{W(k)}}^{W(k)})=\Theta(\pi_1) \otimes_k \pi_1 \textup{ et }  \mathfrak{r}_\ell(L_{\Theta(\Pi_1)} \otimes_{W(k)}L_1) = \mathfrak{r}_{\ell}(L_{\Theta(\Pi_1)}) \otimes_k \pi_1$$
sont isomorphes. On en déduit que les semi-simplifiées de $\mathfrak{r}_\ell(L_{\Theta(\Pi_1)})$ et $\Theta(\pi_1)$ sont isomorphes. Comme la première est de longueur finie \cite[II.5.11.a]{vig}, cela est aussi vrai pour $\Theta(\pi_1)$. \end{proof}

On peut améliorer ce résultat dans le cas banal avec une condition sur $\Theta(\Pi_1)$.

\begin{theo} On suppose que $H_1$ admet des sous-groupes discrets co-compacts et que la caractéristique résiduelle de $F$ est impaire. On suppose que $\ell$ est banal vis-à-vis de $H_1$ et que $\Theta(\Pi_1)$ est une représentation cuspidale absolument irréductible. Alors $\Theta(\pi_1)$ est une représentation cuspidale absolument irréductible. \end{theo}

\begin{proof} D'après \cite[Cor. 6.10]{dat_nu}, la réduction modulo $\ell$ de tout $W(k)$-réseau stable de $\Theta(\Pi_1)$ est absolument irréductible et cuspidale. Donc la semi-simplifiée de $\mathfrak{r}_\ell(\Theta(\Pi_1))$ est une représentation cuspidale absolument irréductible, qui n'est autre que $\mathfrak{r}_\ell(\Theta(\pi_1))$ elle-même par irréductibilité. Par conséquent la Proposition \ref{reduction_big_theta_ss_brauer_nesbitt_prop} précédente entraîne que $\mathfrak{r}_\ell(\Theta(\Pi_1)) \simeq \Theta(\pi_1)$, d'où le résultat pour $\Theta(\pi_1)$. \end{proof}

\begin{rem} Dans les résultats précédents, on peut remplacer $K$ par n'importe quelle extension algébrique finie totalement ramifiée $L$ de $K$ et $W(k)$ par l'anneau des entiers $\mathcal{O}_L$ de $K$. En effet, en choisissant une uniformisante $\varpi_L$ de $\mathcal{O}_L$, l'anneau $\mathcal{O}_L$ de $L$ est encore principal, local et complet. De plus, on peut vérifier que \cite[Cor. 6.10]{dat_nu} est toujours valable dans ce cadre. \end{rem}

\begin{rem} \label{reduction_mod_ell_longue_rem} La difficulté principale quand on veut appliquer des arguments de réduction semble provenir du fait que l'on a besoin d'une description précise du plus grand quotient $\Pi_1$-isotypique :
$$ \omega_{\psi,V_X^K}^K \twoheadrightarrow {(\omega_{\psi,V_X^K}^K)}_{\Pi_1}.$$
Dans la présente section, cela a été traduit en termes d'action d'un élément $e_{\Pi_1}$ du centre de Bernstein qui donne de bonnes compatibilités à la réduction. En toute généralité, pour espérer appliquer des arguments de réduction -- \textit{i.e.} pour passer de $K$ à $k$ -- à la correspondance thêta, il paraît inévitable de devoir décrire plus explicitement la formation de ces $\Pi_1$-coinvariants. Plusieurs problèmes ouverts en ce sens sont les suivants. Comment traiter les cas où l'on relâche les conditions et où : 
\begin{itemize}[label=$\bullet$]
\item $k$ est un corps parfait de caractéristique $\ell$ ? Les preuves précédentes restent valables à condition de savoir que la correspondance thêta est valide sur tout corps de caractéristique $0$. Grâce à la Section \ref{compatibilité_extension_des_scalaires_section} néanmoins, ce résultat était déjà connu sur tout corps isomorphe à un sous-corps de $\mathbb{C}$. Bien évidemment, quand $k$ est une extension algébrique de $\mathbb{F}_\ell$, le corps $K$ est isomorphe à un sous-corps de $\mathbb{C}$.
\item $H_1$ n'admet pas de sous-groupes discrets co-compacts ? Le résultat que l'on utilise de manière clé \cite[Cor. 6.10]{dat_nu} repose sur ce point. La construction que l'on a présentée semble alors échouer. En effet, on ne sait même pas en général si toute représentation irréductible à coefficients dans $k$ intervient comme sous-quotient de la réduction  modulo $\ell$ d'un réseau stable d'une représentation irréductible à coefficients dans $K$, ce qui est précisément l'énoncé \cite[Lem. 6.8 i)]{dat_nu}.
\item $\ell$ n'est pas banal vis-à-vis de $H_1$ ? Toujours dans \cite[Cor. 6.10]{dat_nu}, l'hypothèse de banalité est capitale. Néanmoins, on peut considérer une classe plus fine de représentations même si $\ell$ n'est pas banal, à savoir celles dont la réduction modulo $\ell$ de tout réseau reste irréductible et projective. On peut également étudier les idempotents centraux primitifs $e_{\Pi_1}$ qui ont \og bonne réduction \fg{}  modulo $\ell$ \textit{i.e.} ceux à coefficients entiers dont la réduction est un idempotent central $\mathfrak{r}_\ell(e_{\Pi_1})$ non nul.
\item $F$ est de caractéristique résiduelle $2$ ? Cette hypothèse est nécessaire pour calculer le degré formel à l'aide d'un argument de théorie des types \cite{kurinczuk_stevens} et montrer que les formules qui donnent l'idempotent central $e_{\Pi_1}$ sont à coefficients entiers et se réduisent bien modulo $\ell$. Une étude plus poussée de la caractéristique résiduelle $2$ pourrait fonctionner puisque \cite[Cor. 6.10]{dat_nu} est encore valable.
\item $\Pi_1$ n'est pas cuspidale ? La stratégie présentée paraît échouer car elle repose de manière cruciale sur le fait que $H_1$ est à centre compact, et que les représentations irréductibles cuspidales sont alors projectives. Or cela est manifestement faux si l'on considère un Levi de $H_1$ puisque des facteurs de type $\textup{GL}$ interviennent. \end{itemize} \end{rem}

\begin{rem} Dans la preuve de la Proposition \ref{reduction_big_theta_ss_brauer_nesbitt_prop}, on invoque le principe de Brauer-Nesbitt pour montrer l'égalité entre semi-simplifiées de la réduction de deux réseaux. Peut-on décrire plus explicitement le réseau $e_{\Pi_1} \omega_{\psi,V_X^{W(k)}}^{W(k)}$ ? Par exemple, si l'on prouvait que le $W(k)$-réseau stable :
$$e_{\Pi_1} \omega_{\psi,V_X^{W(k)}}^{W(k)} \in \textup{Rep}_{W(k)}(\widehat{H}_1 \times \widehat{H}_2)$$
était de la forme $L_{\Theta(\Pi_1)} \otimes_{W(k)} L_1$, avec $L_1$ un $W(k)$-réseau stable de $\Pi_1$ et $L_{\Theta(\Pi_1)}$ un $W(k)$-réseau stable de $\Theta(\Pi_1)$, on aurait une égalité plus précise, à savoir :
$$\mathfrak{r}_\ell(L_{\Theta(\Pi_1)}) = \Theta(\pi_1).$$  \end{rem}

\appendix

\section{Lien avec \cite{ct}} \label{lien_avec_ct_section}

On va montrer que les objets construits dans \cite{weil}, et généralisés dans \cite{ct}, s'identifient au groupe métaplectique et à la représentation de Weil tels que nous les avons construits, c'est-à-dire en suivant le développement de \cite{mvw}.

\subsection{Construction de \cite{weil} et \cite{ct}}

Soient $X$ un espace vectoriel de dimension finie sur $F$ et $W = X \oplus X^*$. Par hypothèse sur $R$, il existe un caractère lisse non trivial $\psi$ de $F$ à valeurs dans $R^\times$.

\subsubsection{Rappels utiles}

Les faits présentés proviennent de \cite[1 \& 2]{ct}.

\paragraph{Formes quadratiques et bicaractères.} On rappelle qu'une \textit{forme quadratique} sur $X$ est une application continue $X \to F$ telle que pour tout $x \in X$ et tout $u \in F$, $f(ux)=u^2f(x)$, et $(x,y)\mapsto f(x+y)-f(x)-f(y)$ est $F$-bilinéaire. Un \textit{caractère de degré 2} de $X$ est une application $\varphi:X \to R^\times$ telle que $(x,y) \mapsto \varphi(x+y) \varphi(x)^{-1} \varphi(y)^{-1}$ est un bicaractère de $X \times X$ \textit{i.e.} un caractère lisse en chaque variable. À toute forme quadratique $f$ peut être associé un bicaractère $\psi \circ f$.
 
On note $[.,.]$ le crochet de dualité. En identifiant $(X^*)^* \simeq X$, on écrit $[x,x^*]=[x^*,x]$ indifféremment. On rappelle que $\hat{X}_R \simeq X^*$ via $x^* \mapsto \psi([.,x^*])$.

\paragraph{Le groupe symplectique.} Soit $\mathcal{B}$ la forme bilinéaire de $( X \oplus X^*) \times ( X \oplus X^*)$ dans $F$ définie par $\mathcal{B}((x_1,x_1^*),(x_2,x_2^*))=[x_1,x_2^*]$. 

\begin{defi} On note $\text{Sp}(W)$ le groupe des automorphismes $\sigma$ de $W$ vérifiant $$\mathcal{B}(\sigma(w_1),\sigma(w_2)) - \mathcal{B}(\sigma(w_2),\sigma(w_1)) = \mathcal{B}(w_1,w_2) - \mathcal{B}(w_2,w_1).$$ \end{defi}

\begin{prop} Muni de la forme antisymétrique :
$$\langle w_1,w_2 \rangle = \mathcal{B}(w_1,w_2) - \mathcal{B}(w_2,w_1)$$
le groupe des isométries de l'espace symplectique $(W,\langle .,. \rangle )$ est $\emph{Sp}(W)$. \end{prop}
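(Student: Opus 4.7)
L'énoncé étant en substance tautologique, le plan se résume à dérouler les vérifications élémentaires dans le bon ordre. La première étape est de s'assurer que la forme $\langle \cdot , \cdot \rangle$ fait effectivement de $W$ un espace symplectique ; il faut pour cela contrôler qu'elle est bilinéaire, alternée, et non dégénérée. La bilinéarité est immédiate puisque $\mathcal{B}$ est bilinéaire et que $\langle \cdot, \cdot \rangle$ s'en déduit par antisymétrisation. Le caractère alterné résulte de $\langle w , w \rangle = \mathcal{B}(w,w) - \mathcal{B}(w,w) = 0$.

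Pour la non-dégénérescence, j'écrirais tout élément $w \in W$ sous la forme $w = (x,x^*)$ avec $x \in X$ et $x^* \in X^*$. Pour $w_1 = (x_1,x_1^*)$ et $w_2 = (x_2,x_2^*)$, un calcul direct donne
$$\langle w_1 , w_2 \rangle = [x_1, x_2^*] - [x_2, x_1^*].$$
Si $w_1$ est orthogonal à tout $W$, alors en faisant varier $w_2$ dans $X \oplus 0$, puis dans $0 \oplus X^*$, on obtient $[x_1, x_2^*] = 0$ pour tout $x_2^* \in X^*$ (donc $x_1 = 0$ par réflexivité du crochet) et $[x_2, x_1^*] = 0$ pour tout $x_2 \in X$ (donc $x_1^* = 0$). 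Ainsi $w_1 = 0$, ce qui prouve la non-dégénérescence.

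La seconde étape, qui est la clef de l'identification, consiste à traduire littéralement la définition de $\textup{Sp}(W)$. Par construction, un automorphisme $\sigma$ de $W$ appartient à $\textup{Sp}(W)$ si et seulement si pour tous $w_1, w_2 \in W$,
$$\mathcal{B}(\sigma w_1, \sigma w_2) - \mathcal{B}(\sigma w_2, \sigma w_1) = \mathcal{B}(w_1,w_2) - \mathcal{B}(w_2,w_1),$$
ce qui, compte tenu de la formule $\langle w_1, w_2 \rangle = \mathcal{B}(w_1,w_2) - \mathcal{B}(w_2,w_1)$, se réécrit exactement $\langle \sigma w_1 , \sigma w_2 \rangle = \langle w_1, w_2 \rangle$. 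Autrement dit, $\textup{Sp}(W)$ coïncide avec le groupe des automorphismes de $W$ préservant $\langle \cdot, \cdot \rangle$, à savoir le groupe des isométries de l'espace symplectique $(W, \langle \cdot, \cdot \rangle)$.

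Il n'y a pas d'obstacle sérieux : l'intérêt de la proposition est de dire que la définition \textit{ad hoc} de $\textup{Sp}(W)$ donnée en suivant \cite{ct} (en termes de la forme bilinéaire non symétrique $\mathcal{B}$) coïncide avec la notion habituelle de groupe symplectique associée à une forme alternée. Le seul point qui demande un peu d'attention est la vérification de la non-dégénérescence, mais elle découle immédiatement de la dualité entre $X$ et $X^*$.
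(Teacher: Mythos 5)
Your proof is correct. The paper states this proposition without giving a proof (it is treated as immediate), and your argument — verifying that $\langle \cdot, \cdot \rangle$ is bilinear, alternating and non-degenerate (the last via the duality between $X$ and $X^*$), then unwinding the definition of $\textup{Sp}(W)$ to see that the defining condition is precisely the preservation of $\langle \cdot, \cdot \rangle$ — is exactly the natural, essentially tautological argument the authors had in mind.
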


On associe à tout élément $\sigma \in \text{Sp}(W)$ une forme quadratique définie par $$f_{\sigma}(w)=\frac{1}{2} \big( \mathcal{B}(\sigma(w),\sigma(w)) - \mathcal{B}(w,w) \big)$$
On vérifie facilement la relation de cocycle pour tout $\sigma_1, \sigma_2 \in \text{Sp}(W)$ $$f_{\sigma_1,\sigma_2} = f_{\sigma_1} + f_{\sigma_2} \circ \sigma_1$$ et, pour tout $\sigma \in \text{Sp}(W)$ et tout $w_1, w_2 \in W$ $$f_\sigma(w_1+w_2)-f_\sigma(w_1)-f_\sigma(w_2)= \mathcal{B}(\sigma(w_1),\sigma(w_2)) - \mathcal{B}(w_1,w_2).$$

\begin{rem} On pose $\mathcal{F}= \psi \circ \mathcal{B}$. En composant par $\psi$ les relations précédentes, on obtient essentiellement des conditions analogues pour des bicaractères $\psi \circ f_\sigma$.
\end{rem}

\paragraph{Définitions de $A(W)$ et $B_0(W)$.} Soit $A(W)$, noté $A$ dans la suite, l'ensemble $W \times R^\times$ muni de la loi de groupe :
$$(w,r) \cdot (w',r')=(w+w',r r' \mathcal{F}(w,w')).$$
Le centre $Z$ de $A$ s'identifie à $R^\times$ via $r \mapsto (0,r)$.

On note $B_0=\text{Aut}_Z(A)$ le sous-groupe des automorphismes de $A$ triviaux sur le centre de $A$  \textit{i.e.} $B_0=\{\sigma \in \text{Aut}(A) \ | \ \forall z \in Z, \ \sigma(z)=z\}$.

\begin{prop} On a l'isomorphisme de groupe suivant : $$\begin{array}{ccc}
\emph{Sp}(W) \rtimes W^* & \longrightarrow & B_0 \\
(\sigma,\tau) &\longmapsto& \big((w,r) \mapsto (\sigma(w),r \varphi_{\sigma,\tau}) \big)
\end{array}$$
où $\varphi$ est le bicaractère défini par $\varphi_{\sigma,\tau} = (\psi \circ f_\sigma) \ (\psi \circ \tau)$.
\end{prop}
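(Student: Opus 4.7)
L'idée est de décrire directement tout élément $\alpha \in B_0$ et d'en extraire le couple $(\sigma,\tau)$.

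\textbf{Étape 1 (forme générale d'un élément de $B_0$).} Soit $\alpha \in B_0$. Comme $\alpha$ est trivial sur $Z$, l'égalité $(w,r) = (w,1) \cdot (0,r)$ et le fait que $\alpha$ soit un morphisme donnent :
$$\alpha(w,r) = \alpha(w,1) \cdot (0,r) = (\sigma(w), r\chi(w)),$$
où $\sigma : W \to W$ et $\chi : W \to R^\times$ sont définies par $\alpha(w,1) = (\sigma(w),\chi(w))$. Le premier pas est de traduire \og $\alpha$ est un morphisme \fg{} sur les éléments $(w_1,1)(w_2,1) = (w_1+w_2,\mathcal{F}(w_1,w_2))$, ce qui impose d'une part l'additivité de $\sigma$ et d'autre part la relation
$$\chi(w_1+w_2)\chi(w_1)^{-1}\chi(w_2)^{-1} = \psi\bigl(\mathcal{B}(\sigma(w_1),\sigma(w_2)) - \mathcal{B}(w_1,w_2)\bigr).$$

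\textbf{Étape 2 ($\sigma \in \textup{Sp}(W)$).} Le membre de gauche de la relation précédente est symétrique en $(w_1,w_2)$. En égalisant avec celui obtenu en échangeant les rôles de $w_1$ et $w_2$, on récupère :
$$\mathcal{B}(\sigma(w_1),\sigma(w_2)) - \mathcal{B}(\sigma(w_2),\sigma(w_1)) = \mathcal{B}(w_1,w_2) - \mathcal{B}(w_2,w_1),$$
ce qui est exactement la condition $\sigma \in \textup{Sp}(W)$. La $F$-linéarité de $\sigma$ découle alors de sa continuité et de son additivité, ainsi que de ce qu'il préserve une forme $F$-bilinéaire non dégénérée.

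\textbf{Étape 3 (paramétrisation de $\chi$).} On fixe $\sigma \in \textup{Sp}(W)$. D'après la formule donnée pour $f_\sigma$, la fonction $\chi_0 = \psi \circ f_\sigma$ satisfait la même relation de 2-cocycle que $\chi$ à l'Étape 1. Leur rapport $\chi \cdot \chi_0^{-1}$ est donc un caractère lisse de $W$ à valeurs dans $R^\times$. L'identification $\hat{W}_R \simeq W^*$ via $w^* \mapsto \psi([\cdot,w^*])$ fournit un unique $\tau \in W^*$ tel que $\chi = (\psi \circ f_\sigma)(\psi \circ \tau) = \varphi_{\sigma,\tau}$. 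Ainsi $\alpha$ provient du couple $(\sigma,\tau)$, ce qui établit la surjectivité de l'application annoncée, et la surjection réciproque -- \emph{i.e.} le fait que tout $\varphi_{\sigma,\tau}$ ainsi construit induit bien un élément de $B_0$ -- résulte du calcul inverse.

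\textbf{Étape 4 (structure de morphisme et injectivité).} Pour identifier la loi de composition, on compose deux automorphismes $\alpha_1, \alpha_2$ associés à $(\sigma_1,\tau_1)$ et $(\sigma_2,\tau_2)$. Un calcul direct, appuyé sur la relation de cocycle $f_{\sigma_1 \sigma_2} = f_{\sigma_1} + f_{\sigma_2} \circ \sigma_1$ de l'énoncé, donne $\alpha_1 \circ \alpha_2 = \alpha_{(\sigma_1 \sigma_2,\, \tau)}$ avec un $\tau$ s'exprimant linéairement en $\tau_1$, $\tau_2$ et $\sigma_i$ ; cela correspond précisément à la loi du produit semi-direct $\textup{Sp}(W) \rtimes W^*$ indiqué. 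L'injectivité est immédiate : si $\alpha_{(\sigma,\tau)} = \textup{Id}_A$, alors $\sigma = \textup{Id}_W$, donc $f_\sigma = 0$, et $\psi \circ \tau \equiv 1$, ce qui force $\tau = 0$ puisque $\psi$ est non trivial et $\hat{W}_R \simeq W^*$.

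Le point potentiellement le plus délicat est l'Étape 3, où il faut être prudent sur la continuité (lissité) de $\chi$ et sur la description explicite de $\hat{W}_R$ pour garantir que le caractère $\chi \cdot \chi_0^{-1}$ soit de la forme $\psi \circ \tau$ avec $\tau \in W^*$ ; les autres étapes sont essentiellement des vérifications algébriques directes.
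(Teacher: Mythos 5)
Le papier ne donne pas de preuve de cette proposition : elle est rappelée de \cite{ct} dans la sous-section «Rappels utiles». Le point de comparaison pertinent est donc la démonstration que le papier \emph{donne} pour l'analogue $\textup{Aut}_Z(H)\simeq B_1=\textup{Sp}(W)\rtimes W^*$ en Section B.2, et c'est précisément là que votre Étape 2 présente une lacune. Dans le cas de $H$, la seconde coordonnée est $F$-valuée : la condition de morphisme donne directement $\langle\sigma w_1,\sigma w_2\rangle=\langle w_1,w_2\rangle$ \emph{dans $F$}, ce qui permet ensuite de conclure à la $F$-linéarité de $\sigma$ via la non-dégénérescence de la forme. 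Dans le cas de $A$, la seconde coordonnée est $R^\times$-valuée ; la symétrisation de votre relation de cocycle ne fournit que $\psi\bigl(\langle\sigma w_1,\sigma w_2\rangle-\langle w_1,w_2\rangle\bigr)=1$, c'est-à-dire l'égalité modulo $\textup{Ker}(\psi)$, et non l'égalité $F$-valuée $\mathcal{B}(\sigma w_1,\sigma w_2)-\mathcal{B}(\sigma w_2,\sigma w_1)=\mathcal{B}(w_1,w_2)-\mathcal{B}(w_2,w_1)$ que vous écrivez. Ce passage n'est pas justifié tel quel.

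La $F$-linéarité de $\sigma$, que vous invoquez pour combler cet écart, ne « découle » pas de la continuité et de l'additivité seules : sur une extension propre $F/F_0$, il existe des automorphismes additifs continus de $W$ qui ne sont pas $F$-linéaires. Par exemple, une conjugaison galoisienne $g$ appliquée composante par composante préserve le bicaractère $\mathcal{F}=\psi\circ\mathcal{B}$ dès que $\psi$ se factorise par $\textup{tr}_{F/F_0}$, donc induit bien un élément de $\textup{Aut}_Z(A)$ avec $\chi=1$, sans préserver la forme $F$-valuée $\langle\cdot,\cdot\rangle$. Or la seule voie vers la $F$-linéarité de $\sigma$ qui figure dans le papier (argument «$\langle\sigma(\lambda w)-\lambda\sigma(w),\sigma(w')\rangle=0$») suppose déjà acquise la préservation \emph{exacte} de la forme, ce qui vous ramène au point précédent : l'argument est circulaire. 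Les Étapes 3 et 4 sont correctes une fois acquis $\sigma\in\textup{Sp}(W)$ ; c'est l'établissement de ce fait, qui n'a pas d'analogue direct dans le cas du groupe d'Heisenberg, que votre esquisse omet.
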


\paragraph{Définitions de $U_0$, $\mathbb{A}(W)$ et $\mathbb{B}_0(W)$.} Pour $w=(u,u^*) \in X \oplus X^*=W$ et $r \in R^\times$, on définit un opérateur $U(w,r) \in \textup{GL}_R(C_c^\infty(X))$ de la manière suivante :
$$\begin{array}{cccc}
U_0 : & A & \longrightarrow & \textup{GL}_R(C_c^\infty(X)) \\
 & (w,r) & \longmapsto & \big( \phi \mapsto r \ \psi([.,u^*]) \ \Phi(.+u) \big)
\end{array}.$$
C'est un morphisme de groupes injectif, on note $\mathbb{A}$ son image. On a donc $A \simeq \mathbb{A}$ via $U_0$. On en déduit que leurs groupes d'automorphismes sont isomorphes, et même mieux : $\text{Aut}_Z (\mathbb{A}) \simeq \text{Aut}_Z (A)=B_0$.

De plus, le groupe $B_0$ agit sur $\mathbb{A}$. En effet, il agit sur $A$, on peut donc faire agir $B_0 \simeq \text{Sp}(W) \rtimes W^*$ via $U_0$. Plus précisément $$\begin{array}{ccc}
(B_0,\mathbb{A}) & \longrightarrow & \mathbb{A} \\
((\sigma,\tau),U(w,r)) & \longmapsto & U(\sigma(w), r \varphi_{\sigma,\tau}(w))
\end{array}.$$
Remarquons qu'à travers cette action, on a explicitement l'isomorphisme en question entre $B_0$ et $\text{Aut}_Z(\mathbb{A})$, les automorphismes de $\mathbb{A}$ triviaux sur le centre de $\mathbb{A}$. Ce centre n'est ni plus ni moins que $\{ r \text{Id}_{\mathcal{S}(X)} \ | \ r \in R^\times \} \simeq R^\times$.

On note $\mathbb{B}_0$ le normalisateur de $\mathbb{A}$ dans $\textup{GL}_R(C_c^\infty(X))$, \textit{i.e.} $$\mathbb{B}_0 = \{ s \in \textup{GL}_R(C_c^\infty(X)) \ | \ s \mathbb{A} s^{-1} = \mathbb{A} \}$$
Par conséquent, si $s \in \mathbb{B}_0$, alors la conjugaison par $s$, notée $\text{conj}(s)$, est un élément de $\text{Aut}_Z(\mathbb{A}) \simeq B_0$.

\begin{lem} L'application :
$$\begin{array}{cccc}
\pi_0 & \mathbb{B}_0 & \longrightarrow & B_0 \\
 & s & \longmapsto & \emph{conj}(s)
\end{array}$$ est un morphisme de groupes.
\end{lem}

\begin{theo} On a une suite exacte :
$$1 \to R^\times \to \mathbb{B}_0 \overset{\pi_0}{\to} B_0 \to 1$$
où la première flèche est simplement l'inclusion $R^\times \to \{ r \emph{Id}_{C_c^\infty(X)} \ | \ r \in R^\times \}$. \end{theo}

\subsubsection{Le groupe métaplectique défini dans \cite[2.3.]{ct}}

L'isomorphisme entre $B_0$ et $\text{Sp}(W) \rtimes W^*$ donne une flèche $\mu : \textup{Sp} (W) \to B_0$ en associant simplement à $\sigma \in \textup{Sp} (W)$, l'image de $(\sigma,1) \in \textup{Sp} (W) \rtimes W^*$ dans $B_0$. La seconde flèche est celle obtenue dans la sous-section précédente $\pi_0 : \mathbb{B}_0 \to \mathbb{B}_0$. Il faut se rappeler que $\mathbb{B}_0 \subset \textup{GL}_R(C_c^\infty(X))$, et que $\pi_0$ a pour noyau le centre de $\textup{GL}_R(C_c^\infty(X)))$, on peut donc penser à $B_0$ comme un sous-groupe de $\textup{PGL}_R(C_c^\infty(X))$.

\begin{defi} Le groupe métaplectique de $W$, associé à $R$ et $\psi$, est le sous-groupe $\text{Mp}_{R,\psi}(W)=\textup{Sp}(W) \times_{B_0} \mathbb{B}_0$ de $\textup{Sp}(W) \times \mathbb{B}_0$ qui est composé des paires $(\sigma,s)$ telles que $\mu(\sigma)=\pi_0(s)$.
\end{defi}

On résume la situation dans le diagramme suivant, en se rappelant que $B_0$ est par définition $\textup{Aut}_Z(A)$ :

$$\xymatrix{
	\textup{Mp}_{\psi,R}(W) \ar[r] \ar[d] & \mathbb{B}_0 \ar[rd] & \\
	\textup{Sp}(W) \ar[rd] 		&		&	\textup{Aut}_Z(\mathbb{A}) \ar[d]^\wr \\
						& \textup{Sp}(W) \rtimes W^* \ar[r]^\sim & \textup{Aut}_Z(A)
	}$$
qui peut être réécrit comme 

$$ \xymatrix{
     \textup{Mp}_{\psi,R}(W) \ar[r] \ar[d] & \mathbb{B}_0 \ar[d]^{\pi_0} \\
     \textup{Sp}(W) \ar[r]^\mu & B_0
  }$$

\subsection{Lien avec la construction de \cite{mvw} et la nôtre} \label{construction_parallèle}

Soit $F$ un corps dont la caractéristique est différente de 2, et qui est soit fini, soit local non archimédien. Soit $(W,\langle , \rangle )$ un espace symplectique sur $F$. 

\paragraph{Lien entre $H$ et $A$.} On rappelle que le groupe d'Heisenberg $H = W \times F$ est donné par la loi de groupe :
$$(w,t).(w',t')=(w+w',t+t'+\frac{1}{2} \langle w,w' \rangle ).$$
Le centre du groupe $F$ est $\{(0,t) \ | \ t \in F\}$. Il est isomorphe à $F$ et on se permet de dire que $F$ est le centre de $H$. On se donne une polarisation complète $W =X+Y$ et on écrit $w=x+y$ pour signifier la décomposition de $W$ vis-à-vis de $X$ et $Y$. On peut établir un premier lien entre le groupe d'Heisenberg $H$ défini dans \cite[Chap. 2]{mvw} et le groupe $A$ qui provient de \cite[\S 2.1]{ct}, et dont on a rappelé la construction plus haut. Pour cela, on identifie $X^* \simeq Y$ et on pose $\mathcal{B}(w,w')=\langle x,y' \rangle $. Cette identification dépend donc du choix de la polarisation complète dont on se dote. On prend bien soin de garder la même pour la suite.

\begin{prop} \label{isom_gamma_prop} Le morphisme :
$$\begin{array}{cccc}
\Gamma : & H & \to & A \\
& (w,t) & \mapsto & (w, \psi(t) \psi( \frac{1}{2} \langle x,y \rangle ))
\end{array}$$
induit un isomorphisme $\overline{\Gamma}$ entre $H/\emph{Ker}(\psi)$ et le sous-groupe $W \rtimes \emph{Im}(\psi)$ de $A$. Les centres de ces deux groupes sont respectivement $F/\emph{Ker}(\psi)$ et $\emph{Im}(\psi)$, qui sont bien isomorphes.

De plus, la conjugaison par $\overline{\Gamma}$ induit un isomorphisme :
$$ s \in B_0 = \emph{Aut}_Z(A) \overset{\sim}{\longrightarrow} \overline{\Gamma}^{-1} \circ s \circ \overline{\Gamma} \in \emph{Aut}_Z(H/\emph{Ker}(\psi)).$$
\end{prop}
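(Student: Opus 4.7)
}

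Le premier pas est de vérifier que $\Gamma$ est un morphisme de groupes. Pour tout $(w,t),(w',t') \in H$ avec $w = x+y$ et $w'=x'+y'$ dans la polarisation complète $W = X + Y$, on a d'un côté
\[
\Gamma((w,t)(w',t')) = \big(w+w',\psi(t+t'+\tfrac{1}{2}\langle w,w'\rangle)\,\psi(\tfrac{1}{2}\langle x+x',y+y'\rangle)\big),
\]
et de l'autre
\[
\Gamma(w,t)\cdot\Gamma(w',t') = \big(w+w',\psi(t)\psi(t')\,\psi(\tfrac{1}{2}\langle x,y\rangle)\psi(\tfrac{1}{2}\langle x',y'\rangle)\,\mathcal{F}(w,w')\big).
\]
L'isotropie de $X$ et $Y$ donne $\langle w,w'\rangle = \langle x,y'\rangle - \langle x',y\rangle$, et en développant $\langle x+x',y+y'\rangle$, les termes croisés s'arrangent pour que la seconde composante du membre de gauche soit égale à $\psi(t+t')\psi(\tfrac{1}{2}\langle x,y\rangle)\psi(\tfrac{1}{2}\langle x',y'\rangle)\psi(\langle x,y'\rangle)$. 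Comme $\mathcal{B}(w,w')=[x,y']$ s'identifie à $\langle x,y'\rangle$ via l'identification $Y \simeq X^*$, on a bien $\mathcal{F}(w,w') = \psi(\langle x,y'\rangle)$, d'où l'égalité souhaitée. Ce calcul est la seule vérification technique non triviale.

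Ensuite, le noyau de $\Gamma$ est immédiatement $\{(0,t)\ |\ \psi(t)=1\}=\textup{Ker}(\psi)$, et l'image de $\Gamma$ est $W \times \textup{Im}(\psi)$ en tant que sous-ensemble. Cet ensemble est bien un sous-groupe de $A$ puisque le $2$-cocycle $\mathcal{F}$ prend ses valeurs dans $\textup{Im}(\psi)$, donc la loi de groupe de $A$ stabilise $W\times\textup{Im}(\psi)$. Ce sous-groupe hérite par restriction de la structure semi-directe $W \rtimes \textup{Im}(\psi)$. L'application $\overline{\Gamma}$ induite sur le quotient est donc un isomorphisme entre $H/\textup{Ker}(\psi)$ et $W\rtimes\textup{Im}(\psi)$.

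Pour l'assertion sur les centres, le centre de $H/\textup{Ker}(\psi)$ est $F/\textup{Ker}(\psi)$, et celui de $W\rtimes\textup{Im}(\psi)$ est $\textup{Im}(\psi)$ (identifié à $\{0\}\times\textup{Im}(\psi)$) d'après la définition du centre de $A$ rappelée plus haut. L'isomorphisme $\overline{\Gamma}$ envoie par construction la classe de $(0,t)$ sur $(0,\psi(t))$, donc réalise l'isomorphisme annoncé entre ces centres.

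Finalement, pour l'isomorphisme entre groupes d'automorphismes, on commence par vérifier que tout $s \in B_0 = \textup{Aut}_Z(A)$ stabilise $W\rtimes\textup{Im}(\psi)$ : en utilisant la description explicite $B_0 \simeq \textup{Sp}(W)\rtimes W^*$ et la formule $(w,r)\mapsto(\sigma(w),r\varphi_{\sigma,\tau}(w))$, cela découle de ce que le bicaractère $\varphi_{\sigma,\tau}=(\psi\circ f_\sigma)(\psi\circ\tau)$ est à valeurs dans $\textup{Im}(\psi)$. La conjugaison par $\overline{\Gamma}$ définit alors un morphisme $B_0 \to \textup{Aut}_Z(H/\textup{Ker}(\psi))$. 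L'injectivité est claire car un automorphisme de $A$ trivial sur le centre est entièrement déterminé par sa restriction à $W\times\{1\}$, laquelle est contenue dans $W\rtimes\textup{Im}(\psi)$. La surjectivité provient du fait que toute automorphisme $\sigma' \in \textup{Aut}_Z(W\rtimes\textup{Im}(\psi))$ se relève de façon unique à $A$ : sa partie \og symplectique \fg{} $\sigma \in \textup{Sp}(W)$ est déterminée par son action modulo le centre, et le cocycle qu'il induit sur $W$ s'étend de manière unique en un caractère de degré $2$ à valeurs dans $R^\times$ grâce à l'existence d'un supplémentaire de $\textup{Im}(\psi)$ dans $R^\times$ au niveau des bicaractères concernés. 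Comme la construction de \cite[\S 2]{ct} classifie précisément $B_0$ par les couples $(\sigma,\tau)$, cette extension appartient à $B_0$, ce qui conclut.
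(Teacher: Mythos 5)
Your route matches the paper's exactly: verify that $\Gamma$ is a morphism via the explicit identity relating $\mathcal{B}$ to the symplectic form, read off kernel and image, note the centers, then transport $\textup{Aut}_Z$ by conjugation through $\overline{\Gamma}$. The morphism computation is correct, and the forward inclusion (every $s \in B_0$ stabilizes $W \rtimes \textup{Im}(\psi)$ because $\varphi_{\sigma,\tau} = (\psi \circ f_\sigma)(\psi \circ \tau)$ takes values in $\textup{Im}(\psi)$) is also fine.

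The surjectivity step, however, is misstated. You invoke l'existence d'un supplémentaire de $\textup{Im}(\psi)$ dans $R^\times$ to extend the degree-$2$ character from $\textup{Im}(\psi)$-valued to $R^\times$-valued, but no extension and no complement are needed: since $\textup{Im}(\psi) \subset R^\times$, the character is already $R^\times$-valued. What the paper actually does at this point (rerunning the structure theory of \cite[Prop.~2.1]{ct} on the subgroup $W \rtimes \textup{Im}(\psi)$) is show that the twisting factor $\phi'$ of any $s' \in \textup{Aut}_Z(W \rtimes \textup{Im}(\psi))$ is forced to be $(\psi \circ f_{\sigma'})(\psi \circ \gamma)$ for some $\gamma \in W^*$: after dividing $\phi'$ by $\psi \circ f_{\sigma'}$ and using the cocycle relation satisfied by $f$, one is left with a genuine character of $W$ valued in $R^\times$, which by the self-duality $\hat{W}_R \simeq W^*$ must be $\psi \circ \gamma$. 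With this normal form in hand, the lift to $A$ is the same formula applied to arbitrary $r \in R^\times$, and it lands in $B_0$ by construction. So the conclusion you reach is correct and the overall strategy is the paper's, but the justification should go through the self-duality of $W$ under $\psi$, not through a complement of $\textup{Im}(\psi)$; relatedly, your remark that the morphism check is la seule vérification technique non triviale understates this automorphism step, which is where the real content lies.
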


\begin{proof} Le fait que $\Gamma$ soit un un morphisme de groupes provient de :
$$\mathcal{B}(w,w') + \frac{1}{2} \langle x,y \rangle  + \frac{1}{2} \langle x',y' \rangle  = \frac{1}{2}  \langle w,w' \rangle  + \frac{1}{2} \langle x+x',y+y' \rangle .$$
L'image de $\Gamma$ est $W \times \text{Im}(\psi)$ et son noyau s'identifie à $\textup{Ker} (\psi)$. Trouver le centre de chacun des groupes en jeu est immédiat.

Ensuite, il suffit de remarquer que $\textup{Aut}_Z(A) = \textup{Aut}_Z(W \times \text{Im}(\psi)) \simeq \textup{Aut}_Z(H/\textup{Ker}(\psi))$. Seule la première égalité est à prouver, la deuxième résultant de la conjugaison par $ \overline{\Gamma}$. Tout élément de $B_0$ est de la forme $s=(\sigma, (\psi \circ f_\sigma ) \ (\psi \circ \gamma))$ avec $\gamma \in W^*$, et tel que $s(w,r)=(\sigma(w),(\psi \circ f(w)) \ r)$ où $f= f_\sigma + \gamma$. Donc $s$ induit par restriction un automorphisme de $W \times \text{Im}(\psi)$, qui est bien l'identité sur le centre $\text{Im}(\psi) \subset R^\times$. Réciproquement, tout élément $s'$ de $\textup{Aut}_Z(W \times \text{Im}(\psi))$ s'écrit sous la même forme qu'un élément de $B_0$. En effet, soit $s'=(\sigma',\phi')$ où $\sigma' : W \times \text{Im}(\psi) \to W$ et $\phi' :  W \times \text{Im}(\psi) \to \text{Im}(\psi)$. De la loi de groupe sur $A = W \times R$ induite sur $W \times \text{Im}(\psi)$, on tire comme dans le cas de $B_0$ \cite[Prop. 2.1]{ct} des relations qui entraînent que : $\sigma'(w,r)=\sigma'(w)$ et $\sigma' \in \text{Sp}(W)$ ; il existe $\gamma \in W^*$ tel que $\phi'(w,r) = (\psi \circ f_\sigma(w))(\psi \circ \gamma(w)) r$. Alors l'élément $s=(\sigma',(\psi \circ f_\sigma) (\psi \circ \gamma)) \in B_0$ relève $s'$ à $A$. \end{proof}

\paragraph{Le groupe $B_1$.} On note $\textup{Aut}(H)$ le groupe des automorphismes du groupe $H$, $\textup{Aut}_Z(H)$ le sous-groupe des automorphismes triviaux sur le centre de $H$. On décrit explicitement les éléments de $\textup{Aut}_Z(H)$ :

\begin{prop} Soit $s \in \emph{Aut}(H)$, on note $\varphi_s$ et $\tau_s$ les applications associées telles que $s((w,t))= (\varphi_s ((w,t)),\tau_s(w,t))$. On identifie alors $\emph{Aut}_Z(H)$ à $ \emph{Sp}(W) \rtimes \emph{Hom}(W,F)$ via l'isomorphisme :
$$s \mapsto (\sigma,\gamma)=\varphi_s|_{(W,0)}, \tau_s|_{(W,0)}).$$
Le produit étant défini par $(\sigma,\gamma).(\sigma',\gamma')=(\sigma \circ \sigma', \gamma \circ \sigma' + \gamma')$.
\end{prop}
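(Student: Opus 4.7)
La stratégie consiste à exploiter la trivialité de $s$ sur le centre pour réduire un automorphisme $s \in \textup{Aut}_Z(H)$ à deux fonctions sur $W$, à savoir $\sigma := \varphi_s|_{W \times 0}$ et $\gamma := \tau_s|_{W \times 0}$, puis à traduire la compatibilité de $s$ à la loi de groupe de $H$ en conditions algébriques explicites sur le couple $(\sigma,\gamma)$.

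\emph{Réduction aux données $(\sigma,\gamma)$.} La condition $s((0,t))=(0,t)$ donne $\varphi_s((0,t))=0$ et $\tau_s((0,t))=t$. En appliquant la compatibilité au produit à la décomposition $(w,t)=(w,0)\cdot(0,t)$, on obtient $\varphi_s((w,t))=\varphi_s((w,0))=\sigma(w)$ (donc indépendant de $t$) et $\tau_s((w,t))=\tau_s((w,0))+t=\gamma(w)+t$. Tout $s \in \textup{Aut}_Z(H)$ est donc de la forme $s((w,t))=(\sigma(w),\gamma(w)+t)$ pour un unique couple $(\sigma,\gamma)$ de fonctions $W \to W$ et $W \to F$.

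\emph{Conditions imposées par la loi de groupe.} L'égalité $s((w,0)(w',0))=s((w,0))s((w',0))$ donne $\sigma(w+w')=\sigma(w)+\sigma(w')$ ainsi que la relation :
$$\gamma(w+w')+\tfrac{1}{2}\langle w,w' \rangle = \gamma(w)+\gamma(w')+\tfrac{1}{2}\langle \sigma(w),\sigma(w') \rangle.$$
Le cobord de $\gamma$ étant symétrique en $(w,w')$ et la différence $\langle \sigma(w),\sigma(w') \rangle-\langle w,w' \rangle$ étant antisymétrique, la caractéristique de $F$ différente de $2$ entraîne l'annulation simultanée de ces deux quantités : $\gamma$ est un morphisme de groupes additif et $\sigma$ préserve le produit symplectique. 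L'étape la plus délicate consiste alors à passer de la simple additivité à la $F$-linéarité de $\sigma$ : pour tout $\lambda \in F$ et tout $w, w' \in W$, l'additivité de $\sigma$ et la préservation du produit symplectique donnent $\langle \sigma(\lambda w)-\lambda \sigma(w), \sigma(w') \rangle = 0$. Comme $s^{-1} \in \textup{Aut}_Z(H)$ fournit par la même construction une fonction inverse, $\sigma$ est bijectif, et la non-dégénérescence de $\langle , \rangle$ entraîne $\sigma(\lambda w)=\lambda \sigma(w)$, de sorte que $\sigma \in \textup{Sp}(W)$.

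\emph{Bijection et structure de produit semidirect.} Réciproquement, pour tout couple $(\sigma,\gamma)$ avec $\sigma \in \textup{Sp}(W)$ et $\gamma \in \textup{Hom}(W,F)$, une vérification directe de la loi de $H$ montre que $s_{(\sigma,\gamma)}((w,t))=(\sigma(w),\gamma(w)+t)$ est un automorphisme de $H$ trivial sur le centre, ce qui établit la bijectivité de l'application de l'énoncé. Enfin, le calcul :
$$s_{(\sigma,\gamma)} \circ s_{(\sigma',\gamma')}((w,t))=(\sigma\sigma'(w),\gamma(\sigma'(w))+\gamma'(w)+t)=s_{(\sigma\sigma', \gamma \circ \sigma'+\gamma')}((w,t))$$
identifie la loi de composition de $\textup{Aut}_Z(H)$ à la loi du produit semidirect $\textup{Sp}(W) \rtimes \textup{Hom}(W,F)$ donnée dans l'énoncé, ce qui conclut.
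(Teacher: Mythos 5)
Your proof is correct, and it follows the same general architecture as the paper's argument: reduce $s$ to the pair $(\sigma,\gamma)$ using triviality on the center, extract conditions from compatibility with the group law, prove $F$-linearity of $\sigma$ via bijectivity (pulled from $s^{-1}$) and non-degeneracy of the symplectic form, then verify the converse and the composition law.

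It is worth noting, however, that your write-up is actually \emph{more careful} than the paper's on the $\gamma$-component. The paper's proof derives the relations $\sigma(w+w')=\sigma(w)+\sigma(w')$ and $\langle\sigma(w),\sigma(w')\rangle=\langle w,w'\rangle$ as if they followed directly from equating the two sides of $s((w,0))s((w',0))=s((w+w',\tfrac{1}{2}\langle w,w'\rangle))$, but the $F$-component of that equation only yields $\langle\sigma(w),\sigma(w')\rangle-\langle w,w'\rangle = 2\big(\gamma(w+w')-\gamma(w)-\gamma(w')\big)$; one still has to separate this by a symmetry argument, as you do. Worse, the paper's summary asserts $\tau_s((w,t))=t$ and $s((w,t))=(\sigma(w),t)$, which (as written) silently sets $\gamma=0$ and would identify $\textup{Aut}_Z(H)$ with $\textup{Sp}(W)$ alone rather than $\textup{Sp}(W)\rtimes\textup{Hom}(W,F)$ — clearly a slip, since the statement of the Proposition, and the subsequent definition of $B_1$, require the $\gamma$-part. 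Your symmetric/antisymmetric decomposition of the single constraint — observing that the coboundary of $\gamma$ is symmetric while $\langle\sigma\cdot,\sigma\cdot\rangle-\langle\cdot,\cdot\rangle$ is antisymmetric, so both vanish since $\textup{char}\,F\neq 2$ — is exactly the cleanest way to justify the two relations simultaneously, and your explicit verification of the semidirect product law and of the converse closes the gaps the paper leaves implicit. In short: same approach, but your version is the one that actually holds together.
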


\begin{proof}  Plusieurs choses sont à vérifier. Dans l'ordre : on donne une description des relations portant sur $\varphi_s$ et $\tau_s$ ; on montre que l'application du théorème est bien un morphisme ; et enfin, que $\sigma \in \textup{Sp}(W)$ car elle est bien $F$-linéaire.

On sait que les éléments de $\textup{Aut}_Z(H)$ sont triviaux sur le centre. On a tout d'abord $s((w,t))=s((0,t).(w,0))=(0,t).s((w,0))$. Il suffit donc de connaître les relations portant sur $s((w,0))$. Rappelons que pour tout $t \in F$, $s((0,t))=(0,t)$ \textit{i.e.} $\varphi_s((0,t))=0$ et $\tau_s((0,t))=t$. À l'aide du calcul explicite :
$$s((w,0)).s((w',0))=s((w,0).(w',0))=s((w+w',\frac{1}{2} \langle w,w' \rangle ))$$
on tire :
$$\varphi_s((w,0))+\varphi_s((w',0))=\varphi_s((w+w',0)) \textup{ et } \langle \varphi_s((w,0)),\varphi_s((w',0)) \rangle =\langle w,w' \rangle.$$

On obtient finalement $\varphi_s((w,t))=\varphi_s((w,0))=\sigma(w)$ et $\tau_s((w,t))=t$. On note $\tau = \tau_s$ qui est indépendant de $s \in \textup{Aut}_Z(H)$. La composée $s \circ s'$ de morphismes de $s, s' \in \textup{Aut}_Z(H)$ donne $\varphi_{s \circ s'}((w,t))= \varphi_s((\varphi_{s'}(w),0))= \sigma \circ \sigma' (w)$. Et $\tau$ est inchangé. De plus, comme $s$ est inversible, il existe un morphisme de groupe $\sigma' : W \to W$ tel que $\sigma \circ \sigma' = \textup{Id}_W$, donc $\sigma$ est un automorphisme.

Il ne reste plus qu'à prouver que $\sigma$ est $F$-linéaire. On déduit des équations précédentes que pour tout $\lambda \in F$, on a $\langle \sigma(\lambda w ) - \lambda \sigma(w),\sigma(w') \rangle =0$. Comme $\sigma$ est bijective, on conclut par le fait que la forme est non dégénérée sur $W$. Donc $\sigma \in \textup{Sp}(W)$.

En résumé, pour tout $s \in \textup{Aut}_Z(H)$, on a $s((w,t))=(\sigma(w),t)$ avec $\sigma \in \textup{Sp}(W)$. Réciproquement, toute application de cette forme est bien dans $\textup{Aut}_Z(H)$. \end{proof}

On s'intéresse ici à  un sous-groupe particulier de $\textup{Aut}_Z(H)$. On note $W^*$ le dual algébrique de $W$.

\begin{defi} On définit donc $B_1$ comme le sous-groupe $\textup{Sp}(W)\rtimes W^*$ dans $\textup{Aut}_Z(H)$. \end{defi}

\begin{prop} Le morphisme de groupes :
$$\begin{array}{ccc}
B_1 & \longrightarrow & B_0 \\
(\sigma,\gamma) & \longmapsto & (\sigma, (\psi \circ \gamma) \ (\psi \circ f_\sigma))
\end{array}$$
est un isomorphisme. \end{prop}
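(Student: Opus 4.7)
Le plan consiste à identifier le morphisme proposé comme l'unique bijection rendant compatibles les deux paramétrages par $\textup{Sp}(W) \rtimes W^*$ déjà présents dans le texte : d'une part celui de $B_0 = \textup{Aut}_Z(A)$ par $(\sigma, \tau) \mapsto \big((w,r) \mapsto (\sigma(w), r \varphi_{\sigma,\tau}(w))\big)$ rappelé plus haut, et d'autre part celui de $B_1$ comme sous-groupe de $\textup{Aut}_Z(H)$ via $(\sigma, \gamma) \mapsto \big((w,t) \mapsto (\sigma(w), t + \gamma(w))\big)$. Puisque l'image du morphisme proposé correspond exactement à l'élément de $B_0$ paramétré avec $\tau = \gamma$, son image est bien contenue dans $B_0$.

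Pour la bijectivité, je m'appuierais d'abord sur le paramétrage complet de $B_0$ par $\textup{Sp}(W) \times W^*$ issu de la proposition rappelée plus haut, qui garantit directement la surjectivité. L'injectivité mérite d'être justifiée explicitement : elle repose sur le fait que l'application $\gamma \in W^* \mapsto \psi \circ \gamma$ est injective. En effet, si $\psi \circ \gamma = \psi \circ \gamma'$, alors $(\gamma - \gamma')(W)$ est un sous-$F$-espace vectoriel de $F$ inclus dans $\textup{Ker}(\psi)$ ; comme $\psi$ est supposé non trivial, on a $\textup{Ker}(\psi) \neq F$ et donc $\gamma = \gamma'$.

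La partie la plus technique consiste à vérifier la compatibilité des lois de groupes. Je commencerais par expliciter la composition dans $\textup{Aut}(A)$ des deux automorphismes associés à $(\sigma_1, \gamma_1)$ et $(\sigma_2, \gamma_2)$, pour constater que le résultat coïncide avec l'élément de $B_0$ associé à $(\sigma_1 \sigma_2, \gamma_1 \circ \sigma_2 + \gamma_2)$, qui est précisément le produit défini sur $B_1$. L'étape clé est l'emploi d'une forme adaptée de la relation de cocycle $f_{\sigma_1 \sigma_2}(w) = f_{\sigma_2}(w) + f_{\sigma_1}(\sigma_2(w))$, variant de celle rappelée au début de l'annexe, qui permet de simplifier l'exposant de $\psi$ apparaissant dans le produit composé.

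L'obstacle principal, somme toute mineur dans cette preuve par ailleurs très élémentaire, est de rester rigoureux sur les conventions de composition et d'ordre des facteurs dans le produit semi-direct : il faut s'assurer que la loi donnée sur $B_1$ correspond bien à la composition des automorphismes de $A$ dans le sens attendu via le morphisme proposé, la relation de cocycle appliquée dans le bon ordre donnant naturellement la combinaison $\gamma_1 \circ \sigma_2 + \gamma_2$ plutôt que son symétrique.
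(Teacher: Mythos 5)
Votre preuve est correcte et suit essentiellement la même approche que celle, très brève, du papier, qui invoque la formule de cocycle $f_{\sigma\sigma'}=f_\sigma\circ\sigma'+f_{\sigma'}$ (la forme que vous identifiez à raison comme adaptée, par rapport à celle énoncée au début de l'annexe) pour le caractère morphique, et l'isomorphisme entre $W^*$ et les caractères lisses de $W$ pour la bijectivité. Votre argument explicite d'injectivité de $\gamma\mapsto\psi\circ\gamma$, via l'observation qu'un sous-$F$-espace propre de $F$ est nul, déplie précisément ce que le papier sous-entend en invoquant cet isomorphisme.
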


\begin{proof} C'est bien un morphisme de groupes étant donné la formule de cocyle $f_{\sigma \circ \sigma'}=f_\sigma \circ \sigma' + f_{\sigma'}$. L'injectivité et la surjectivité se vérifient aisément en se rappelant que le dual algébrique est isomorphe aux caractères de $W$.
\end{proof}

\paragraph{Isomorphisme entre produits fibrés.} Le théorème principal que l'on se propose de montrer concerne les groupes métaplectiques définis dans \cite{weil} et \cite{mvw}. Donnons tout d'abord une première version dans laquelle le lien avec le dernier de ces groupes est absent. Ici, $\mathbb{B}_1$ est un sous-groupe $\textup{GL}_R(C_c^\infty(X))$ analogue au $\mathbb{B}_0$ dans la construction de \cite{weil}, \cite{ct}.

\begin{theo} Le produit fibré $\emph{Sp}(W) \times_{B_1} \mathbb{B}_1$ est isomorphe à $\emph{Sp}(W) \times_{B_0} \mathbb{B}_0$ , l'isomorphisme entre $B_1$ et $B_0$ étant compatible avec les flèches respectives.
\end{theo}

\begin{proof} La preuve va se dérouler en deux parties. Les deux diagrammes ci-dessous seront une bonne aide à la lecture, même si les objets auxquels ils se rapportent ne seront définis que par la suite.

La première est immédiate et consiste à prouver que le diagramme suivant commute 
$$\xymatrix{
		\textup{Sp}(W) \ar[r]^\sim \ar[d] & \textup{Sp}(W) \ar[d] \\
		B_1 \ar[r]^\sim & B_0
		}$$
Les flèches respectives étant : l'identité de $\textup{Sp}(W)$ ; les plongements canoniques de $\textup{Sp}(W)$ dans les produits semi-directs $B_1$ et $B_0$ ; l'isomorphisme de la proposition précédente.

Ensuite, dans un deuxième temps, il s'agit de s'occuper de la commutativité et des isomorphismes du diagramme
$$\xymatrix{
		\mathbb{B}_1 \ar[r]^\sim \ar[d] & \mathbb{B}_0 \ar[d] \\
		\textup{Aut}_Z(\mathbb{H}) \ar[r]^\sim \ar[d]^\wr & \textup{Aut}_Z(\mathbb{A}) \ar[d]^\wr \\
		B_1 	\ar[r]^\sim	& B_0
		}$$

\begin{lem} Soit $(w,t) \in H$. On définit une application $U_1 : H \to \textup{GL}_R(C_c^\infty(X))$ telle que, pour tout $\phi \in C_c^\infty(X), \ U_1(w,t) \phi (u)= \psi(t) \psi(\frac{1}{2} \langle x,y \rangle )\psi(\langle u,y \rangle )\phi(u+x)$. Alors, $U_1$ est un morphisme. Il induit un isomorphisme entre $H/\emph{Ker}(\psi)$ est son image, notée $\mathbb{H}$. Le diagramme qui suit est commutatif et les flèches horizontales sont des isomorphismes 
$$\xymatrix{
	H/\emph{Ker}(\psi) \ar[r]^{\overline{U_1}} \ar@{^{(}->}[d]_{\overline{\Gamma}} & \mathbb{H} \ar@{^{(}->}[d] \\
	A \ar[r]^{U_0} & \mathbb{A}
	 }$$
De plus, la flèche verticale de droite est l'inclusion de $\mathbb{H}$ dans $\mathbb{A}$ vus comme sous-groupes de $\textup{GL}_R(C_c^\infty(X))$. L'action de $B_0$ sur $A$ définie une action de $B_0$ sur $H/\emph{Ker}(\psi)$ et sur $\mathbb{H}$. \end{lem}

\begin{proof} Il suffit de noter que $U_1=U_0 \circ \Gamma$, où $\Gamma$ et $\overline{\Gamma}$ sont les morphisme de la Proposition \ref{isom_gamma_prop}. En particulier $\mathbb{H} \simeq H / \textup{Ker}(\psi)$. \end{proof}

Ceci va permettre d'obtenir le carré bas du diagramme. Tout d'abord, les isomorphismes $\overline{U_1}$ et $U_0$ induisent des isomorphismes $\textup{Aut}_Z(\mathbb{H})\simeq \textup{Aut}_Z(H/\textup{Ker}(\psi))$ ainsi que $\textup{Aut}_Z(\mathbb{A}) \simeq \textup{Aut}_Z(A)$. De plus, grâce à la proposition \ref{isom_gamma_prop}, $\textup{Aut}_Z(H/\textup{Ker}(\psi)) \simeq \textup{Aut}_Z(A)$. Finalement, on obtient 
$$\xymatrix{
	\textup{Aut}_Z(\mathbb{H}) \ar[r]^\sim \ar[d]_\sim & \textup{Aut}_Z(\mathbb{A}) \ar[d]^\sim \\
	B_1 \ar[r]^\sim & B_0
	}$$
	
	La flèche $\textup{Aut}_Z(\mathbb{H}) \to \textup{Aut}_Z(\mathbb{A})$ est l'inverse de la restriction à $\mathbb{H}$, \textit{i.e.} l'inverse de $f \in \textup{Aut}_Z(\mathbb{A})  \to f|_{\mathbb{H}} \in \textup{Aut}_Z(\mathbb{H})$.

Pour rappel, les isomorphismes entre ces groupes sont les suivants :
$$U_0 \circ \textup{Aut}_Z(A) \circ U_0^{-1} = \textup{Aut}_Z(\mathbb{A}) ;$$
$$\overline{U_1} \circ \textup{Aut}_Z(h) \circ \overline{U_0}^{-1} = \textup{Aut}_Z(\mathbb{H}) ;$$
$$\overline{\gamma}^{-1} \circ \textup{Aut}_Z(h) \circ \overline{\Gamma} = \textup{Aut}_Z(H/\textup{Ker}(\psi)).$$

Pour finir, on s'occupe du carré du haut. Par construction, $\mathbb{B}_0$ et $\mathbb{B}_1$ sont les normalisateurs de $\mathbb{A}$ et $\mathbb{H}$ dans $\textup{GL}_R(C_c^\infty(X))$. Ces normalisateurs sont les mêmes car $\mathbb{A}= R^\times \mathbb{H}$ et $R^\times$ est dans le centre de $\textup{GL}_R(C_c^\infty(X))$.
\end{proof}

\section{Représentations d'un produit de groupes} \label{representations_un_produit_groupes_section}

Dans toute cette partie, $R$ sera un corps. Soient $G_1$ et $G_2$ deux groupes localement profinis. Pour des représentations $(\pi_1,V_1) \in \textup{Rep}_R(G_1)$ et $(\pi_2,V_2) \in \textup{Rep}_R(G_1)$, on pose $D_1 = \textup{End}_{G_1}(\pi_1)$ et $D_2 = \textup{End}_{G_2}(\pi_2)$. La représentation $(\pi_1,V_1)$ est un $D_1$-module à gauche. Cette structure de module commute à la structure de $R[H_1]$-module à gauche, c'est-à-dire $(\pi_1,V_1)$ est un $(D_1,R[H_1])$-bimodule à gauche. Il en va de même pour $(\pi_2,V_2)$ qui est un $(D_2,R[H_2])$-bimodule à gauche.

\subsection{Produit tensoriel et extension des scalaires}

On suppose que les représentations $(\pi_1,V_1)$ et $(\pi_2,V_2)$ sont irréductibles. En particulier, $D_1$ et $D_2$ sont des corps par le lemme de Schur.

\paragraph{Produit tensoriel.} On applique \cite[A VIII.210, Th. 2]{bou} pour obtenir :

\begin{lem} \label{produit_tensoriel_de_rep_irred_lem} Soient $V=V_1 \otimes_R V_2 \in \textup{Rep}_R(G_1 \times G_2)$ et $D =\textup{End}_{G_1 \times G_2}(V_1 \otimes_R V_2)$. Alors on a : 
$$D  \simeq \textup{End}_{G_1}(\pi_1) \otimes_R \textup{End}_{G_2}(\pi_2).$$ De plus, l'ensemble $\mathbb{V}$ des sous-représentations de $V$ est en bijection avec l'ensemble $\mathbb{D}$ des sous-$D$-modules à droite de $D$, grâce à l'application suivante qui est compatible à la relation d'inclusion :
$$\begin{array}{ccc}
\mathbb{D} & \to & \mathbb{V} \\
D' & \mapsto & D' V
\end{array}.$$ \end{lem}

\paragraph{Extension des scalaires dans les corps parfaits.} On rappelle la définition de l'action d'un élément de Galois sur une représentation donnée. Soient $R'$ une extension algébrique de $R$ et $w : R' \to R'$ un automorphisme de $R$-algèbres. En particulier, la restriction de $w$ à $R$ est l'identité. Pour toute représentation $(\rho_1,V_1') \in \textup{Rep}_{\bar{R}} (G_1)$, en choisissant une $R'$-base $(e_i)_I$ de $V_1'$, on définit $(w \rho_1,V_1') \in \textup{Rep}_R'( G_1 )$ par :
$$[w \rho_1(g_1)]_{i,j} = w([\rho_1(g_1)]_{i,j}) \textup{ où } i, j \in I \textup{ et } g_1 \in G_1.$$
Cette construction ne dépend pas du choix de la base $(e_i)_I$ au sens où la classe d'isomorphisme de telles représentations $(w \rho_1, V_1')$ est bien définie \cite[Chap. II, 4.1.a]{vig}. Par conséquent, cela a du sens de parler de $w \rho_1$ sans mentionner de choix de base.

Grâce au Lemme \ref{produit_tensoriel_de_rep_irred_lem} précédent, on va généraliser \cite[Chap. II, 4.4]{vig}. On fixe désormais une clôture algébrique $\bar{R}$ de $R$ et on rappelle quelques notions développées dans \cite[Chap. II, 4]{vig}.

Le corps de rationalité $E(\rho_1)$ d'une représentation $\rho_1$ dans $\textup{Rep}_{\bar{R}}(G_1)$ est défini comme le corps des invariants de $\bar{R}$ sous l'action de $H(\rho_1)=\{ w \in \textup{Gal}_R(\bar{R}) \ | \ w \rho_1 \simeq \rho_1 \}$ :
$$E(\rho_1) = (\bar{R})^{H(\rho_1)}.$$
En particulier, l'action du groupe de Galois $\textup{Gal}_R(\bar{R})$ sur les classes d'isomorphismes définies par $\rho_1$ se factorise par l'action du groupe $\textup{Gal}_R(E(\rho_1),\bar{R})$, qui est défini comme l'ensemble des plongements $R$-linéaires de $E(\rho_1)$ dans $\bar{R}$.

Un sous-corps $E$ de $\bar{R}$ est un corps de définition $E$ pour $\rho_1$ s'il existe une représentation $\tau_1 \in \textup{Rep}_E(G_1)$ telle que $\tau_1 \otimes_E \bar{R} \simeq \rho_1$. On dit alors que $\rho_1$ est réalisable sur $E$. Quand le corps $R$ est parfait, l'extension algébrique $\bar{R}/R$ est galoisienne, et on sait \cite[Chap. II, 4.1.c]{vig} qu'un corps de définition de $\rho_1$ contient toujours son corps de rationalité $E(\rho_1)$. La réciproque n'est pas vraie en toute généralité et échoue déjà pour les groupes finis.

\begin{theo} \label{decomposition_extension_des_scalaires_thm} On suppose que $R$ est un corps parfait et que $\pi_1$ est admissible. On adopte les notations suivantes :
\begin{itemize}[label=$\bullet$]
\item $E_1$ est le centre de $D_1$ avec $n_1 = [E_1 : R]$ ;
\item $m_1$ est le degré de cette algèbre à division $D_1$ sur son centre $E_1$ ;
\item $\textup{Gal}_R(E_1,\bar{R})$ est l'ensemble des plongements $R$-linéaires de $E_1$ dans $\bar{R}$.
\end{itemize}
Alors pour tout facteur irréductible $\rho_1 \in \textup{Rep}_{\bar{R}}(G_1)$ de $\pi_1 \otimes_R \bar{R}$, il existe $w_1 \in \textup{Gal}_R(E_1,\bar{R})$ tel que $E(\rho_1) = w_1(E_1)$. Le choix d'un tel $\rho_1$ et $w_1$ induit un isomorphisme de représentations dans $\textup{Rep}_{\bar{R}}(G_1)$ :
$$\pi_1 \otimes_R \bar{R} \simeq m_1 \ \bigg( \bigoplus_{w \in \textup{Gal}_R(E(\rho_1),\bar{R})} w \rho_1 \bigg).$$
Pour terminer, en considérant $\pi_1$ comme une représentation dans $\textup{Rep}_{E_1}(G_1)$ via l'action du centre $E_1$ de son anneau des endomorphismes $D_1$, le plongement $w w_1 : E_1 \to \bar{R}$ pour $w \in  \textup{Gal}_R(E(\rho_1),\bar{R})$ induit un isomorphisme de représentations dans $\textup{Rep}_{\bar{R}}(G_1)$ :
$$\pi_1 \otimes_{E_1} \bar{R} \simeq m_1( w \rho_1).$$ \end{theo}

\begin{proof} On va montrer qu'il existe une extension finie $R'$ de $R$ dans $\bar{R}$ pour laquelle ce résultat est déjà vrai. Le but est de prouver que la représentation $\pi_1 \otimes_R R'$ est somme de représentations absolument irréductibles qui ont même multiplicité et s'obtiennent comme des conjuguées l'une de l'autre. Tout d'abord, $D_1$ est une algèbre à division de centre $E_1$, qui est de dimension finie sur $E_1$ car $\pi_1$ est admissible. Il existe donc une extension (séparable) $E_1'$ de $E_1$ de degré $m_1$ telle que $D_1 \otimes_{E_1} E_1' \simeq M_{m}(E_1')$.

En prenant la clôture galoisienne $R'$ de toute image de $E_1'$ dans $\bar{R}$, on peut voir $R'$ comme une représentation irréductible du groupe $G_2 = \mathbb{Z}$ muni de la topologie discrète. En effet, grâce au théorème de l'élément primitif, il existe $\beta \in R'$ non nul de polynôme minimal $P$ tel que $R' = R[\beta] \simeq R[X]/(P(X))$. La représentation $(\pi_2,R') \in \textup{Rep}_R(\mathbb{Z})$ définie par $\pi_2( 1 ) = \beta \in \textup{GL}_R(R')$ est bien irréductible.

D'après le Lemme \ref{produit_tensoriel_de_rep_irred_lem}, les sous-représentations de $\pi_1 \otimes_R \pi_2 = \pi_1 \otimes_R R'$ correspondent aux sous-$D$-modules à droite sur $D = \textup{End}_{G_1}(\pi_1) \otimes_R R'$. Or :
$$D \simeq \prod_{1 \leq k \leq n_1} M_{m_1}(R').$$
D'après \cite[Chap. II, 4.2]{vig}, la représentation $\pi_1 \otimes_R R'$ est semi-simple. Étant donné que $D = \textup{End}_{G_1}(\pi_1 \otimes_R R')$ d'après le Lemme \ref{produit_tensoriel_de_rep_irred_lem}, il existe des représentations irréductibles $(\tau_k)$ dans $\textup{Rep}_{R'}(G_1)$ qui sont deux à deux non isomorphes et telles que :
$$\pi_1 \otimes_R R' \simeq \bigoplus_{1 \leq k \leq n_1} (m_1 \tau_k).$$
De plus $\textup{End}_{G_1}(\tau_k) = R'$, donc chaque représentation $\rho_k$ est absolument irréductible. Par conséquent $\rho_k = \tau_k \otimes_{R'} \bar{R}$ est irréductible. Ainsi $\pi_1 \otimes_R \bar{R} = m_1  (\bigoplus_{1 \leq k \leq n_1} \rho_k)$.

Il reste à montrer que $\textup{Gal}_R(E_1,\bar{R})$ agit simplement transitivement sur les $n_1$ classes d'isomorphismes définies par la famille $(\rho_k)$.  Tout d'abord $\textup{Gal}_R(E_1,\bar{R})=\textup{Gal}_R(E_1,R')$ est de cardinal $n_1$ puisque $R$ est parfait. Donc on peut considérer de manière équivalente, au lieu de $\bar{R}$ et $(\rho_k)$, le corps $R'$ et la famille $(\tau_k)$. Pour chaque $w \in \textup{Gal}_R(E_1,R')$, on va expliciter le facteur $M_{m_1}(R')$ correspondant dans la décomposition de $D$. Le centre de $D$ est $E_1 \otimes_R R'$. En utilisant le théorème de l'élément primitif, il existe un polynôme unitaire $Q \in R[X]$ de degré $n_1$ et une racine $\alpha \in E_1$ de celui-ci tels qu'on ait :
$$E_1 =R[\alpha] \simeq R[X] / (Q(X))$$
De plus, le polynôme $Q$ est scindé à racines simples dans $R'[X]$ car l'extension $R'$ de $R$ est galoisienne et contient une racine de $Q$.

Ainsi on a un isomorphisme d'anneaux :
$$E_1 \otimes_R R' \simeq \prod_{w \in \textup{Gal}_R(E_1,R')} R'[X] / (X-w(\alpha))$$
auquel est associé une unique décomposition $\sum_{w \in \textup{Gal}_R(E_1,R')} e_w = 1_{E_1 \otimes_R R'}$ de l'unité en somme d'idempotents, ici notée $(e_w)_w$. Pour $w \in \textup{Gal}_R(E_1,R')$, l'idempotent $e_w$ vérifie :
$$e_w (\alpha \otimes_R 1_{R'} - 1_R \otimes_R w(\alpha)) = 0 .$$
Avec ces notations $e_w$ est un idempotent central de $D$, et on obtient explicitement l'isomorphisme :
$$D = \prod_{w \in \textup{Gal}_R(E_1,R')} e_w D \simeq \prod_{1 \leq k \leq n_1} M_{m_1}(R')$$
où le dernier isomorphisme dépend du choix d'une base.

Enfin, l'action de $\textup{Gal}_R(R')$ laisse invariante la représentation $\pi_1 \otimes_R \bar{R}$ au sens où pour tout $w \in \textup{Gal}_R(R')$, on a $w(\pi_1 \otimes_R R') \simeq \pi_1 \otimes_R R'$. Par conséquent, il existe un indice $k$ tel que $w \tau_1 \simeq \tau_k$. Par définition du corps de rationalité,  $w \tau_1 \simeq \tau_1$ si et seulement si $w|_{E(\tau_1)}=\textup{Id}_{E(\tau_1)}$. Et l'orbite de $\tau_1$ est de cardinal $| \textup{Gal}_R(E(\tau_1),R')|$.

On va montrer qu'il existe $w_1 \in \textup{Gal}_R(E_1,R')$ tel que $E(\tau_1)=w_1(E_1)$, de sorte qu'on aura que le cardinal de l'orbite de $\tau_1$ est $n_1$. Or :
$$e_w(\pi_1 \otimes_R R') \simeq \pi_1 \otimes_{E_1} R'$$
où $\pi_1$ est considérée à droite comme une représentation dans $\textup{Rep}_{E_1}(G_1)$ et le produit tensoriel est pris pour $w : E_1 \to R'$ avec $w \in \textup{Gal}_R(E_1,R')$. Les sous-représentations de $e_w(\pi_1 \otimes_R R')$ étant en bijection avec les sous-$e_w D$-modules à droite de $e_w D \simeq M_{m_1}(R')$, on en déduit que $\pi_1 \otimes_{E_1} R'$ est isotypique. Donc il existe $w_1 \in \textup{Gal}_R(E_1,R')$ tel que $e_{w_1} (\pi_1 \otimes_R R') \simeq m_1 \tau_1 \simeq \pi_1 \otimes_{E_1} R'$. Ainsi $E(m_1 \tau_1) = E(\tau_1) = E(\pi_1 \otimes_{E_1} R')$. Comme $E(\pi_1 \otimes_{E_1} R') \subset w_1(E_1)$, on a une première inclusion. Pour l'inclusion réciproque, l'image par $w \in \textup{Gal}_R(R')$ de $e_{w_1}(\pi_1 \otimes_R R')$ est isomorphe à $e_{w w_1}(\pi_1 \otimes_R R')$. De plus, $\{ w w_1 \ | \ \textup{Gal}_R(R') \} = \textup{Gal}_R(E_1,R')$, donc l'action est transitive puisque tout $m_1 \tau_k$ est obtenu comme un $e_{w w_1}(\pi_1 \otimes_R R')$. Ainsi pour tout $w \in \textup{Gal}_R(R')$ tel que $w|_{w_1(E_1)} \neq \textup{Id}_{w_1(E_1)}$, on a $w \tau_1 \not \simeq \tau_1$ \textit{i.e.} $w_1(E_1) \subset E(\tau_1)$. Donc $w_1(E_1) = E(\tau_1)$ et l'orbite de $\tau_1$ sous l'action de $\textup{Gal}_R(E(\tau_1),R')$ est de cardinal $n_1$ \textit{i.e.} est la famille $(\tau_k)$. \end{proof}

\begin{rem} \label{decomposition_extension_des_scalaires_rem} Bien que cela ne soit pas rendu explicite ici, il est possible de décrire le comportement de l'extension des scalaires quand $R$ n'est pas parfait. Cela fait apparaître des extensions non triviales entre d'une représentation irréductible par elle-même. On a toujours une décomposition du type $\pi_1 \otimes_R \bar{R} = \sum_w \pi_1 \otimes_{E_1} R'$ où la somme porte sur les plongements $\textup{Gal}_R(E_1,\bar{R})$. En revanche, l'anneau $E_1 \otimes_R \bar{R}$ n'est plus nécessairement un produit de corps car il n'est pas réduit si $E_1/R$ n'est pas séparable. Le Lemme \ref{produit_tensoriel_de_rep_irred_lem} étant encore valable, la décomposition sera donnée par les sous-$D_1 \otimes_R \bar{R}$-modules à droite de $D_1 \otimes_R \bar{R}$ où :
$$D_1 \otimes_R \bar{R} \simeq M_{m_1} (E_1 \otimes \bar{R}).$$
Par exemple si $R= \mathbb{F}_2(t)$, la représentation $(\pi_1,R^2) \in \textup{Rep}_R(\mathbb{Z})$ avec $\pi_1(1) =  \left[ \begin{array}{cc} 0 & t \\
1 & 0 \end{array} \right]$ est irréductible. On a $D_1 \simeq \mathbb{F}_2(\sqrt{t})$. Mais pour l'extension algébrique $R'=\mathbb{F}_2(\sqrt{t})$, on obtient $D_1 \otimes_R R' \simeq R'[\varepsilon]$ où $\varepsilon^2=0$. Or $R'[\varepsilon]$ possède trois idéaux $(0) \subset (\varepsilon) \subset (1)$, donc la représentation $\pi_1 \otimes_R R'$ est de longueur $2$ et n'est pas semi-simple. Sa semi-simplifiée est $2 \chi$ où $\chi : 1 \in \mathbb{Z} \mapsto \sqrt{t} \in R'$. \end{rem}

\begin{lem} \label{unicite_restriction_des_scalaires_induction_lem} On suppose que $R$ est un corps parfait. Soit $\rho_1 \in \textup{Rep}_{\bar{R}}(G_1)$ une représentation irréductible admissible réalisable sur une extension finie de $R$. Il existe alors une représentation irréductible admissible $\pi_1 \in \textup{Rep}_R(G_1)$ unique à isomorphisme près telle que $\pi_1 \otimes_R \bar{R}$ contienne $\rho_1$ comme sous-quotient. On la note $\pi_1(\rho_1,R)$. De plus, si $E$ un corps de définition de $\rho_1$ de degré minimal tel que $\tau_E \otimes_E \bar{R} = \rho_1$, on a en notant $\textup{Res}^R$ la restriction des scalaires à $R$ :
$$\pi_1(\rho_1,R) \simeq \textup{Res}^R(\tau_E).$$ \end{lem} 

\begin{proof} On commence tout d'abord par l'unicité. Pour tout $\pi_1 \in \textup{Rep}_R(G_1)$ irréductible, la représentation $\textup{Res}^R(\pi_1 \otimes_R \bar{R})$ est $\pi_1$-isotypique. Ainsi, pour tout sous-quotient $W$ de $\pi_1 \otimes_R \bar{R}$, la représentation $\textup{Res}^R(W)$ est $\pi_1$-isotypiques. C'est en particulier vrai si le sous-quotient en question est $\rho_1$. Comme $\textup{Res}^R(\rho_1)$ est isotypique, cela entraîne l'unicité d'une telle représentation $\pi_1$ quand elle existe.

On réalise ensuite $\rho_1$ sur une extension finie $E$ de $R$ avec $\tau_E \otimes_E \bar{R} = \rho_1$. Il est clair que $\tau_E$ est admissible puisque $\rho_1$ l'est. Cela entraîne que $\textup{Res}^R(\tau_E)$ est admissible. Comme $\textup{Res}^R(\tau_E)$ est de type finie, elle admet un quotient irréductible $\pi_1$ \textit{i.e.} un morphisme non nul $f : \textup{Res}^R(\tau_E) \to \pi_1$. Soit $\phi : R[G_1] \to E[G_1]$ l'inclusion évidente. Le foncteur d'oubli $\textup{Rep}_E(G_1) \to \textup{Rep}_R(G_1)$ admet un adjoint à droite qui est $\textup{Hom}_{R[G_1]}(E[G_1],-)$. Donc il correspond $f$ par adjonction un morphisme non nul $f' : \tau_E \to \textup{Hom}_{R[G_1]}(E[G_1],\pi_1)$ donc $f'$ est injective. Ainsi, $\textup{Res}^R(\tau_E)$ est une sous-représentation de :
$$\textup{Res}^R(\textup{Hom}_{R[G_1]}(E[G_1],\pi_1))= \bigoplus_{[E:R]} \pi_1.$$
Donc $\textup{Res}^R(\tau_E)$ est $\pi_1$-isotypique de multiplicité finie. On en déduit que $\pi_1$ est admissible puisque $\textup{Res}^R(\tau_E)$ l'est et que le foncteur des invariants pour les sous-groupes compacts est exact à gauche. Donc $\pi_1 = \pi_1(\rho_1,R)$ existe et est admissible.

Il reste enfin à montrer l'assertion $\pi_1(\rho_1,R) \simeq \textup{Res}^R(\tau_E)$ si l'extension $E/R$ est de degré minimal. On note simplement $\pi_1$ pour abréger $\pi_1(\rho_1,R)$. On a déjà montré au paragraphe précédent que $\textup{Res}^R(\tau_E)$ est $\pi_1$-isotypique de longueur finie. Il suffit par conséquent de montrer qu'elle est irréductible. On peut appliquer le Théorème \ref{decomposition_extension_des_scalaires_thm} précédent qui assure qu'il existe un sous-corps $E_1'$ de $E$ isomorphe au centre $E_1$ de $D_1 = \textup{End}_{G_1}(\pi_1(\rho_1,R))$ tel que $E/E_1'$ soit de degré $m_1$. On tire enfin des égalités :
$$\pi_1 \otimes_{E_1} E \simeq m_1 \tau_E \textup{ et } \textup{Res}^R(\pi_1 \otimes_{E_1} E) = \oplus_{[E:E_1']} \pi_1 = m_1 \pi_1$$
que $m_1 \textup{Res}^R(\tau_E) \simeq m_1 \pi_1$ \textit{i.e.} $\textup{Res}^R(\tau_E) \simeq \pi_1$. \end{proof}

\begin{rem} Conformément à la Remarque \ref{decomposition_extension_des_scalaires_rem}, on peut développer ce résultat quand $R$ n'est pas parfait. Il n'y a besoin d'aucune modification \textit{a priori}, excepté qu'au lieu des facteurs $m_1 \tau_E$ il faut considérer un facteur $A_{\tau_E}$ de longueur $m_1$, qui peut contenir des extensions non triviales de $\tau_E$ par elle-même. En revanche, il est tout à fait possible que $A_{\tau_E} \simeq \pi_1 \otimes_{E_1} E$ ne soit pas semi-simple alors que $\textup{Res}^R(A_{\tau_E})$ l'est toujours. \end{rem}

\begin{cor} \label{rep_irred_gp_red_sont_adm_cor} Soit $G$ un groupe réductif (connexe) sur un corps local non archimédien. On suppose que $R$ est un corps parfait et que $G$ contient un sous-groupe compact ouvert de pro-ordre inversible dans $R$. Alors toute représentation irréductible dans $\textup{Rep}_R(G)$ est admissible. \end{cor}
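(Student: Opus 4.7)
Mon plan est de ramener le problème au résultat d'admissibilité connu sur un corps algébriquement clos, puis de descendre à $R$ grâce au Lemme \ref{unicite_restriction_des_scalaires_induction_lem}.

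Soit $\pi \in \textup{Rep}_R(G)$ irréductible. Comme $R$ est parfait, la représentation $\pi \otimes_R \bar{R}$ est semi-simple d'après \cite[Chap. II, 4.2]{vig} et non nulle, donc contient au moins une sous-représentation irréductible $\rho \in \textup{Rep}_{\bar{R}}(G)$. Le théorème d'admissibilité des représentations lisses irréductibles des groupes réductifs $p$-adiques sur un corps algébriquement clos de caractéristique différente de $p$, dû à Vignéras, entraîne alors que $\rho$ est admissible.

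L'étape cruciale, et celle qui constituera probablement le principal obstacle, consistera à prouver que $\rho$ est réalisable sur une extension finie de $R$. Je fixerais à cette fin un sous-groupe compact ouvert $K$ avec $\rho^K \neq 0$, de sorte que $\rho^K$ soit un $\mathcal{H}_{\bar{R}}(G,K)$-module simple de dimension finie $d$ sur $\bar{R}$. Par densité de Jacobson, l'image de $\mathcal{H}_{\bar{R}}(G,K)$ dans $\textup{End}_{\bar{R}}(\rho^K) = M_d(\bar{R})$ est toute l'algèbre matricielle ; comme $\mathcal{H}_{\bar{R}}(G,K) = \mathcal{H}_R(G,K) \otimes_R \bar{R}$, on pourrait extraire un nombre fini d'éléments de $\mathcal{H}_R(G,K)$ dont les images $\bar{R}$-engendrent $M_d(\bar{R})$. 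Un argument de descente galoisienne permettrait alors d'obtenir une sous-extension finie $E$ de $\bar{R}$ sur $R$ sur laquelle $\rho^K$ se réalise en tant que module simple, puis $\rho$ tout entière par la relation $\rho = \bar{R}[G] \cdot \rho^K$.

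Une fois cette étape franchie, la conclusion découle du Lemme \ref{unicite_restriction_des_scalaires_induction_lem} appliqué à $\rho$ : il fournit une représentation irréductible admissible $\pi_1(\rho, R) \in \textup{Rep}_R(G)$ dont l'extension des scalaires à $\bar{R}$ contient $\rho$ comme sous-quotient. L'argument d'unicité dans la preuve de ce lemme n'utilisant que l'irréductibilité, la représentation $\textup{Res}^R(\rho)$ est à la fois $\pi$-isotypique et $\pi_1(\rho, R)$-isotypique comme représentation de $G$ sur $R$, ce qui impose $\pi \simeq \pi_1(\rho, R)$. L'admissibilité de $\pi$ en résulte.
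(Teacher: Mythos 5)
Votre stratégie globale coïncide avec celle du papier : ramener le problème au cas $\bar{R}$, où l'admissibilité est connue, puis appliquer le Lemme \ref{unicite_restriction_des_scalaires_induction_lem} et conclure par unicité. La conclusion finale est correctement argumentée. Deux points méritent cependant d'être signalés.

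D'abord, l'invocation de la semi-simplicité de $\pi \otimes_R \bar{R}$ via \cite[II.4.2]{vig} est circulaire telle quelle : ce résultat, comme l'illustre la preuve du Théorème \ref{decomposition_extension_des_scalaires_thm}, s'appuie sur la finitude de $D_1 = \textup{End}_G(\pi)$ sur $R$, c'est-à-dire sur l'admissibilité de $\pi$ — précisément ce qu'on cherche à établir. Heureusement, cet excès est inoffensif : il suffit d'observer que $\pi \otimes_R \bar{R}$ est cyclique (engendrée par $v \otimes 1$ pour tout $v \neq 0$), donc admet un quotient irréductible $\rho$, et le Lemme \ref{unicite_restriction_des_scalaires_induction_lem} ne requiert qu'un sous-quotient.

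Le point véritablement problématique est votre tentative de démontrer directement que $\rho$ est réalisable sur une extension finie de $R$. Le papier cite simplement \cite[II.4.7]{vig}, qui est un théorème non trivial. Votre esquisse par densité de Jacobson ne se referme pas : après avoir choisi des éléments $f_1, \dots, f_n \in \mathcal{H}_R(G,K)$ dont les images engendrent $M_d(\bar{R})$ sur $\bar{R}$, leurs coefficients matriciels vivent certes dans une extension finie $E$ de $R$, mais cela ne garantit pas que l'image de toute l'algèbre $\mathcal{H}_E(G,K)$ préserve $E^d$. Par l'argument de Morita (c'est un $M_d(E)$-bimodule contenant $M_d(E)$), cette image est de la forme $M_d(E')$ pour une sous-extension $E \subset E' \subset \bar{R}$, mais rien dans l'argument ne force $E'$ à être finie sur $E$, ni même égale à $E$. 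C'est justement l'obstruction que les preuves de rationalité dans la littérature contournent via la théorie des représentations cuspidales, des types et du support cuspidal — et non par un argument Hecke-algébrique élémentaire. Vous devriez soit citer \cite[II.4.7]{vig} comme le fait le papier, soit développer cette descente avec beaucoup plus de soin.
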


\begin{proof} On applique le Lemme \ref{unicite_restriction_des_scalaires_induction_lem} précédent. D'une part, toute représentation irréductible $\rho_1$ dans $\textup{Rep}_{\bar{R}}(G)$ est admissible. Ce fait bien connu \cite[Chap. II, 2.8]{vig} provient de ce que cela est vrai pour les représentations cuspidales. D'autre part, toute représentation irréductible $\rho_1$ dans $\textup{Rep}_{\bar{R}}(G)$ peut se réaliser sur une extension finie de $R$ d'après \cite[Chap. II, 4.7]{vig}.

Si le groupe réductif en question n'est pas connexe, on sait qu'il admet un quotient fini par sa partie connexe. On peut résonner de manière similaire sur les représentations cuspidales de ses sous-groupes de Levi. \end{proof}

\begin{rem} Toujours dans le même esprit que les remarques précédentes, ce résultat semble aussi valable quand $R$ n'est pas parfait. \end{rem}

\subsection{Plus grand quotient $\pi_1$-isotypique} \label{decomposition_rep_prod_groupes_section}

On suppose dans cette section qu'il existe un sous-groupe compact ouvert de $G_1 \times G_2$ de pro-ordre inversible dans $R$, ce qui assure l'existence d'une mesure de Haar sur $G_1 \times G_2$ à valeurs dans $R$. En particulier, il existe des mesures de Haar sur $G_1$ et $G_2$ à valeurs dans $R$. Le théorème suivant généralise les résultats de \cite{flath} pour les représentations à coefficient complexes $R = \mathbb{C}$, dont \cite[Th. A.4]{vig_invent} est déjà une généralisation au cas $R$ algébriquement clos.

\begin{theo} \label{flath_theorem} On suppose que $R$ est un corps parfait.
\begin{enumerate}[label=\textup{\alph*)}]
\item Si $\pi_1 \in \emph{Rep}_R(G_1)$ et $\pi_2 \in \emph{Rep}_R(G_2)$ sont deux représentations irréductibles admissibles, alors $\pi_1 \otimes_R \pi_2 \in \textup{Rep}_R(G_1 \times G_2)$ est une représentation semi-simple admissible.
\item Si $\pi$ est une représentation irréductible admissible dans $\textup{Rep}_R(G_1 \times G_2)$, alors  il existe des représentations irréductibles admissibles $\pi_1$ de $G_1$ et $\pi_2$ de $G_2$ telles que $\pi$ soit un quotient de $\pi_1 \otimes_R \pi_2$. De plus, $\pi_1$ et $\pi_2$ sont uniques à isomorphisme près c'est-à-dire $\pi$ détermine ces classes d'isomorphisme. \end{enumerate} \end{theo}

\begin{proof} a) Grâce au Lemme \ref{produit_tensoriel_de_rep_irred_lem}, il suffit de prouver que la $R$-algèbre suivante $D= \textup{End}_{G_1}(\pi_1) \otimes_R \textup{End}_{G_2}(\pi_2)$ est semi-simple. Comme elle est de dimension finie sur $R$ par admissibilité, il suffit de prouver que son centre est réduit. Or, son centre est $E_1 \otimes_R E_2$ en notant $E_1$ et $E_2$ respectivement les centres de ces algèbres d'endomorphismes. Comme $R$ est parfait, ces extensions finies $E_1$ et $E_2$ sont séparables, ce qui assure que le centre est réduit. 

b) Pour le deuxième point, soit $W \in \text{Rep}_R(G_1 \times G_2)$ une représentation irréductible admissible. Soit $K=K_1 \times K_2$, où $K_i$ est un sous-groupe compact ouvert de $G_i$ de pro-ordre inversible dans $R$ et tel que $W^K \neq 0$. L'espace $W^K$ est de dimension finie, et irréductible en tant que $\mathcal{H}_R(G_1 \times G_2,K)$-module, ce qui permet d'appliquer les résultats de \cite[A VIII.208]{bou}. Il existe donc un couple, unique à isomorphisme près, de $\mathcal{H}_R(G_i,K_i)$-modules simples $W_i^{K_i}$ et un morphisme non nul (donc surjectif \dots)  $a_K : W_1^{K_1} \otimes_R W_2^{K_2} \to W^K$ de $\mathcal{H}_R(G_1 \times G_2,K)$-modules de sorte que $W^K$ soit un quotient $W_1^{K_1} \otimes W_2^{K_2}$. De même pour tout sous-groupe ouvert $K'=K_1' \times K_2'$ inclus dans $K$, on peut construire un morphisme $a_{K'} : W_1^{K_1'} \otimes_R W_2^{K_2} \to W^{K'}$ pour deux modules simples $W_1^{K_1'}$ et $W_2^{K_2'}$. Il existe alors un morphisme :
$$b_i(K,K') \in \textup{End}_{\mathcal{H}_R(G_1,K_1) \otimes \mathcal{H}_R(G_2,K_2)} (W_1^{K_1} \otimes W_2^{K_2} , W_1^{K_1'} \otimes W_2^{K_2'})$$
tel que le diagramme suivant commute : 
$$\xymatrix{
	W^K  \ar@{^{(}->}[d]^{\textup{incl.}} & \ar[l]_{a_K \hspace{0.5cm}}  W_1^{K_1} \otimes W_2^{K_2} \ar[d]^{b_{K,K'}} \\
	W^{K'}  		& \ar[l]_{a_{K'} \hspace{0.5cm}}	W_1^{K_1'} \otimes W_2^{K_2'}	
	}$$
	
\noindent De plus, les morphismes $b_{K,K'}$ peuvent être choisis de sorte que les sous-groupes compacts ouverts précédents $K$, $K'$ forment un système inductif. Alors $W$ est un quotient de la représentation $W_1 \otimes_R W_2$ où :
$$ W_i = \underset{\underset{K_i}{\to}}{\lim} W_i^{K_i} \in \textup{Rep}_R(G_i) \textup{ est irréductible admissible}.$$ La classe de $W_i$ est déterminée par celle de $W$. En effet, deux représentations $W$ et $W'$ sont isomorphes si et seulement si pour tout compact ouvert $K$ de pro-ordre inversible $W^K \simeq (W')^K$. La restriction de $W$ à $G_i$ est donc $W_i$-isotypique car $W_i^{K_i}$ est unique à isomorphisme d'après \cite[A VIII.208]{bou}. \end{proof}

On peut donner une condition un peu plus fine pour qu'un produit tensoriel de représentations irréductibles soit irréductible.

\begin{lem} \label{produit_de_rep_irred_End_lem} On suppose que les représentations $\pi_1$ et $\pi_2$ sont irréductibles et qu'il existe un morphisme non nul de $R$-algèbres $D_2 \to D_1^\textup{op}$. Alors $\pi_1 \otimes_{D_2} \pi_2 \in \textup{Rep}_R(G_1 \times G_2)$ est une représentation irréductible. \end{lem}

\begin{proof} Soit $v = \sum v_1^i \otimes_{D_2} v_2^i \in \pi_1 \otimes_{D_2} \pi_2$. Comme $D_2= \textup{End}_{G_2}(\pi_2)$ est un corps, on peut choisir une famille finie $v_2^i$ de sorte que celle-ci soit libre sur $D_2$ et qu'on ait $v = \sum v_1^i \otimes_{D_2} v_2^i$ avec $v_1^i \neq 0$. On va montrer que $V_1 \otimes_{D_2} v_2^1$ est contenu dans la sous-représentation engendrée par $v$, ce qui suffit pour prouver que $v$ engendre $\pi_1 \otimes_{D_2} \pi_2$. Pour ce faire, on choisit un élément $f_2 \in R[G_2]$ de sorte que :
$$\pi_2(f_2) v_2^i = \left\{ \begin{array}{cc} 
0 & \text{ si } i\neq 1 \\
v_2^1 & \text{ si } i=1 \end{array} \right. .$$
Un tel élément existe toujours. En effet, on note $A$ l'image de $R[G_2] \to \textup{End}_R(\pi_2)$.  Alors $A$ est contenue dans $\textup{End}_{\textup{End}_{G_2}(\pi_2)}(\pi_2) = \textup{End}_{\textup{comm}(A)}(\pi_2) = \textup{comm}(\textup{comm}(A))$ où \og $\textup{comm}$ \fg{} désigne le commutant d'une algèbre dans $\textup{End}_R(\pi_2)$. On considère le sous-espace vectoriel $W_2$ de $\pi_2$ engendré par les $v_2^i$. L'application que l'on veut interpoler est la projection sur $v_2^1$ parallèlement aux autres $v_2^i$. En particulier, il existe un élément $b \in \textup{End}_{D_2}(\pi_2) = \textup{comm}(\textup{comm}(A))$ dont la restriction $b|_{W_2}$ à $W_2$ réalise cette projection. Comme la famille $v_2^i$ est finie, le Théorème de Densité \cite[Chap. I, B.6]{vig} assure qu'il existe $a \in A$ tel que $a|_{W_2} = b|_{W_2}$. D'où l'existence de $f_2$. On en déduit donc que $V_1 \otimes_{D_2} v_2^1$ est contenu dans la sous-représentation engendrée par $v$. \end{proof}

Les deux lemmes suivants généralisent \cite[Chap. 2, Lem. III.3 \& Lem. III.4]{mvw}. Les modifications apportées aux preuves sont mineures, mais on les reprend en détail pour insister sur ces légers changements.

\begin{lem} \label{pi-coinvariant-sous-rep} Soit $(\pi_1,V_1) \in \textup{Rep}_R(G_1)$ une représentation irréductible et admissible. Soit $(\pi_2,V_2) \in \textup{Rep}_R(G_2)$. On suppose que $V_2$ est munie d'une structure de module à droite sur $D_1$ compatible à l'action de $G_2$, cela signifie qu'on se donne un morphisme non nul :
$$D_1 \to D_2^{\textup{op}}.$$
Alors, pour toute sous-représentation $V \in \textup{Rep}_R(G_1 \times G_2)$ dans $V_2 \otimes_{D_1} V_1$, il existe une sous-représentation $V_2'$ de $V_2$, munie d'une structure de $D_1$-module à droite, telle que $V = V_2' \otimes_{D_1} V_1$ dans $\textup{Rep}_R(G_1 \times G_2)$. \end{lem}

\begin{proof} Posons $V_2'=\{ v_2 \in V_2 \ | \ \forall v_1 \in V_1, v_2 \otimes v_1 \in V \}$. Cet espace est stable par $G_2$ et par l'action à droite de $D_1$. La représentation $V_2' \otimes V_1$ est une sous-représentation de $V$. On va montrer que $V$ est contenu dans $V_2' \otimes V_1$. Soit $v \in V$. On emploie le même argument que celui de la preuve du Lemme \ref{produit_de_rep_irred_End_lem}. On le rappelle en raccourci. Par définition, il existe une famille finie $(v_1^i)$ libre sur le corps $D_1$ telle que $v=\sum_i v_2^i \otimes v_1^i$ et $v_2^i \neq 0$. On peut alors trouver un élément dans $f_1^i \in R[G_1]$ tel que :
$$\pi_1(f_1^i) v_1^j = \left\{ \begin{array}{cc} 
0 & \text{ si } j\neq i \\
v_1^i & \text{ si } j=i \end{array} \right. .$$ 
Ainsi, $v_2^i \in V_2'$ pour tout $i$, donc $v = \sum v_2^i \otimes v_1^i \in V_2'\otimes V_1$. \end{proof}

Quand le corps $R$ est algébriquement, les résultats se simplifient considérablement puisque les anneaux d'endomorphismes $D_1$ et $D_2$ sont égaux à $R$. L'expression de $V_2$ dans le résultat suivant se simplifie également en $(V\otimes V_1^\vee)_{1_{G_1}}$ qui correspond bien à une généralisation du plus grand quotient $\pi_1$-isotypique dans le cas complexe.

\begin{lem} \label{pi-coinvariant} Soit $(\pi,V) \in \emph{Rep}_R(G_1 \times G_2)$. Soit $(\pi_1,V_1) \in \textup{Rep}_R(G_1)$ irréductible admissible.
\begin{itemize}[label=$\bullet$]
\item On définit une sous-représentation de $V$ par :
$$V[\pi_1]= \bigcap_{f \in \emph{Hom}_{G_1}(V,V_1)} \emph{Ker} (f) \in \textup{Rep}_R(G_1 \times G_2).$$
On appelle \emph{\og plus grand quotient $\pi_1$-isotypique de $V$ \fg{}} la représentation :
$$V_{\pi_1} = V / V[\pi_1] \in \textup{Rep}_R(G_1 \times G_2).$$ \item Il existe alors un $\mathcal{H}_R(G_2)-D_1$-bimodule $(\pi_2,V_2)$, unique à isomorphisme de bimodule près, tel que :
$$V_{\pi_1} \simeq \pi_2 \otimes_{D_1} \pi_1.$$
De plus, on a un isomorphisme de $\mathcal{H}_R(G_2) - D_1$-bimodule :
$$V_2 \simeq (V \otimes_R \textup{Hom}_{D_1}(V_1,D_1)^{\infty})_{1_{G_1}}.$$ \end{itemize} \end{lem}

\begin{proof} Le groupe $G_2$ agit de manière évidente sur $\textup{Hom}_{G_1}(V,V_1)$, donc sur $V[\pi_1]$ \textit{a fortiori}. On a bien $V[\pi_1]$ et $V_{\pi_1}$ qui sont des représentations de $G_1 \times G_2$. Pour l'unicité de $V_2$, on va montrer que si $V_2$ existe, alors $V_2 \simeq (V \otimes_R \pi_1' )_{1_{G_1}}$ où l'on a pris les coinvariants vis-à-vis de $G_1$ et la représentation $\pi_1' =\textup{Hom}_{D_1}(V_1,D_1)^{\infty}$ est non réduite à $0$. La structure de $D_1$-module à droite sur $(V \otimes_R \pi_1')_{1_{G_1}}$ est héritée de celle sur $\pi_1'$. On suppose donc qu'on a un isomorphisme $V_{\pi_1} \simeq V_2 \otimes_{D_1} V_1$ dans $\textup{Rep}_R(G_1 \times G_2)$. Le morphisme quotient $V \to V_{\pi_1}$ induit un isomorphisme $(V \otimes_R \pi_1')_{1_{G_1}} \simeq (V_{\pi_1} \otimes_R \pi_1')_{1_{G_1}}$ puisque $V[\pi_1] \otimes_R \pi_1'$ est inclus dans $(V_{\pi_1} \otimes_R \pi_1') [1_{G_1}]$. D'une part, comme $\pi_1$ est admissible, on peut montrer que $(\pi_1')' \simeq \pi_1$ en tant que ($\mathcal{H}_R(G_1) \otimes_R D_1$)-module. D'autre part, pour tout $f \in \textup{Hom}_{G_1} (V \otimes_R \pi_1' , 1_{G_1})$, on a  $v \in V \mapsto (v_1' \mapsto f(v \otimes_R v_1')) \in V_1$ qui appartient à $\textup{Hom}_{G_1}(V,\pi_1)$. Donc si $v \in V[\pi_1]$, pour tout $v_1' \in V_1'$, on obtient bien que $f(v \otimes_R v_1')=0$. Maintenant, si $V_{\pi_1} \simeq \pi_2 \otimes_{D_1} \pi_1$, alors :
$$(V \otimes_R \pi_1')_{1_{G_1}} = (V_{\pi_1} \otimes_R \pi_1')_{1_{G_1}} = V_2 \otimes_{D_1} (\pi_1 \otimes_R \pi_1')_{1_{G_1}} = V_2$$
la dernière égalité résulte d'un argument classique car $(\pi_1 \otimes_R \pi_1')_{1_{G_1}}$ est un $D_1$-module à gauche de dimension $1$. En effet, en procédant comme dans \cite[III.1.9]{ren}, on un isomorphisme de $R$-espaces vectoriels  $\textup{Hom}_{\mathcal{H}_R(G_1) \otimes_R D_1}(\pi_1 \otimes_R \pi_1',1_{G_1,D_1}) \simeq \textup{End}_{G_1}(\pi_1')$ où $1_{G_1,D_1}$ désigne la \og représentation triviale \fg{} \textit{i.e.} un $D_1$-espace vectoriel de dimension $1$ muni de l'action triviale de $G_1$. Comme $\textup{End}_{G_1}(\pi_1')$ est un $D_1$-espace vectoriel de dimension 1, le dual de $(\pi_1 \otimes_R \pi_1')_{1_{G_1}}$ est de dimension $1$ sur $D_1$. D'où le fait que $(\pi_1 \otimes \pi_1')_{1_{G_1}}$ soit de dimension $1$.

En ce qui concerne l'existence de $V_2$, il suffit de montrer que $V_{\pi_1}$ est une sous-représentation d'une représentation de la forme $V_2' \otimes_{D_1} \pi_1$ et utiliser le lemme précédent. On pose $V_2 = (V \otimes_R \pi_1')_{1_{G_1}}$ et on désigne par $p$ le morphisme quotient $V \otimes_R \pi_1' \to V_2$ associé aux coinvariants vis-à-vis de $G_1$. Soit $\phi$ l'application $R$-linéaire :
$$\phi : v \in V \mapsto (\phi(v) : v_1' \mapsto p(v \otimes_R v_1')) \in \textup{Hom}_{D_1}(V_1', V_2).$$
Elle est un morphisme de représentations de $G_1 \times G_2$ et se factorise par $V_{\pi_1}$. On va montrer que l'image de $\phi(v)$ est de dimension finie. Soient $v \in V$ et $K$ un sous-groupe ouvert compact de $G_1$ de pro-ordre inversible dans $R$ fixant $v$. Soit $e_K$ l'idempotent associé dans l'algèbre de Hecke $\mathcal{H}_R(G_1)$. Pour $v_1' \in V_1'$, on a :
$$\phi(v)(v_1')=p(v \otimes_R v_1') = p(\pi(e_K) v \otimes_R v_1' ) = p(v \otimes_R \pi_1'(\check{e_K}) v_1 )$$
où $\check{e_K}$ est l'image de $e_K$ par l'automorphisme $g \mapsto g^{-1}$. Mais $\check{e_K}=e_K$, ce qui entraîne que $\phi(v)(v_1') = \phi(v)(\pi_1'(\check{e_K})v_1')$. Autrement dit $\phi(v)$ se factorise par $\pi_1'(e_K)$. On a un plongement naturel $V_2 \otimes_{D_1} V_1 \rightarrow \textup{Hom}_{D_1}(V_1',V_2)$. L'admissibilité de $\pi_1$ implique que son image est le sous-espace des $f \in \textup{Hom}_{D_1}(V_1',V_2)$ tel qu'il existe un sous-groupe ouvert compact $K$ de $G_1$ de pro-ordre inversible pour lequel $f$ se factorise par $\pi_1'(e_K)$. Alors $\phi$ se factorise par $\phi' : V_{\pi_1} \rightarrow V_2 \otimes_{D_1} V_1$.

Il reste à montrer que $\phi'$ est injective pour conclure. Soit $ v \in V_{\pi_1}$, $v \neq 0$. Il existe par hypothèse $f \in \textup{Hom}_{G_1}(V_{\pi_1},\pi_1)$ tel que $f(v) \neq 0$. Fixons un tel $f$ et $v_1' \in V_1'$ tels que $v_1' \circ f(v) \neq 0$. Par fonctorialité, $f$ définit un morphisme de $D_1$-modules à droite :
$$f' : (V \otimes_R V_1')_{G_1} \rightarrow (V_1 \otimes_R V_1')_{1_{G_1}}.$$
En composant avec $h : v_1 \otimes_R v_1' \in V_1 \otimes_R V_1' \mapsto v_1'(v_1) \in 1_{G_1,D_1}$, qui se factorise par $(V_1 \otimes V_1')_{1_{G_1}}$, on obtient que $h \circ f' \circ p(v \otimes_R v_1') = v_1' \circ f(v) \neq 0$ et donc $p(v \otimes_R v_1') \neq 0$. Par conséquent $\phi(v  + V[\pi_1]) \neq 0$ et $\phi'$ est injective. Comme énoncé plus haut, $V_{\pi_1}$ est une sous-représentation de $V_2 \otimes_{D_1} V_1$ et le lemme précédent donne l'existence de $(\pi_2,V_2)$. \end{proof}

\begin{cor} \label{pi1_coinvariants_famille_pi_1i_cor} Soit $(\pi_{1,i})_{i \in I}$ une famille finie de représentations irréductibles deux à deux non isomorphes. Pour $V \in \textup{Rep}_R(G_1 \times G_2)$, on note $p_{\pi_{1,i}}$ la projection $V \mapsto V_{\pi_{1,i}}$. Alors l'application :
$$p_I : v \in V \mapsto (p_{\pi_{1,i}}(v) )_{i\in I} \in \oplus_{i\in I} V_{\pi_1^i}$$
est surjective de noyau :
$$\bigcap_{i \in I} V[\pi_{1,i}] = \bigcap_{f \in \textup{Hom}_{G_1}(V, \oplus_{i \in I} \pi_{1,i})} \textup{Ker} (f).$$ \end{cor}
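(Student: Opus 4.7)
Le plan se découpe en deux temps : identifier le noyau par une manipulation formelle, puis établir la surjectivité à l'aide de la structure semi-simple induite sur les quotients isotypiques.

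Tout d'abord, pour l'identification du noyau, on observe qu'un morphisme $G_1$-équivariant $f : V \to \oplus_{i \in I} \pi_{1,i}$ se décrit de manière unique comme une famille $(f_i)_{i \in I}$ avec $f_i \in \textup{Hom}_{G_1}(V, \pi_{1,i})$, dont le noyau est $\bigcap_i \textup{Ker}(f_i)$. En intervertissant les intersections, on obtient immédiatement
$$\bigcap_{f \in \textup{Hom}_{G_1}(V, \oplus_i \pi_{1,i})} \textup{Ker}(f) = \bigcap_{i \in I} \bigcap_{f_i \in \textup{Hom}_{G_1}(V, \pi_{1,i})} \textup{Ker}(f_i) = \bigcap_{i \in I} V[\pi_{1,i}],$$
qui est par définition le noyau de $p_I$.

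Pour la surjectivité, on note $U$ l'image de $p_I$ dans $\oplus_{i \in I} V_{\pi_{1,i}}$. D'après le Lemme \ref{pi-coinvariant}, chaque quotient $V_{\pi_{1,i}} \simeq \pi_{2,i} \otimes_{D_{1,i}} \pi_{1,i}$ se décompose, en tant que $G_1$-représentation, comme une somme directe de copies de $\pi_{1,i}$ en choisissant une $D_{1,i}$-base de $\pi_{2,i}$. Par conséquent, $\oplus_i V_{\pi_{1,i}}$ est une $G_1$-représentation semi-simple, dont la composante $\pi_{1,i}$-isotypique est précisément $V_{\pi_{1,i}}$ (puisque les $\pi_{1,j}$ pour $j \neq i$ ne contribuent pas). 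Il en va de même pour $U$, qui s'écrit donc $U = \oplus_i U_{\pi_{1,i}}$ avec $U_{\pi_{1,i}} \subset V_{\pi_{1,i}}$.

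Il reste à montrer que $U_{\pi_{1,i}} = V_{\pi_{1,i}}$ pour tout $i \in I$. La projection $\textup{pr}_i : \oplus_j V_{\pi_{1,j}} \to V_{\pi_{1,i}}$ annule toute composante $U_{\pi_{1,j}}$ avec $j \neq i$ par absence de morphisme $G_1$-équivariant non nul entre représentations isotypiques pour des irréductibles non isomorphes. Or la composée $\textup{pr}_i \circ p_I : V \to V_{\pi_{1,i}}$ coïncide avec $p_{\pi_{1,i}}$, qui est surjective par définition. On en déduit que $U_{\pi_{1,i}} = V_{\pi_{1,i}}$, donc $U = \oplus_i V_{\pi_{1,i}}$. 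Aucune difficulté réelle n'est à anticiper : l'étape la plus délicate est peut-être la justification de la semi-simplicité des $V_{\pi_{1,i}}$ comme $G_1$-représentations, mais elle découle directement du Lemme \ref{pi-coinvariant}.
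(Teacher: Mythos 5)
Votre preuve est correcte et suit une route un peu différente de celle du papier. Pour la surjectivité, le papier procède par récurrence finie : il fixe $i_0$, montre que $V$ et $V[\pi_{1,i_0}]$ ont la même image dans $V_{\pi_{1,i}}$ pour $i \neq i_0$ (via la chaîne d'inclusions $V[\pi_{1,i}] \cap V[\pi_{1,i_0}] \subset V[\pi_{1,i_0}] \subset V$ et l'absence de morphismes croisés entre représentations isotypiques de types distincts), puis réduit au cas de $|I|-1$ indices appliqué à $V[\pi_{1,i_0}]$. Vous évitez l'induction en décomposant directement l'image $U$ selon ses composantes $G_1$-isotypiques dans le module semi-simple $\oplus_i V_{\pi_{1,i}}$, et en observant que $\mathrm{pr}_i \circ p_I = p_{\pi_{1,i}}$ force chaque composante à être pleine. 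Les deux arguments reposent sur le même fait crucial — $V_{\pi_{1,i}}$ est $\pi_{1,i}$-isotypique au sens semi-simple, donc sans morphisme $G_1$-équivariant non nul vers un $\pi_{1,j}$-isotypique pour $j \neq i$ — mais votre rédaction le rend explicite via le Lemme~\ref{pi-coinvariant}, ce qui suppose implicitement l'admissibilité des $\pi_{1,i}$ (hypothèse que le papier utilise aussi, mais sous le tapis de l'énoncé \og{}par définition\ldots{}est $\pi_{1,i_0}$-isotypique\fg{}). Votre approche est sans doute plus directe et plus transparente sur les ingrédients utilisés ; celle du papier a l'avantage formel de ne manipuler que deux indices à la fois.
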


\begin{proof} Tout d'abord, il est clair que $\cap_{i \in I} V[ \pi_{1,i}]$ est le noyau de l'application de l'énoncé puisque le noyau de $p_{\pi_{1,i}}$ est $V[\pi_{1,i}]$ par définition. Ensuite, on voit facilement que la dernière égalité est vraie étant donné que $\textup{Hom}_{G_1}(V,\oplus_i \pi_{1,i}) = \prod_i \textup{Hom}_{G_1}(V,\pi_{1,i})$.

Il reste donc à prouver que l'application ainsi définie est surjective. On le montre à l'aide d'une récurrence finie. Soit $i_0 \in I$. Par définition $V_{\pi_{1,i_0}}  = V / V[\pi_{1,i_0}]$ est une représentation $\pi_{1,i_0}$-isotypique dans $\textup{Rep}_R(G_1)$. Pour tout $i \in I$ avec $i \neq i_0$, on a :
$$V_{\pi_{1,i}} = (V[\pi_{1,i_0}])_{\pi_{1,i}}.$$
En effet on a des inclusions $V[\pi_{1,i}]\cap V[\pi_{1,i_0}] \subset  V[\pi_{1,i_0}]  \subset V$, où le dernier quotient $V/V[\pi_{1,i_0}]$ est $\pi_{1,i_0}$-isotypique et le premier est $\pi_{1,i}$-isotypique. Par conséquent $V$ se surjecte sur $V_{\pi_{1,i}} \oplus V_{\pi_{1,i_0}}$ avec $V_{\pi_{1,i_0}} \simeq V[\pi_{1,i_0}] / V[\pi_{1,i}]\cap V[\pi_{1,i_0}]$. Une récurrence finie montre qu'il en de même pour $|I|$ représentations deux à deux non isomorphes : si l'on considère par exemple $V[\pi_{1,i_0}] \to \oplus_{i \in I \backslash {i_0}} V_{\pi_{1,i}}$ au lieu de l'application de l'énoncé, les images de $V$ et de $V[\pi_{1,i_0}]$ dans $\oplus_{i \in I \backslash {i_0}} V_{\pi_{1,i}}$ sont égales, et on peut recommencer pour un autre indice $i_1$ de $I \backslash \{ i_0 \}$. Comme $I$ est fini, on en déduit que l'application de l'énoncé est surjective. \end{proof}

\paragraph{Comportement vis-à-vis de l'extension des scalaires.}  On fixe une clôture algébrique $\bar{R}$ de $R$. Pour toute représentation irréductible admissible $\pi_1$ dans $\textup{Rep}_R(G_1)$, et quand le corps $R$ est parfait, le Théorème \ref{decomposition_extension_des_scalaires_thm} assure qu'il existe $\rho_1 \in \textup{Rep}_{\bar{R}}(G_1)$ irréductible (admissible) telle que :
$$\pi_1 \otimes_R \bar{R} \simeq m_1 \bigg( \bigoplus_{w \in \textup{Gal}_R(E(\rho_1),\bar{R})} w \rho_1 \bigg).$$
Soit $V \in \textup{Rep}_R(G_1 \times G_2)$. Peut-on relier l'espace des $\pi_1$-coinvariants $V_{\pi_1}$ avec l'ensemble des $w \rho_1$-coninvariants $V_{w \rho_1}$ où $w \in \textup{Gal}_R(E(\rho_1),\bar{R})$ ? La réponse est apportée par le théorème suivant.

\begin{theo} \label{extension_des_scalaires_coinvariants_last_thm} On suppose que $R$ est un corps parfait. Soit $(\pi_1,V_1)$ une représentation irréductible admissible dans $\textup{Rep}_R(G_1)$. On considère la décomposition précédente de $\pi_1 \otimes_R \bar{R}$ donnée par le Théorème \ref{decomposition_extension_des_scalaires_thm}. Alors pour tout $V \in \textup{Rep}_R(G_1 \times G_2)$, on a :
$$V_{\pi_1} \otimes_R \bar{R} \simeq \bigoplus_{w \in \textup{Gal}_R(E(\rho_1),\bar{R})} (V \otimes_R \bar{R})_{w\rho_1}.$$ \end{theo}

\begin{proof} D'après le Corollaire \ref{pi1_coinvariants_famille_pi_1i_cor}, l'application :
$$V \otimes_R \bar{R} \to \bigoplus_{w \in \textup{Gal}_R(E(\rho_1),\bar{R})} (V\otimes_R \bar{R})_{w \rho_1}$$
est surjective et a pour noyau $\bigcap_w (V \otimes_R \bar{R}) [w \rho_1]$. On va montrer que :
$$V[\pi_1] \otimes_R \bar{R} = \bigcap_{w \in \textup{Gal}_R(E(\rho_1),\bar{R})} (V \otimes_R \bar{R})[w \rho_1].$$
Cette dernière égalité entraînera alors que $(V \otimes_R \bar{R})/ (V[\pi_1] \otimes_R \bar{R}) \simeq V_{\pi_1} \otimes_R \bar{R}$ est le plus grand quotient isotypique de $V$ associé, au sens du Corollaire \ref{pi1_coinvariants_famille_pi_1i_cor}, à la famille finie $(w \rho_1)_{w \in \textup{Gal}_R(E(\rho_1),\bar{R})}$.

L'inclusion directe demande le moins d'effort. Soit $v \in V[\pi_1]$. Il s'agit de montrer que pour tout $w \in \textup{Gal}_R(E(\rho_1),\bar{R})$ et tout $f \in \textup{Hom}_{G_1}(V \otimes_R \bar{R},w \rho_1)$, on a $v \otimes_R 1 \in \textup{Ker}(f)$. En particulier, un tel $f$ définit un morphisme de $R[G_1]$-modules $\textup{Res}^R(V \otimes_R \bar{R}) \to \textup{Res}^R (w \rho_1)$ par oubli des scalaires. De plus, le morphisme $f$ est non nul si et seulement si sa restriction à $V\otimes_R 1 = \{ v \otimes_R 1 \ | \ v \in V\}$ est non nulle. Or, comme dans les premières lignes de la preuve du Lemme \ref{unicite_restriction_des_scalaires_induction_lem}, la représentation $\textup{Res}^R(w \rho_1)$ est $\pi_1$-isotypique. Par conséquent $f|_{V \otimes_R 1} : V \simeq V \otimes_R 1 \to \textup{Res}^R(w \rho_1) \simeq \oplus \pi_1$. Donc $f(v \otimes_R 1) =0$ par définition de $V[\pi_1]$.

En ce qui concerne le sens indirect, on sait d'après le Lemme \ref{pi-coinvariant} qu'il existe un $R[G_2]-D_1$-bimodule lisse $V_2$ tel que  $V_{\pi_1} \simeq V_2 \otimes_{D_1} V_1$ où $D_1 = \textup{End}_{G_1}(\pi_1)$ est une algèbre à division de degré $m_1$ sur son centre. De plus, on a un isomorphisme de représentations :
$$V_{\pi_1} \otimes_R \bar{R} \simeq (V_2 \otimes_R \bar{R}) \otimes_{D_1 \otimes_R \bar{R}} (V_1 \otimes_R \bar{R}).$$
D'après le Théorème \ref{decomposition_extension_des_scalaires_thm}, l'anneau $D_1 \otimes_R \bar{R}$ est isomorphe à $\prod_w e_w D \simeq \prod_w M_{m_1}(\bar{R})$ où $(e_w)_{w \in \textup{Gal}_R(E_1,\bar{R})}$ est un système d'idempotents centraux primitifs dans $D_1 \otimes_R \bar{R}$. On en déduit que :
$$V_{\pi_1} \otimes_R \bar{R} \simeq \bigoplus V_w \otimes_{e_w D} (m_1 (w \rho_1))$$
où $m_1 w \rho_1 = e_w (\pi_1 \otimes_R \bar{R})$ et $V_w = V_2 e_w$.

On montre ensuite qu'il existe une représentation $V_{2,w} \in \textup{Rep}_{\bar{R}}(G_2)$ telle que :
$$V_w \otimes_{e_w D} (m_1 (w \rho_1)) \simeq V_{2,w} \otimes_{\bar{R}} (w \rho_1).$$
En effet $e_w D \simeq M_{m_1}(\bar{R})$, et en notant $e_{i,j}$ la matrice élémentaire dans $M_{m_1}(\bar{R})$, on a $e_{1,1} + \dots + e_{m_1,m_1} = \textup{Id}_{m_1}$ qui est une décomposition de l'identité en somme d'idempotents. Ils ne sont cependant pas centraux dans $M_{m_1}(\bar{R})$. Néanmoins, chaque $e_{i,i}$ définit une application $v \in V_w \mapsto v e_{i,i} \in V_w$ qui est un morphisme de $\bar{R}[G_2]$-modules. De plus $e_{i,i} V_w \simeq e_{1,1} V_w$ pour tout $i$. En notant $V_{2,w} = V_w e_{1,1} \in \textup{Rep}_{\bar{R}}(G_2)$, on a $V_w \simeq m_1 V_{2,w}$ et $(m_1 V_{2,w}) \otimes_{M_{m_1}(\bar{R})} (m_1 (w \rho_1)) \simeq V_{2,w} \otimes_{\bar{R}} (w \rho_1)$. Ainsi le quotient $V \otimes_R \bar{R} \to V_{2,w} \otimes_{\bar{R}} (w \rho_1)$ se factorise par $(V \otimes_R \bar{R})_{w \rho_1}$ par définition du plus grand quotient $w \rho_1$-isotypique. Donc le quotient $V \otimes_R \bar{R} \to V_{\pi_1} \otimes_R \bar{R} \simeq \oplus_w V_{2,w} \otimes_{\bar{R}} (w \rho_1)$ se factorise par $\oplus_w (V \otimes_R \bar{R})_{w \rho_1}$. En d'autres termes $\cap_w (V\otimes_R \bar{R})_{w \rho_1} \subset V[\pi_1] \otimes_R \bar{R}$. \end{proof}

\bibliographystyle{alpha}
\bibliography{lesrefer}

\end{document}